\documentclass[12pt]{amsart}
\usepackage{amssymb, tensor, fullpage, textcomp, pb-diagram, wasysym, wrapfig, comment, xspace, leftindex, xcolor, appendix, tensor, relsize}
\usepackage[Symbol]{upgreek}
\usepackage[mathscr]{euscript}
\usepackage[shortlabels]{enumitem}

\let\oldtocsection=\tocsection

\let\oldtocsubsection=\tocsubsection

\let\oldtocsubsubsection=\tocsubsubsection

\renewcommand{\tocsection}[2]{\hspace{0em}\oldtocsection{#1}{#2}}
\renewcommand{\tocsubsection}[2]{\hspace{1em}\oldtocsubsection{#1}{#2}}
\renewcommand{\tocsubsubsection}[2]{\hspace{2em}\oldtocsubsubsection{#1}{#2}}

\DeclareFontFamily{U}{fsy}{}
\DeclareFontShape{U}{fsy}{m}{n}{<->s*[.9]psyr}{}
\DeclareSymbolFont{der@m}{U}{fsy}{m}{n}
\DeclareMathSymbol{\der}{\mathord}{der@m}{182}

\newcommand{\acl}{\operatorname{acl}}

\newcommand{\nset}{\{1, \ldots, n \}}

\newcommand{\I}{\mathbb{I}_u}

\newcommand{\cyc}{C}

\newcommand{\V}{\mathbb{V}}
\newcommand{\D}{\mathbb{D}}

\newcommand{\pac}{\mathrm{PAC}}

\newcommand{\dlo}{\mathrm{DLO}}

\newcommand{\dprk}{\operatorname{dp}}

\newcommand{\lex}{<_{\mathrm{Lex}}}

\DeclareSymbolFont{imag@m}{OT1}{cmr}{m}{ui}
\DeclareMathSymbol{\imag}{\mathord}{imag@m}{105}

\newtheorem{theorem}{Theorem}[section]
\newtheorem*{theorem*}{Theorem}

\newtheorem*{qst*}{Question}

\newtheorem{proposition}[theorem]{Proposition}
\newtheorem*{proposition*}{Proposition}
\newtheorem*{fact*}{Fact}
\newtheorem{lem}[theorem]{Lemma}

\newtheorem*{Claim*}{Claim}
\newtheorem{fact}[theorem]{Fact}

\newtheorem{lemma}[theorem]{Lemma}
\newtheorem{corollary}[theorem]{Corollary}

\newtheorem*{thmA}{Theorem A}
\newtheorem*{factA}{Fact A}
\newtheorem*{thmB}{Theorem B}
\newtheorem*{thmC}{Theorem C}
\newtheorem*{thmD}{Theorem D}

\theoremstyle{definition}

\theoremstyle{remark}

\newcommand{\monster}{\boldsymbol{\Sa M}}

\newcommand{\monsterset}{\boldsymbol{M}}

\newcommand{\mfrak}{\mathfrak{m}}
\newcommand{\Val}{\mathrm{Val}}

\newcommand{\hgroup}{(H;+,\triangleleft)}
\newcommand{\rgoup}{(\R;+,<)}
\newcommand{\zgoup}{(\Z;+,<)}
\newcommand{\jgroup}{(J;+,C)}
\newcommand{\rfield}{(\R;+,\times)}
\newcommand{\rcf}{\mathrm{RCF}}

\newcommand{\rad}{\operatorname{rad}}

\newcommand{\rvec}{\R_{\mathrm{Vec}}}

\newcommand{\Th}{\mathrm{Th}}
\ProvideTextCommandDefault{\cprime}{(U+042C)}

\newcommand{\Fraisse}{Fra\"iss\'e\xspace}
\newcommand{\Erdos}{Erd\H{o}s\xspace}

\newcommand{\st}{\operatorname{st}}

\newenvironment{claimproof}[1][\proofname]
               {
                 \proof[#1]
                 
               }
               {
                 \endproof
               }

\newcommand{\cl}{\operatorname{Cl}}

\newcommand{\shk}{\K^\mathrm{Sh}}

\newcommand{\Sh}[1]{\ensuremath{\mathscr{#1}^{\mathrm{Sh}}}}
\newcommand{\Sq}[1]{\ensuremath{\mathscr{#1}^{\square}}}
\newcommand{\nip}{\mathrm{NIP}}
\newcommand{\ip}{\mathrm{IP}}

\newcommand{\Cal}[1]{\ensuremath{\mathcal{#1}}}
\newcommand{\Sa}[1]{\ensuremath{\mathscr{#1}}}

\newcommand{\B}{\mathbb{B}}
\newcommand{\Z}{\mathbb{Z}}
\newcommand{\N}{\mathbb{N}}

\newcommand{\Q}{\mathbb{Q}}
\newcommand{\R}{\mathbb{R}}
\newcommand{\F}{\mathbb{F}}
\newcommand{\E}{\mathbb{E}}
\newcommand{\K}{\mathbb{K}}

\begin{document}
\title[]{Trace definability II: model-theoretic linearity}

\author{Erik Walsberg}
\email{erik.walsberg@gmail.com}

\begin{abstract}
We give examples of $\nip$ structures in which new algebraic structure appears in the Shelah completion.
In particular we construct a weakly o-minimal structure $\Sa M$ such that $\Sa M$ does not interpret an infinite group but the Shelah completion of $\Sa M$ interprets an infinite field.
We introduce a weak notion of interpretability called local trace definability between first order structures and an associated weak notion of equivalence.
We give a dichotomy between ``linearity" and ``field structure" for dp-minimal expansions of archimedean ordered abelian groups.
We also prove several other results about trace definability and local trace definability between various classes of structures.
\end{abstract}

\maketitle

\section*{Introduction}
A theme in model theory is to show that some abstract notion of ``linearity" or ``triviality" is equivalent to non-interpretability of some algebraic structure.
Two examples:
\begin{enumerate}[leftmargin=*]
\item Zil'ber showed that a totally categorical structure does not interpret an infinite group if and only if every interpretable strongly minimal set is disintegrated~\cite{Zilber_1993}.
\item Peterzil and Starchenko showed that an o-minimal structure does not interpret an infinite group (field) if and only it is disintegrated (locally modular)~\cite{PS-Tri}.
\end{enumerate}

Fact~A is a special case of the Peterzil-Starchenko theorem.

\begin{factA}
If $\Sa M$ is an o-minimal expansion of an ordered abelian group $(M; +, <)$ then the following are equivalent.
\begin{enumerate}[leftmargin=*]
\item $\Sa M$ is locally modular.
\item $\Sa M$ does not interpret an infinite field.
\item There is an ordered division ring $\D$ and an ordered $\D$-vector space $\Sa V$ expanding $(M; +, <)$ such that every $\Sa M$-definable set is definable in $\Sa V$, i.e. every $\Sa M$-definable set is a boolean combination of solution sets of $\D$-linear inequalities.
\end{enumerate}
\end{factA}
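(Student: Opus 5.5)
The equivalences follow from the Peterzil--Starchenko trichotomy together with the Loveys--Peterzil structure theorem for locally modular o-minimal expansions of groups. I would prove the implications $(3)\Rightarrow(2)\Rightarrow(1)\Rightarrow(3)$.

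For $(3)\Rightarrow(2)$, suppose every $\Sa M$-definable set is definable in an ordered $\D$-vector space $\Sa V$. Then every $\Sa M$-interpretable set is $\Sa V$-interpretable, so it suffices to show that $\Sa V$ does not interpret an infinite field. $\Sa V$ is o-minimal; its o-minimality is clear from quantifier elimination for ordered divisible $\D$-vector spaces. Hence $\Sa V$ eliminates imaginaries up to definable choice, because the group structure gives definable choice. So an interpretable field would be definably isomorphic to a field definable in $\Sa V$. By Pillay's theorem such a field is a definable real closed field or its algebraic closure, and then $\Sa V$ would be non-locally-modular. This is impossible: $\Sa V$ is locally modular, since $\acl$ in $\Sa V$ is the $\D$-linear span (over parameters), and linear span gives a modular pregeometry. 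An alternative route is the following. Multiplication would give a definable function $f$ on an interval which is not affine on any subinterval. But by quantifier elimination every $\Sa V$-definable unary function is piecewise of the form $x\mapsto \lambda x+c$ with $\lambda\in\D$. By cell decomposition, field multiplication would then be piecewise affine in two variables. This contradicts distributivity: for generic fixed $a$, the map $x\mapsto a\cdot x$ is affine with slope $\lambda_a$, and $a\mapsto\lambda_a$ is an injective definable map into the small set $\D$, which is impossible on an interval.

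For $(2)\Rightarrow(1)$, apply the Peterzil--Starchenko trichotomy. If $\Sa M$ is not locally modular, there is a point $a$ that is non-trivial and non-linear, so a real closed field is definable on an interval around $a$. Hence $\Sa M$ defines, and in particular interprets, an infinite field. Since $\Sa M$ expands a group, no point is trivial, which is consistent with the statement involving only fields.

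For $(1)\Rightarrow(3)$, the step I expect to be the main obstacle, I would invoke Loveys--Peterzil. Let $\D$ be the ring of $\emptyset$-definable partial endomorphisms germs at $0$, i.e.\ germs of definable homomorphisms defined near $0$. Local modularity forces every definable unary function to be piecewise of the form $x\mapsto \lambda x + c$ with $\lambda\in\D$. This uses the fact that a non-affine germ would yield, via the standard "tangent" construction, a definable family of curves of dimension $2$ through a point, and hence a field by Peterzil--Starchenko. The key subtlety is that germs of definable endomorphisms near $0$ may not extend to total endomorphisms of $M$. Loveys--Peterzil handle this by showing that, in the locally modular case, they do extend: by local modularity one can additively continue each germ to all of $M$. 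Then $\D$ is an ordered division ring, $M$ is an ordered $\D$-vector space $\Sa V$, and every definable set is a boolean combination of sets given by $\D$-linear inequalities. This last claim is proved by induction on dimension using cell decomposition, with cell boundaries being affine by the unary case applied coordinatewise (a multivariable definable function that is linear in each variable separately, in a locally modular structure, is jointly affine on cells).
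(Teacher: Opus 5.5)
Your overall plan is the one the paper has in mind; the paper gives no argument for Fact~A, only the citation to Peterzil--Starchenko. Your $(3)\Rightarrow(2)$ via modularity of $\acl$ in $\Sa V$ and your $(2)\Rightarrow(1)$ via the trichotomy are fine at that level of citation. The ``alternative route'' for $(3)\Rightarrow(2)$ should be dropped: $\lambda_a$ is neither shown to be well defined nor injective, and $a\mapsto\lambda_a$ is not a definable map.

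The genuine gap is in $(1)\Rightarrow(3)$, at ``by local modularity one can additively continue each germ to all of $M$''. Additivity determines a definable partial endomorphism $\lambda$ with domain $(-a,a)$ only on the convex subgroup generated by $a$, via $x\mapsto n\lambda(x/n)$. If $(M;+,<)$ is archimedean, that subgroup is all of $M$. The continuations then form an ordered division ring acting on $M$, and your argument goes through; this covers expansions of $\rgoup$. In general nothing forces a continuation beyond that convex subgroup, and local modularity does not supply one.

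In fact the continuation, and with it (3) as literally stated, can fail. Let $f(x)=\sqrt2\,x$ for $|x|<1$ and $f(x)=0$ otherwise, and let $T=\Th(\R;+,<,f)$. Since $(\R;+,<,f)$ is a reduct of the ordered $\Q(\sqrt2)$-vector space $\R$, $T$ is o-minimal and interprets no infinite field, so it is locally modular.

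Take $N\models T$ with an element above the convex subgroup $B$ generated by $(-1,1)$. As $B$ is divisible, $N\cong(N/B)\times_{\mathrm{lex}}B$ with $f$ supported in $B$. Put $M:=\Q\times_{\mathrm{lex}}N$, with $f$ as in $N$ on $\{0\}\times N$ and $0$ elsewhere. Then $M\cong(\Q\times_{\mathrm{lex}}N/B)\times_{\mathrm{lex}}B$. Since $\Q\times_{\mathrm{lex}}N/B\equiv N/B$, a two-factor Feferman--Vaught argument gives $M\equiv N$, so $M\models T$.

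Now suppose some ordered $\D$-vector space on $(M;+,<)$ defines the graph of $f$.
\begin{enumerate}
\item By quantifier elimination, $f(x)=\mu x+c$ on some subinterval for some $\mu\in\D$. By additivity of $f$, $\mu z=f(z)$ for $z$ near $0$.
\item Since $f(f(z))=2z$ for small $z$, $\mu^2-2$ kills a nonzero vector, so $\mu^2=2$ in $\D$.
\item Since $z<f(z)<2z$ for small $z>0$, we get $1<\mu<2$ in $\D$. Hence $x<\mu x<2x$ for all $x>0$, and $\mu$ preserves the convex subgroup $\{0\}\times N$.
\item Write $\mu\omega=q\omega+b$ for $\omega=(1,0)$, with $q\in\Q$ and $b\in N$. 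Then $2\omega=\mu^2\omega=q^2\omega+(qb+\mu b)$ with $qb+\mu b\in N$, so $q^2=2$, which is absurd.
\end{enumerate}

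So $(1)\Rightarrow(3)$ cannot be proved in the stated generality; the paper's wording of Fact~A overstates the non-archimedean case. Outside the archimedean setting you must aim for a weaker conclusion, for instance a description of the definable sets via the definable partial endomorphisms themselves.
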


We are interested in obtaining similar results over other classes of unstable $\nip$ structures.
Let $\Sa M$ range over $\nip$ structures.
We let $\Sh M$ be the {\bf Shelah completion} of $\Sa M$, i.e. the expansion of $\Sa M$ by all externally definable sets.
(This is usually called the Shelah expansion.)
See Section~\ref{section:new} for background.
In general $\nip$-theoretic properties transfer immediately from $\Sa M$ to $\Sh M$.
Thus if non-interpretability of an infinite group (field) is equivalent to a $\nip$-theoretic property over some reasonable class $\Cal C$ of $\nip$ structures then we expect that the Shelah completion of any $\Sa M \in \Cal C$ interprets an infinite group (field) if and only if $\Sa M$ does.
So we were surprised to find that new algebraic structure can appear in the Shelah completion.

\begin{thmA}
\hspace{.0000000000000000000000000000000000000000000000000000000000000000000000000000000000000000000000000000000000000000001cm}
\begin{enumerate}[leftmargin=*]
\item There is a weakly o-minimal structure $\Sa M$ such that $\Sa M$ is disintegrated, no model of $\Th(\Sa M)$ interprets an infinite group, but the Shelah completion of $\Sa M$ interprets the field of real numbers.
\item There is a weakly o-minimal expansion $\Sa Q$ of $(\Q;+,<)$ such that no model of $\Th(\Sa Q)$ interprets an infinite field, but the Shelah completion of any $\aleph_1$-saturated elementary extension of $\Sa Q$ interprets the field of real numbers.
\item Given a $p$-adically closed field $\K$ let $\Sa B_\K$ be the induced structure on the set of balls in $\K$, so $\Sa B_\K$ is interpretable in $\K$.
Then $\Sa B_\K$ does not interpret an infinite field but the Shelah completion of $\Sa B_\K$ interprets the field of $p$-adic numbers when $\K$ is $\aleph_1$-saturated.
\end{enumerate}
\end{thmA}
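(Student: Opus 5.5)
Throughout, the key mechanism is that in the Shelah completion a cut (or a nested family of balls) of a saturated model becomes a definable set. A suitably chosen infinitesimal neighbourhood then carries enough induced structure to recover a standard part map onto $\R$ or $\Q_p$. Field structure that was only ``approximately'' present in $\Sa M$ becomes exactly present in $\Sh M$ after quotienting by the infinitesimals.

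\textbf{Part (1).} I take $\Sa M = (\R; <, C)$, where $C$ is a ternary relation encoding a countable family of monotone maps. Concretely, let $C(x,y,z)$ hold iff $x<y<z$ and $y-x < \epsilon\,(z-y)$ in a form that is only piecewise definable. A cleaner choice is an expansion of $(\R;<)$ by the graphs of a dense collection of germs, chosen so that $\Sa M$ is weakly o-minimal and disintegrated (acl is trivial). Non-interpretability of infinite groups in any model of $\Th(\Sa M)$ follows from disintegratedness together with the known fact that weakly o-minimal disintegrated theories do not interpret infinite groups. For this I plan to reduce to the o-minimal completion and apply the Peterzil--Starchenko trichotomy cited in the introduction. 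For the positive direction, I pass to an $\aleph_1$-saturated $\Sa N \succ \Sa M$ and use the externally definable convex set $\mathfrak{m}$ of elements infinitesimally close to a fixed point. The quotient of a suitable definable family by the equivalence ``same standard part'' is $\Sh N$-interpretable, and the relations of $\Sa M$ induce on it the graphs of $+$ and $\times$ on $\R$. The construction is designed precisely so that the ratio/difference comparisons encoded by $C$ become addition and multiplication after taking standard parts.

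\textbf{Part (2).} Let $\Sa Q$ be $(\Q;+,<)$ expanded by the restriction to $\Q$ of a non-semilinear convex predicate, for instance $\{(x,y): y< x^2\}\cap \Q^2$ truncated to a bounded box. This expansion is weakly o-minimal, since it is the trace of an o-minimal structure on a dense subgroup. It does not interpret a field because $\Q$-definable sets are boolean combinations of linear sets and of ``cuts by irrationals'' in the sense of Fact~A. The precise step is to show that every $\Sa Q$-definable set is definable in $(\Q;+,<)$ together with unary cuts at irrational points, and then to apply a Fact~A-type linearity argument to models of the theory. In an $\aleph_1$-saturated extension $\Sa N$, the Shelah completion defines the convex hull of $\Q$ and its infinitesimals. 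The quotient of the finite part by the infinitesimals is $\R$, and the predicate induces $y<x^2$ on it, so $\Sh N$ interprets $(\R;+,x\mapsto x^2)$ and hence $\times$.

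\textbf{Part (3).} Balls in $\K$ carry the induced structure of inclusion and of the ``sum'' and ``product'' of balls. Non-interpretability of a field follows from showing that $\Sa B_\K$ is linear in the appropriate sense: its definable sets are tree-like and valuation-theoretic, so it interprets at most the value group and residue field, which is finite. In the $\aleph_1$-saturated case, the externally definable set of balls containing a fixed infinitesimal ball lets one identify, up to infinitesimal balls, the $\Z_p$-points. The ball operations then induce $\Q_p$ with its field structure.

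The main obstacle is the negative halves: showing that $\Sa M$, $\Sa Q$ and $\Sa B_\K$, together with all their elementary extensions, do not interpret groups or fields. This requires a careful quantifier-elimination-style description of the definable sets; I expect the weakly o-minimal cases to rest on a cell decomposition relative to the o-minimal completion.
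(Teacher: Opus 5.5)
Your examples for (1) and (2) cannot work, however the details are filled in.

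\textbf{Part (1).} Any weakly o-minimal expansion of $(\R;<)$ is o-minimal, because every convex subset of $\R$ is an interval with endpoints in $\R\cup\{\pm\infty\}$. By Marker--Steinhorn it is then Shelah complete, so $\Sh M=\Sa M$. Hence ``$\Sh M$ interprets $\rfield$'' contradicts ``$\Sa M$ interprets no infinite group''.

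Passing to a saturated $\Sa N\succ\Sa M$ does not help:
\begin{itemize}
\item A disintegrated o-minimal theory is monadically $\nip$ (Fact~\ref{fact:monotone0}), so by Fact~\ref{fact:lb} it cannot even trace define an infinite group.
\item Anything interpretable in $\Sh N$ is trace definable in $\Th(\Sa M)$ (Proposition~\ref{prop:she-0}).
\item Your $C$ is not disintegrated anyway: the supremum of its fibres gives the definable function $(x,z)\mapsto (x+\epsilon z)/(1+\epsilon)$.
\end{itemize}
The statement is about $\Sh M$ itself, so you need a structure on a non-complete set that already contains a representative of every standard part. The paper uses $\Sa P$, the structure induced on a dense $\acl$-independent $P$ in an $\aleph_1$-saturated $\Sa N\succ\Sa R$. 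Dolich--Miller--Steinhorn quantifier elimination gives weak o-minimality and disintegration. $\Sa R$ is recovered in $\Sh P$ because standard parts of finite points of $P$ compute closures, combined with Lemma~\ref{lem:regular}.

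\textbf{Part (2).} Since $x^2\in\Q$ for $x\in\Q$, your $S=\{(x,y)\in\Q^2\cap[0,1]^2 : y<x^2\}$ defines $x\mapsto x^2$ on $[0,1]\cap\Q$ as the least element of $([0,1]\cap\Q)\setminus S_x$. Polarization then gives multiplication on a box. Transport along $x\mapsto x/(1+x)$ gives the operations $(a,b)\mapsto (a+b-2ab)/(1-ab)$ and $(a,b)\mapsto ab/(1-a-b+2ab)$ on $[0,1)\cap\Q$. These need only bounded products and form a copy of $(\Q_{\ge 0};+,\cdot)$. So your $\Sa Q$ interprets the field $\Q$, and hence $(\Z;+,\cdot)$: it is neither weakly o-minimal nor field-free.

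Being the trace of an o-minimal structure on a dense subgroup gives no tameness; the trace of $\rfield$ on $\Q$ defines the field $\Q$. The paper instead pulls back semialgebraic sets along $h\mapsto\lambda^h$. Then $(\Q;+)$ becomes multiplication on the finite-rank group $\lambda^{\Q}$, and van den Dries--G\"unaydin supplies tameness.

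\textbf{The negative halves.} All three of your strategies are invariant under trace equivalence. Each structure in Theorem~A is trace equivalent to what appears in its Shelah completion (Propositions~\ref{prop:she-0} and~\ref{prop:shn}), so these strategies would refute the positive halves.
\begin{itemize}
\item In (2): suppose every $\Sa Q$-definable set were definable in $(\Q;+,<)$ plus unary cuts. Then $\Sa Q$ would be trace definable in $\Th(\Q;+,<)$ (Corollary~\ref{cor:poizat-b}), hence would have near linear Zarankiewicz bounds (Fact~\ref{fact:mct}, Lemma~\ref{lem:blank-2}). Then no Shelah completion of a model could interpret $\rfield$ (Proposition~\ref{prop:field-blank}); compare Corollary~\ref{cor:final}.
\item In (1): any o-minimal completion you form carries the very field produced by the positive half, so Peterzil--Starchenko applied there shows nothing.
\item The general fact you invoke in (1) is false. $(\Q\times\Q;<_{\mathrm{lex}},E,R)$, with $E$ meaning equal first coordinates and $R((a,b),(a',b'),(a'',b''))\Leftrightarrow a+a'=a''$, is weakly o-minimal with $\acl(A)=A$, yet it interprets $(\Q;+)$.
\item In (3): $\Sa B_\K$ is trace equivalent to $\Q_p$, so it is not ``linear'' in any trace-invariant sense, and ``tree-like'' is not an argument.
\end{itemize}

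The missing idea is to use tools that detect interpretations rather than traces.
\begin{itemize}
\item In (1) and (2), Eleftheriou's elimination of imaginaries reduces to definable groups and fields. Then Berenstein--Vassiliev applies: no definable infinite group in $\Th(\Sa P)$, and weak one-basedness of $\Th(\Sa H_\lambda)$.
\item In (3), Halevi--Hasson--Peterzil says an interpretable infinite field is definably isomorphic to a finite extension of $\K$. This would give a definable surjection from a subset of $\B_\K^m$ onto some $\K^n$. That is impossible: every definable map $\B_\K^m\to\K^n$ has finite image. Over $\Q_p$ the image is countable while nonempty open sets are not, and this transfers because $\K$ eliminates $\exists^\infty$.
\end{itemize}

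Your positive half of (3) is close to the paper's, with two corrections. First, use balls of a fixed radius $\delta>\N$ with centres in the valuation ring $V$ of $\K^\times\to\Gamma/\Z$, not balls containing a fixed infinitesimal ball (those form a chain). Second, you need Fact~\ref{fact:ksh}, together with convexity of $\Z$ in $\Gamma$, to see that this set is $\Sh B_\K$-definable. Then the zero set in $\Q_p^n$ of each $f\in\Z[x_1,\ldots,x_n]$ is the image, under the induced map to $\Q_p^n$, of the $n$-tuples of such balls whose product meets the zero set of $f$ in $V^n$.
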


See Section~\ref{section:new} for the details of these examples.
How can the situation be salvaged?
We believe that the right approach is to work with a weaker notion of interpretability.
In \cite{trace1} we introduced a weak notion of interpretability called trace definability, see Section~\ref{section:bckgrnd} for background.
This gives rise to a weak notion of equivalence which we call trace equivalence.
Any $\nip$ structure is trace equivalent to its Shelah completion.
It follows that $\Th(\Sa M)$ trace defines a structure $\Sa O$ when the Shelah completion of $\Sa M$ interprets $\Sa O$.
At present we also introduce an even weaker notion of interpretability, local trace definability, which again gives rise to a weaker notion of equivalence called local trace equivalence.
We use these notions to give a dichotomy between ``linearity" and ``field structure" for dp-minimal expansions of archimedean ordered abelian groups.
(An expansion of $\rgoup$ is dp-minimal if and only if it is o-minimal and an expansion of a divisible archimedean ordered abelian group is dp-minimal if and only if it is weakly o-minimal~\cite{Simon-dp, SW-dp}.)

\begin{thmB}
Let $\Sa R$ be a dp-minimal expansion of an archimedean ordered abelian group $(R;+,<)$, which we take to be a substructure of $\rgoup$.
Then the following are equivalent.
\begin{enumerate}[leftmargin=*]
\item The Shelah completion of a model of $\Th(\Sa R)$ cannot interpret $\rfield$.
\item $\Th(\Sa R)$ does not locally trace define an infinite field.
\item $\Sa R$ has near linear Zarankiewicz bounds.
\item Every $\Sa R$-definable set is a boolean combination of $(R;+)$-definable sets and sets of the form $Y \cap R^n$ where $Y \subseteq \R^n$ is the solution set of a system of linear inequalities.
\end{enumerate}
\end{thmB}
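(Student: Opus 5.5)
I would prove Theorem B by establishing the cycle (4)$\Rightarrow$(3)$\Rightarrow$(2)$\Rightarrow$(1)$\Rightarrow$(4). The real work is in (1)$\Rightarrow$(4). There I pass from a dp-minimal expansion of an archimedean group to an o-minimal expansion of $\rgoup$, and then apply the linear case of Fact~A.

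\textbf{The easy directions.}

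(4)$\Rightarrow$(3): Zarankiewicz bounds are inherited by boolean combinations, so it suffices to check the two kinds of generators.
\begin{itemize}
\item Sets definable in $(R;+)$ are stable, and in fact definable in a one-based structure. Their incidence graphs therefore satisfy linear bounds.
\item Traces $Y\cap R^n$ of semilinear sets are traces of definable sets in the ordered $\R$-vector space $\rvec$. Since $\rvec$ is o-minimal and locally modular, it has near linear Zarankiewicz bounds by the known semilinear incidence bounds (Basit--Chernikov--Starchenko--Tao--Tran). Near linear bounds pass to traces because they are statements about finite subsets.
\end{itemize}

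(3)$\Rightarrow$(2): I would use the fact, presumably established earlier for local trace definability, that near linear Zarankiewicz bounds are preserved under local trace definability. An infinite field locally trace defines point-line incidence, which violates near linear bounds by Szemerédi--Trotter type lower bounds (the $n^{4/3}$ grid construction).

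(2)$\Rightarrow$(1): If the Shelah completion of a model $\Sa N\equiv\Sa R$ interprets $\rfield$, then $\Th(\Sa R)$ trace defines $\rfield$, since a structure is trace equivalent to its Shelah completion. Trace definability implies local trace definability, so $\Th(\Sa R)$ locally trace defines an infinite field.

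\textbf{The main direction, (1)$\Rightarrow$(4).}

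Step 1 (completion). By Laskowski--Steinhorn-type results for weakly o-minimal structures (Wencel), $\Sa R$ should be a reduct of the trace on $R$ of an o-minimal expansion $\Sa R^*$ of $\rgoup$. Concretely, $\Sa R^*$ is generated by the closures in $\R^n$ of $\Sa R$-definable sets.

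Step 2 (realizing $\Sa R^*$ in a Shelah completion). I claim $\Sa R^*$ is interpretable in the Shelah completion of an $\aleph_1$-saturated $\Sa N\succ\Sa R$. Take the convex hull $\mathcal O$ of $R$ in $N$, which is externally definable, and the infinitesimals $\mathfrak m$. Then $\mathcal O/\mathfrak m\cong\R$, and the standard parts of $\Sa N$-definable sets give $\Sa R^*$, using saturation to obtain all of $\R$.

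Step 3 (applying Fact~A). If $\Sa R^*$ interprets an infinite field, then by Peterzil--Starchenko it defines a real closed field on an interval. An o-minimal expansion of $\rgoup$ with such a field defines $\rfield$ up to interdefinability on intervals, and this yields an interpretation of $\rfield$ in $\Sh N$, contradicting (1). So by Fact~A, $\Sa R^*$ is a reduct of an ordered $\D$-vector space for some subfield $\D\subseteq\R$. In particular every $\Sa R^*$-definable set is semilinear.

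Step 4 (descending to $\Sa R$). Every $\Sa R$-definable $X\subseteq R^n$ now has semilinear closure. I would then show by induction on dimension that $X$ is a boolean combination of sets $Y\cap R^n$ with $Y$ semilinear, together with $(R;+)$-definable sets. The $(R;+)$-definable sets are needed to account for congruence conditions: for non-divisible $R$, the sets $mR$ can appear.

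\textbf{Main obstacle.} Step 4, together with making Step 2 precise, is the hardest part. It requires controlling the gap between $X$ and the trace of its closure. I expect to handle this with a cell decomposition in weakly o-minimal structures, where cells have boundaries given by definable functions. Each such function should be piecewise the restriction of an $\R$-affine map, up to a congruence-class correction, and proving this is the key lemma. My first attempt would be to show that a definable unary function $R\to R$ is locally affine off a finite set by comparing it with its continuous extension to $\R$, and then to handle $n$-ary functions coordinate-wise using dp-minimality.
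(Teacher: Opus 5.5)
Your implications (2)$\Rightarrow$(1) and (3)$\Rightarrow$(2) follow the paper. Steps 2 and 3 are essentially its construction of $\Sq R$ and its use of Fact~\ref{fact:pz}. But (1)$\Rightarrow$(4), which carries all the content, is not proved, and the sketch breaks at specific points.

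\begin{itemize}
\item \emph{Non-divisible and discrete $R$.} The theorem covers non-divisible $R$ (e.g.\ $R=\Z+\sqrt2\Z$, where $2R$ is dense and codense). Then $\Sa R$ is not weakly o-minimal, so neither Wencel's completion theorem nor weakly o-minimal cell decomposition is available. For discrete $R$, your Step 2 produces $\Z$, not $\R$; the paper instead uses Dolich--Goodrick to get $\Sa R=\zgoup$ and then Presburger quantifier elimination.
\item \emph{The key lemma.} It is out of reach of the proposed argument. Definable functions can be nowhere continuous: for $R$ as above, take $x\mapsto x$ on $2R$ and $x\mapsto x+1$ off $2R$. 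Cell boundaries are cut-valued, not $R$-valued. And affineness in each variable separately does not give affineness, as $(x,y)\mapsto xy$ shows.
\item \emph{Definability of the pieces.} This is the most important problem. Any induction on dimension must split $X$ along sets coming from $\Sa R^*$: the lower-dimensional exceptional set of a fiber lemma, or the locus where a boundary agrees with a given $\Sa R^*$-definable function. The induction hypothesis applies only if these pieces are definable. Step 1 asserts the opposite inclusion, which does not help. The needed inclusion fails for $\Sa R$ itself: for $\Sa R=(\Q;+,<)$ the trace of $(\sqrt2,\infty)$ is not definable.
\end{itemize}

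This is the missing idea, and the paper supplies it.
\begin{itemize}
\item It replaces $\Sa R$ by $\Sh R$. This is trace equivalent to $\Sa R$, has the same $\Sq R$, and satisfies $\overline R=\R$.
\item $\Sq R$ is o-minimal because it is a dp-minimal expansion of $\rgoup$.
\item Fact~\ref{fact:Sq} says that traces on $R$ of $\Sq R$-definable sets are $\Sh R$-definable.
\end{itemize}
With this, Lemma~\ref{lem:combination} proves, with no linearity hypothesis, that the definable sets of a Shelah complete $\Sa R$ are exactly the Boolean combinations of $(R;+)$-definable sets and traces of $\Sq R$-definable sets. The induction uses three inputs:
\begin{itemize}
\item the uniform fiber description from Fact~\ref{fact:simon}(1), where fibers are finite or of the form $I_\beta\cap(mR+\gamma)$;
\item the equality $\dim\cl(X)=\dprk X$ (Lemma~\ref{lem:dim-inequality});
\item Lemma~\ref{lem:ind}: the closure of the graph of a definable cut-valued function is $\Sq R$-definable of dimension at most $\dim\cl(Y)$.
\end{itemize}
So off a lower-dimensional set, the endpoints of $I_\beta$ lie among finitely many $\Sq R$-definable functions, and the resulting pieces are definable by Fact~\ref{fact:Sq}. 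Linearity enters only at the very end. Under (1), $\Sq R$ is a reduct of $\rvec$, so those functions are automatically piecewise affine in all variables at once.

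Separately, your (4)$\Rightarrow$(3) rests on a false principle: near linear bounds are not inherited by Boolean combinations. Point--line incidence $y=ax+b$ is the conjunction of $y\le ax+b$ and $y\ge ax+b$. $K_{t,t}$-free point--halfplane incidence graphs have only $O_t(n)$ edges, whereas point--line incidence has $\Theta(n^{4/3})$.

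Since the sets in (4) mix congruence conditions with linear inequalities, you need a single ambient structure with the bounds. The paper notes that (4) makes $a\mapsto(a,a)$ a trace definition $\Sa R\rightsquigarrow(R;+)\sqcup\rvec$. This disjoint union is $(2,1)$-semi-equational and hence has near linear bounds by Chernikov--Mennen (Lemma~\ref{lem:vector space}). Such bounds pull back along (local) trace definitions (Lemma~\ref{lem:blank-2}).

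Finally, in (3)$\Rightarrow$(2) the integer grid degenerates in characteristic $p$. Use finite subfields or bounded-degree polynomials in a transcendental, or cite Proposition~\ref{prop:field-blank}.
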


Near linear Zarankiewicz bounds is a candidate for a $\nip$-theoretic notion of ``linearity".
We introduce this property, show that it implies $\nip$, is preserved under local trace definability, and rules out local trace definability of characteristic zero rings and infinite rings without zero divisors.

\medskip
The proof of Theorem~B goes through the following result.

\begin{thmC}
Let $\Sa R$ be as in Theorem~B.
Then $\Sa R$ is trace equivalent to an o-minimal expansion $\Sa S$ of $\rgoup$ such that $\Sa S$ is also interpretable in the Shelah completion of any $\aleph_1$-saturated elementary extension of $\Sa R$.
Furthermore $\Sa R$ is either locally trace equivalent to $\rgoup$ or locally trace equivalent to some o-minimal expansion of $\rfield$.
\end{thmC}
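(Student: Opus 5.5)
The plan is to build $\Sa S$ as a ``completion'' of $\Sa R$ to the reals. For every $\Sa R$-definable $X \subseteq R^n$, let $\overline{X}$ be its closure in $\R^n$, and let $\Sa S$ be the structure on $\R$ generated by $(\R;+,<)$ together with all these closures $\overline{X}$. Equivalently, and more robustly, $\Sa S$ can be taken to be the structure induced on $\R$ through the standard part map. The first step is to show that $\Sa S$ is o-minimal. For this I would use the known description of dp-minimal expansions of divisible archimedean ordered groups: they are weakly o-minimal, and definable unary sets are finite unions of convex sets whose endpoints lie in $\R$. From this I would get that unary $\Sa S$-definable sets are finite unions of intervals and points. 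Uniform finiteness, inherited from dp-minimality, should then propagate this to all definable families. This mirrors the Laskowski--Steinhorn style ``o-minimal completion'' of weakly o-minimal archimedean structures, and if an appropriate fact of that kind is stated earlier I would simply invoke it.

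Next I would prove that $\Sa S$ is interpretable in the Shelah completion of an $\aleph_1$-saturated $\Sa N \succ \Sa R$. Let $\mathcal{O} \subseteq N$ be the convex hull of $R$, i.e.\ the elements bounded by elements of $R$, and let $\mathfrak{m}$ be the set of infinitesimals relative to $R$. Both are convex, hence externally definable in $\Sa N$, and $\aleph_1$-saturation together with archimedeanity gives $\mathcal{O}/\mathfrak{m} \cong \R$ as ordered groups via the standard part. Each $\overline{X}$ is the image under $\st$ of $X(\Sa N) \cap \mathcal{O}^n$, so $\Sa S$ is interpretable in $\Sh N$. Consequently $\Th(\Sa R)$ trace defines $\Sa S$. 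Conversely, $\Sa S$ trace defines $\Sa R$: the inclusion $R \to \R$, or a suitable saturated variant, should witness this, because every $\Sa R$-definable set is a boolean combination of traces of $\Sa S$-definable sets together with $(R;+)$-definable sets. Handling the $(R;+)$-definable part, such as cosets of $nR$ when $R$ is not divisible, requires care here; for divisible $R$ it is immediate. Combining the two directions gives trace equivalence.

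For the dichotomy, I would apply Fact~A (Peterzil--Starchenko) to $\Sa S$. If $\Sa S$ does not interpret an infinite field, then it is a reduct of an ordered $\D$-vector space expanding $\rgoup$, and I would argue that $\Sa S$ is locally trace equivalent to $\rgoup$: linear structure over any ordered division ring should be locally trace equivalent to the bare group, using the fact that local trace definability only sees bounded pieces. If instead $\Sa S$ interprets a field, the trichotomy gives a definable real closed field on an interval. I would then show that $\Sa S$ is locally trace equivalent to an o-minimal expansion of $\rfield$, namely the structure induced on that interval, transported to $\R$ by a definable homeomorphism. Since local trace equivalence is transitive and is implied by trace equivalence, these conclusions pass to $\Sa R$.

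The main obstacle I expect is the o-minimality of $\Sa S$ in higher arity: showing that closures of definable sets, and everything generated from them, behave tamely. I would attack this first through the standard part description in the Shelah completion, since externally definable expansions of NIP structures stay dp-minimal, and then use the fact that a dp-minimal expansion of $\rgoup$ is o-minimal, as recalled in the introduction.
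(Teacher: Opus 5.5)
You have the right outline: the standard-part quotient of $\Sh N$, o-minimality from dp-minimality, a decomposition of $\Sa R$-definable sets, then Peterzil--Starchenko. But the two steps that carry the content are only announced, and the construction fails in the discrete case.

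\textbf{The decomposition.} The step you flag as the main obstacle is the easy one. O-minimality is a condition on unary sets, and your closing remark settles it in one line, as in the paper: dp-rank is unchanged in $\Sh N$, $\st$ is an $\Sh N$-definable surjection from a subset of $N$, and Simon's theorem applies.

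The real gap is the clause claiming that every $\Sa R$-definable set is a boolean combination of $(R;+)$-definable sets and traces $Y\cap R^n$ of $\Sa S$-definable sets. This is the heart of the trace equivalence and is not a routine consequence of anything you set up; without it you only have that $\Th(\Sa R)$ trace defines $\Sa S$. In the paper it is Lemma~\ref{lem:combination}, proved as follows:
\begin{enumerate}
\item Replace $\Sa R$ by $\Sh R$, using Fact~\ref{fact:Sq} and $(\Sh R)^\square=\Sq R$.
\item Prove $\dim\cl(X)=\dprk_{\Sa R}X$ (Lemma~\ref{lem:dim-inequality}).
\item Use Fact~\ref{fact:simon}(1) to split $X\subseteq R^n$ into pieces whose fibers over $\uppi(X)$ are either uniformly finite or of the form $I_\beta\cap(mR+\gamma)$ with cut-valued endpoints.
\item Induct on $n$ and on $\dim\cl(\uppi(X))$. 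The dimension equality and the o-minimal fiber lemma show that, off a lower-dimensional set, the closures of the relevant graphs (Lemma~\ref{lem:ind}) are finitely many graphs of $\Sq R$-definable functions.
\end{enumerate}

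The non-divisible case does not need extra care inside a map $R\to\R$. Use the trace definition $a\mapsto(a,a)$ into $(R;+)\sqcup\Sa S$. Then note that $(R;+)$ is non-singular, hence trace definable in $\Th(\Sa S)$ by Proposition~\ref{prop:Z-decomp}.

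\textbf{The discrete case.} When $R$ is discrete, say $R=\Z$, you get $\mathcal O=\Z$, $\mfrak=\{0\}$ and $\mathcal O/\mfrak\cong\Z$, not $\R$. The paper treats this case separately. By Dolich--Goodrick $\Sa R=\zgoup$. One takes the convex hull of $\gamma\Z$ modulo $\{a: n|a|<\gamma \text{ for all } n\}$ for a nonstandard $\gamma$, and applies Lemma~\ref{lem:Z}.

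\textbf{The field branch.} That $\Th(\Sa S)$ trace defines the structure $\Sa I$ induced on the interval is trivial. The content is that $\Th(\Sa I)$ locally trace defines all of $\Sa S$, whose definable sets are not confined to $I$; you give no argument for this. The paper splits into cases.
\begin{itemize}
\item If $\Sa S$ is not semi-bounded, Edmundo's theorem (Fact~\ref{fact:mund}) gives a definable real closed field on all of $\R$ with the original order. By completeness it is isomorphic to $\rfield$.
\item If $\Sa S$ is semi-bounded and non-linear, Fact~\ref{fact:smib} makes $\Sa S$ interdefinable with $(\V,\Cal B)$ with quantifier elimination. The maps $x\mapsto\lambda x$ reduce local trace definability to $(\R;+,<,\Cal B)$. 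Archimedeanity lets you reduce every bounded set to subsets of $[0,1)^n$. Proposition~\ref{prop:nscover}, a universal cover argument, then makes this trace equivalent to the structure induced on $[0,1)$. That structure is interpretable in $\Sa I$, because in the archimedean case every bounded interval is short.
\end{itemize}
In the semi-bounded case there is no definable homeomorphism $I\to\R$. You must transport along the abstract isomorphism $(I;\oplus,\otimes,<)\cong\rfield$, which is harmless.

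\textbf{The linear branch.} The right justification is Proposition~\ref{prop:lvoe}, where the scalar maps $v\mapsto\lambda v$ serve as invariants, plus transitivity for the reduct $\Sa S$ of an ordered vector space. Local trace definability handles finitely many definable sets at a time, not ``bounded pieces''.
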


The last part of Theorem~C reduces to showing that any o-minimal expansion of $\rgoup$ is either locally trace equivalent to $\rgoup$ or locally trace equivalent to an o-minimal expansion of $\rfield$.
We also show that any o-minimal expansion of an ordered abelian group is either locally trace equivalent to $\rgoup$ or locally trace equivalent to an o-minimal expansion of an ordered field.

\medskip
It follows from the proof of Theorem~C that any o-minimal expansion of an ordered group which does not interpret an infinite field is locally trace equivalent to $\rgoup$.
We show in Section~\ref{section:dis} that an o-minimal structure which does not interpret an infinite group is locally trace equivalent to $(\R;<)$.
This follows from the Peterzil-Starchenko trichotomy theorem and the second claim of Theorem~D below, which summarizes our other results on (local) trace definability.
We let $\Sa M \sqcup \Sa N$ be the disjoint union of structures $\Sa M$ and $\Sa N$, considered as a two-sorted structure in the natural way.
Recall that a structure is said to be weakly minimal if it is superstable of U-rank one.

\begin{thmD}
\hspace{.000000001cm}
\begin{enumerate}[leftmargin=*]
\item Any disintegrated weakly minimal structure is locally trace equivalent to an infinite set equipped with equality.
In particular any colored bounded degree graph is locally trace equivalent to an infinite set equipped with equality.
\item Any weakly quasi-o-minimal structure in a binary relational language is locally trace equivalent to $(\R;<)$.
In particular any colored linear order or disintegrated o-minimal structure is locally trace equivalent to $(\R; <)$.
\item Any non-divisible non-singular torsion free abelian group is trace equivalent to $(\Z; +)$.
Furthermore $\Th(\Z; +)$ is, with respect to trace definability, the minimal theory above $\Th(\Q; +)$ which is not totally transcendental.
\item The following structures are trace equivalent to $\rgoup$\textup{:} Any non-singular ordered abelian group, any infinite non-singular cyclically ordered abelian group, $(\R; +, <, \Z, \Q)$, and $(\R; +, <, t \mapsto \lambda t)$ for any irrational algebraic $\lambda \in \R$.
Any ordered vector space over an ordered division ring is locally trace equivalent to $\rgoup$.
\item Let $\hgroup$ be an ordered abelian group.
If any elementary extension of $\hgroup$ has only countably many definable convex subgroups then $\hgroup$ is locally trace equivalent to $(H; +) \sqcup \rgoup$.
If $\hgroup$ has only finitely many definable convex subgroups, in particular if $\hgroup$ is archimedean or has finite rank, then $\hgroup$ is trace equivalent to $(H; +) \sqcup \rgoup$.
\end{enumerate}
\end{thmD}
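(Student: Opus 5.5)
Theorem~D is a summary of five independent results, so I will prove the items one at a time. Each proof uses the same template: a trace embedding in one direction, and a (local) trace embedding or interpretation-level argument in the other. I will use the basic facts about trace definability from \cite{trace1}:
\begin{itemize}
\item trace definability is transitive;
\item interpretability implies trace definability;
\item any structure trace defines its reducts;
\item to show $\Th(\Sa M)$ trace defines $\Sa O$, it suffices to find an injection $O\to M^m$ such that the preimage of every $\Sa M$-definable set is $\Sa O$-definable and every $\Sa O$-definable set is the trace of an $\Sa M$-definable set.
\end{itemize}
For local trace definability I will use the weaker requirement that this trace condition holds on arbitrary finite subsets, or on subsets of bounded size, uniformly in the formula.

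\textbf{(1) and (2): binary structures.} In both items, one direction is immediate: an infinite set with equality (respectively $(\R;<)$) is a reduct of the structure in question, or is trace definable in it by a Ramsey/indiscernible-sequence argument. For the converse, a disintegrated weakly minimal structure has, after passing to $\Sa M^{\mathrm{eq}}$ and naming parameters, every definable set a boolean combination of binary relations $E(x,y)$ with finite fibers, and unary sets. Given a finite $A\subseteq M$, I will encode $A$ into an infinite set with equality by mapping each $a$ to a tuple that records its finitely many algebraic neighbours in $A$ as equalities among coordinates. Each binary relation restricted to $A$ is then a boolean combination of coordinate equalities and unary predicates, and unary predicates are traces of equalities with fixed parameters.

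For (2) I will do the same with $(\R;<)$. Weak quasi-o-minimality in a binary language means each binary relation on $A$ is a finite boolean combination of convex pieces. I will embed $A$ into $\R^k$ so that each relation becomes a boolean combination of order comparisons between coordinates. The bound on the number of pieces must be uniform; this is why only local trace equivalence is claimed. The colored linear order and disintegrated o-minimal cases follow directly from the general statement.

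\textbf{(3)--(5): groups.} For (3), $(\Z;+)$ is trace definable in any non-divisible torsion free $H$, since $pH\neq H$ gives a definable quotient $H/pH$ and non-singularity (finite $H/pH$) controls the quantifier-free structure. Conversely, by quantifier elimination for torsion-free abelian groups (Szmielew invariants), $H$ is trace definable in $(\Z;+)$ via an embedding of $H$ into a power of $\Z$ that respects the congruence predicates. The minimality claim needs a separate argument: a theory above $\Th(\Q;+)$ that is not totally transcendental has an infinite descending chain of definable subgroups, and from it I will extract a copy of the $(\Z;+)$-congruence structure as a trace.

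For (4) and (5) I will use quantifier elimination for ordered abelian groups (Gurevich–Schmitt/Cluckers–Halupczok), which expresses definable sets through convex subgroups, order, and congruences. Modulo the definable convex subgroups, this lets me split an ordered group into $(H;+)\sqcup\rgoup$. Archimedean pieces embed in $\rgoup$. With countably many convex subgroups I can still embed lexicographically into finite powers of $\R$ on each finite set, which again explains why only local trace equivalence is claimed there. The specific examples are handled as follows:
\begin{itemize}
\item $(\R;+,<,t\mapsto\lambda t)$: $\lambda$ acts as a $\Q(\lambda)$-linear map, and I will embed $\Q(\lambda)$-vector spaces into powers of $\rgoup$ by choosing a $\Q$-basis.
\item $(\R;+,<,\Z,\Q)$: decomposes as $\Z\times[0,1)$.
\end{itemize}

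\textbf{Main obstacle.} I expect the hardest step to be the minimality statement in (3), together with the countably-many-convex-subgroups case of (5). In (3) one must produce a trace of $\Z$ from an arbitrary non-totally-transcendental theory. My first attempt will be via Baldwin–Saxl and a descending chain of uniformly definable subgroups of finite index, mapping elements to their sequence of cosets.
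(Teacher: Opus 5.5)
There are genuine gaps, the most serious in (3).

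Your route to ``$H$ is trace definable in $\Th(\Z;+)$'' fails in general. An embedding of $H$ into $G^n$ with $G\equiv(\Z;+)$ that respects the divisibility predicates induces an injection $H/pH\to(\Z/p)^n$, so it forces $\dim_p H\le n$ for all $p$. Non-singularity only makes each $\dim_p H$ finite. For example $H=\bigoplus_{i\ge 1}\Z[1/p_1,\ldots,1/p_i]$, with $p_i$ the $i$th prime, has $\dim_{p_j}H=j-1$. An infinite power would not give a trace definition at all.

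Your converse is not an argument either. The finite quotients $H/pH$ do not supply the congruence structure of $\Z$. For $H=\Z_{(p)}$ every model of $\Th(H)$ is $q$-divisible for all $q\neq p$, so the congruences of $\Z$ modulo $q$ can only be coded non-homomorphically through the chain $H\supsetneq pH\supsetneq p^2H\supsetneq\cdots$.

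The missing idea is that $(\Z;+)$ and every non-divisible non-singular torsion free $H$ are trace equivalent to $(\Q;+)\sqcup\Sa C$, where $\Sa C$ is the Cantor set with clopen predicates.
\begin{itemize}
\item One direction uses Proposition~\ref{prop:ab} and Fact~\ref{fact:cantor}: a theory trace defines $\Sa C$ iff it is not totally transcendental.
\item The other direction uses the inclusion of $H$ into its divisible hull together with the identity map into the unary structure of all cosets of all $kH$. This handles Szmielew's congruence conditions because each $H/kH$ is finite, and this countable unary structure is trace definable in $\Th(\Sa C)$ by the second half of Fact~\ref{fact:cantor}.
\end{itemize}
Minimality then follows at once. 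Your plan for it rests on a false premise: $\Th(\Q;+,<)$ lies above $\Th(\Q;+)$ and is not totally transcendental, but being o-minimal it has no infinite descending chain of definable subgroups. Baldwin--Saxl plays no role.

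You also misidentify where locality comes from. Local trace definability allows infinitely many invariant maps $M\to N$, each defined on all of $M$, with each definable set using finitely many of them. In every item the locality comes from needing infinitely many such maps, not from a uniformity issue over finite subsets. As a result your plan never names the actual witnesses.
\begin{itemize}
\item In (2) the missing input is Moconja--Tanovi\'c (Fact~\ref{fact:wqom}). First, quantifier elimination in the binary language; without it, tracing the atomic relations says nothing about definable sets of higher arity. Second, a uniform decomposition of binary formulas into unary formulas and relations $\varphi$ with every $\varphi(\alpha,M)$ an initial segment. After that, the identity together with one map $\alpha\mapsto\sup\varphi(\alpha,M)$ into the Dedekind completion per $\varphi$, plus unary bits, does the job.
\item In (5) the witnesses are the inclusion into the divisible hull $K$ and the quotient maps $H\to H/J$, one per definable convex subgroup $J$. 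Here $H/J$ is trace definable in $(H;+)$ because convex subgroups are pure (Lemma~\ref{lem:pure}). Lexicographic embeddings into powers of $\R$ do not touch the conditions $\sum m_i\alpha_i+\rho\in J+p^kH$.
\item In (4) the non-singular case needs (3) to obtain $(H;+)$ from $\Th\rgoup$. It needs non-singularity to make $J+p^kH$ a finite union of cosets of $p^kH$, and the Shelah completion of $K$ to handle all convex subgroups with a single map. For $(\R;+,<,\Z,\Q)$ the conditions $\sum q_i\alpha_i\in\rho+\Q$ need the map $\R\to\R/\Q\cong(\R;+)$, which $\Z\times[0,1)$ does not provide.
\item In (1) the reduction to binary finite-fibre relations is false. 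Infinitely many disjoint Fano planes with the collinearity relation form a mutually algebraic structure. Each plane has a $2$-transitive automorphism group, so no boolean combination of unary and binary definable relations picks out the collinear triples of an unnamed plane. The fix is your neighbour-recording idea in all arities: by Fact~\ref{fact:laskowski} use $k$-ary mutually algebraic $R$, and list the boundedly many $\beta\in M^{k-1}$ with $R(\alpha,\beta)$ as functions $M\to M$.
\end{itemize}
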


It follows from (5) that any ordered abelian group with bounded regular rank has near linear Zarankiewicz bounds and hence cannot locally trace define an infinite field.
The same holds for $(\R; +, <, \Z, \Q)$ by (4).
Finally, (3) is sharp in that a torsion free abelian group is non-singular if and only if it is superstable and superstability is preserved under trace definability.


\subsection*{Acknowledgments}
All of this began when John Goodrick asked me if there is a version of Fact~A for dp-minimal expansions of ordered abelian groups.
Thanks to Artem Chernikov for very useful conversations and for encouraging me to write up the example presented in Section~\ref{section;noal}.
This research was funded in part by the Austrian Science Fund (FWF) 10.55776/PAT1673125.

\section*{Conventions}
Throughou $m,n,k$ are natural numbers.
All structures, languages, and theories are first order and all theories are complete and consistent unless noted otherwise.
We generally assume that all structures are infinite and all theories are theories of infinite structures.
``Definable" without modification means ``first order definable, possibly with  parameters" and ``zero-definable" means ``definable without parameters".
If $\Sa M$ and $\Sa M^*$ are structures on the same domain then $\Sa M$ is a {\bf reduct} of $\Sa M^*$ if every $\Sa M$-definable set is $\Sa M^*$-definable and $\Sa M, \Sa M^*$ are {\bf interdefinable} if each is a reduct of the other.
Two structures on possibly different domains are {\bf bidefinable} if they are interdefinable up to isomorphism.
Throughout $\Sa M$ is a structure with domain $M$.
The {\bf structure induced} on $A \subseteq M^n$ by $\Sa M$ is the structure with an $m$-ary relation defining $Y \cap A^n$ for every $\Sa M$-definable $Y \subseteq M^{mn}$.
Note that this induced structure admits quantifier elimination if and only if every definable set is of the form $Y \cap A^m$ for $\Sa M$-definable $Y \subseteq M^{mn}$.

\medskip
A theory is {\bf disintegrated} if the algebraic closure of any subset of any model is the union of the algebraic closures of the elements of that subset.
A structure is disintegrated when its theory is.

\medskip
We let $|y|$ be the length of a tuple $y$ of variables.
Given a formula $\varphi(x,y)$ in a structure $\Sa M$ we let $\varphi(M^{|x|}, \beta)$ be the subset of $M^{|x|}$ defined by $\varphi(x, \beta)$ for any $\beta \in M^{|y|}$.
Given sets $A,B$ and a subset $X$ of $A \times B$ we let $X_a$ be the set of $b \in B$ such that $(a,b) \in X$ for any $a \in A$.
An $\Sa M$-definable family of sets is a collection of sets of the form $\{X_a : a \in M^m\}$ for some $\Sa M$-definable $X \subseteq M^m \times M^n$.

\medskip
We let $\cl(X)$ be the closure of a subset $X$ of a topological space.

\medskip
A {\bf finitely homogeneous} structure is a countable structure in a finite relational language with quantifier elimination.
Equivalently, a finitely homogeneous structure is a \Fraisse limit of a \Fraisse class of structures in a finite relational language.

\section{New algebraic structure in the Shelah expansion}\label{section:new}
We recall some background on externally definable sets and the Shelah completion.
Throughout this section $\Sa M$ is a $\nip$ structure.
A subset of $M^m$ is {\bf externally definable} if it is of the form $Y \cap M^m$ for a definable subset $Y \subseteq N^m$ with $\Sa N$ an elementary extension of $\Sa M$.

\begin{fact}
\label{fact:convex}
Suppose that $X$ is an $\Sa M$-definable set and $<$ is an $\Sa M$-definable linear order on $X$.
Then any $<$-convex subset of $X$ is externally definable.
\end{fact}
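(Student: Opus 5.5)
Let $C \subseteq X$ be $<$-convex. By definition, I must find an elementary extension $\Sa N \succ \Sa M$ and an $\Sa N$-definable $Y$ with $Y \cap M^m = C$. The plan is a compactness argument realizing the two ``cuts'' bounding $C$ by new elements, and then defining $C$ as an interval in $\Sa N$. Nothing special about $\nip$ is used; the statement holds for any structure.

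Write $X = \varphi(M^m, \beta)$ and let $\psi(x, x', \beta')$ define $<$. Let $L = \{a \in X : a < c \text{ for all } c \in C\}$ and $U = \{a \in X : c < a \text{ for all } c \in C\}$. If $C$ is empty, take $Y = \emptyset$. Otherwise convexity gives $X = L \sqcup C \sqcup U$. Consider the set of formulas in new variables $y, z$ with parameters from $M$:
\[
\Sigma(y,z) = \{\varphi(y,\beta), \varphi(z,\beta)\} \cup \{a < y : a \in L\} \cup \{y < c : c \in C\} \cup \{c < z : c \in C\} \cup \{z < b : b \in U\}.
\]
Each finite subset of $\Sigma$ is realized in $\Sa M$. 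Given finitely many $a_i \in L$, $c_j \in C$ and $b_k \in U$, let $c_0$ be the $<$-least of the $c_j$ and $c_1$ the $<$-greatest; if no $c_j$ appears, pick any element of $C$. The main obstacle is that $y$ must lie strictly between $\max a_i$ and $c_0$, which may be impossible in $M$ when these are adjacent.

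To handle this, use non-strict inequalities in $\Sigma$ instead: replace $y < c$ by $y \le c$ and $c < z$ by $c \le z$, and interpret $Y$ as $\{x \in X : y \le x \le z\}$. Now $y = c_0$ and $z = c_1$ witness each finite subset, since $a_i < c_0$ and $c_1 < b_k$ by the definition of $L$ and $U$. Compactness then yields $\Sa N \succ \Sa M$ and $y^*, z^* \in N^m$ realizing $\Sigma$.

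Set $Y = \{x \in N^m : \varphi(x,\beta) \wedge y^* \le x \le z^*\}$, which is $\Sa N$-definable. For $x \in M^m$ we check $Y \cap M^m = C$:

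\begin{itemize}
\item If $x \in C$, then $y^* \le x \le z^*$ by $\Sigma$, so $x \in Y$.
\item If $x \in L$, then $x < y^*$, so $x \notin Y$.
\item If $x \in U$, then $z^* < x$, so $x \notin Y$.
\end{itemize}

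Hence $Y \cap M^m = C$, so $C$ is externally definable.
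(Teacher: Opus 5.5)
Your proof is correct and is essentially the ``obvious compactness argument'' the paper alludes to: you realize the two cuts around $C$ by elements $y^*,z^*$ of an elementary extension, using non-strict inequalities on the $C$ side, and then cut out $C$ as the interval $\{x : y^* \le x \le z^*\}$ of $X$ in $\Sa N$. As you note, this route needs no $\nip$ hypothesis, unlike the paper's alternative derivation via the Chernikov--Simon characterization (Fact~\ref{fact:cs}).
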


Fact~\ref{fact:convex} follows by the definitions and an obvious compactness argument.
It also follows by
Fact~\ref{fact:cs}.
The latter is due to Chernikov and Simon~\cite[Prop.~3.21]{Simon-Book}.
It requires $\nip$.

\begin{fact}\label{fact:cs}
A subset $X\subseteq M^n$ is externally definable if and only if there is a definable family $\Cal X$ of subsets of $M^n$ so that for every finite $A \subseteq X$ we have $A \subseteq X' \subseteq X$ for some $X' \in \Cal X$.
\end{fact}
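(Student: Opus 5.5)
The statement is Fact~\ref{fact:cs}, the Chernikov--Simon characterization of externally definable sets in a $\nip$ structure. The plan is to prove the two directions separately. The right-to-left direction is a straightforward compactness argument. The left-to-right direction is the substantive one and should come from honest definitions, using uniform approximation by finite sets.

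For ($\Leftarrow$), suppose $\Cal X = \{Y_b : b \in M^k\}$ with $Y \subseteq M^k \times M^n$ zero-definable over parameters $c$. Pass to an $|M|^+$-saturated elementary extension $\Sa N$ and consider the type in the variable $b$ consisting of
\[
\{ Y(b,a) : a \in X\} \cup \{\neg Y(b,a) : a \in M^n \setminus X\}.
\]
A finite subset of this type mentions finitely many $a \in X$; call this finite set $A$. By hypothesis some $X' = Y_{b'}$ with $b' \in M^k$ satisfies $A \subseteq X' \subseteq X$, so $b'$ also satisfies the negative conditions. The type is therefore finitely satisfiable, and saturation gives $b^* \in N^k$ realizing it. Then $Y_{b^*} \cap M^n = X$, so $X$ is externally definable.

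For ($\Rightarrow$), write $X = \varphi(N^n, b) \cap M^n$ with $\Sa N \succ \Sa M$. I would invoke the existence of honest definitions for $\nip$ theories, in the form proved by Chernikov and Simon. This gives a formula $\theta(x,z)$ such that for every finite $A \subseteq X$ there is $d \in M^{|z|}$ with $A \subseteq \theta(M^n,d) \subseteq X$. Taking $\Cal X = \{\theta(M^n,d)\}$ finishes the proof. The honest-definition argument itself runs as follows. By $\nip$, $\varphi(x,y)$ has finite VC-dimension. The $(p,q)$-theorem (Matou\v{s}ek), or uniform $\varepsilon$-approximations applied to the finitely satisfiable type of $b$ over finite sets, shows that the trace of $\varphi(x,b)$ on any finite $A \subseteq M$ is cut out by a bounded Boolean combination of instances $\varphi(x,b_i)$ with $b_i \in M$ using a fixed number of parameters. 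To obtain $\theta(M,d)\subseteq X$ on all of $M$, and not only agreement on $A$, one works in a saturated pair $(\Sa N, M)$ and applies compactness to the set of conditions on $d$ expressing $\theta(M,d)\subseteq \varphi(M,b)$.

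I expect this last step, uniformly bounding the complexity of $\theta$ while still achieving containment $\theta(M,d) \subseteq X$ over all of $M$, to be the main obstacle. I would attempt it via the $(p,q)$-theorem applied to the dual family, following \cite[Ch.~3]{Simon-Book}.
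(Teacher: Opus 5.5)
Your proof is correct and follows the paper. The paper simply attributes this fact to Chernikov and Simon \cite[Prop.~3.21]{Simon-Book}; your right-to-left direction is the routine compactness argument and needs no $\nip$, and your left-to-right direction rightly invokes the existence of honest definitions as a black box, which is exactly the content of that citation.

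Do not rely on your supplementary sketch of why honest definitions exist. Its first step is trivial and uses no $\nip$: on a finite $A \subseteq M^n$, a single instance $\varphi(x,b')$ with $b' \in M$ already agrees with $\varphi(x,b)$, because $\Sa M \preceq \Sa N$. Its compactness step in the saturated pair is circular, since it presupposes the consistency of the honesty condition, and that consistency is precisely what $\nip$ is needed to establish.
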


We now introduce some non-standard terminology.
A structure is {\bf Shelah complete} if every externally definable set is already definable.
The well-known characterization of stability in terms of definable types shows that a theory is stable if and only if all of its models are Shelah complete.
The Marker-Steinhorn theorem shows that any o-minimal expansion of $(\R;<)$ is Shelah complete~\cite{Marker-Steinhorn} and Delon showed that the field $\Q_p$ is Shelah complete~\cite{Delon-def}.
The \textbf{Shelah completion} $\Sh M$ of $\Sa M$ is the expansion of $\Sa M$ by all externally definable sets.
Up to interdefinability $\Sh M$ is the structure induced on $M$ by any $|M|^+$-saturated elementary extension of $\Sa M$.
(This is usually referred to as the \textit{Shelah expansion}, we prefer \textit{completion} as it is more descriptive and \textit{expansion} is already extensively used in model theory.)
Facts~\ref{fact:shelah} and \ref{fact:cs} together imply that the Shelah completion of a $\nip$ structure is Shelah complete.

\begin{fact}
\label{fact:shelah}
Every $\Sh M$-definable set is externally definable.
\end{fact}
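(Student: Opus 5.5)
This is Shelah's theorem: every $\Sh M$-definable set is externally definable in $\Sa M$. The plan is to show that the class of externally definable sets is closed under the operations generating definable sets in $\Sh M$. Every atomic $\Sh M$-formula defines an externally definable set, and boolean combinations and permutations of coordinates are clearly harmless. So everything reduces to one claim: if $X\subseteq M^{m+n}$ is externally definable, then its projection $\pi(X)\subseteq M^m$ is externally definable. Equivalently, via a quantifier-elimination-style induction on formulas, every $\Sh M$-definable set has the form $\varphi(M,b)$ for some formula $\varphi$ of $\Sa M$ and some $b$ in an elementary extension.

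For projections, fix a formula $\varphi(x,y;z)$ and a parameter $b\in N$ with $X=\varphi(M,M;b)$. Pass to a sufficiently saturated $\Sa N\succ\Sa M$, and consider the pair $(\Sa N,M)$ with a predicate for $M$. The naive candidate $\exists y\,\varphi(x,y,b)$ fails: it may have witnesses $y$ outside $M$. The fix is to use NIP through honest definitions (Chernikov--Simon).

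\begin{enumerate}
\item Let $A$ be the relevant part of $M$. The trace $\varphi(M^{m+n},b)$ has an honest definition: there is a formula $\theta(x,y;d)$ with $d$ in an elementary extension $\Sa N'$ of the pair $(\Sa N,M)$ satisfying the following two conditions.
\begin{enumerate}
\item $\theta(a,d)$ holds iff $\varphi(a,b)$ holds, for all $a\in M$.
\item $\theta(N'',d)\subseteq\varphi(N'',b)$ on the new copy $M'$ of $M$.
\end{enumerate}
Equivalently, in the finite form suggested by Fact~\ref{fact:cs}, there is a definable family $\{\theta(x,y;c):c\}$ such that for every finite $A_0\subseteq X$ some $c\in M$ gives $A_0\subseteq\theta(M,c)\subseteq X$.
\item Apply this to $X$ itself. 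For each finite $F\subseteq\pi(X)$, choose witnesses $y$ and let $A_0$ be the resulting finite subset of $X$. Take $c\in M$ with $A_0\subseteq\theta(M,c)\subseteq X$. Then $F\subseteq\exists y\,\theta(M,y,c)\subseteq\pi(X)$, and the inclusion $\exists y\,\theta(M,y,c)\subseteq\pi(X)$ holds precisely because $c$ and the witnesses range over $M$.
\item The family $\{\exists y\,\theta(x,y,c)\}$ is $\Sa M$-definable, so Fact~\ref{fact:cs} shows that $\pi(X)$ is externally definable.
\end{enumerate}

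The main obstacle is step 1: producing the uniform definable family $\theta$ that squeezes between finite subsets and $X$. Fact~\ref{fact:cs} is stated as an equivalence, and its forward direction applied to $X$ gives exactly such a family. Since I may assume Fact~\ref{fact:cs}, which requires NIP, step 1 comes for free and the whole proof collapses to steps 2--3 plus the induction. So I would present the proof as an induction on $\Sh M$-formulas with the projection step carried out via Fact~\ref{fact:cs} as above. I would only remark that Fact~\ref{fact:cs} itself rests on the $(p,q)$-theorem/uniform honest definitions, which is where NIP is genuinely used.
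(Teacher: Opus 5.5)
Your proposal is correct and follows the route the paper indicates: the paper attributes the fact to Shelah and remarks that it is an easy consequence of Fact~\ref{fact:cs}, and that is exactly your induction on formulas, where the projection step uses the forward (NIP) direction of Fact~\ref{fact:cs} for $X$ and its easy compactness converse for $\pi(X)$. The honest-definition discussion in your step 1 is unnecessary, and as written it is slightly garbled, since the parameter $d$ must lie in the new copy $M'$ for the condition to have content; nothing in steps 2--3 depends on it.
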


Fact~\ref{fact:shelah} is due to Shelah~\cite{Shelah-external}.
It is also an easy consequence of Fact~\ref{fact:cs}.

\begin{fact}\label{fact:ksh}
If $\Sa N$ is an elementary extension of $\Sa M$ then the structure induced on $M$ by $\Sh N$ is interdefinable with $\Sh M$.
\end{fact}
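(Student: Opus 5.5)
We prove the two inclusions separately. Throughout, "the structure induced on $M$ by $\Sh N$" means the expansion of $\Sa M$ by all sets $Z \cap M^m$ with $Z \subseteq N^m$ definable in $\Sh N$. We will use Fact~\ref{fact:shelah} to reduce $\Sh N$-definable sets to externally definable subsets of $N^m$. We will also use the description of $\Sh M$ as the structure induced on $M$ by an $|M|^+$-saturated elementary extension.

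First, every $\Sh M$-definable set is definable in the induced structure. By Fact~\ref{fact:shelah}, every $\Sh M$-definable set is externally definable in $\Sa M$, so it equals $Y \cap M^m$ with $Y \subseteq N'^m$ definable in some $\Sa N' \succ \Sa M$. It therefore suffices to show that any such set is also of the form $Z \cap M^m$ with $Z$ externally definable in $\Sa N$.

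Write $Y = \varphi(N'^m, b)$. Take a common elementary extension $\Sa N^* \succ \Sa N$ such that $\Sa N'$ embeds elementarily into $\Sa N^*$ over $M$. This is possible by amalgamation over $M$: the elementary diagrams of $\Sa N$ and $\Sa N'$, each over $M$, are jointly consistent because $\Sa M$ is a model. Then $Z = \varphi(N^*{}^m, b) \cap N^m$ is externally definable in $\Sa N$, and $Z \cap M^m = \varphi(M^m, b) = Y \cap M^m$. Hence every $\Sh M$-definable set is definable in the structure induced by $\Sh N$.

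Conversely, every set definable in the induced structure is $\Sh M$-definable. Since $\Sh N$ has no quantifier elimination in general, we apply Fact~\ref{fact:shelah} to $\Sa N$: any $\Sh N$-definable $Z \subseteq N^m$ is externally definable, so $Z = \psi(N^*{}^m, c) \cap N^m$ for some $\Sa N^* \succ \Sa N$. Then $Z \cap M^m = \psi(N^*{}^m, c) \cap M^m$, and $\Sa N^*$ is also an elementary extension of $\Sa M$, so this set is externally definable in $\Sa M$ and hence $\Sh M$-definable.

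We also need the induced structure to be closed under the same operations as $\Sh M$. Finite Boolean combinations are harmless in both directions. Projections are taken care of because we compare the expansions of $\Sa M$ by the collection of all sets $Z \cap M^m$. Both expansions are then generated by the same family of basic sets. Since $\Sh M$ is Shelah complete, its definable sets are exactly the externally definable ones, so the two collections of definable sets coincide.

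The only genuinely non-routine point is the amalgamation step, which moves a parameter $b$ from an arbitrary extension $\Sa N'$ into an extension of $\Sa N$ while keeping the trace on $M$. Everything else is bookkeeping with Fact~\ref{fact:shelah} and the fact that traces are transitive under elementary extensions.
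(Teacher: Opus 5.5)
Your proof is correct and is essentially the argument the paper intends when it calls this an easy consequence of Fact~\ref{fact:shelah}. Fact~\ref{fact:shelah} applied to $\Sa N$ shows that traces on $M$ of $\Sh N$-definable sets are externally definable in $\Sa M$, and your amalgamation step gives the converse; realizing $\mathrm{tp}(b/M)$ in an elementary extension of $\Sa N$ would do the same job. Your final paragraph is unnecessary, since interdefinability only requires checking basic relations in each direction, and the identification of $\Sh M$-definable sets with externally definable ones is Fact~\ref{fact:shelah}, not Shelah completeness of $\Sh M$.
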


Fact~\ref{fact:ksh} is an easy consequence of Fact~\ref{fact:shelah}.
We now present our examples.

\subsection{The induced structure on rational powers of $\lambda$}
Let $H$ be a non-trivial finite rank divisible subgroup of $(\R;+)$ and fix a real number $\lambda > 1$.
For example, take $H = \Q$ and $\lambda = 2$.
Let $\Sa H_\lambda$ be the structure on $H$ with an $n$-ary relation defining \[\{(h_1,\ldots,h_n)\in H^n: (\lambda^{h_1},\ldots,\lambda^{h_n})\in X \}\] for every semialgebraic $X\subseteq\R^n$.
Hence $\Sa H_\lambda$ is bidefinable with the structure induced on $\{\lambda^h : h\in H\}$ by $\rfield$.
Note that $(H;+,<)$ is a reduct of $\Sa H_\lambda$.
Fact~\ref{fact:vddgun} is due to van den Dries and G{\"u}naydin~\cite{DrGu}.

\begin{fact}\label{fact:vddgun}
A subset of $H^m$ is definable in $\Sa H_\lambda$ if and only if it is the preimage of a semialgebraic subset of $\R^m$ under the map $H^m \to \R^m$ given by $(h_1,\ldots,h_m) \mapsto (\lambda^{h_1},\ldots,\lambda^{h_m})$, i.e. $\Sa H_\lambda$ admits quantifier elimination.
\end{fact}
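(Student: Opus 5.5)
The plan is to reduce quantifier elimination for $\Sa H_\lambda$ to the van den Dries--G\"unaydin theorem on the pair $(\R;+,\times,G)$, where $G = \{\lambda^h : h \in H\}$ is a finite rank subgroup of $(\R_{>0};\times)$. Since $h \mapsto \lambda^h$ is an isomorphism from $(H;+)$ onto $G$, $\Sa H_\lambda$ is bidefinable with the structure induced on $G$ by $\rfield$. The one direction is trivial: every preimage of a semialgebraic set is definable by construction. So the whole content is the converse, namely that projections of such preimages are again preimages of semialgebraic sets.

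The key input, which I take from \cite{DrGu}, is that $(\R;+,\times,G)$ admits quantifier elimination relative to the group $G$, and that every definable subset of $G^m$ in the pair is a boolean combination of traces $Y \cap G^m$ with $Y$ semialgebraic and of sets definable in the group $(G;\times)$ expanded by the constants needed for the Mann property (i.e.\ $G$ has the Mann property, which holds since $G$ has finite rank). Concretely, I would do the following.
\begin{enumerate}
\item Show that the induced structure on $G$ has the same definable sets as the structure whose definable sets are traces of sets definable in $(\R;+,\times,G)$. This is immediate, because the induced structure quantifies only over $G$, and quantification over $G$ is allowed in the pair.
\item Apply the van den Dries--G\"unaydin description of definable subsets of $G^m$ in the pair: boolean combinations of $Y \cap G^m$ ($Y$ semialgebraic) and of $(G;\times)$-definable sets.
\item Observe that $G$ is divisible, since $H$ is. A divisible torsion-free abelian group has quantifier elimination in $(G;\times,1)$ together with constants, so its definable sets are boolean combinations of solution sets of multiplicative equations $g_1^{k_1}\cdots g_m^{k_m} = c$ (rational exponents allowed by divisibility). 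Each such set is the trace on $G^m$ of a semialgebraic set once we use rational exponents; when $\lambda$ is not algebraic over parameters we can take $c$ as a real parameter.
\end{enumerate}
Combining these, every definable subset of $G^m$ is a trace of a semialgebraic set. Pulling back along $h \mapsto \lambda^h$ then gives the claim for $\Sa H_\lambda$.

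The main obstacle is step (2). I need the exact form of the van den Dries--G\"unaydin result: that the induced structure on $G$ is just ``semialgebraic traces plus pure group structure", and that there are no further predicates such as $n$-divisibility cosets. Those predicates are exactly what divisibility eliminates in step (3). If the cited result is only stated as quantifier elimination for the pair in a language with function symbols for the Mann-property coefficients, I would first unwind that: a formula with quantifiers only over $G$ reduces to a boolean combination of conditions $\varphi(g)$ with $\varphi$ semialgebraic and multiplicative conditions on $g$. This is the step where finite rank (the Mann property) is essential.
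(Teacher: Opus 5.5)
Your proposal is correct and is essentially the paper's approach: the paper only attributes this fact to van den Dries and G{\"u}naydin, and your argument unpacks that citation. You apply their theorem that subsets of $G^m$ definable in $(\R;+,\times,G)$ are boolean combinations of semialgebraic traces and sets definable in the group $G$ (valid since finite rank gives the Mann property), then use divisibility of $H$ to rewrite the group-definable sets as traces of $\{\prod x_i^{n_i}=c\}$. This also explains why the paper assumes $H$ divisible, and your aside about $\lambda$ being non-algebraic is unnecessary, since semialgebraic sets may use arbitrary real parameters such as $c$.
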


Fact~\ref{fact:vddgun} shows in particular that $\Th(\Sa H_\lambda)$ is weakly o-minimal.

\begin{proposition}
\label{prop:mann}
An elementary extension of $\Sa H_\lambda$ cannot interpret an infinite field.
\end{proposition}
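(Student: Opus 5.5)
We will show that no model of $\Th(\Sa H_\lambda)$ interprets an infinite field by reducing interpretations to a setting with a known obstruction. Fact~\ref{fact:vddgun} says $\Sa H_\lambda$ is bidefinable with the structure induced on $G=\{\lambda^h : h \in H\}$ by $\rfield$. The pair $(\R;+,\times,G)$ is a dense pair / Mann-group pair in the sense of van den Dries--G\"unaydin, since $G$ is a finite rank multiplicative subgroup of $\R_{>0}$. The key known result (due to van den Dries--G\"unaydin, and refined by Berenstein--Vassiliev and Eleftheriou, among others) is that the structure induced on $G$ is ``small'' or ``$G$-trivial''. Concretely, every definable set in the induced structure on $G^n$ is a boolean combination of sets of the form $\{g \in G^n : (g_1^{k_1}\cdots g_n^{k_n}) \in P\}$ and trace sets given by linear equations. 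The plan is to invoke this description to show that $\Th(\Sa H_\lambda)$ is \emph{linear} in the appropriate sense, and then apply the standard theorem that a linear weakly o-minimal theory interprets no infinite field.

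The first step is to pass to a monster model $\Sa H^*$ of $\Th(\Sa H_\lambda)$. By weak o-minimality, if $\Sa H^*$ interprets an infinite field then it already definably interprets one on a one-dimensional set, and moreover it admits a definable field structure on a convex set after eliminating imaginaries locally. Here I would use the known results of Macpherson--Marker--Steinhorn and Wencel that weakly o-minimal structures with a field must be non-valuational or have a definable field on an interval. This gives a definable family of unary functions of ``dimension two'' near some point, namely the family $x \mapsto a\cdot x + b$ coming from field multiplication. That in turn yields a definable curve family witnessing a non-locally-modular pregeometry.

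The second step is to show that $\acl$ in $\Th(\Sa H_\lambda)$ is locally modular. By Fact~\ref{fact:vddgun}, $\acl$ is computed in $\Sa H^*$ through the pair: $a \in \acl(B)$ iff $\lambda^a$ lies in $\acl_{\rfield}$ of $\lambda^B$ together with the pair structure. By Mann's theorem on solutions of linear equations in finite rank groups, this reduces to $\Q$-linear (group-theoretic) dependence in $H^*$. Hence $\acl$ coincides with the $\Q$-span closure in the divisible group $(H^*;+)$, which is modular. A structure whose $\acl$ is a modular pregeometry cannot interpret an infinite field, because a field would yield a two-parameter family of curves, contradicting modularity (the usual Zil'ber / Peterzil--Starchenko style argument, valid in weakly o-minimal $\nip$ setting).

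The main obstacle is the second step: making the computation of $\acl$ via Mann's theorem rigorous in nonstandard models, where $H^*$ is no longer finite rank. I would handle this by using that the Mann property is first-order expressible: for each linear equation, the number of nondegenerate solutions is bounded uniformly. This bound transfers to elementary extensions, so the relevant Mann axiom scheme holds in $\Sa H^*$. With that in hand, the identification of $\acl$ with the $\Q$-linear span goes through, and the argument of the previous paragraph applies.
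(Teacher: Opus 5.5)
\textbf{The gap: nothing in your argument controls imaginaries.} Your second and third steps only concern $\acl$ on the home sort $H^*$. A field living on a quotient of a definable set by a definable equivalence relation is invisible to that pregeometry. Your first step is meant to bridge this, but no such result is available. Macpherson--Marker--Steinhorn and Wencel study weakly o-minimal expansions of ordered groups and fields (valuational versus non-valuational, o-minimal completions). They do not turn an interpretable field into a definable field on a convex set.

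In fact the principle you rely on is false: a weakly o-minimal $\nip$ theory with modular $\acl$ can interpret an infinite field. Take $M=\R\times\Q$ with the lexicographic order, let $\pi\colon M\to\R$ be the first projection, and add a relation for $\pi^{-1}(X)$ for every semialgebraic $X\subseteq\R^n$. A back-and-forth shows that the type of a tuple is determined by the type of its $\pi$-image together with the order of its entries inside each fibre. Hence in every model each definable subset of the line is a finite union of convex sets, and $\acl(B)=B$ because a point can be moved inside its fibre. So this theory is weakly o-minimal and disintegrated, yet it interprets $\rfield$ on the quotient by $\pi(x)=\pi(y)$. This is the same phenomenon as in Theorem~A.

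What rules this out for $\Th(\Sa H_\lambda)$ is Eleftheriou's theorem that this theory eliminates imaginaries, so an interpretable field is definable on some $(H^*)^n$. The paper's proof uses exactly this. It then cites Berenstein--Vassiliev: $\Th(\Sa H_\lambda)$ is weakly one-based, and weakly one-based theories define no infinite field.

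\textbf{What survives once elimination of imaginaries is in hand.} Your remaining steps then give a genuinely different finish, but they need repair.

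The step you flagged as the main obstacle is actually the easy one, with one correction. In the paper's language every $h\in H$ is zero-definable (the singleton $\{\lambda^h\}$ is semialgebraic), so $\acl(B)$ is the $\Q$-span of $B\cup H$, not of $B$. To compute it, first use Fact~\ref{fact:vddgun}, o-minimality and density: a finite definable set over $B$ consists of points definable over $\lambda^{B}\cup\R$ in the ambient real closed field. Then apply the Mann property to a polynomial relation; it holds for arbitrary real coefficients and transfers to elementary extensions.

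Step three, by contrast, is not a routine Zil'ber-style argument in an unstable theory. There are no canonical bases, and modularity of $\acl$ does not make definable sets linear. Here they are traces of arbitrary semialgebraic sets, e.g.\ $\{(h_1,h_2):\lambda^{h_1}+\lambda^{h_2}<1\}$, contrary to your opening description of the definable sets. It can be done directly in any geometric theory with modular $\acl$:
\begin{enumerate}
\item Let $F$ be a definable field of dimension $d\ge 1$, let $p$ be a generic point of $F^2$, and let $m$ be a generic line through $p$.
\item By modularity, $\acl(m)\cap\acl(p)$ has dimension $2d+2d-3d=d$; let $e$ be a $d$-dimensional tuple in it.
\item Choose a realization $m'$ of $\mathrm{tp}(m/pe)$ independent from $m$ over $pe$. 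A dimension count shows $m'$ is independent from $m$ outright.
\item Yet $e\in\acl(m)\cap\acl(m')$, a contradiction.
\end{enumerate}
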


\begin{proof}
By a theorem of Eleftheriou~\cite{E-small} $\Th(\Sa H_\lambda)$ eliminates imaginaries.
Berenstein and Vassiliev~\cite[Prop.~3.16]{BV-one-based} show that $\Th(\Sa H_\lambda)$ is weakly one-based and a model of a weakly one-based theory cannot define an infinite field by~\cite[Prop.~2.11]{BV-one-based}.
\end{proof}

\begin{proposition}\label{prop:hlamb}
Let $\Sa M$ be an $\aleph_1$-saturated elementary extension of $\Sa H_\lambda$.
Then $\Sh M$ interprets $\rfield$.
\end{proposition}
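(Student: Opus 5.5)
The idea is to recover the real line as a quotient of a definable set by an externally definable equivalence relation, and then pull back the field operations. Work in $\Sa M \succ \Sa H_\lambda$, which is $\aleph_1$-saturated. Write $H^*$ for the domain of $\Sa M$, and write $\lambda^x$ for the element coded by $x \in H^*$. Formally, $\Sa M$ is the structure induced on $\lambda^{H^*}$ inside a nonstandard real closed field $R^* \succ \R$ with a predicate; let $\Sa R^*$ denote this field.

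By Fact~\ref{fact:vddgun}, the ternary relation
\[
\lambda^{z} = \lambda^{x} + \lambda^{y}
\]
is not directly available, since $\lambda^H$ is not closed under addition. What is available is every relation of the form $(\lambda^{x_1},\ldots,\lambda^{x_n}) \in X$ for $X$ semialgebraic over $\Q$ (and over the parameters from $H$). In particular we have the order relations $\lambda^{z} < \lambda^{x} + \lambda^{y}$.

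The construction runs as follows.

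\emph{Step 1: choose a window.} Let $V \subseteq H^*$ be the convex set of elements $x$ such that $\lambda^x$ is finite and not infinitesimal, i.e.\ $|x| \le h$ for some standard $h \in H$. This is a convex subset of the definable order $(H^*,<)$, so it is externally definable by Fact~\ref{fact:convex}, hence definable in $\Sh M$.

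\emph{Step 2: the standard part equivalence relation.} Define $x \sim y$ iff $\lambda^x / \lambda^y$ is infinitesimally close to $1$, i.e.\ $|\lambda^{x} - \lambda^y| < \lambda^{x - h}$ for every standard $h > 0$ in $H$. This is the intersection of the $\Sa M$-definable sets
\[
D_h = \{(x,y) : |\lambda^x - \lambda^y| < \lambda^{x-h}\},
\]
and it is a convex-type condition: it says that $y - x$ lies in the convex set $I$ of infinitesimals of $H^*$. Since $I$ is convex in $(H^*,<)$, it is externally definable, and so $\sim$ is $\Sh M$-definable. By $\aleph_1$-saturation and density of $H$ in $\R$ (nontrivial divisible, hence dense), the map $x \mapsto \st(\lambda^x)$ sends $V$ onto $(0,\infty)$ with fibers exactly the $\sim$-classes. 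So $V/\!\sim$ is in bijection with $\R_{>0}$, via $x \mapsto \st(x)$ composed with $t \mapsto \lambda^t$.

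\emph{Step 3: interpret the operations.} Multiplication is easy: $+$ on $H^*$ induces multiplication on $\R_{>0}$. Addition is the main point. I would define a ternary relation on $V/\!\sim$ by
\[
S([x],[y],[z]) \iff \text{for all standard } \epsilon>0,\ |\lambda^z - \lambda^x - \lambda^y| < \epsilon\lambda^z .
\]
As before, this needs to be expressed externally. By saturation, pick an infinitesimal $\delta \in R^*$ that is not too small. Because $\lambda^{H^*}$ is not closed under $R^*$-constants, this is handled instead by taking $d \in I$ positive with $d$ larger than every element of some smaller class. Cleaner: the set
\[
E = \{(x,y,z) \in V^3 : \exists\, z' \sim z \text{ with } \lambda^{z'} \le \lambda^x + \lambda^y\ \text{and}\ \exists\, z'' \sim z \text{ with } \lambda^{z''} \ge \lambda^x+\lambda^y\}
\]
is $\Sh M$-definable, because $\sim$ and the relations $\lambda^{z'} \le \lambda^x+\lambda^y$ are. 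Using density of $\lambda^{H}$ in $\R_{>0}$ and saturation, $E$ holds iff $\st\lambda^x + \st\lambda^y = \st\lambda^z$. Then $(V/\!\sim;\oplus,\otimes)$ is isomorphic to the semiring $(\R_{>0};+,\times)$. From this, $\rfield$ is interpretable in the standard way, via formal differences: pairs modulo $(a,b)\approx(c,d) \iff a+d=b+c$.

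\emph{Expected obstacle.} The delicate step is Step 3: checking that the $\Sh M$-definable relation really captures standard-part addition, in particular that $\lambda^x + \lambda^y$ is approximated by elements $\lambda^{z}$ with $z \in H^*$ from both sides within any infinitesimal class. I would handle this by density of $\lambda^H$ in $\R_{>0}$ together with $\aleph_1$-saturation (realizing the type $\{r - 1/n < \lambda^z < r\}$). This also requires confirming that all the quantifiers over $\sim$-classes stay inside $\Sh M$, which holds because $\Sh M$ is closed under first-order definitions by Fact~\ref{fact:shelah}.
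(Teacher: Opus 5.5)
Your proposal is correct and is essentially the paper's argument. The paper also realizes $\R_{>0}$ as the quotient of the convex hull $W$ of $J$ by the convex subgroup $\mfrak$ of elements infinitely close to $1$, which is your $V/\!\sim$ written multiplicatively. It then obtains the field relations as standard-part images of $\Sa M$-definable sets, writing $\{(a,b,c): a+b\le c\} = \st(Y^*\cap W^3)$ by density of $J$ and $\aleph_1$-saturation, which is the same mechanism as your sandwich set $E$. Your tentative first formulation in Step~3 (choosing an infinitesimal $\delta$) is unnecessary and can be dropped, since $E$ alone does the job.
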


\begin{proof}
We replace $\Sa H_\lambda$ with the structure $\Sa J$ induced on $J = \{\lambda^h : h\in H\}$ by $\rfield$.
Let $W$ be the convex hull of $J$ in $M$ and $\mfrak$ be the set of $\alpha \in M$ such that $\lambda^{-1/n} < \alpha < \lambda^{1/n}$ for every $n\ge 1$.
Then $W$ and $\mfrak$ are convex subgroups of $M$ and are hence $\Sh M$-definable.
We identify $W/\mfrak$ with $\R_>$ and the quotient map $W \to \R_>$ with the standard part map.
We show that addition and multiplication on $\R_>$ are $\Sh M$-definable.
Let $X \subseteq \R^3$ be either $\{(a,b,c) \in \R^3_> : a + b \le c\}$ or $\{(a,b,c) \in \R^3_> : ab \le c\}$.
It suffices to show that $X$ is definable.
Let $Y = X \cap J^3$ and note that $Y$ is $\Sa J$-definable and dense in $X$.
It easily follows that $X = \st(Y^* \cap W^3)$ for $Y^*$ the canonical extension of $Y$ to an $\Sa M$-definable set.
\end{proof}

\subsection{The induced structure on an algebraically independent subset of $\R$}\label{section;noal}
In this section $\Sa R$ is an o-minimal expansion of $\rgoup$, $\Sa N$ is an $\aleph_1$-saturated elementary extension of $\Sa R$, $P$ is a dense $\acl$-independent subset of $N$, and $\Sa P$ is the structure induced on $P$ by $\Sa N$.
For example we could take $\Sa N$ to be an $\aleph_1$-saturated real closed field and $P$ to be dense algebraically independent subset of $N$.

\begin{proposition}
\label{prop:newdm}
The theory of $\Sa P$ is weakly o-minimal and disintegrated.
Furthermore $\Sh P$ interprets $\Sa R$ but a model of $\Th(\Sa P)$ cannot interpret an infinite group.
\end{proposition}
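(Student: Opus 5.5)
The plan is to treat $(\Sa N, P)$ as a dense pair in the sense of dense $\acl$-independent subsets (an ``H-structure'' in the sense of Berenstein--Vassiliev), and to read off the properties of $\Sa P$ from the known theory of such pairs. We may enlarge $P$ so that $(\Sa N,P)$ is an H-structure: $P$ is dense, $\acl$-independent, and realizes all independent extensions over finite sets. Enlarging $P$ only changes $\Sa P$ up to elementary equivalence, after checking that the induced structure on a dense independent set depends only on its ``independent dense'' type. For H-structures over o-minimal theories, Berenstein--Vassiliev show that $\Th(\Sa N,P)$ is complete. They also show that every subset of $P^n$ definable in $(\Sa N,P)$ is of the form $Y\cap P^n$ with $Y$ $\Sa N$-definable, so the induced structure $\Sa P$ admits quantifier elimination in the sense of the Conventions.

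\textbf{Weak o-minimality.} A definable subset of $P$ is then $Y\cap P$ for $Y\subseteq N$ definable in $\Sa N$, hence a finite union of intervals and points of $N$ intersected with $P$. Such a set is a finite union of $P$-convex sets, and the same holds in any model of $\Th(\Sa P)$, since that theory is again the induced theory of an H-structure.

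\textbf{Disintegration.} Use the description of $\acl$ in H-structures. For $A\subseteq P$, the algebraic closure of $A$ in the sense of $(\Sa N,P)$, intersected with $P$, is $\acl_{\Sa N}(A)\cap P$. Since $P$ is $\acl$-independent, this set is exactly $A$. So $\acl$ in $\Sa P$ is trivial: $\acl(A)=A=\bigcup_{a\in A}\acl(a)$, and this transfers to all models.

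\textbf{$\Sh P$ interprets $\Sa R$.} We follow Proposition~\ref{prop:hlamb}, using $\aleph_1$-saturation of $\Sa N$. By Fact~\ref{fact:ksh}, it suffices to work with the structure induced on $P$ by $\Sh N$. Let $W$ be the convex hull of $\R$ in $N$ and $\mfrak$ the infinitesimals. Define $P_W=P\cap W$ and the equivalence relation $x\sim y \iff x-y\in\mfrak$ on $P_W$. Both are externally definable: $W$ and $x-y\in\mfrak$ are convex conditions (Fact~\ref{fact:convex}), and by Fact~\ref{fact:shelah} intersecting with $P$ stays externally definable in $\Sa P$. By density of $P$ and $\aleph_1$-saturation, $\st\colon P_W\to\R$ is surjective, so $P_W/{\sim}$ is in bijection with $\R$. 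For each $\emptyset$-definable $X\subseteq\R^n$ of $\Sa R$ we use its canonical extension $X^*$ in $\Sa N$. Since $\Sa R$ is o-minimal, $X=\st(X^*\cap W^n)$ up to boundary issues, and this can be handled by coding closures of graphs of definable functions. The key point is that $X^*\cap P^n$ is $\Sa P$-definable and dense in $X^*$ near standard points, because $P$ is dense and, by genericity, $X^*\cap P^n$ is dense in the interior of cells. The only delicate case is lower-dimensional sets, since $P^n$ misses graphs of definable functions by independence. For these I would instead interpret $+$ and the basic relations through closed conditions, for instance $\{(a,b,c):a+b\le c\}$, which has interior and whose $\st$-image of $X^*\cap P^3$ is exactly right. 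O-minimal structures are determined by such full-dimensional closed sets; e.g.\ $f$ is recoverable from $\{(x,y):y\le f(x)\}$.

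\textbf{No infinite group.} This is the step I expect to be the main obstacle. The plan is to show that $\Th(\Sa P)$ has trivial $\acl$ and is weakly one-based, with elimination of imaginaries or enough control of imaginaries, and then rule out infinite interpretable groups. Triviality of $\acl$ contradicts generic group structure: in a group $G$ interpretable in $\Sa P$, take independent generics $g,h$. Then $gh$ lies in $\acl(g,h)$ but not in $\acl(g)\cup\acl(h)$, by a dp-rank/dimension count. Dimension is well behaved here because $\Sa P$ is weakly o-minimal, hence dp-minimal. Transferring triviality of $\acl$ to imaginaries is where the work lies. I would try to reduce imaginaries to real tuples via weak o-minimality (Eleftheriou-style elimination of imaginaries modulo convex equivalence relations) and handle the remaining convex quotients, which are themselves trivial linear orders.
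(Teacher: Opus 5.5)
\textbf{Main gap: interpretable groups.} Your proposal does not show that no model of $\Th(\Sa P)$ \emph{interprets} an infinite group. Your generic-element count only handles a group whose domain is a definable subset of some $P^n$. Even there it has to be run coordinatewise. By disintegration, each coordinate of $gh$ lies in $\acl(g)$ or in $\acl(h)$. Since $gh$ is independent from $g$ and from $h$, and the weakly o-minimal theory $\Th(\Sa P)$ is geometric, both groups of coordinates have dimension $0$. This contradicts $\dim(gh)=\dim G>0$.

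For imaginaries, disintegration of the home sort gives you nothing. Your reduction ``the remaining convex quotients are themselves trivial linear orders'' is precisely the unproved point, and it does not follow from weak o-minimality. Indeed $\Sh P$ is still a weakly o-minimal structure, and this very section interprets all of $\Sa R$ in it as $O/E$, a quotient by a convex equivalence relation. The missing ingredient is Eleftheriou's theorem that $\Th(\Sa P)$ eliminates imaginaries. With it, interpretable groups are definable, and Berenstein--Vassiliev (or your dimension argument, corrected as above) rules them out. This is the paper's route.

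\textbf{Other steps that need repair.} The H-structure detour fails as stated. Enlarging $P$ inside $N$ only enlarges $\acl(A\cup P)$, so it works against the extension property. It can also change $\Th(\Sa P)$: if $c\in P'\setminus P$, the relation naming $\{c\}$ is empty on $P$ but not on $P'$. It is also unnecessary. Fact~\ref{fact:dms} gives quantifier elimination for $\Sa P$ for every dense independent $P$. Weak o-minimality of the theory then follows from uniform finiteness of fibres of $\Sa N$-definable families.

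Your computation of $\acl$ is off. The induced structure names every element of $P$, so $\acl(\emptyset)=P$ in $\Sa P$, not $\acl(A)=A$. The correct statement is $\acl(A)=P\cup A$ in elementary extensions, which still gives disintegration.

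In the interpretation of $\Sa R$, Fact~\ref{fact:ksh} concerns elementary extensions and does not apply to $P\subseteq N$. Define $\sim$ as the paper does: $a\sim b\iff \forall c\in Q\,(|a-b|<c)$, with $Q$ convex in $P$.

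Your coding by full-dimensional sets is sound once made precise; note that $\{(x,y):y\le f(x)\}$ is regular closed only for suitable $f$.
\begin{itemize}
\item Open cells are regular open, so each open cell $U$ is $\Sh P$-definable as the interior of $\cl(U)=\st(U^*\cap O^n)$.
\item After permuting coordinates, every cell is the graph of a continuous $h$ over an open cell $V$.
\item Each coordinate satisfies $h_j(v)=\inf\{y:(v,y)\in U_j\}$, where $U_j=\{(v,y):v\in V,\ y>h_j(v)\}$ is an open cell.
\end{itemize}
The structure induced on $O/E$ is closed under first-order definability, so this suffices. It even avoids the covering theorems of Wilkie and Edmundo--Eleftheriou--Prelli on which Lemma~\ref{lem:regular} relies.
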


Fact~\ref{fact:dms} is due to Dolich, Miller, and Steinhorn~\cite[2.16]{DMS-Indepedent}.

\begin{fact}\label{fact:dms}
The structure $\Sa P$ admits quantifier elimination.
Equivalently: a subset of $P^m$ is definable in $\Sa P$ if and only if it is of the form $Y \cap P^m$ for $\Sa N$-definable $Y \subseteq P^m$.
\end{fact}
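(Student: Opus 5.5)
The statement should be read with $Y \subseteq N^m$ rather than $Y \subseteq P^m$. The plan is the usual induction on formulas. Atomic $\Sa P$-definable sets are traces $Y \cap P^m$ of $\Sa N$-definable sets, and traces are closed under boolean combinations. So it suffices to handle one existential quantifier: given an $\Sa N$-definable $Y \subseteq N^{m+1}$, defined over a finite tuple $c$ from $N$, I will find an $\Sa N$-definable $Z \subseteq N^m$ with
\[ \{a \in P^m : \exists b \in P\ (a,b) \in Y\} = Z \cap P^m. \]

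\textbf{Step 1 (absorbing the parameters).} The parameters $c$ may create dependencies among elements of $P$, so I first enlarge them. By finiteness of $\acl$-dimension there is a finite $P_1 \subseteq P$ with $\dim(c/P) = \dim(c/P_1)$. By exchange and symmetry of independence, $P \setminus P_1$ is then $\acl$-independent over $cP_1$. Replace $c$ by $cP_1$.

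The set $P^m$ splits into finitely many pieces according to the \emph{equality type} of $a$: which coordinates coincide, and which coordinates equal which elements of $P_1$. Each piece is the trace of an $\Sa N$-definable set. On each piece, substituting the forced values reduces to the case where $a$ has distinct coordinates, all lying in $P \setminus P_1$. Such an $a$ is $\acl$-independent over $c$.

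\textbf{Step 2 (the one-variable fibre).} By o-minimality of $\Sa N$ and uniform cell decomposition over $c$, there are finitely many $c$-definable functions $f_1,\ldots,f_k$ and a $c$-definable partition of $N^m$ with the following property: on each piece, $Y_a$ is a union of some open intervals between consecutive $f_i(a)$ together with some of the points $f_i(a)$. Whether $Y_a$ contains an open interval is a definable condition on $a$. When it does, density of $P$ gives some $b \in P$ in that interval, so this part of the projection is the trace of a definable set.

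Otherwise $Y_a$ is finite, and we must decide which points $f_i(a)$ lie in $P$. This is the crux, handled by independence. Suppose $b \in P$ satisfies $b = f_i(a)$ with $b \notin P_1 \cup \{a_1,\ldots,a_m\}$. Then $b \in \operatorname{dcl}(ac) \subseteq \acl(acP_1)$, contradicting the independence of $P \setminus P_1$ over $c$ from Step 1. Hence, for $a$ of the given equality type,
\[ f_i(a) \in P \iff f_i(a) \in P_1 \cup \{a_1,\ldots,a_m\}. \]
The right-hand side is an $\Sa N$-definable condition on $a$, since $P_1$ is finite and included in the parameters. Taking the union over $i$, over the cell pieces, and over the equality types gives the required $Z$.

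\textbf{Main obstacle.} The delicate point is Step 1: the parameters of $Y$ lie in $N$ and not in $P$, so $P$ need not remain independent over them. This is resolved by absorbing the finite set $P_1$ witnessing $\dim(c/P)$. After that, the isolated-point case reduces to the membership test displayed above, and the interval case reduces to density of $P$. The remaining bookkeeping over equality types is routine, and the induction on formulas then gives quantifier elimination.
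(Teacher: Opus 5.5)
Your proof is correct, and you are right that the statement should read $Y \subseteq N^m$. The paper gives no argument of its own here. It imports the fact from Dolich, Miller, and Steinhorn~\cite[2.16]{DMS-Indepedent}, whose work studies the expansion of $\Sa N$ by a predicate for $P$, so there the trace description comes as part of a broader analysis of that pair. Your route is instead a direct induction on formulas that proves exactly the induced-structure statement, and nothing more. Infinite fibres are handled by density of $P$. Finite fibres are handled by the test $f_i(a)\in P \iff f_i(a)\in P_1\cup\{a_1,\ldots,a_m\}$, which becomes valid once the finitely many points $P_1$ witnessing $\dim(c/P)$ are absorbed into the parameters. What this buys is transparency and minimal hypotheses: you use only density, $\acl$-independence, and uniform cell decomposition, and never the $\aleph_1$-saturation of $\Sa N$. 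The citation spares the bookkeeping and places the fact inside a study of the pair $(\Sa N,P)$. One small simplification: the split by equality types is unnecessary. For any $a\in P^m$, put $A=\{a_1,\ldots,a_m\}\setminus P_1$; then $\operatorname{dcl}(ca)=\operatorname{dcl}(cA)$, since $P_1$ is contained in $c$. Independence of $A\cup\{b\}$ over $c$ then already rules out $b=f_i(a)$ for every $b\in P\setminus(P_1\cup A)$.
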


It follows that $\Th(\Sa P)$ is weakly o-minimal and that we have $\acl(A) = P \cup A$ for any subset $A$ of an elementary extension of $\Sa P$.
In particular $\Th(\Sa P)$ is disintegrated.
A theorem of Eleftheriou~\cite[Thm.~C]{Elef-small-sets} shows that $\Th(\Sa P)$ eliminates imaginaries and a result of Berenstein and Vassiliev~\cite[Cor.~6.3]{BV-independent} shows that a model of $\Th(\Sa P)$ cannot define an infinite group.
We need Lemma~\ref{lem:regular} below to show that $\Sh P$ interprets $\Sa R$.
Recall that a subset of a topological space is {\bf regular open} if it is the interior of its closure.

\begin{lemma}
\label{lem:regular}
Every $\Sa R$-definable set is a boolean combination of regular open $\Sa R$-definable sets.
\end{lemma}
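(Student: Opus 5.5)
We argue via cell decomposition. By the o-minimal cell decomposition theorem, every $\Sa R$-definable subset of $R^n$ is a finite union of cells. So it suffices to show that every cell $C \subseteq R^n$ is a boolean combination of regular open definable sets. The plan is to induct on $\dim C$, or equivalently to prove the stronger statement that the class $\Cal B$ of boolean combinations of regular open definable sets contains every definable set. We do this by induction on the dimension of the set.

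The first observation is that open cells are regular open. If $C$ is an open cell, then $\operatorname{int}(\cl(C)) = C$. This follows by induction on $n$. An open cell is either an interval, or the region strictly between two continuous definable functions $f<g$ (allowing $\pm\infty$) on an open cell $D$. Its closure lies over $\cl(D)$. Any interior point of the closure projects to an interior point of $\cl(D)$, hence into $D$ by induction. Over $D$ the fiber of the closure is $[f(x),g(x)]$, whose interior points over an open set must satisfy $f(x)<t<g(x)$ by continuity of $f$ and $g$.

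Next, every definable open set $U$ is a boolean combination of regular open sets. Indeed $V=\operatorname{int}(\cl(U))$ is regular open and definable. The difference $V\setminus U$ is definable with empty interior, so it has dimension $<n$. Hence $U = V \setminus (V\setminus U)$, and we reduce to sets of lower dimension.

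The main obstacle is handling lower-dimensional sets $Z$, with $\dim Z<n$, since these have empty interior. The trick is to write them as differences of open sets. Let $Z$ be a cell of dimension $<n$, which we may take to be the graph of a continuous function over a lower cell, or similar. We use the group structure and order to thicken it. Take $U=\operatorname{int}(\cl(R^n\setminus Z))$. Then $R^n\setminus Z$ is open whenever $Z$ is closed, and the set $\operatorname{int}(\cl(R^n\setminus Z))=R^n$ because $Z$ has empty interior. So it is better to write $Z = (R^n\setminus Z)^c$ and note that $R^n\setminus Z$ is open when $Z$ is closed. By the previous paragraph, $R^n\setminus Z$ is then in $\Cal B$ modulo sets of dimension $<n$. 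However, the error set $\operatorname{int}\cl(R^n\setminus Z)\setminus(R^n\setminus Z)=Z$ is $Z$ itself, so this is circular.

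Instead we induct on dimension inside a fixed ambient space by working with the family of open sets containing $Z$. Any definable $Z$ differs from a closed definable set by a set of smaller dimension, since $\dim(\cl(Z)\setminus Z)<\dim Z$. So it suffices to treat closed $Z$ of dimension $d<n$. For such $Z$, write $Z$ locally as a graph over a coordinate $d$-plane, with cell decomposition adapted to a projection $\pi$. Over an open cell $D\subseteq R^d$ in this projection, the piece $\Gamma$ of $Z$ is the graph of a continuous definable $f:D\to R^{n-d}$. Define the open sets
\[U_1=\{(x,y): x\in D,\ y_1<f_1(x)\}, \qquad U_2=\{(x,y): x \in D,\ y_1\le f_1(x)\}\]
and the analogous sets for the other coordinates, after first replacing $D$ by $D\times R^{n-d}$. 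The set $U_1$ is an open cell, hence regular open. The closure-interior operation of $U_1\cup\{y_1>f_1\}$ gives $D\times R^{n-d}$. Thus $\{y_1=f_1(x)\}\cap (D\times R^{n-d})$ equals $(D\times R^{n-d})\setminus(U_1\cup U_1')$, where $U_1'=\{y_1>f_1\}$, and each of these is an open cell. Intersecting over the coordinates $i$ yields $\Gamma$ as a boolean combination of open cells. The remaining pieces of $Z$ lie over lower-dimensional cells of $R^d$ and are handled by induction on $d$. After a coordinate permutation, which is a homeomorphism and so preserves regular openness, this covers all cells. The main point to verify carefully is precisely that each cell of dimension $<n$ is cut out by equalities of continuous functions over an open cell in an appropriate coordinate projection. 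We would get this directly from the cell decomposition theorem in its standard form, where each cell is defined by a sequence of graph and band conditions, each a boolean combination of open bands.
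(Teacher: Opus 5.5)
Your argument is correct once stripped of its detours, and it takes a genuinely different and more elementary route than the paper. The paper first notes that every definable set is a boolean combination of definable open sets. It then proves the stronger statement that every definable open set is a \emph{finite union} of regular open definable sets. That step needs real input: the covering theorem of Edmundo, Eleftheriou and Prelli when $\Sa R$ defines no global field structure, and otherwise Wilkie's covering theorem for bounded open sets plus an inversion trick for the unbounded part.

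Your closing step instead shows directly that every cell is a boolean combination of open cells. Let $C$ be a $d$-cell with $d<n$, and let $p$ be the projection onto the coordinates where the type of $C$ has a $1$. Then $p|_C$ is a homeomorphism onto an open cell $D\subseteq R^d$. Let $q$ be the complementary projection and $g=q\circ(p|_C)^{-1}$. For each coordinate $j$ outside the range of $p$ put $A_j=\{z\in p^{-1}(D): z_j<g_j(p(z))\}$ and $B_j=\{z\in p^{-1}(D): z_j>g_j(p(z))\}$. Then $C=\bigcap_j \bigl(p^{-1}(D)\setminus(A_j\cup B_j)\bigr)$. After a coordinate permutation, each of $p^{-1}(D)$, $A_j$, $B_j$ is an open cell, hence regular open (your proof of that fact is fine).

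This uses only cell decomposition, needs no case split on global field structure, and works in any o-minimal structure. The paper's route gives the stronger finite-union statement for open sets. The later application, interpreting $\Sa R$ in $\Sh P$, only uses boolean combinations of regular open sets, so it does not need that.

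You should write the proof in that streamlined form.
\begin{itemize}
\item The passage through $\operatorname{int}(\cl(U))$ and the reduction to closed $Z$ are superfluous; the first is circular on its own, as you noticed.
\item $U_2=\{y_1\le f_1(x)\}$ is not open, although you never use it.
\item The sentence saying the remaining pieces of $Z$ lie over lower-dimensional cells of $R^d$ and are handled by induction on $d$ is wrong for a fixed projection. A piece over a lower-dimensional base cell can still be $d$-dimensional: take $\{0\}\times R$ over $\{0\}$ when projecting to the first coordinate. This is why each cell must be handled with its own type projection $p$, as your final sentence indicates.
\end{itemize}
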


It is easier to show that $\rfield$ is interpretable in $\Sh P$ when $\Sa R$ expands $\rfield$, as 
$$ \{ (a, b, c) \in \R^3 : a + b < c\} \quad \text{and} \quad \{ (a, b, c) \in \R^3 : ab < c \}  $$
are both regular open.
We say that $\Sa R$ defines a global field structure if there are definable functions $\oplus,\otimes: \R^2 \to \R$ such that $(\R,<,\oplus,\otimes)$ is isomorphic to $(\R,<,+,\cdot)$.

\begin{proof}
An application of o-minimal cell decomposition shows that every definable set is a boolean combination of open definable sets.
It is therefore enough to show that any definable open set is a finite union of definable regular open sets.
It is easy to see that open cells are regular. 
Edmundo, Eleftheriou, and Prelli~\cite{EEL-covering} show that if $\Sa R$ does not define a global field structure then any definable open set is a finite union of open cells.

\medskip
Suppose that $\Sa R$ defines a global field structure and let $U$ be a definable open subset of $\R^n$.
Without loss of generality we suppose $\Sa R$ expands $\rfield$.
Let $B_t$ be the open ball in $\R^n$ with center the origin and radius $t > 0$.
It suffices to show that $U \cap B_2$ and $U \setminus \cl(B_1)$ are both finite unions of definable regular open sets.
Let $\iota \colon \R^n \setminus \{0\} \to \R^n \setminus \{0\}$ be given by $\iota(v) = v/\|v\|$, i.e. $\iota$ is the inversion across the unit sphere.
Now $\iota$ is a definable homeomorphism and hence takes definable regular open sets to definable regular open sets.
Wilkie~\cite{Wilkie-covering} shows that any definable bounded open set is a finite union of open cells.
So $U \cap B_2$ is a finite union of open cells.
Furthermore $\iota(U \setminus \cl(B_1)) \subseteq B_1$ is a union of open cells $V_1,\ldots,V_m$.
So $U \setminus \cl(B_1)$ is the union of $\iota(V_1),\ldots,\iota(V_m)$.
\end{proof}

We now show that $\Sh P$ interprets $\Sa R$.
Let $O$ be the set of $a \in P$ such that $|a| < n$ for some $n$.
Let $Q$ be the set of $a \in P$ such that $\frac{1}{n} < a$ for some $n \ge 1$.
Then $O$ and $Q$ are both convex, hence definable in $\Sh P$.
 Let $E$ be the equivalence relation on $P$ where $(a,b) \in E$ when $|a - b| < 1/n$ for all $n > 0$.
We show that $E$ is definable in $\Sh P$.
Let $X$ be the set of $(a,b,c) \in N^3$ such that $| a - b | < c$.
Then $C := X \cap P^3$ is definable in $\Sa P$.
Observe that
$$ E = \bigcap_{c \in Q} \{ (a,b) \in P^2 : (a,b,c) \in C \} $$
so $E$ is definable in $\Sh P$.
By density and saturation we can identify $O/E$ with $\R$ and identify the standard part map $\st \colon O \to \R$ with the quotient map $O \to O/E$.
As $\Sh P$ defines the usual order on $\R$, it defines a basis for the topology on $\R^n$.
We show that $\Sa R$ is a reduct of the structure induced on $\R$ by $\Sh P$.
By Lemma~\ref{lem:regular} it suffices to suppose that $U \subseteq \R^n$ is regular, open, and $\Sa R$-definable and show that $U$ is definable in $\Sh P$.
Let $U^*$ be the subset of $N^n$ defined by any formula defining $U$.
Now $U^* \cap P^n$ is $\Sh P$-definable and dense in $U^*$.
Furthermore $U^* \cap O^n = (U^* \cap P^n) \cap O^n$ is $\Sh P$-definable.
It is easy to see that $\cl(U) = \st(U^* \cap O^n)$, so $\cl(U)$ is definable in $\Sh P$.
Finally $U$ is $\Sh P$-definable as $U$ is the interior of $\cl(U)$ by regularity.



\subsection{The induced structure on $p$-adic balls}\label{section:p-adic}
Fix a prime $p$.
We refer to \cite{Belair-panorama} for basic facts on the model theory of $p$-adically closed fields.
Let $\B$ be the set of balls in $\Q_p$, considered as a $\Q_p$-definable set of imaginaries.
Let $\Sa B$ be the structure induced on $\B$ by $\Q_p$.
Given an elementary extension $\K$ of $\Q_p$ let $\Sa B_\K$ be the associated elementary extension of $\Sa B$.
We first show that $\Sa B_\K$ does not interpret an infinite field.

\begin{fact}
\label{fact:obv}
Any infinite field interpretable in a $p$-adically closed field $\K$ is definably isomorphic to a finite extension of $\K$.
\end{fact}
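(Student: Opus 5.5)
This fact is well known, and I would assemble it from three standard ingredients rather than prove anything from scratch. The argument reduces to definable fields, then to the statement that a definable field is a finite extension of $\K$.

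\emph{Step 1: eliminate the imaginaries.} Let $F$ be an infinite field interpretable in $\K$. Imaginaries in $\K$ are coded in the geometric sorts of Hrushovski--Martin--Rideau. The quotients of $\mathrm{GL}_n(\K)$ by stabilizers of lattices are coded up to finite data by the value group and residue field sorts, and the residue field is finite. So I would first reduce to $F$ being definable in an expansion $\K^{\mathrm{eq}}$ restricted to the home sort together with $\Gamma$ and finite sets. The residue field is finite, so it contributes nothing infinite. The value group $\Gamma$ is a $\Z$-group, hence stably embedded with Presburger induced structure. Presburger arithmetic is one-based in the appropriate sense and interprets no infinite field. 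This rules out $F$ living essentially in $\Gamma$.

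\emph{Step 2: pass to a field definable in the home sort.} Take a dimension-maximal part of $F$. Using dp-minimality of $\K$ and the theory of definable groups in $p$-adically closed fields (Pillay's work, and Hrushovski--Pillay for groups locally isomorphic to algebraic groups), $F$ is definably isomorphic to a field definable in $\K$ itself, up to finite quotients. In practice I would simply cite the result of Halevi--Hasson--Peterzil (``interpretable fields in various valued fields'') or Pillay's theorem. The latter states that a field definable in a $p$-adically closed field is definably isomorphic to a finite extension of $\K$.

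\emph{Step 3: the definable case.} For a field $F$ definable in $\K$, $(F,+)$ and $(F^\times,\cdot)$ are definable groups, and $F$ carries a definable manifold structure. The additive and multiplicative groups are then locally Nash groups, and $F$ is an abelian $\K$-analytic Lie field. The differential of multiplication at a point gives a $\K$-algebra structure on the tangent space, which is finite-dimensional over $\K$. From this I obtain a definable isomorphism of $F$ with a finite extension of $\K$.

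The main obstacle is Step 2, the passage from interpretable to definable. It needs the full elimination of imaginaries in the geometric sorts, and a check that no infinite field lives on the lattice or torsor sorts. I would not reprove this but cite it, so in this paper the fact is recorded as a citation.
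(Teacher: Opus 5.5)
Your bottom line agrees with the paper: Fact~\ref{fact:obv} is not proved there but simply attributed to Halevi--Hasson--Peterzil, and your proposal also ends by citing them. The sketch around the citation, however, contains a false reduction.

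In Step~1 you assert that the lattice sorts $S_n=\mathrm{GL}_n(\K)/\mathrm{GL}_n(\mathcal O)$ are coded, up to finite data, by $\Gamma$ and the residue field. You conclude that $F$ may be assumed to live on $\K$, $\Gamma$ and finite sets. This is wrong. The $S_n$ are exactly the sorts Hrushovski--Martin--Rideau have to add, and already $S_2$ cannot be coded in $\K\cup\Gamma$ plus finite data.

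The sort $\B$ of balls used in this very section embeds in $S_2$: send $a+c\mathcal O$ to the lattice spanned by $(1,a)$ and $(0,c)$. By Lemma~\ref{lem:map} every definable map from tuples of balls to $\K^n$ has finite image. So a code in $\K\cup\Gamma$ would essentially have to separate balls by their $\Gamma$-coordinates. But in $\Q_p$ the $p^k$ balls of radius $k$ inside $\Z_p$ form a uniformly definable family of finite sets. Uniformly definable families of finite subsets of $\Gamma^m$ (whose induced structure is Presburger) have size growing only quasi-polynomially in $k$. Your own closing remark, that one still has to check the lattice sorts, is in fact inconsistent with Step~1.

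This matters, because the lattice sorts are where the content of the statement lies. Pillay's theorem, which your Step~3 outlines reasonably, concerns only fields definable in the home sort, and it was proved for $\Q_p$. Citing it therefore does not give the interpretable case. The passage from interpretable to definable rests entirely on your Step~2, which as written supplies nothing. ``A dimension-maximal part'' is not defined, and ``up to finite quotients'' is meaningless for a field, which has no proper nontrivial quotients.

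Moreover, excluding $\Gamma$ one sort at a time would not suffice even if Step~1 were true. An interpretable field may a priori be built from several sorts simultaneously. Halevi--Hasson--Peterzil handle this with a local analysis: an infinite interpretable field is locally internal to one of a few distinguished sorts (such as $\K$, $\Gamma$, $\K/\mathcal O$), and all but $\K$ are then excluded. So as a citation your proposal matches the paper, but the surrounding argument should not be offered as a proof sketch.
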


Fact~\ref{fact:obv}  is due to Halevi, Hasson, and Peterzil~\cite{halevi-hasson-peterzil}.
Suppose that $\Sa B_\K$ interprets an infinite field $F$.
By Fact~\ref{fact:obv} we may suppose that the domain of $F$ is some $\K^n$, so there is a $\K$-definable surjection $\B_\K^m \to \K^n$ for some $m,n \ge 1$.
This contradicts Lemma~\ref{lem:map}.

\begin{lem}
\label{lem:map}
Let $\K$ and $\B_\K$ be as above.
Any definable function $\B_\K^m \to \K^n$ has finite image.
\end{lem}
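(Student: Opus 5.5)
Fix a $\K$-definable $f\colon \B_\K^m \to \K^n$. I will show that its image is finite by induction on $m$, after reducing to $n=1$ by composing $f$ with the coordinate projections. Parametrize a ball by its center $c\in\K$ and radius $\gamma \in \Gamma_\K\cup\{\infty\}$, so that $\B_\K$ is the quotient of $\K\times\Gamma_\K$ by the definable equivalence relation $(c,\gamma)\sim(c',\gamma')$ iff $\gamma=\gamma'$ and $v(c-c')\ge\gamma$; points are balls of radius $\infty$. Then $f$ lifts to a definable $g\colon (\K\times\Gamma_\K)^m\to\K$ that is constant on $\sim$-classes. The plan is to play two facts about $p$-adically closed fields against each other: definable functions $\K\to\K$ are locally analytic (in particular continuous and locally constant or locally injective off a finite set), and any definable function $\Gamma_\K^k\to\K$ has finite image. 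The second holds because $\Gamma_\K$ is a $\Z$-group, the induced structure on $\Gamma_\K$ is pure Presburger, and a definable map from a set orthogonal to the field into $\K$ cannot have infinite image. Concretely, the image would be an infinite definable subset of $\K$, hence would have interior, and one derives a contradiction with the usual fact that definable subsets of $\K$ in the image of $\Gamma$-indexed families are countable in $\Q_p$, transferring by elementarity.

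For $m=1$, consider the fibres of $\B_\K$ over the radius $\gamma$. For fixed $\gamma<\infty$, the map $c\mapsto g(c,\gamma)$ is invariant under translation by the ball $\{x: v(x)\ge\gamma\}$, so it is locally constant with constant radius $\gamma$. Its image on the balls of radius $\gamma$ is therefore a definable set that is the image of the discrete definable set $\K/p^\gamma\mathcal{O}$. By the same Presburger-type argument (a definable subset of $\K$ parametrized by a discrete set is finite or has a cardinality contradiction in $\Q_p$), this image is finite, and the bound is uniform in $\gamma$ by compactness.

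The radius-$\infty$ part, namely the points, is the real issue. Here the map $c\mapsto f(\{c\})$ is an arbitrary definable function $\K\to\K$, for instance the identity. So the lemma as literally stated must exclude points: $\B$ should be balls of finite radius. I would read it that way, or equivalently restrict to closed balls with radius in $\Gamma_\K$. Under that convention, combining the uniform finiteness over each radius with finiteness of definable maps from $\Gamma_\K$ gives a finite image. This second step needs some care, since the function $\gamma\mapsto$ (finite set of values) is coded through $\K^{\le N}$.

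For general $m$, fix the first $m-1$ balls as parameters and apply the case $m=1$ uniformly. This gives finitely many values, bounded uniformly by compactness, as a definable finite-set-valued function of $(m-1)$ balls. Coding finite subsets of $\K$ of bounded size by tuples of elementary symmetric functions in $\K^N$, the induction hypothesis applies. The main obstacle I expect is the rigorous justification of the claim that definable maps from a definable discrete or $\Gamma$-like set into $\K$ have finite image. I would first try to prove it via dimension theory: a definable set of dimension $0$ in $\K$ is finite, and the image of such a map has dimension $0$ because every definable function into $\K$ from a set that is internal to $\Gamma$ is locally constant with respect to a suitable topology. Failing that, I would fall back on the cell decomposition of Denef for $\K$ with $\Gamma$-sort variables.
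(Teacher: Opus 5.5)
Your argument is correct once $\B_\K$ is read as the set of balls of finite radius. This is the paper's convention, since $\rad\colon\B_\K\to\Gamma_>$, and the paper's proof silently needs it too, so you are right that points would break the statement. But your route is much more roundabout than necessary.

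The paper's proof has three steps:
\begin{enumerate}
\item It reduces to $\Q_p$ at once using elimination of $\exists^\infty$. A definable subset of $\K$ is finite if and only if it has empty interior, so a uniform bound on the images in a definable family is first-order and transfers to $\K$.
\item It reduces to $n=1$ by a coordinate projection.
\item It observes that $\B$ itself is countable, because every ball with radius in $\Z$ has a center in $\Q$. So the image of $\B^m$ is a countable definable subset of $\Q_p$, hence has empty interior, hence is finite.
\end{enumerate}

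You use exactly this countability-plus-transfer mechanism, but only on $\Gamma=\Z$ and on the fixed-radius fibres $\Q_p/p^\gamma\Z_p$. You then reassemble the pieces by lifting to centers and radii, coding finite sets by elementary symmetric functions, and inducting on $m$. Since the same mechanism applies verbatim to $\B^m$, all of that scaffolding can be dropped.

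Several ingredients you announce are never needed: local analyticity of definable functions, the Presburger induced structure on $\Gamma$, and the fallback to dimension theory or Denef cell decomposition. The ``main obstacle'' you flag is already settled by your own countability argument together with elimination of $\exists^\infty$. That same elimination of $\exists^\infty$ is also what properly justifies the uniformity in $\gamma$ that you attribute to ``compactness'' in your $m=1$ step.

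The fibrewise decomposition gains nothing for this lemma, beyond making explicit where the radius-$\infty$ balls would cause the failure you correctly identified.
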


\begin{proof}
Recall that a definable subset of $\K$ is finite if and only if it has empty interior.
Hence $\K$ eliminates $\exists^\infty$, so it suffices to prove the lemma over $\Q_p$.
If $f \colon \B^m \to \Q_p^n$ has infinite image then there is a coordinate projection $e \colon \Q^m_p \to \Q_p$ so that $e \circ f$ has infinite image.
Hence it suffices to show that  the image of any definable function $\B^m \to \Q_p$ has empty interior.
This holds as $\B$ is countable and any nonempty open subset of $\Q_p$ is uncountable.
\end{proof}

\begin{proposition}\label{prop:p-adic}
Suppose that $\K$ is an $\aleph_1$-saturated elementary extension of $\Q_p$.
Then $\Sh B_\K$ interprets $\Q_p$.
\end{proposition}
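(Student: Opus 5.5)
We follow the template of Proposition~\ref{prop:hlamb}. The idea is to recover $\Q_p$ inside $\Sh B_\K$ as a quotient of a set of infinitesimally small balls modulo ``infinitesimal closeness''. Field structure is then transferred by a standard part argument.

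\textbf{Step 1: the domain.} Since $\Q_p$ and $\K$ both have value group $\Z$ in the standard part, the natural candidate is as follows. Let $O$ be the set of balls $b \in \B_\K$ that satisfy two conditions: $b$ is contained in the valuation ring $\Cal O_\K$ (or in some ball $p^{-n}\Cal O_\K$, with $n$ standard), and the radius of $b$ is smaller than $p^{n}$ for every standard $n$, i.e. $b$ is an ``infinitesimal'' ball. We declare $b \sim b'$ when $b$ and $b'$ are contained in a common ball of radius $p^{-n}$ for every standard $n$.

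We show $O$ and $\sim$ are $\Sh B_\K$-definable, in the same way $E$ was handled in Section~\ref{section;noal}:
\begin{itemize}
\item The set of radii is a $\Sa B_\K$-definable linear order, since radii are balls containing $0$, ordered by inclusion. The set of infinitesimal radii is a convex subset of it, hence externally definable by Fact~\ref{fact:convex}.
\item Containment of balls is $\Sa B_\K$-definable. The relation ``$b,b'$ lie in a common ball of radius $r$'' is $\Sa B_\K$-definable in $(b,b',r)$.
\item Therefore $\sim$ is the intersection over the convex, externally definable set of standard radii. More concretely, $b \sim b'$ iff for every non-infinitesimal radius $r$ some ball of radius $r$ contains both. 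This is first-order over the convex set, so it is $\Sh B_\K$-definable.
\end{itemize}
By $\aleph_1$-saturation every element of $\Q_p$ is the standard part of some element of $\Cal O_\K$. Since infinitesimal balls exist, the map sending a ball to the standard part of any of its points gives a bijection $O/{\sim} \to \Q_p$ (or $\Z_p$ first, then $\Q_p$ via $p^{-n}$ scaling).

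\textbf{Step 2: the field structure.} We must show that the graphs of addition and multiplication on $\Q_p$ (restricted to $\Z_p$, which suffices since $\Z_p$ with its ring structure interprets $\Q_p$ as a fraction field) are definable in the induced structure on $O/{\sim}$. Ball addition is $\Sa B_\K$-definable on balls of equal radius: $b + b'$ is the ball of that radius containing $x+y$ for $x \in b$ and $y\in b'$. Multiplication is likewise definable for balls in $\Cal O_\K$ of a fixed radius, since that radius is preserved by multiplication by units and shrinks otherwise; alternatively we define the relation ``some $x\in b$, $y \in b'$ have $xy \in b''$''.

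Then for $b,b',b'' \in O$ we have $\st(b)+\st(b') = \st(b'')$ iff there exist $c \sim b$, $c'\sim b'$, $c''\sim b''$ such that some $x\in c$, $y \in c'$ satisfy $x+y\in c''$. The forward direction holds by taking points. The converse holds since standard part is a ring homomorphism on $\Cal O_\K$, with closeness checked at each standard radius. This is a $\Sh B_\K$-formula, and multiplication is handled identically.

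\textbf{Main obstacle.} The main obstacle is making sure the relevant sets are genuinely externally definable rather than merely type-definable, in particular the set of infinitesimal radii and the equivalence $\sim$. Here the key observation is that the radii form a definable linear order (isomorphic to the value group), so Fact~\ref{fact:convex} applies directly to the convex set of infinitesimal radii. Fact~\ref{fact:shelah} then lets us quantify over that convex set inside $\Sh B_\K$.
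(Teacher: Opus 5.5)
Your argument is correct, and its core is the same as the paper's. You realize $\Q_p$ (for you, first $\Z_p$, then its fraction field) as a quotient of a $\Sh B_\K$-definable set of balls of infinitesimal radius by the relation ``same standard part''. You then transfer $+,\times$ because $\st$ is a ring homomorphism on $\Cal O_\K$.

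Where you differ is in how external definability is obtained. The paper passes through the field. The valuation ring $V$ of the natural valuation $\K^\times \to \Gamma/\Z$ is $\shk$-definable because $\Z$ is convex in $\Gamma$. Fact~\ref{fact:ksh} is then used to identify $\Sh B_\K$ with the structure induced on $\B_\K$ by $\shk$. After that a single nonstandard radius $\delta$ suffices, and $\uppi$, $\approx$ and the $\uppi$-images of zero sets of integer polynomials are definable at once.

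You stay inside $\Sa B_\K$. The balls containing $0$ form a $\Sa B_\K$-definable linear order, and the infinitesimal ones form a convex subset, so Fact~\ref{fact:convex} alone yields $O$. You then define $\sim$ and the operations by hand, quantifying over non-infinitesimal radii and over $\sim$-classes. Your route is more self-contained, since it never compares $\Sh B_\K$ with $\shk$. The paper's route is shorter once that comparison is granted.

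There are three minor slips.
\begin{enumerate}
\item ``Infinitesimal'' should mean radius below $p^{-n}$, not $p^{n}$, for every standard $n$.
\item Saturation is what provides infinitesimal balls. It is not what makes elements of $\Z_p$ standard parts, since they already lie in $\Cal O_\K$.
\item Fact~\ref{fact:shelah} plays no role: quantifying over a predicate of $\Sh B_\K$ is just first-order logic in that structure.
\end{enumerate}
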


\begin{proof}
Let $\Gamma$ be the value group of $\K$ with respect to the $p$-adic valuation, this is an elementary extension of $\zgoup$.
Now $\Z$ is a convex subgroup of $\Gamma$ so we let $\Val$ be the natural valuation $\K^\times \to \Gamma/\Z$ and let $V$ be the valuation ring of $\Val$.
Then the residue field of $\Val$ is $\Q_p$ and the residue map $\st \colon V \to \Q_p$ is the standard part map that takes every ``finite" element of $\K$ to the nearest element of $\Q_p$.
We also let $\st\colon V^n\to \Q_p^n$ be the map given by applying $\st$ coordinate-wise for each $n\ge 2$. 

\begin{Claim*}
Fix $f\in \Z[x_1,\ldots,x_n]$ and let $X\subseteq \Q_p^n$, $X^*\subseteq \K^n$ be the vanishing set of $f$ over $\Q_p$, $\K$, respectively.
Then $\st(X^* \cap V^n)=X$.
\end{Claim*}

\begin{claimproof}
First note that $\st(X^*\cap V^n)$ is contained in $X$ as $\st$ is a ring homomorphism.
The inclusion $\Q_p\to \K$ is right-inverse to $\st$.
Hence $X \subseteq X^*\cap V^n$, and so $\st(X^*\cap V^n)$ contains $\st(X) = X$.
\end{claimproof}

An application of Fact~\ref{fact:ksh} shows that $\Sh B_\K$ is interdefinable with the structure induced on $\B$ by $\shk$.
Let $B(a, \delta) \in \B_\K$ be the ball with center $a$ and radius $\delta$.
We let $\rad(B) \in \Gamma$ be the radius of a ball $B \in \B_\K$.
Then $\rad \colon \B_\K \to \Gamma_>$ is surjective and $\K$-definable, so we consider $\Gamma_>$ to be an imaginary sort of $\Sa B_\K$ and $\rad$ to be a $\Sa B_\K$-definable function.
Fix $\delta \in \Gamma_>$ with $\delta > \N$.
Let $D$ be the set of balls $B(a,\delta) \in \B_\K$ with $a \in V$.
Now $\Val$ is $\shk$-definable as $\Z$ is a convex subset of $\Gamma$. 
Hence $V$ is $\shk$-definable.
It follows that $D$ is $\Sh B_\K$-definable.
Note that for any $\alpha \in V$ and $\beta \in B(\alpha,\delta)$ we have $\st(\alpha) = \st(\beta)$.
We define a surjection $\uppi \colon D \to \Q_p$ by declaring $\uppi( B(\alpha,\delta) ) = \st(\alpha)$ for all $\alpha \in V$ and also let $\uppi \colon D^n \to \Q_p^n$ be the function giving by applying $\uppi$ coordinate-wise for all $n\ge 2$.
Note that $\uppi$ is surjective and $\shk$-definable.
Let $\approx$ be the equivalence relation on $D$ given by declaring $\alpha\approx\beta$ if and only if $\uppi(\alpha)=\uppi(\beta)$.
Then $\approx$ is $\Sh B_\K$-definable so we identify $\Q_p$ with $D/\!\approx$ and consider $\Q_p$ to be a $\Sh B_\K$-definable set of imaginaries.
It now suffices to show that the field operations on $\Q_p$ are definable in $\Sh B_\K$.
Fix a polynomial $f\in\Z[x_1,\ldots,x_n]$ and let $X, X^*$ be as in the claim.
It suffices to show that $X$ is $\Sh B_\K$-definable.
We  show that $X$ is the image under $\uppi$ of an $\Sh B_\K$-definable subset of $D^n$.
Let $X^*\subseteq \K^n$ be the vanishing set of $f$ over $\K$.
Let $Y$ be the set of $(B_1,\ldots,B_n) \in D^n$ such that $B_1\times\cdots\times B_n$ intersects $X^*\cap V$.
Observe that $Y$ is definable in $\Sh B_\K$ as it is definable in $\shk$.
Note that $Y$ is the set of tuples of the form $(B(\alpha_1,\delta),\ldots,B(\alpha_n,\delta))$ for $(\alpha_1,\ldots,\alpha_n) \in X^*\cap V$.
For any such tuple we have
\[
\uppi\left(B(\alpha_1,\delta),\ldots,B(\alpha_n,\delta)\right) = (\st(\alpha_1),\ldots,\st(\alpha_n)).
\]
Hence $\uppi(Y) = \st(X^*\cap V^n)$.
By the claim $\st(X^*\cap V^n) = X$.
\end{proof}

\section{Trace definability and local trace definability}\label{section:bckgrnd}
We recall trace definability, introduce local trace definability, and prove a number of useful general results about both.

\medskip
Fact~\ref{fact:shelah} shows that if $\Sa M$ is $\nip$ and $\Sa M \to \Sa N$ is an elementary embedding with $\Sa N$ highly saturated then a subset of $M^m$ is $\Sh M$-definable if and only if it is a preimage of an $\Sa N$-definable set under the natural map $M^m \to N^m$.
This allows one to show that many model-theoretic properties pass from $\Sa M$ to $\Sh M$.
We generalize this idea.
Given a map $\uptau\colon M \to N$ we abuse notation by letting $\uptau$ denote the map $M^n \to N^n$ given by applying $\uptau$ componentwise for each $n \ge 2$.
A map $\uptau \colon \Sa M \to \Sa N$ between arbitrary structures is a {\bf trace embedding} if every definable subset of every $M^m$ is of the form $\uptau^{-1}(Y)$ for definable $Y \subseteq N^m$.
Consideration of the graph of equality shows that any trace embedding is injective.
A {\bf trace definition} $\Sa M \rightsquigarrow \Sa N$ is a map $\uptau\colon M \to N^n$ for some $n$ such that every $\Sa M$-definable subset of every $M^m$ is of the form $\uptau^{-1}(Y)$ for $\Sa N$-definable $Y \subseteq N^{mn}$.
We say that $\Sa N$ trace defines $\Sa M$ if there is a trace definition $\Sa M \rightsquigarrow \Sa N$.
It is easy to see that trace definitions form a category and trace definability is therefore transitive.

\begin{fact}\label{fact:embed}
Elementary embeddings are trace embeddings.
If an $L$-structure $\Sa M$ admits quantifier elimination then any embedding of $\Sa M$ into another $L$-structure is a trace embedding.
\end{fact}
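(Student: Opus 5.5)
We prove the two claims of Fact~\ref{fact:embed} separately, in each case by unwinding the definition of a trace embedding: given a map $\uptau\colon M\to N$ and a definable $X\subseteq M^m$, we must produce a definable $Y\subseteq N^m$ with $X=\uptau^{-1}(Y)$.

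For the first claim, let $\uptau\colon\Sa M\to\Sa N$ be an elementary embedding and let $X\subseteq M^m$ be defined by $\varphi(x,\beta)$ with $\beta\in M^{|y|}$. Take $Y=\varphi(N^m,\uptau(\beta))$. For $a\in M^m$, elementarity gives $\Sa M\models\varphi(a,\beta)$ if and only if $\Sa N\models\varphi(\uptau(a),\uptau(\beta))$. Hence $a\in X$ if and only if $\uptau(a)\in Y$, that is, $X=\uptau^{-1}(Y)$.

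For the second claim, let $\uptau\colon\Sa M\to\Sa N$ be an embedding of $L$-structures and suppose $\Sa M$ admits quantifier elimination. Any definable $X\subseteq M^m$ is then defined by a quantifier-free formula $\psi(x,\beta)$. Again take $Y=\psi(N^m,\uptau(\beta))$. Embeddings preserve atomic formulas and their negations, so by induction on boolean combinations they preserve all quantifier-free formulas with parameters. Thus for $a\in M^m$ we have $\Sa M\models\psi(a,\beta)$ if and only if $\Sa N\models\psi(\uptau(a),\uptau(\beta))$, and so $X=\uptau^{-1}(Y)$.

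There is no real obstacle in either case. The one point to check is that quantifier elimination allows parameters from $M$ to appear in $\psi$. This is harmless: we eliminate quantifiers in the parameter-free formula $\varphi(x,y)$ and then substitute $\beta$, and $\uptau(\beta)$ is a legitimate parameter tuple in $N$.
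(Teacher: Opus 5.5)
Your proof is correct. It directly unwinds the definition of a trace embedding: you pull back $\varphi(N^m,\uptau(\beta))$ using elementarity in the first case, and a quantifier-free equivalent $\psi(N^m,\uptau(\beta))$ using preservation of quantifier-free formulas under embeddings in the second. The paper gives no argument of its own here and simply cites \cite[Prop.~1.2]{trace1}; yours is the standard argument behind that reference.
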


Fact~\ref{fact:embed} is \cite[Prop.~1.2]{trace1}.
We now give several equivalent definitions of trace definability.

\begin{fact}\label{fact:trace-def}
The following are equivalent for arbitrary structures $\Sa M$ and $\Sa N$.
\begin{enumerate}[leftmargin=*]
\item $\Sa M$ is trace definable in $\Sa N$.
\item Up to isomorphism $M \subseteq N^n$ for some $n$ and every $\Sa M$-definable subset of every $M^m$ is of the form $Y \cap M^m$ for $\Sa N$-definable $Y \subseteq N^{mn}$.
\item There is a finite set $\Cal E$ of functions $M \to N$ such that every $\Sa M$-definable set is of the form
\[
\{ (a_1,\ldots,a_m) \in M^m : (f_1(a_{i_1}),\ldots,f_n(a_{i_n})) \in Y\}
\]
for some $f_1,\ldots,f_n \in \Cal E$, $i_1,\ldots,i_n \in \{1,\ldots,m\}$, and $\Sa N$-definable $Y \subseteq N^n$.
\item If $L$ is a relational language then every $\Sa M$-definable $L$-structure embeds into an $\Sa N$-definable $L$-structure.
\end{enumerate}
\end{fact}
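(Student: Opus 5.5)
We prove the equivalences in the cycle (1)$\Rightarrow$(2)$\Rightarrow$(3)$\Rightarrow$(1), and then prove (2)$\Leftrightarrow$(4) separately.

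\emph{(1)$\Rightarrow$(2).} Let $\uptau\colon M \to N^n$ be a trace definition. As noted before Fact~\ref{fact:embed}, considering the graph of equality shows that $\uptau$ is injective: equality on $M$ is $\uptau^{-1}(Y)$ for some $\Sa N$-definable $Y$. We transport the structure along $\uptau$ to assume $M \subseteq N^n$ with $\uptau$ the inclusion. Then $\uptau^{-1}(Y) = Y \cap M^m$, where we identify $(N^n)^m$ with $N^{mn}$.

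\emph{(2)$\Rightarrow$(3).} Take $\Cal E = \{e_1,\ldots,e_n\}$, where $e_j$ is the restriction to $M$ of the $j$th coordinate projection $N^n \to N$. Given $Y \subseteq N^{mn}$, the set $Y \cap M^m$ is exactly the set of tuples $(a_1,\ldots,a_m)$ with $(e_1(a_1),\ldots,e_n(a_1),\ldots,e_1(a_m),\ldots,e_n(a_m)) \in Y$. This has the required form, with the index list $i_1,\ldots,i_{mn}$ listing each $i$ exactly $n$ times.

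\emph{(3)$\Rightarrow$(1).} Let $\Cal E = \{f_1,\ldots,f_k\}$ and define $\uptau(a) = (f_1(a),\ldots,f_k(a)) \in N^k$. A set of the form in (3) is $\uptau^{-1}(Y')$, where $Y' \subseteq N^{mk}$ is the preimage of $Y$ under the coordinate projection $N^{mk} \to N^n$ that picks out the $f_{j}(a_{i})$ coordinates. Since $Y'$ is $\Sa N$-definable, $\uptau$ is a trace definition.

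\emph{(2)$\Rightarrow$(4).} Let $\Sa O$ be an $\Sa M$-definable $L$-structure, with $L$ relational, whose domain is a definable $D \subseteq M^k$. Using (2) we have $M \subseteq N^n$, hence $D \subseteq N^{kn}$. For each relation $R$ of arity $r$, the interpretation $R^{\Sa O} \subseteq D^r \subseteq M^{kr}$ is $\Sa M$-definable, so it equals $Y_R \cap M^{kr}$ for some $\Sa N$-definable $Y_R$. The same holds for $D$ itself, say $D = Y_D \cap M^k$. We define an $\Sa N$-definable $L$-structure on $Y_D$ by interpreting $R$ as $Y_R \cap Y_D^r$. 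Since $Y_R \cap D^r = R^{\Sa O}$, the inclusion $D \to Y_D$ is an embedding of $L$-structures. One caveat needs care here. If $L$ is infinite, an $\Sa N$-definable $L$-structure must still be legitimate, and this is fine since each relation is separately definable. Also, (4) should quantify over $L$ of arbitrary size, so that in particular $\Sa M$ itself in its Morleyized language is included.

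\emph{(4)$\Rightarrow$(2).} Apply (4) to the expansion of $\Sa M$ by a relation symbol for every $\Sa M$-definable set; this is a relational structure definable in $\Sa M$. We obtain an embedding $\iota$ of $M$ into the domain of an $\Sa N$-definable $L$-structure, which is a subset of some $N^n$. For each $\Sa M$-definable $X \subseteq M^m$ with symbol $R_X$, the embedding property gives $X = \iota^{-1}(R_X^{\Sa N})$, and $R_X^{\Sa N}$ is $\Sa N$-definable. Identifying $M$ with $\iota(M)$ yields (2).

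I expect the main obstacle to be the bookkeeping for (4): making precise what "$\Sa M$-definable $L$-structure" means when $L$ is infinite, and ensuring that the Morleyization of $\Sa M$ is an admissible instance. A secondary point is that the domain in (4) might consist of imaginaries; if so, one reduces to definable domains, since the definition of trace definability uses only $N^n$.
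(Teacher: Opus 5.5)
Your proof is correct and follows essentially the paper's route. The cycle (1)$\Rightarrow$(2)$\Rightarrow$(3)$\Rightarrow$(1), via coordinate projections and the tuple map $a \mapsto (f_1(a),\ldots,f_k(a))$, is the paper's argument; your (4)$\Rightarrow$(2) is the paper's Morleyization argument, except that giving every parameter-definable set its own symbol yields the pullback directly from the embedding, with no need for quantifier elimination or Fact~\ref{fact:embed}, and your (2)$\Rightarrow$(4) construction correctly supplies the direction the paper leaves to the reader.
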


\begin{proof}
The equivalence of (1) and (2) is clear from the definitions as any trace definition is injective.
If (2) holds then the collection of maps $M \to N$ given by restricting coordinate projections $N^n \to N$ satisfies the condition of (3).
Conversely if $\Cal E = \{f_1, \ldots, f_n\}$ satisfies the condition of (3) then the map $M \to N^n$ given by $a \mapsto (f_1(a), \ldots, f_n(a))$ is easily seen to be a trace definition $\Sa M \rightsquigarrow \Sa N$.
Suppose that (4) holds.
After possibly Morleyizing we may suppose that $\Sa M$ admits quantifier elimination and the language $L$ of $\Sa M$ is relational.
Then $\Sa M$ embeds into an $\Sa N$-definable $L$-structure.
This embedding give a trace definition $\Sa M \rightsquigarrow \Sa N$ by Fact~\ref{fact:embed}.
We leave it to the reader to show that (1) implies (4).
\end{proof}

We say that $\Sa N$ {\bf locally trace defines} $\Sa M$ if there is a possibly infinite collection $\Cal E$ of functions $M \to N$ such that every $\Sa M$-definable set is of the form
\[
\{ (a_1,\ldots,a_m) \in M^m : (f_1(a_{i_1}),\ldots,f_n(a_{i_n})) \in Y\}
\]
for some $f_1,\ldots,f_n \in \Cal E$, $i_1,\ldots,i_n \in \{1,\ldots,m\}$, and $\Sa N$-definable $Y \subseteq N^n$.
We say that $\Cal E$ witnesses local trace definability of $\Sa M$ in $\Sa N$.
Note that if $\Cal E$ witnesses local trace definability of $\Sa M$ in $\Sa N$ and $\Cal F$ witnesses local trace definability of $\Sa N$ in $\Sa O$ then $\{ f \circ e : f \in \Cal F, e \in \Cal E\}$ witnesses local trace definability of $\Sa M$ in $\Sa O$.
Hence local trace definability is also transitive.

\medskip
We think of the $f \in \Cal E$ as $\Sa N$-valued invariants of elements of $\Sa M$, so the idea is that any definable relation between elements of $\Sa M$ reduces to an $\Sa N$-definable relation between the invariants.

\medskip
We now extend these definitions to theories in the natural way.
Let $T$ be a theory.
Then $T$ (locally) trace defines a structure if and only if some model of $T$ does.
Furthermore $T$ (locally) trace defines another theory $T^*$ if some model of $T$ (locally) trace defines a model of $T^*$.
Two theories are (locally) trace equivalent when each (locally) trace defines the other and two structures are (locally) trace equivalent when their theories are.

\begin{proposition}\label{prop:loc trace theories}
If $T$ and $T^*$ are theories then some model of $T^*$ is locally trace definable in a model of $T$ if and only if every model of $T^*$ is locally trace definable in a model of $T$.
\end{proposition}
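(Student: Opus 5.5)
One direction is trivial, so the plan concerns the other. Suppose $\Sa M \models T^*$ is locally trace definable in $\Sa N \models T$, witnessed by a family $\Cal E$ of functions $M \to N$. Let $\Sa M'$ be an arbitrary model of $T^*$. I will produce a model $\Sa N'$ of $T$ and a family $\Cal E'$ of functions $M' \to N'$ witnessing local trace definability of $\Sa M'$ in $\Sa N'$. The idea is to reduce to a statement about finite pieces of data and then apply compactness.

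The first step is to package the witnessing data syntactically. For each $L^*$-formula $\varphi(x_1,\ldots,x_m)$ without parameters, fix:
\begin{itemize}
\item functions $f_1,\ldots,f_n \in \Cal E$;
\item indices $i_1,\ldots,i_n$;
\item an $L$-formula $\psi_\varphi(z_1,\ldots,z_n; w)$ and a parameter $b_\varphi \in N$;
\end{itemize}
such that, for all $a \in M^m$,
\[ \Sa M \models \varphi(a) \iff \Sa N \models \psi_\varphi(f_1(a_{i_1}),\ldots,f_n(a_{i_n}); b_\varphi). \]
It suffices to handle parameter-free formulas. Indeed, $\varphi(x,c)$ with parameter $c \in M^k$ is a fibre of $\varphi(x,y)$, and the fibre is obtained by fixing the values $f_j(c_\ell)$ as further parameters in $N$.

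Next I consider the two-sorted language $L^{*} \sqcup L$ together with:
\begin{itemize}
\item a new function symbol $F_f$ from the first sort to the second for each $f \in \Cal E$ actually used;
\item a constant $b_\varphi$ of the second sort for each $\varphi$.
\end{itemize}
Let $\Sigma$ consist of $T^*$ on the first sort, $T$ on the second, and, for each $\varphi$, the sentence
\[ \forall x\,\bigl(\varphi(x) \leftrightarrow \psi_\varphi(F_{f_1}(x_{i_1}),\ldots,F_{f_n}(x_{i_n}); b_\varphi)\bigr). \]
The structure $(\Sa M, \Sa N, (f)_{f}, (b_\varphi)_\varphi)$ is a model of $\Sigma$, so $\Sigma$ is consistent. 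Its complete theory $\Sigma^+$ restricts to $T^*$ on the first sort.

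The main step is to realize $\Sa M'$ as the first sort of a model of $\Sigma^+$. For this I use the standard fact that if a complete two-sorted theory restricts to the complete theory $T^*$ on one sort, then every model of $T^*$ elementarily embeds into the first sort of some model. I would prove this by elementary diagram plus compactness: add $\mathrm{ElDiag}(\Sa M')$ to $\Sigma^+$, identifying the first sort with new constants. Finite satisfiability holds because any finite fragment of the diagram is a sentence $\exists \bar c\, \theta(\bar c)$ true in a model of $T^*$, hence true in the first sort of $(\Sa M, \ldots)$. This gives a model $(\Sa M'', \Sa N', \ldots)$ of $\Sigma^+$ with $\Sa M' \preceq \Sa M''$. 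Then $\Sa M''$ is locally trace defined in $\Sa N' \models T$ by the interpretations of the $F_f$.

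Finally I transfer from $\Sa M''$ down to $\Sa M'$. Restrict each function to $M'$ and take the parameters in $N'$. Because $\Sa M' \preceq \Sa M''$, every $\Sa M'$-definable set is the trace on $M'$ of the corresponding $\Sa M''$-definable set defined with the same parameters. Hence the restrictions of the $F_f$ witness local trace definability of $\Sa M'$ in $\Sa N'$. I expect this last transfer, together with the compactness step, to be the main care point: one must be sure the parameter reduction and the use of the elementary embedding $\Sa M' \preceq \Sa M''$ fit together without losing any definable sets, which is what makes the restricted family a genuine witness.
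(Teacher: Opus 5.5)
Your proposal is correct and is essentially the compactness argument the paper intends; the paper leaves this proof to the reader as a slight modification of its argument for trace definability. As there, you encode the witnessing family $\Cal E$ by new function symbols in a two-sorted expansion, realize the arbitrary model of $T^*$ elementarily inside the first sort of a model of the resulting theory, and restrict the witnesses along that elementary embedding. Using the elementary diagram rather than a saturated extension, and constants $b_\varphi$ rather than existentially quantified parameters, is only a cosmetic variation.
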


We proved the analogous statement for trace definability in \cite[Lemma~1.7]{trace1}.
The proof of the local case follows by slight modifications of that argument and is therefore left to the reader.
It follows that local trace definability between theories is transitive and hence that local trace equivalence is an equivalence relation.
Two structures are locally trace equivalent if and only if each is locally trace definable in an elementary extension of the other.

\medskip
We give a number of equivalent definitions of local trace definability.
Let $\Sa M$ be a structure.
Let $I$ be an arbitrary index set and let $nI$ be the disjoint union of $n$  copies of $I$ for every $n \ge 1$.
We identify $M^{nI}$ with $(M^I)^n$ for each $n \ge 1$.
Let $x$ be a tuple $(x_i)_{i \in I}$ of variables and let $\varphi(x)$ range over formulas with parameters from $M$ and variables among the $x_i$.
We say that a subset $X$ of $M^I$ is $\Sa M$-definable if there is $\varphi(x)$ such that $X = \{ a \in M^I : \Sa M \models \varphi(a)\}$.
Of course when $I$ is finite this is just the usual notion of a definable set.

\begin{proposition}\label{prop:loc equiv}
The following are equivalent for any structures $\Sa M$ and $\Sa N$.
\begin{enumerate}[leftmargin=*]
\item $\Sa M$ is locally trace definable in $\Sa N$.
\item Up to isomorphism $M$ is a subset of $N^I$ for some index set $I$ and every $\Sa M$-definable subset of every $M^m$ is of the form $Y \cap M^m$ for some $\Sa N$-definable $Y \subseteq N^{mI}$.
\item There is an index set $I$ and an injection $\uptau\colon M \to N^I$ so that for every $\Sa M$-definable subset $X$ of every $M^m$ there is a $\Sa N$-definable subset $Y$ of $N^{mI}$ such that we have $\alpha \in X$ if and only if $\uptau(\alpha) \in Y$ for all $\alpha \in M^n$.
\item For every $m_1,\ldots,m_k$ and $\Sa M$-definable sets $X_1\subseteq M^{m_1},\ldots,X_k\subseteq M^{m_k}$ there is an injection $\uptau\colon O \to N^n$ for some $n$ and $\Sa N$-definable sets $Y_1\subseteq N^{nm_1},\ldots,Y_k\subseteq N^{nm_k}$ such that we have $\alpha\in X_i$ if and only if $\uptau(\alpha)\in Y_i$ for every $i = 1,\ldots,k$ and $\alpha \in M^{m_i}$.
\item If $L$ is a finite relational language then any $L$-structure which is definable in $\Sa M$ embeds into an $L$-structure which is definable in $\Sa N$.
\item For any $\Sa M$-definable $X\subseteq M^m$ with $m \ge 2$ there is a function $\uptau\colon M \to N^n$ for some $n$ and $\Sa N$-definable $Y\subseteq N^{nm}$ such that we have $\alpha \in X$ if and only if $\uptau(\alpha) \in Y$ for all $\alpha \in M^m$.
\item If $L$ is a language consisting of a single relation of arity $\ge 2$ then every $\Sa M$-definable $L$-structure embeds into an $\Sa N$-definable $L$-structure.
\item Same as (5) but with ``definable" replaced with ``zero-definable".
\item Same as (6) but with ``definable" replaced with ``zero-definable".
\end{enumerate}
\end{proposition}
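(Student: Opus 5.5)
I will prove the equivalences along the chain
\[
(1)\Leftrightarrow(2)\Leftrightarrow(3),\quad (3)\Rightarrow(4)\Rightarrow(6)\Rightarrow(3),\quad (4)\Rightarrow(5)\Rightarrow(7)\Rightarrow(6),\quad (5)\Rightarrow(8)\Rightarrow(9)\Rightarrow(6).
\]

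\textbf{The easy directions.}
\begin{itemize}
\item $(1)\Leftrightarrow(3)$. Given a witnessing family $\Cal E$, index it by $I=\Cal E$ and put $\uptau(a)=(f(a))_{f\in\Cal E}$. A set of the form $\{(a_1,\ldots,a_m) : (f_1(a_{i_1}),\ldots,f_n(a_{i_n}))\in Y\}$ is then $\uptau^{-1}$ of a definable subset of $N^{mI}$ which only uses finitely many coordinates. Conversely, the coordinate functions $a\mapsto\uptau(a)_i$ form a witnessing family, since a formula only mentions finitely many variables.
\item Injectivity of $\uptau$. Apply the condition in (3) to the equality relation; this also makes (3) formally equivalent to (2).
\item $(3)\Rightarrow(4)$. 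Given finitely many $X_1,\ldots,X_k$, only finitely many coordinates $J\subseteq I$ occur in the corresponding $Y_i$. Restrict $\uptau$ to $J$ and add the finitely many coordinates used to define equality, so that the restriction stays injective. (Here $O$ in (4) should read $M$.)
\item $(4)\Rightarrow(6)$ and $(5)\Rightarrow(7)$. These are specializations.
\item $(4)\Leftrightarrow(5)$. Unwind the definitions: an $\Sa M$-definable $L$-structure with domain $D\subseteq M^d$ is coded by finitely many definable sets, and an injection $\uptau\colon M\to N^n$ induces an embedding of $D$ into $N^{nd}$.
\end{itemize}

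\textbf{The key step: $(6)\Rightarrow(3)$.} This is the only step that needs an idea. The functions in (6) need not be injective and handle one set at a time, so I take a product.

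Let $\Cal D$ be the set of all $\Sa M$-definable $X\subseteq M^m$ with $m\ge 2$, plus equality. For each $X\in\Cal D$ choose $\uptau_X\colon M\to N^{n_X}$ and $Y_X$ as given by (6). Put $I=\bigsqcup_X n_X$ and $\uptau=(\uptau_X)_X\colon M\to N^I$. Each $X$ is then $\uptau^{-1}$ of $Y_X$, viewed as a set in the relevant $X$-block of coordinates.

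Two points need checking.
\begin{itemize}
\item Unary sets. For $X\subseteq M$, use the binary set $X\times M$.
\item Injectivity. This comes from the equality component: if $\uptau(a)=\uptau(b)$, then $(a,b)$ and $(a,a)$ have the same image, so $(a,b)\in\Delta$ and $a=b$.
\end{itemize}
So the arity restriction in (6) costs nothing.

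\textbf{The zero-definable variants.} $(5)\Rightarrow(8)$ and $(8)\Rightarrow(9)$ are again specializations. For $(9)\Rightarrow(6)$, given $X=\varphi(M^m,b)$ I consider the zero-definable set $\varphi(M^{m+|b|})$, or rather a single-relation version of it. Applying (9) to this set and plugging in the parameter $b$ in the source, I need to realize the parameter side in $N$. I do this by sending $b$ to the fixed tuple $\uptau(b)$ and using $Y_{\uptau(b)}$, which is $\Sa N$-definable with parameters. Some care is needed because $b$ is a tuple of elements and the relation must stay of arity $\ge2$; I handle both by the product trick above.

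I expect this parameter-absorption step, together with the product construction in $(6)\Rightarrow(3)$, to be the only places requiring more than bookkeeping.
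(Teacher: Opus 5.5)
Your chain breaks at $(5)\Rightarrow(8)$, which is not a specialization. In (8), and likewise in (9), ``definable'' is replaced by ``zero-definable'' in both places. So the target $L$-structure must be zero-definable in $\Sa N$ (compare Proposition~\ref{prop:hy}(3)). But (5) only provides a target defined over some parameter tuple $c$ from $N$. Your chain reaches (8) and (9) only through this step, so $(5)\Rightarrow(8)$ and $(6)\Rightarrow(9)$ are not established.

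What is missing is the $\Sa N$-side counterpart of the parameter absorption you carried out on the $\Sa M$ side. This is exactly what the paper's Lemma~\ref{lem:local 1} supplies. Suppose the target has domain $P=\theta(N^n,c)$ and relations $R_j=\psi_j(N^{nk_j},c)$. Put $P^*=\{(x,z):\Sa N\models\theta(x,z)\}$, and let $R^*_j\bigl((x_1,z_1),\ldots,(x_{k_j},z_{k_j})\bigr)$ hold iff $z_1=\cdots=z_{k_j}$ and $\Sa N\models\psi_j(x_1,\ldots,x_{k_j},z_1)$. This structure is zero-definable, and $x\mapsto(x,c)$ embeds $(P;(R_j)_j)$ into it. Composing with the embedding from (5) gives (8). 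In the function form of (9), this just means appending $c$ as constant coordinates to $\uptau$.

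With that repair your proof is correct, and it is organized differently from the paper's. The paper gets $(7)\Rightarrow(5)$ from the finite fusion Lemmas~\ref{lem:fusion} and~\ref{lem:nfusion}: a product of finitely many $\Sa N$-definable structures, with unary predicates coded on $M\times\{p,q\}$. It cites Lemma~\ref{lem:local 1} for both zero-definable equivalences, and leaves the link between (1)--(3) and (4)--(9) implicit.

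Your route instead takes a single infinite product in $(6)\Rightarrow(3)$, with the diagonal forcing injectivity and $X\times M$ handling unary sets. You then restrict to finitely many coordinates in $(3)\Rightarrow(4)$. This performs the fusion once, globally, and makes that link explicit.

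Your $(9)\Rightarrow(6)$ is also correct and needs no extra product trick. The formula $\varphi(x,y)$ already gives a zero-definable relation on $M$ of arity $m+|y|\ge 2$, and $\uptau$ is applied to $b$ coordinatewise.
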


Here (4) is the motivation of the term ``local trace definability"; local in the sense of handling finitely many definable sets at a time.

\begin{proof}
It is easy to see that (2) is equivalent to (3), that (4) is equivalent to (5), and that (6) is equivalent to (7).
Note that if $\Cal E$ is a collection of functions $M \to N$ witnessing local trace definability of $\Sa M$ in $\Sa N$ then the map $M \to N^{\Cal E}$ given by $\alpha \mapsto (f(\alpha))_{f \in \Cal E}$ satisfies the condition of (3).
Likewise if (2) holds then the collection of functions $M \to N$ given by restricting coordinate projections $N^I \to N$ witnesses local trace definability of $\Sa M$ in $\Sa N$.
It is clear that (5) implies (7).
The equivalence of (5) and (8), and the equivalence of (6) and (9) both follow by Lemma~\ref{lem:local 1} below.
Finally, Lemma~\ref{lem:nfusion} below shows that (7) implies (5).
\end{proof}

\begin{lemma}
\label{lem:fusion}
Let $\Sa M$ be an arbitrary structure,  $L_1,\ldots,L_n$ be  relational languages and  $L$ be the disjoint union of the $L_i$.
Let $\Sa O$ be an $L$-structure and  $\Sa O_i$ be the $L_i$-reduct of $\Sa O$ for each $i = 1,\ldots,n$.
Suppose that each $\Sa O_i$ embeds into an $\Sa M$-definable $L_i$-structure.
Then $\Sa O$ embeds into an $\Sa M$-definable $L$-structure.
\end{lemma}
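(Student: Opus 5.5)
We prove Lemma~\ref{lem:fusion} by taking a product of the given embeddings. For each $i$ fix an $\Sa M$-definable $L_i$-structure $\Sa P_i$, with domain $P_i\subseteq M^{k_i}$ (or a definable quotient; we work in $\Sa M^{\mathrm{eq}}$ if imaginaries are allowed), and an embedding $\uptau_i\colon \Sa O_i\to\Sa P_i$. The candidate target structure $\Sa P$ has domain
$$P:=P_1\times\cdots\times P_n\subseteq M^{k_1+\cdots+k_n},$$
which is $\Sa M$-definable. The candidate embedding is $\uptau(a)=(\uptau_1(a),\ldots,\uptau_n(a))$, which is injective because each $\uptau_i$ is injective.

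Each relation is interpreted through the coordinate it came from. For an $m$-ary relation symbol $R\in L_i$, let $\pi_i\colon P\to P_i$ be the coordinate projection and set
$$R^{\Sa P}=\{(p_1,\ldots,p_m)\in P^m : (\pi_i(p_1),\ldots,\pi_i(p_m))\in R^{\Sa P_i}\}.$$
This set is $\Sa M$-definable because $R^{\Sa P_i}$ and $\pi_i$ are. Since the $L_i$ are disjoint, this gives a well-defined $L$-structure on $P$, and it is $\Sa M$-definable. If $L$ is infinite, "definable $L$-structure" just means each relation is definable, so this causes no problem.

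Next we check that $\uptau$ is an embedding. For $R\in L_i$ and $a_1,\ldots,a_m\in O$ we have $(\uptau(a_1),\ldots,\uptau(a_m))\in R^{\Sa P}$ if and only if $(\uptau_i(a_1),\ldots,\uptau_i(a_m))\in R^{\Sa P_i}$. Because $\uptau_i$ is an embedding of $\Sa O_i$, this holds if and only if $(a_1,\ldots,a_m)\in R^{\Sa O_i}=R^{\Sa O}$. Together with injectivity, this shows $\uptau$ is an embedding of $L$-structures.

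The only point needing any care is equality. In a relational language, an embedding must preserve equality, which is exactly injectivity, and that follows from injectivity of any single $\uptau_i$. If the paper's notion of a definable structure allows the domain to be a definable set of imaginaries, the same construction runs inside $\Sa M^{\mathrm{eq}}$, because a finite product of imaginary sorts is again an imaginary sort. I do not expect any substantial obstacle.
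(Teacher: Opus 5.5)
Your proof is correct and is essentially the paper's own argument: take $P=P_1\times\cdots\times P_n$, interpret each $R\in L_i$ by pulling back $R^{\Sa P_i}$ along the coordinate projection $P\to P_i$, and embed $\Sa O$ via $\alpha\mapsto(\uptau_1(\alpha),\ldots,\uptau_n(\alpha))$. Your remarks on injectivity, infinite languages, and imaginaries are harmless details that the paper leaves implicit.
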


\begin{proof}
For each $i=1,\ldots,n$ let $\Sa P_i$ be an $\Sa M$-definable $L_i$-structure, $\uptau_i$ be an embedding $\Sa O_i\to\Sa P_i$, and $\uppi_i$ be the coordinate projection $P := P_1\times\cdots\times P_n \to P_i$.
Let $\Sa P$ be the $L$-structure on $P$ given by declaring
\[ \Sa P\models R(\beta_1,\ldots,\beta_k)\quad\Longleftrightarrow\quad \Sa P_i\models R(\uppi_i(\beta_1),\ldots,\uppi_i(\beta_k))\]
for all $i=1,\ldots,n$ and $k$-ary $R\in L_i$.
Then $\Sa P$ is $\Sa M$-definable.
Let $\uptau\colon O \to P$ be given by $\uptau(\alpha)=(\uptau_1(\alpha),\ldots,\uptau_n(\alpha))$ for all $\alpha \in O$.
For any $k$-ary $R \in L_i$ and $\alpha_1,\ldots,\alpha_k \in O$ we have
\begin{align*}
\Sa O \models R(\alpha_1,\ldots,\alpha_k) \quad&\Longleftrightarrow\quad \Sa O_i \models R(\alpha_1,\ldots,\alpha_k) \\ &\Longleftrightarrow\quad \Sa P_i \models R(\uptau_i(\alpha_1),\ldots,\uptau_i(\alpha_k)) \\ &\Longleftrightarrow\quad \Sa P_i\models R( \uppi_i(\uptau(\alpha_1)),\ldots,\uppi_i(\uptau(\alpha_k)) \\ &\Longleftrightarrow\quad \Sa P_{\hspace{.1cm}}\models R(\uptau(\alpha_1),\ldots,\uptau(\alpha_k)).
\end{align*}
Hence $\uptau$ is an embedding $\Sa O\to \Sa P$.
\end{proof}

\begin{lemma}\label{lem:nfusion}
Let $\Sa O = (O; U_1,\ldots,U_n, R_1,\ldots, R_m)$ be a relational structure and $\Sa M$ be an arbitrary structure.
Suppose that each $U_i$ is unary, each $R_i$ has arity $\ge 2$, and that each $(O; R_i)$ embeds into an $\Sa M$-definable structure.
Then $\Sa O$ embeds into an $\Sa M$-definable structure.
\end{lemma}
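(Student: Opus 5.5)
The plan is to reduce to Lemma~\ref{lem:fusion}. Take $L_i = \{R_i\}$ for $i = 1,\ldots,m$, and put each unary predicate into one of these languages. The issue is that Lemma~\ref{lem:fusion} needs each reduct $\Sa O_i$ to embed into an $\Sa M$-definable structure. By hypothesis we only know this for $(O;R_i)$, not for $(O;U_1,\ldots,U_n,R_i)$. So the main step is to show that unary predicates can be absorbed into a relation of arity $\ge 2$.

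First suppose $m \ge 1$. Define a single relation $R^*$ on $O$ of arity $k \ge 2$, where $k$ is the arity of $R_1$, which codes both $R_1$ and all the $U_j$. One natural choice is to use distinct "diagonal" positions. Take $R^*$ to be $R_1$ on tuples that are not constant. On the diagonal, encode membership of $a$ in the $U_j$ as follows: let $R^*(a,\ldots,a)$ hold iff $R_1(a,\ldots,a)$. This alone does not encode the $U_j$, so instead increase the number of relations. For each $j$ set
\[
R^*_j(a_1,\ldots,a_k) \iff R_1(a_1,\ldots,a_k) \text{ and not all } a_i \text{ equal, or all } a_i = a \text{ and } a \in U_j.
\]
Each $(O;R^*_j)$ still has to embed into an $\Sa M$-definable structure, and this is the real obstacle. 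It is cleaner to avoid coding and argue directly.

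Let $\uptau\colon (O;R_1) \to \Sa P_1$ be the given embedding into an $\Sa M$-definable $\Sa P_1 = (P_1;S)$. Each $U_j$ partitions $O$, so consider the finite partition of $O$ into atoms $A_\sigma$, $\sigma \in 2^n$, of the boolean algebra generated by $U_1,\ldots,U_n$. Replace $P_1$ by $P_1 \times F$, where $F$ is a finite $\Sa M$-definable set of size $2^n$. Such an $F$ exists since $\Sa M$ is infinite: use $2^n$ distinct parameters, or tuples from a two-element set. Define $\uptau'(a) = (\uptau(a), \sigma(a))$, where $\sigma(a)$ is the atom containing $a$. Interpret $R_1$ on $P_1 \times F$ by applying $S$ to the first coordinates, and interpret $U_j$ as $P_1 \times \{\sigma : \sigma(j)=1\}$. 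All of this is $\Sa M$-definable, since finite sets of parameters are allowed, and $\uptau'$ is an embedding of $(O;U_1,\ldots,U_n,R_1)$. Note that injectivity is inherited from $\uptau$. With this in hand, apply Lemma~\ref{lem:fusion} with $L_1 = \{U_1,\ldots,U_n,R_1\}$ and $L_i = \{R_i\}$ for $i \ge 2$.

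If $m = 0$, then $O$ needs an injection into an $\Sa M$-definable set, and this requires $|O|$ bounded appropriately. In that case take the $\Sa M$-definable structure to be $M^k$ for suitable $k$ if $|O| \le |M|$. If necessary, handle this degenerate case by noting that the intended use has $m \ge 1$; otherwise reuse the product trick with the structure $(O;=)$ embedded via any injection. The main thing to check carefully is that the embedding $\uptau$ into $\Sa P_1$ really is injective, which holds because any embedding of structures is injective by convention. The finite-$F$ step is the only genuinely new ingredient.
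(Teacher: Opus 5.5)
Your argument is correct and is essentially the paper's: both reduce to Lemma~\ref{lem:fusion} by tagging elements with a finite $\Sa M$-definable set of markers. The paper embeds each $(O;U_i)$ separately into $M\times\{p,q\}$ via an arbitrary injection $O\to M$, which requires $|O|\le|M|$; you attach all the tags at once to the given embedding of $(O;R_1)$, which sidesteps that cardinality step. Like the paper (which gets $|O|\le|M|$ from some $R_i$), you need $m\ge 1$, and you are right that for $m=0$ the statement genuinely requires $|O|\le|M|$; in a write-up you should also drop the abandoned coding attempt at the start.
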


\begin{proof}
By Lemma~\ref{lem:fusion} it is enough to show that each $(O; U_i)$ embeds into an $\Sa M$-definable structure.
We have $|M| \ge |O|$ as any $(O; R_i)$ embeds into an $\Sa M$-definable structure.
Fix distinct $p, q \in M$.
Then each $(O; U_i)$ embeds into the structure with domain $M \times \{p, q\}$ and a unary relation defining $M \times \{p\}$.
\end{proof}

\begin{lemma}
\label{lem:local 1}
Let $\Sa O$ be a structure and $L$ be a finite relational language.
Any $\Sa O$-definable $L$-structure embeds into an $L$-structure which is zero-definable in $\Sa O$.
\end{lemma}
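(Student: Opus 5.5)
The idea is to absorb the parameters into the domain of the new structure. Let $\Sa P$ be an $\Sa O$-definable $L$-structure, with $L = \{R_1,\ldots,R_k\}$ finite and relational. Then the domain $P \subseteq O^n$ is defined by a formula $\delta(x,c)$ and each $R_i^{\Sa P}$, of arity $r_i$, is defined by a formula $\varphi_i(x_1,\ldots,x_{r_i},c)$. Here $c \in O^d$ is a single tuple of parameters; we may take one tuple for all of them by concatenating, which uses that $L$ is finite. The plan is to move $c$ into every element, so that the parameters become part of the data of the points and the defining formulas no longer need them.

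First I will set
\[
P' = \{ (a,e) \in O^{n} \times O^{d} : \Sa O \models \delta(a,e) \}.
\]
This is zero-definable. Next I will define a zero-definable $L$-structure $\Sa P'$ on $P'$ by declaring that $R_i((a_1,e_1),\ldots,(a_{r_i},e_{r_i}))$ holds if and only if
\[
e_1 = e_2 = \cdots = e_{r_i} \quad\text{and}\quad \Sa O \models \varphi_i(a_1,\ldots,a_{r_i},e_1).
\]
Each such relation is defined by a parameter-free formula, so $\Sa P'$ is zero-definable. Then I will check that $a \mapsto (a,c)$ is an embedding $\Sa P \to \Sa P'$. It is injective, and it lands in $P'$ because $\Sa O \models \delta(a,c)$ for $a \in P$. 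The relations are preserved in both directions because all points in the image share the same second coordinate $c$, so the equality conjunct is automatically true and the remaining conjunct is exactly $\varphi_i(\cdot,c)$.

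A subtle point arises when some $R_i$ is unary. There the equality conjunct is vacuous, but the relation still restricts correctly on the image, which is all an embedding needs. A degenerate case occurs when $d = 0$, where the statement is trivial. One might worry about needing $O$ nonempty, but that is harmless because structures are infinite by convention.

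There is no real obstacle. The only design choice is to include the equality conjunct, or equivalently to restrict to the fibre $e = c$ only on the image. This keeps everything zero-definable while making the embedding exact rather than merely an injection.
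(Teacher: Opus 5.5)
Your proof is correct. Because $L$ is finite, you can gather all parameters into one tuple $c$, and then $a \mapsto (a,c)$ embeds the given structure into the zero-definable structure on $\{(a,e) : \delta(a,e)\}$ obtained by turning $c$ into a variable. The paper gives no written proof and leaves this lemma to the reader; parameter absorption is the standard argument here. Your equality conjunct is harmless but not needed, since every point in the image has the same last coordinate $c$.
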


We leave the proof of Lemma~\ref{lem:local 1} to the reader.
We now give definitions of trace definability and local trace definability in the case when we have quantifier elimination.
This definition can also be applied to the general case via Morleyization.

\begin{lemma}\label{lem:qe case}
Suppose that $\Sa N$ is a structure which admits quantifier elimination and $\Sa M$ is an arbitrary structure.
Then $\Sa N$ locally trace defines $\Sa M$ if and only if there is a collection $\Cal E$ of functions $M \to N$ such that every $\Sa M$-definable set is quantifier free definable in the two-sorted structure $(\Sa N, M, \Cal E)$.
Furthermore $\Sa N$ trace defines $\Sa M$ if we may additionally take $\Cal E$ to be finite.
\end{lemma}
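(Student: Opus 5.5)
The plan is to compare the definition of local trace definability with the quantifier-free definable sets of the two-sorted structure $(\Sa N, M, \Cal E)$. Here the functions in $\Cal E$ are maps from the sort $M$ to the sort $N$, and the language contains the language of $\Sa N$ on the $N$-sort. The sort $M$ carries no structure beyond equality; the plan handles equality separately in the second step below.

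First I would describe the quantifier-free definable subsets of $M^m$ in this two-sorted structure. An atomic formula whose free variables lie in the sort $M$ is of one of two kinds:
\begin{itemize}
\item an $\Sa N$-atomic formula (with parameters from $N$) applied to terms of the form $f(x_i)$ with $f \in \Cal E$;
\item an equality $x_i = x_j$ between $M$-variables.
\end{itemize}
Terms of sort $N$ built from $M$-variables are exactly $\Sa N$-terms applied to such $f(x_i)$, because no function symbol has an $M$-valued output. Hence an atomic formula of the first kind defines a set of the form
\[
\{(a_1,\ldots,a_m) \in M^m : (f_1(a_{i_1}),\ldots,f_n(a_{i_n})) \in Y\}
\]
with $Y$ quantifier-free definable, hence definable, in $\Sa N$. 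Conversely, by quantifier elimination in $\Sa N$, every $\Sa N$-definable $Y$ is quantifier-free definable, so every set of this form is quantifier-free definable in $(\Sa N, M, \Cal E)$.

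Second, I would check that the family of sets of this form is closed under Boolean combinations. For a Boolean combination of finitely many such sets, concatenate the tuples of functions involved and take the corresponding Boolean combination of the sets $Y$, each pulled back to the larger product by a coordinate projection. It then remains to absorb the equality atoms $x_i = x_j$. In the direction where $\Sa N$ locally trace defines $\Sa M$, this is automatic: equality is $\Sa M$-definable, so the diagonal is already of the required form, and hence a witnessing $\Cal E$ separates points in the required way. In the converse direction nothing needs absorbing, since equality atoms are quantifier-free formulas of the two-sorted structure.

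The two directions then follow:
\begin{itemize}
\item If $\Cal E$ witnesses local trace definability of $\Sa M$ in $\Sa N$, then every $\Sa M$-definable set has the displayed form, so by the first step it is quantifier-free definable in $(\Sa N, M, \Cal E)$.
\item Conversely, if every $\Sa M$-definable set is quantifier-free definable in $(\Sa N, M, \Cal E)$, it is a Boolean combination of atoms of the two kinds. By the second step, using that the diagonal is $\Sa M$-definable, it is then of the displayed form.
\end{itemize}
The one point needing care here is that the equality atoms can be absorbed. This holds because the diagonal itself is $\Sa M$-definable, and is therefore of the displayed form by hypothesis applied to the diagonal.

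For the trace definability clause, I would run the same argument with $\Cal E$ finite and invoke the equivalence of (1) and (3) in Fact~\ref{fact:trace-def}, which characterizes trace definability by a finite such $\Cal E$. I expect the main obstacle to be the bookkeeping of the equality atoms, together with confirming that parameters from $N$ cause no trouble. Neither is deep.
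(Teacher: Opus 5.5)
You have a genuine gap in the converse direction, at exactly the point you flag, and your justification there is circular. (The paper leaves this lemma to the reader, so there is no written proof to compare against.)

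In the converse direction your hypothesis is only that every $\Sa M$-definable set is quantifier-free definable in $(\Sa N, M, \Cal E)$. Applied to the diagonal of $M^2$ this gives nothing, since the diagonal is already defined by the atom $x_1 = x_2$. So you cannot conclude that the diagonal is of the displayed form. The same goes for the atoms $x_i = x_j$, and for the atoms $x_i = c$ with $c \in M$, which you also have to handle when parameters from the $M$-sort are allowed.

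You have the two directions reversed. In the forward direction nothing needs absorbing at all: a set of the displayed form is defined by $\psi(f_1(x_{i_1}),\ldots,f_n(x_{i_n}))$ with $\psi$ quantifier-free by quantifier elimination. In the converse direction the equality atoms are precisely what must be eliminated, so ``nothing needs absorbing'' is wrong there.

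The missing idea is to enlarge $\Cal E$ by an injection $\iota\colon M \to N$. Then $x_i = x_j$ is equivalent to $\iota(x_i) = \iota(x_j)$, and $x_i = c$ is equivalent to $\iota(x_i) = \iota(c)$, both of the displayed form. The collection $\Cal E \cup \{\iota\}$ is still finite when $\Cal E$ is, so your concatenation argument then finishes both clauses.

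This step needs $|M| \le |N|$, and that is not a removable technicality. If the diagonal equals $\{(a_1,a_2) : (f_1(a_{i_1}),\ldots,f_n(a_{i_n})) \in Y\}$, then $a \mapsto (f_1(a),\ldots,f_n(a))$ must be injective on $M$. So an infinite set $M$ with no structure beyond equality and $|M| > |N|$ satisfies the right-hand side with $\Cal E = \emptyset$, but is not locally trace definable in $\Sa N$.

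You should therefore either add the hypothesis $|M| \le |N|$, or argue at the level of theories. In the latter case, replace $\Sa N$ by a sufficiently large elementary extension, which still admits quantifier elimination, and compose each $f \in \Cal E$ with the elementary embedding before adjoining $\iota$.
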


Lemma~\ref{lem:qe case} is easy and left to the reader.
We next give a different characterization of local trace definability which requires the following lemma.
A {\bf $k$-hypergraph} is a set equipped with a symmetric $k$-ary relation $E$ such that $E(a_1,\ldots,a_k)$ implies that the $a_i$ are distinct.

\begin{lemma}\label{lem:local 2}
Suppose that ${\bf e} \colon (V;E) \to (V^*;F)$ is an embedding of a $k$-hypergraph into a $k$-ary relation and suppose that $(V^*;F)$ is definable in a structure $\Sa M$.
Then $(V;E)$ embeds into a $k$-hypergraph that is zero-definable in $\Sa M$.
\end{lemma}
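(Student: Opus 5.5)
We first symmetrize $F$ and cut out the diagonal, then remove parameters. Let $\Sa S_k$ be the symmetric group on $\{1,\ldots,k\}$. Define
\[
F^*=\Big\{(\beta_1,\ldots,\beta_k)\in (V^*)^k : \beta_i\neq\beta_j \text{ for } i\ne j,\ \textstyle\bigwedge_{\sigma\in \Sa S_k} (\beta_{\sigma(1)},\ldots,\beta_{\sigma(k)})\in F\Big\}.
\]
Then $F^*$ is $\Sa M$-definable, symmetric, and holds only on tuples of distinct elements, so $(V^*;F^*)$ is a $k$-hypergraph definable in $\Sa M$.

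We check that ${\bf e}$ is still an embedding $(V;E)\to(V^*;F^*)$. It is injective, since ${\bf e}$ is an embedding; if the paper's convention does not build in injectivity, we first compose with a product trick as in Lemma~\ref{lem:fusion}. Suppose $E(a_1,\ldots,a_k)$. Then the $a_i$ are distinct, so the ${\bf e}(a_i)$ are distinct. By symmetry of $E$, every permutation of $(a_1,\ldots,a_k)$ is in $E$, so every permutation of $({\bf e}(a_1),\ldots,{\bf e}(a_k))$ is in $F$. Hence $F^*({\bf e}(a_1),\ldots,{\bf e}(a_k))$. Conversely, $F^*({\bf e}(\bar a))$ implies $F({\bf e}(\bar a))$, which gives $E(\bar a)$. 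So ${\bf e}\colon(V;E)\to(V^*;F^*)$ is an embedding into a definable $k$-hypergraph.

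It remains to replace $(V^*;F^*)$ by a zero-definable $k$-hypergraph. This is the step that needs care, because a general application of Lemma~\ref{lem:local 1} only produces a zero-definable $L$-structure and need not preserve being a hypergraph. I would instead do it by hand.

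Write $V^*=\psi(M^n,c)$ and $F^*=\theta(M^{nk},c)$ with $c\in M^p$ a parameter tuple. Put $W=\{(b,x)\in M^p\times M^n : \psi(x,b)\}$. Define $G$ on $W$ by
\[
G((b_1,x_1),\ldots,(b_k,x_k)) \iff b_1=\cdots=b_k \ \wedge\ \chi(x_1,\ldots,x_k,b_1),
\]
where $\chi(\bar x,b)$ is the formula ``all $x_i$ are distinct and $\bigwedge_\sigma \theta(x_{\sigma(1)},\ldots,x_{\sigma(k)},b)$''. Then $(W;G)$ is zero-definable. It is a $k$-hypergraph for every value of $b$, because symmetry and distinctness are built into $\chi$ rather than inherited from the particular parameter $c$.

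Finally, the map $v\mapsto (c,{\bf e}(v))$ is injective. It embeds $(V;E)$ into $(W;G)$, because the fiber $b=c$ of $G$ is exactly $F^*$, and tuples lying over different parameters are never related.
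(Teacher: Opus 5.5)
Your proof is correct. It uses the same two ingredients as the paper: symmetrize and impose distinctness, then remove parameters. You perform them in the opposite order, however.

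The paper first applies Lemma~\ref{lem:local 1} to the raw $k$-ary relation $(V^*;F)$ to make it zero-definable. Only then does it symmetrize, declaring $E^*(v_1,\ldots,v_k)$ when the $v_i$ are distinct and $F$ holds of \emph{some} permutation of $(v_1,\ldots,v_k)$. A symmetrization of a zero-definable relation is again zero-definable, so the concern you raise, that Lemma~\ref{lem:local 1} need not preserve the hypergraph property, never arises in that ordering.

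You symmetrize first, using \emph{all} permutations. This also works: symmetry of $E$ is then used in the forward direction rather than the backward one. You then remove parameters by hand, gluing the fibers over all parameter values $b$ and building symmetry and distinctness into $\chi$ so that every fiber is a hypergraph. That gluing construction is essentially a proof of the needed instance of Lemma~\ref{lem:local 1}, which the paper leaves to the reader. So your argument is more self-contained, at the cost of length.

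Since $\chi$ re-symmetrizes anyway, your first step is actually redundant. Running the gluing construction directly on a formula defining $F$ yields the same embedding, and that is exactly the paper's order of operations. Your remark about injectivity is also unnecessary, since embeddings are injective by definition.
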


\begin{proof}
By Lemma~\ref{lem:local 1} we may suppose that $(V^*; F)$ is zero-definable.
Let $E^*$ be the $k$-hypergraph on $V^*$ given by declaring $E^*(v_1,\ldots,v_n)$ when the $v_i$ are distinct and we have $F(v_{\sigma(1)},\ldots,v_{\sigma(n)})$ for some permutation $\sigma$ of $\{1,\ldots,n\}$.
Then $(V^*; E^*)$ is an $\Sa N$-definable $k$-hypergraph and ${\bf e}$ gives an embedding $(V;E) \to (V^*;E^*)$.
\end{proof}

In particular any $\Sa M$-definable $k$-hypergraph embeds into a $k$-hypergraph which is zero-definable in $\Sa M$.

\begin{proposition}\label{prop:hy}
The following are equivalent for any structures $\Sa M$ and $\Sa N$.
\begin{enumerate}[leftmargin=*]
\item $\Sa N$ locally trace defines $\Sa M$.
\item Any $\Sa M$-definable $k$-hypergraph embeds into an $\Sa N$-definable $k$-hypergraph for any $k \ge 2$.
\item For any $k \ge 2$, any $k$-hypergraph that is zero-definable in $\Sa M$ embeds into a $k$-hypergraph that is zero-definable in $\Sa N$.
\end{enumerate}
\end{proposition}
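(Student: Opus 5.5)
We prove (1)$\Rightarrow$(2)$\Rightarrow$(3)$\Rightarrow$(1), using Proposition~\ref{prop:loc equiv} to pass between the various forms of local trace definability.

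For (1)$\Rightarrow$(2), fix an $\Sa M$-definable $k$-hypergraph $(V;E)$. By (5) of Proposition~\ref{prop:loc equiv}, applied to the language with a single $k$-ary relation, $(V;E)$ embeds into an $\Sa N$-definable structure $(V^*;F)$ with $F$ a $k$-ary relation. Lemma~\ref{lem:local 2} then replaces $(V^*;F)$ by an $\Sa N$-definable (indeed zero-definable) $k$-hypergraph into which $(V;E)$ still embeds.

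For (2)$\Rightarrow$(3), note that a zero-definable $k$-hypergraph in $\Sa M$ is in particular $\Sa M$-definable. By (2) it embeds into an $\Sa N$-definable $k$-hypergraph, and by the remark following Lemma~\ref{lem:local 2} that hypergraph embeds into a zero-definable one.

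The main step is (3)$\Rightarrow$(1). By (9) of Proposition~\ref{prop:loc equiv} it suffices to embed every zero-definable structure $(X;R)$, with $X$ an $\Sa M$-definable set and $R$ a zero-definable relation of arity $k\ge 2$, into an $\Sa N$-definable structure. We may take $X$ to be a power of $M$; if $X$ is a proper subset we use Lemma~\ref{lem:nfusion} to add a unary predicate.

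The plan is to encode $R$ by a $k$-hypergraph that remembers both the order of the coordinates and repetitions among them.
\begin{enumerate}[leftmargin=*]
\item Replace $X$ by $W = X\times\{1,\ldots,k\}$, which is zero-definable using two zero-definable distinct elements if $M$ has them (otherwise one finds a zero-definable $k$-element set in $M^{k}$ or works with a power). Then put an edge on $\{(a_1,1),\ldots,(a_k,k)\}$ exactly when $R(a_1,\ldots,a_k)$.
\item The vertices of such an edge are automatically distinct, and the label $i$ in each vertex recovers the position, so the edge relation is a symmetric zero-definable $k$-hypergraph $(W;E)$.
\item By (3), $(W;E)$ embeds via some ${\bf e}$ into a zero-definable $k$-hypergraph $(V^*;F)$ in $\Sa N$.
\end{enumerate}

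It remains to pull $R$ back from $F$, and this is the main obstacle. We need ${\bf e}$ to determine $R$ through a map $X\to N^n$, but the labels $i$ are not visible in $\Sa N$. To handle this, define $\uptau(a) = ({\bf e}(a,1),\ldots,{\bf e}(a,k))\in (V^*)^k$ and let
\[
Y = \{(\bar b_1,\ldots,\bar b_k) : F(b_{1,1},b_{2,2},\ldots,b_{k,k})\},
\]
where $b_{j,i}$ denotes the $i$-th component of $\bar b_j$. Then $R(a_1,\ldots,a_k)$ holds if and only if $(\uptau(a_1),\ldots,\uptau(a_k))\in Y$, because ${\bf e}$ is an embedding and the vertices $(a_j,j)$ are distinct. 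This gives (6) of Proposition~\ref{prop:loc equiv}, and hence (1).
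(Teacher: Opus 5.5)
Your (1)$\Rightarrow$(2) and (2)$\Rightarrow$(3) are fine. The core of your (3)$\Rightarrow$(1) is the paper's own argument. The labelled copy $X\times\{1,\ldots,k\}$, with edges on label-complete $k$-sets, is the hypergraph built in part (3) of the Claim in the paper's proof. Your $\uptau$ is the map $\alpha\mapsto((\alpha,1),\ldots,(\alpha,k))$ composed with the induced embedding into $(V^*)^k$, and your $Y$ is the paper's relation $R_F$.

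The difference is the order of implications. The paper proves (2)$\Rightarrow$(1), where the labels may be parameters, and gets (3)$\Rightarrow$(2) from Lemma~\ref{lem:local 2}. By going from (3) straight to (1), you must make $(W;E)$ \emph{zero}-definable in $\Sa M$, and your justification for this fails. In general $\Sa M$ has no zero-definable elements. A zero-definable $k$-element subset of $M^k$ need not exist either: in an infinite set with equality, or in $(\Q;<)$, every automorphism orbit on $M^n$ is infinite, so every nonempty zero-definable subset of every $M^n$ is infinite. ``Works with a power'' is too vague to fill this in, and it is exactly the one place where your route departs from the paper's.

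The repair uses a tool you already invoked in (2)$\Rightarrow$(3).
\begin{enumerate}[leftmargin=*]
\item Take distinct $c_1,\ldots,c_k\in M$ and set $W=M\times\{c_1,\ldots,c_k\}$, so $(W;E)$ is $\Sa M$-definable with parameters.
\item By the remark after Lemma~\ref{lem:local 2}, $(W;E)$ embeds into a zero-definable $k$-hypergraph of $\Sa M$.
\item By (3), that hypergraph embeds into a zero-definable $k$-hypergraph $(V^*;F)$ of $\Sa N$; call the composite embedding $\mathbf{e}$.
\end{enumerate}
Your pullback with $\uptau(a)=(\mathbf{e}(a,c_1),\ldots,\mathbf{e}(a,c_k))$ and $Y$ then goes through verbatim. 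It now handles arbitrary definable $R$, so it gives (6) of Proposition~\ref{prop:loc equiv} directly. Alternatively, you can encode the labels by equality types of tuples, which is zero-definable in any infinite structure.

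One smaller point: (6) and (9) of Proposition~\ref{prop:loc equiv} concern relations on $M$ itself, so you may simply take $X=M$. The detour through Lemma~\ref{lem:nfusion} is unnecessary.
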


\begin{proof}
Lemma~\ref{lem:local 2} shows that (2) and (3) are equivalent.
Suppose that (1) holds and let $(V;E)$ be an $\Sa M$-definable $k$-hypergraph.
Then there is an embedding ${\bf e}$ of $(V;E)$ into an $\Sa N$-definable $k$-ary relation $(V^*; F)$ by Proposition~\ref{prop:loc equiv}.
By Lemma~\ref{lem:local 2} $(V;E)$ embeds into an $\Sa N$-definable $k$-hypergraph.
Hence (1) implies (2).
It remains to show that (2) implies (1).
We need the following claim.
Let $\uppi_i \colon V^k \to V$ be the projection onto the $i$th coordinate for $i = 1,\ldots,k$.

\begin{Claim*}
Fix $k \ge 2$ and a structure $\Sa O$.
For any $k$-hypergraph $(V;E)$ let $R_E$ be the $k$-ary relation on $V^k$ given by  declaring $$R_{E}(v_1,\ldots,v_k)\quad \Longleftrightarrow\quad E(\uppi_1(v_1),\uppi_2(v_2),\ldots,\uppi_k(v_k))$$
for all $v_1,\ldots,v_k\in V^k$.
Then we have the following:
\begin{enumerate}
[leftmargin=*]
\item Any $k$-hypergraph embedding $(V;E)\to(W;F)$ induces an embedding $(V^k; R_E) \to (W^k; R_F)$.
\item If $(V;E)$ is an $\Sa O$-definable $k$-hypergraph then $R_E$ is also $\Sa O$-definable.
\item Any $\Sa O$-definable $k$-ary relation embeds into $(Y^k;R_F)$ for some $\Sa O$-definable $k$-hypergraph $(Y;F)$.
\end{enumerate}
\end{Claim*}

\begin{claimproof}
(1) and (2) are clear from the definition.
We prove (3).
Fix an $\Sa O$-definable $k$-ary relation $R$ on an $\Sa O$-definable set $X$.
After possibly replacing $\Sa O$ with an isomorphic structure suppose that $1,\ldots,k\in O$.
Let $Y = X \times \{1,\ldots,k\}$.
We define a $k$-hypergraph $F$ on $Y$.
Let $(\alpha_1,\imag_1),\ldots,(\alpha_k,\imag_k)$ range over elements of $Y$.
We declare $F((\alpha_1,\imag_1),\ldots,(\alpha_k,\imag_k))$ when $\{\imag_1,\ldots,\imag_k\} = \{1,\ldots,k\}$ and $R(\alpha_{\sigma(1)},\ldots,\alpha_{\sigma(k)})$ where $\sigma$ is the unique permutation of $\{1,\ldots,k\}$ with $\imag_{\sigma(1)} = 1,\ldots,\imag_{\sigma(k)} = k$.
We now give an embedding $\mathbf{e} \colon (X;R) \to (Y^k;R_F)$.
Let $\mathbf{e} \colon X \to Y^k$ be given by $\mathbf{e}(\alpha) = ((\alpha,1),\ldots,(\alpha,k))$.
Suppose that $\alpha_1,\ldots,\alpha_k \in X$.
Then
\begin{align*}
R(\alpha_1,\ldots,\alpha_k) &\quad\Longleftrightarrow \quad F((\alpha_1,1),(\alpha_2,2),\ldots,(\alpha_k,k)) \\
&\quad\Longleftrightarrow \quad R_{F}(\mathbf{e}(\alpha_1),\ldots,\mathbf{e}(\alpha_k))
\end{align*}
Note that $F$ and $\mathbf{e}$ are both $\Sa O$-definable. 
\end{claimproof}

We now suppose that (2) holds and show that $\Sa N$ locally trace defines $\Sa M$.
By Proposition~\ref{prop:loc equiv} it is enough to fix an $\Sa M$-definable $k$-ary relation $(V;R)$ with $k \ge 2$ and show that $(V;R)$ embeds into an $\Sa N$-definable $k$-ary relation.
By the claim $(V;R)$ embeds into $(W^k; R_E)$ for some $\Sa M$-definable $k$-hypergraph $(W; E)$.
By assumption $(W; E)$ embeds into an $\Sa N$-definable $k$-hypergraph $(Y; F)$.
Any embedding $(W; E) \to (Y; F)$ induces an embedding $(W^k; R_E) \to (Y^k; R_F)$ by the claim, hence $(V;R)$ embeds into $(Y^k; R_F)$.
\end{proof}

We now consider structures admitting quantifier elimination in relational languages.

\begin{proposition}\label{prop:qe}
Suppose that  $L$ is a relational language and $\Sa M$ is an $L$-structure with quantifier elimination.
Then an arbitrary structure $\Sa N$ locally trace defines $\Sa M$ if and only if $(M;R)$ embeds into an $\Sa N$-definable structure for every $R \in L$ of arity $\ge 2$.
If $L$ is finite then $\Sa N$ trace defines $\Sa M$ if and only if $\Sa N$ locally trace defines $\Sa M$.
\end{proposition}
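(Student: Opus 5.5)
The plan is to reduce both claims to Proposition~\ref{prop:loc equiv} and Lemma~\ref{lem:nfusion}, using quantifier elimination to control all definable sets by the basic relations.

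\textbf{Forward direction of the first claim.} Suppose $\Sa N$ locally trace defines $\Sa M$. Each $R\in L$ of arity $\ge 2$ is $\Sa M$-definable. Hence $(M;R)$ is an $\Sa M$-definable structure in a language with a single relation of arity $\ge 2$. By (7) of Proposition~\ref{prop:loc equiv} it embeds into an $\Sa N$-definable structure.

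\textbf{Converse.} Suppose each $(M;R)$ with $R$ of arity $\ge 2$ embeds into an $\Sa N$-definable structure. I verify (6) of Proposition~\ref{prop:loc equiv}: fix an $\Sa M$-definable $X\subseteq M^m$.

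\begin{itemize}
\item By quantifier elimination, $X$ is defined by a quantifier-free $L$-formula with parameters.
\item I first absorb parameters. Write $X$ as the fiber over a parameter $b$ of a zero-definable quantifier-free set. Alternatively, note that each atomic formula $R(\bar x,\bar b)$, with some variables instantiated and repeated, defines a relation obtained from $R$ by substitution and identification of coordinates.
\item So $X$ is a boolean combination of finitely many relations of the form $\{\bar a: R(t_1,\ldots,t_k)\}$, where each $t_j$ is either some $a_i$ or a fixed parameter. Equality atoms are also allowed.
\item An embedding $\uptau_R\colon (M;R)\to(P_R;R^*)$ with $P_R$ $\Sa N$-definable handles such a relation: set $Y=\{\bar c : R^*(s_1,\ldots,s_k)\}$, where $s_j$ is $c_i$ or the constant $\uptau_R(b)$, which is a parameter in $N^n$.
\item Unary relations $U\in L$, and unary atoms arising from higher-arity relations after substitution, are already covered by this recipe. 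Pure unary $U$ is handled as in Lemma~\ref{lem:nfusion} via a two-point coding $M\times\{p,q\}$.
\item Only finitely many relations occur in $X$. Following Lemma~\ref{lem:fusion}, combine the corresponding embeddings into a single product map $\uptau=(\uptau_{R_1},\ldots,\uptau_{R_r},\text{unary codes})\colon M\to N^{n}$.
\item Each atom pulls back along the relevant coordinate projection. Equality pulls back correctly because embeddings are injective. The boolean combination of these pullbacks gives the required $\Sa N$-definable $Y$.
\end{itemize}

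\textbf{Second claim.} When $L$ is finite, the same product map over all (finitely many) $R\in L$ works uniformly for every definable set, since every definable set is a boolean combination of instantiated atoms. This gives a trace definition $M\to N^n$, so $\Sa N$ trace defines $\Sa M$. The other direction is trivial, since trace definability is the case of a finite $\Cal E$.

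\textbf{Main obstacle.} The step requiring care is the bookkeeping of parameters and repeated variables in atomic formulas. It must be checked that substituting the constant $\uptau_R(b)$ into $R^*$ gives exactly the pullback. This holds because $\uptau_R$ is an embedding, which preserves and reflects $R$ on all tuples, including those containing $b$.
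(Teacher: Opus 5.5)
Your proof is correct and follows essentially the paper's route. The paper also reduces via Proposition~\ref{prop:loc equiv}, uses Lemma~\ref{lem:nfusion} to embed $(M;R_1,\ldots,R_n)$ into a single $\Sa N$-definable structure $\Sa P$, and lets the same quantifier-free formula (with its parameters, now in $M\subseteq P$) define $Y$; this is your atom-by-atom pullback done in one step. For the finite case it likewise combines Lemma~\ref{lem:fusion} with Fact~\ref{fact:embed}. Both arguments tacitly need some relation of arity $\ge 2$ (or equality, if counted in $L$) to supply the injection of $M$ into some $N^n$ used for equality atoms and the unary codes.
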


It follow in particular that if $\Sa O$ is finitely homogeneous then a structure $\Sa M$ trace defines $\Sa O$ if and only if it locally trace defines $\Sa O$.

\begin{proof}
The second claim follows from the first claim, Lemma~\ref{lem:fusion}, and Fact~\ref{fact:embed}.
We prove the first claim.
The left to right implication follows by Proposition~\ref{prop:loc equiv}.
Suppose that the right hand side holds.
By Proposition~\ref{prop:loc equiv} it is enough to suppose that $X$ is an $\Sa M$-definable subset of $M^k$ and show that $(M; X)$ embeds into an $\Sa N$-definable $k$-ary relation.
Suppose that $X$ is quantifier-free definable in $(M;R_1,\ldots,R_n)$ for $R_1,\ldots,R_n \in L$ and let $\varphi(x_1,\ldots,x_k)$ be a quantifier-free $\{ R_1, \ldots, R_n\}$-formula defining $X$.
By Lemma~\ref{lem:nfusion} $(M;R_1,\ldots,R_n)$ embeds into an $\Sa N$-definable structure $\Sa P$.
So we may suppose that $(M;R_1,\ldots,R_n)$ is a substructure of $\Sa P$.
Let $Y$ be the subset of $P^k$ defined by $\varphi$.
Then the inclusion $M \to P$ gives an embedding $(M;X) \to (P;Y)$.
\end{proof}

We now give a somewhat technical lemma.

\begin{lemma}\label{lem:tech}
Suppose that $L^*$ is an expansion of a language $L$ by relations and $\Sa M$ is an $L^*$-structure with quantifier elimination.
Let $\Sa N$ be an arbitrary structure and suppose that for any $k$-ary $R \in L^* \setminus L$ there is an $\Sa N$-definable $L$-structure $\Sa P$, an embedding $\uptau$ of the $L$-reduct of $\Sa M$ into $\Sa P$, and an $\Sa N$-definable $X \subseteq P^k$ such that we have $\Sa M \models R(\alpha)$ if and only if $\uptau(\alpha) \in X$ for all $\alpha \in M^k$.
Then $\Sa N$ locally trace defines $\Sa M$.
\end{lemma}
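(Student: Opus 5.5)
By Proposition~\ref{prop:loc equiv} it is enough to verify condition (6). So fix an $\Sa M$-definable $X \subseteq M^m$. We will produce a single map $\uptau \colon M \to N^n$ and an $\Sa N$-definable $Y \subseteq N^{nm}$ such that $\alpha \in X \Longleftrightarrow \uptau(\alpha) \in Y$ for all $\alpha \in M^m$. This also handles the case $m = 1$, which is covered by condition (4) with $k = 1$.

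First I would use quantifier elimination. There is a quantifier-free $L^*$-formula $\varphi(x_1,\ldots,x_m)$ defining $X$. It mentions only finitely many new relations $R_1,\ldots,R_\ell \in L^* \setminus L$. Each atomic subformula is of one of two kinds:
\begin{itemize}
\item an atomic $L$-formula in $x$;
\item $R_j(t_1(x),\ldots,t_{k_j}(x))$, where the $t_i$ are $L$-terms.
\end{itemize}
We may assume $\ell \ge 1$, adding a dummy relation if necessary. For each $j$ the hypothesis gives an $\Sa N$-definable $L$-structure $\Sa P_j$ with domain $P_j \subseteq N^{n_j}$, an embedding $\uptau_j$ of the $L$-reduct of $\Sa M$ into $\Sa P_j$, and an $\Sa N$-definable $X_j \subseteq P_j^{k_j}$ encoding $R_j$. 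Set $n = n_1 + \cdots + n_\ell$ and
\[
\uptau(a) = (\uptau_1(a),\ldots,\uptau_\ell(a)).
\]

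The key step is to translate each atomic subformula into an $\Sa N$-definable condition on $\uptau(\alpha)$.
\begin{itemize}
\item For an atomic $L$-formula $\psi(x)$: since $\uptau_1$ is an $L$-embedding it commutes with $L$-terms and preserves atomic formulas. Hence $\Sa M \models \psi(\alpha)$ if and only if $\Sa P_1 \models \psi(\uptau_1(\alpha))$. The latter is an $\Sa N$-definable condition because $\Sa P_1$ is $\Sa N$-definable.
\item For $R_j(t_1(x),\ldots,t_{k_j}(x))$: again $\uptau_j(t_i^{\Sa M}(\alpha)) = t_i^{\Sa P_j}(\uptau_j(\alpha))$. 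So the hypothesis gives that $\Sa M \models R_j(t(\alpha))$ if and only if $(t_1^{\Sa P_j}(\uptau_j(\alpha)),\ldots,t_{k_j}^{\Sa P_j}(\uptau_j(\alpha))) \in X_j$. This is an $\Sa N$-definable condition on $\uptau_j(\alpha)$, since the term functions of $\Sa P_j$ are $\Sa N$-definable.
\end{itemize}

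Finally, let $Y \subseteq N^{nm}$ be the Boolean combination of these $\Sa N$-definable conditions that mirrors $\varphi$. Then $Y$ is $\Sa N$-definable and $\alpha \in X \Longleftrightarrow \uptau(\alpha) \in Y$, which is condition (6).

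The only point needing care is that $L$ may contain function symbols. This is why I use the fact that embeddings commute with terms, rather than assuming terms are trivial. I expect no real obstacle beyond that bookkeeping. If $\Sa N$-definable structures are allowed to live on imaginary sorts, I would first pass to $\Sa N^{\mathrm{eq}}$, or require definable domains as the conventions suggest.
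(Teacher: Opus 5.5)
Your argument is correct and is essentially the one the paper intends when it says the proof is similar to that of Proposition~\ref{prop:qe}: quantifier elimination reduces everything to finitely many atomic formulas, concatenating the $\uptau_j$ is just the product construction of Lemma~\ref{lem:fusion}, and your observation that embeddings commute with $L$-terms handles the fact that $L$ need not be relational. Two small points: the quantifier-free formula may carry parameters $b$ from $M$, which you handle by using $\uptau_j(b)$ as parameters in $\Sa N$; and your ``dummy relation'' step tacitly uses $L^*\setminus L\neq\emptyset$, which the statement itself also needs (if $L^*=L$ the hypothesis is vacuous and the conclusion fails, e.g.\ for the random graph in a trivial structure).
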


The proof of Lemma~\ref{lem:tech} is similar to that of Proposition~\ref{prop:qe} and is left to the reader.

\medskip
The following characterizations of $\nip$ and $k$-$\nip$ in terms of trace definability are given in \cite[Prop.~2.5]{trace1}.
(The generic $k$-hypergraph is the \Fraisse limit of the class of finite $k$-hypergraphs.)
The characterizations in terms of local trace definability follow by this and the second claim of Proposition~\ref{prop:qe}.

\begin{fact}\label{fact:nip char}
A theory is $k$-$\nip$ for some $k \ge 1$ if and only if it does not (locally) trace define the generic $(k+1)$-ary hypergraph.
Furthermore a theory is $\nip$  if and only if it does not (locally) trace define the \Fraisse limit of the class of finite bipartite graphs.
\end{fact}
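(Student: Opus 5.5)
The plan is to reduce the local versions to the trace versions already available from \cite[Prop.~2.5]{trace1}, using the second claim of Proposition~\ref{prop:qe}. That claim says that a finitely homogeneous structure is trace definable in a structure if and only if it is locally trace definable in it.

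First I will check that the relevant target structures are finitely homogeneous. The generic $(k+1)$-hypergraph is the \Fraisse limit of finite $(k+1)$-hypergraphs in the language with one $(k+1)$-ary relation. The generic bipartite graph is the \Fraisse limit of finite bipartite graphs in the language with the edge relation and a unary predicate for one side of the bipartition. Both languages are finite and relational, and \Fraisse limits are countable with quantifier elimination, so both structures are finitely homogeneous.

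Next I will pass from structures to theories, since the statement is about a theory $T$. Suppose $T$ locally trace defines the generic hypergraph $\Sa H$. Then some model $\Sa M \models T$ locally trace defines some model of $\Th(\Sa H)$. By Proposition~\ref{prop:loc trace theories}, every model of $\Th(\Sa H)$ is then locally trace definable in a model of $T$, and in particular $\Sa H$ itself is. By Proposition~\ref{prop:qe}, that model of $T$ then trace defines $\Sa H$, so $T$ trace defines $\Th(\Sa H)$. The converse is immediate, because trace definability implies local trace definability: take $\Cal E$ finite. Hence $T$ trace defines the generic hypergraph if and only if it locally trace defines it, and the same argument applies to the generic bipartite graph.

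Finally I will invoke \cite[Prop.~2.5]{trace1}, which gives the trace-definability versions of both characterizations: $k$-$\nip$ is equivalent to not trace defining the generic $(k+1)$-hypergraph, and $\nip$ is equivalent to not trace defining the generic bipartite graph. Combined with the equivalence of the previous step, this yields the local versions. The only step needing care is the theory-level bookkeeping in the previous paragraph, namely making sure the countable \Fraisse limit itself, and not merely some model of its theory, is locally trace defined before Proposition~\ref{prop:qe} is applied. Proposition~\ref{prop:loc trace theories} handles exactly this, so I expect no real obstacle.
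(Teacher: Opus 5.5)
Your proposal is correct and follows the paper's own argument. The paper likewise cites \cite[Prop.~2.5]{trace1} for the trace versions and derives the local versions from the second claim of Proposition~\ref{prop:qe}, which applies because both generic structures are finitely homogeneous. Your extra bookkeeping via Proposition~\ref{prop:loc trace theories}, and your use of a unary predicate for the bipartition (needed for amalgamation), are sound details that the paper leaves implicit.
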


Hence $k$-$\nip$ is preserved under local trace definability for every $k \ge 1$.
(It is also easy to see this directly from the definition.)
It follows that generic hypergraphs of different airities are distinct modulo local trace equivalence and hence there are infinitely many structures modulo local trace equivalence.
We recall several more preservation results from~\cite{trace1}.

\begin{fact}\label{fact:preserve}
The following properties are preserved under trace definability: superstability, total transcendence, finiteness of Morley rank, finiteness of dp-rank, finiteness of U-rank.
\end{fact}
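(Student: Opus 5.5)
The plan is to prove every item by one transfer principle. A trace definition $\uptau\colon M\to N^n$ sends $\Sa M$-definable families to subfamilies of pullbacks of $\Sa N$-definable families. Each combinatorial witness that a rank is large in $\Sa M$ then pulls back from a witness of the same shape in $\Sa N$.

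First I will check two preliminaries.
\begin{enumerate}[leftmargin=*]
\item By \cite[Lemma~1.7]{trace1} (the trace analogue of Proposition~\ref{prop:loc trace theories}) we may replace $\Sa M$ by a monster model of its theory and still have a trace definition $\uptau\colon \Sa M\rightsquigarrow\Sa N$, with $\Sa N$ also a monster model.
\item Uniformity. If $X\subseteq M^{m}\times M^{k}$ is $\Sa M$-definable, write $X=\uptau^{-1}(Y)$ with $Y\subseteq N^{nm}\times N^{nk}$ definable. Then $X_b=\uptau^{-1}(Y_{\uptau(b)})$ for every $b\in M^k$.
\end{enumerate}
So each formula $\varphi(x;y)$ of $\Sa M$ has a partner $\psi(x';y')$ of $\Sa N$, with $|x'|=n|x|$ and $|y'|=n|y|$, such that $\Sa M\models\varphi(a,b)$ if and only if $\Sa N\models\psi(\uptau(a),\uptau(b))$. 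Since $\Sa N$ has all the properties in question only for finite tuples of variables, and these are stable under passing from $N$ to $N^n$, this causes no harm.

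\textbf{Type counting (superstability, total transcendence).} For finitely many formulas $\Delta$ with partners $\Psi$, and $A\subseteq M$, the map $\mathrm{tp}_\Delta(a/A)\mapsto \mathrm{tp}_\Psi(\uptau(a)/\uptau(A))$ is well defined and injective. Indeed, if two $\Delta$-types differ on some $\varphi(x,b)$, then the partner $\psi(x',\uptau(b))$ separates the images. Hence $|S_\Delta(A)|\le |S_\Psi(\uptau(A))|$ and $|\uptau(A)|\le|A|$.

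Both properties should be treated locally to avoid language-size issues. For total transcendence, bound the number of types over countable sets for each countable sublanguage. For superstability, use the characterization by local $\Delta$-type counting, equivalently finiteness of Shelah's local ranks $R(-,\Delta,\infty)$ via trees. Both then follow from the injection.

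\textbf{Morley rank.} I will show $\mathrm{MR}(\varphi(x,b))\le\mathrm{MR}(\psi(x',\uptau(b)))$ by induction on the rank, in $\aleph_0$-saturated models. Suppose $\varphi(x,b)$ contains infinitely many pairwise disjoint definable $\varphi_i(x,b_i)$ of rank $\ge\alpha$. Take partners $Y_i$ and replace $Y_i$ by $Y_i\cap Y\setminus\bigcup_{j<i}Y_j$. This does not change the pullbacks, since the pulled-back sets are already disjoint and contained in $\varphi$, and it makes the $Y_i$ disjoint subsets of $\psi$. The inductive hypothesis, together with monotonicity of Morley rank under shrinking to a set with the same pullback, then completes the step. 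The same shrinking trick is the general device for making inconsistency or disjointness conditions hold in $\Sa N$ and not merely on the image of $\uptau$.

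\textbf{dp-rank.} An ict pattern of depth $k$ for $\Sa M$ is given by formulas $\varphi_j(x,b_{j,i})$ and witnesses $a_\eta$. Applying $\uptau$ gives $\psi_j(x',\uptau(b_{j,i}))$ and points $\uptau(a_\eta)$ forming an ict pattern of depth $k$ in $\Sa N$ in $n|x|$ variables. Hence $\dprk(\Sa M)\le n\cdot\dprk(\Sa N)$, using subadditivity of dp-rank.

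\textbf{U-rank: the main obstacle.} Forking chains involve dividing along indiscernible sequences. The image $(\uptau(b_i))$ of an $\Sa M$-indiscernible sequence need not be $\Sa N$-indiscernible, and $k$-inconsistency of $\{\varphi(x,b_i)\}$ does not obviously transfer. I would first try to extract, by Ramsey and compactness, an $\Sa N$-indiscernible sequence with the same EM-type as $(\uptau(b_i))$. Inconsistency is a property of finitely many instances and is visible on realizations in $\Sa N$, because realizations in the image come from realizations in $\Sa M$. To fix that, I would intersect the $\psi$-instances with the partner of a formula isolating the relevant part of the type, as in the disjointification trick above.

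If this fails, the fallback uses the results already established. Finite U-rank implies superstability, which transfers by the previous step. I would then use the characterization of $\mathrm{U}(a/A)$ via $\Delta$-rank or local $D$-ranks in superstable theories, so that the tree-pullback argument again applies.
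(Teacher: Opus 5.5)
The paper gives no argument for this Fact; it quotes it from \cite{trace1}. Measured against what a complete proof needs, your treatments of total transcendence, Morley rank and dp-rank are sound, with two small repairs. For superstability you must count $\Delta$-types for countable $\Delta$ (all formulas of a countable sublanguage) over sets of size $\lambda\ge 2^{|T_{\Sa N}|}$ with $\lambda^{\aleph_0}>\lambda$. For finite $\Delta$, both $|S_\Delta(A)|$ and $R(x=x,\Delta,\infty)$ are small in every stable theory, so they cannot separate superstable from stable. In the Morley-rank induction the statement must be $\mathrm{MR}(X)\le\mathrm{MR}(Z)$ for \emph{every} $\Sa N$-definable $Z$ with $\uptau^{-1}(Z)=X$. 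After disjointifying you need a lower bound on $\mathrm{MR}(Y_i')$, and monotonicity gives only an upper bound.

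The genuine gap is U-rank. Every witness of forking you try to transport contains a universal condition on all of $N^{n|x|}$: $k$-inconsistency of $\{\varphi(x,b_i)\}$ along an indiscernible sequence, and likewise the dividing trees behind $D$-ranks in your fallback. But $\uptau$ only controls points of $\uptau(M)$, which is not $\Sa N$-definable. So $\{\psi(x',\uptau(b_i))\}$ may well be consistent in $\Sa N$, realized off the image. Your sentence that inconsistency is visible because realizations in the image come from $\Sa M$ is exactly where this is lost. Disjointification does not help. It replaces the $i$th instance by a different formula $Y_i\setminus\bigcup_{j<i}Y_j$, destroying the ``instances of one formula along an indiscernible sequence'' shape that dividing needs; Morley rank survived only because it needs no such uniformity. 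Re-extracting an $\Sa N$-indiscernible sequence with the same EM-type forgets $\uptau(b_0)$ and $\uptau(a)$, and cannot create inconsistency that fails in $\Sa N$. Shelah's ranks with explicitly contradictory extensions do pull back. However, you only have $\mathrm{U}\le R^\infty$, which bounds U-rank in $\Sa M$ by $R^\infty$ in $\Sa N$, not by U-rank in $\Sa N$.

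One way to close the gap is to detect forking over models by definability of types, which involves no inconsistency. Let $\Sa W=(\Sa M,\Sa N,\uptau)$ be the two-sorted structure with constants for the parameters of all partners. Then $\uptau$ stays a trace definition in elementary extensions, so take $\Sa W$ saturated and small $W_0\prec W_1\prec\Sa W$ with sorts $M_i,N_i$; note $\uptau(M_i)\subseteq N_i^n$. Both $\Sa M$ and $\Sa N$ are stable, since the order property pulls back. Suppose $\mathrm{tp}_{\Sa N}(\uptau(a)/N_1)$ does not fork over $N_0$, and let $\psi$ be the partner of $\varphi(x;y)$.
\begin{enumerate}[leftmargin=*]
\item There is $d(y';e)$ with $e\in N_0$ such that $\psi(\uptau(a),c)\leftrightarrow d(c;e)$ for all $c\in N_1$.
\item Let $\delta(y;m_1)$, $m_1\in M_1$, be the $\varphi$-definition of $\mathrm{tp}_{\Sa M}(a/M_1)$. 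Then $\delta(b;m_1)\leftrightarrow d(\uptau(b);e)$ for $b\in M_1$.
\item Hence $W_1$, and so $\Sa W$, satisfies $\forall y\,(\delta(y;m_1)\leftrightarrow d(\uptau(y);e))$. As $e\in W_0\prec\Sa W$, some $m_0\in M_0$ satisfies the same formula.
\end{enumerate}
Thus $\mathrm{tp}_{\Sa M}(a/M_1)$ is definable over $M_0$, so it does not fork over $M_0$. Now suppose $\mathrm{U}(\Sa M)\ge m$. Moving $a$ along nonforking extensions gives $a$ and $W_0\prec\cdots\prec W_m$ with $\mathrm{tp}_{\Sa M}(a/M_{i+1})$ forking over $M_i$ for each $i$. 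By the contrapositive of the argument above, $\uptau(a)$ has a forking chain along the $N_i$. This gives $\mathrm{U}(\Sa M)\le\mathrm{U}_{\Sa N}(N^n)\le n\,\mathrm{U}(\Sa N)$, the last step by the Lascar inequalities.
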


We now consider theories that locally trace define every structure.
We say that a theory $T$ is $\infty$-$\ip$ if it is $k$-$\ip$ for every $k\ge 1$ and is $\infty$-$\nip$ if it is not $\infty$-$\ip$.
Let $\Sa H_k$ be the generic $k$-hypergraph for each $k \ge 2$.

\begin{proposition}\label{prop:loctrmax}
The following are equivalent for any theory $T$.
\begin{enumerate}[leftmargin=*]
\item $T$ locally trace defines every structure.
\item $T$ is $\infty$-$\ip$.
\item $T$ trace defines $\Sa H_k$ for every $k \ge 2$.
\end{enumerate}
\end{proposition}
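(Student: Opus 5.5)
We prove the cycle of implications (1)$\Rightarrow$(2)$\Rightarrow$(3)$\Rightarrow$(1).

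\emph{(1)$\Rightarrow$(2).} Suppose $T$ locally trace defines every structure. Then in particular $T$ locally trace defines the generic $(k+1)$-hypergraph $\Sa H_{k+1}$ for every $k \ge 1$. By Fact~\ref{fact:nip char}, $T$ is therefore not $k$-$\nip$, i.e.\ $T$ is $k$-$\ip$, for every $k$. Hence $T$ is $\infty$-$\ip$.

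\emph{(2)$\Rightarrow$(3).} If $T$ is $\infty$-$\ip$ then $T$ is $(k-1)$-$\ip$ for every $k \ge 2$. By the trace-definability form of Fact~\ref{fact:nip char}, $T$ trace defines $\Sa H_k$.

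\emph{(3)$\Rightarrow$(1).} This is the main step. Fix an arbitrary structure $\Sa M$ and use criterion (2) of Proposition~\ref{prop:hy}: it suffices to embed every $\Sa M$-definable $k$-hypergraph $(V;E)$ into a $k$-hypergraph definable in a model of $T$. Since every structure is locally trace definable in an elementary extension of itself, and local trace definability is transitive, it is enough to show that every $k$-hypergraph, of any cardinality, embeds into a definable $k$-hypergraph in some model $\Sa N \models T$.

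\begin{itemize}
\item By (3), some model of $T$ trace defines $\Sa H_k$. Applying the trace-definability analogue of Proposition~\ref{prop:loc equiv}(5) to the single edge relation, $\Sa H_k$ embeds into an $\Sa N_0$-definable $k$-ary structure $(P;F)$ for some $\Sa N_0 \models T$. By Lemma~\ref{lem:local 2} we may take $(P;F)$ to be a zero-definable $k$-hypergraph, defined by a formula $\varphi$.
\item Every finite $k$-hypergraph embeds into $\Sa H_k$, hence into $(P;F)$. So every finite $k$-hypergraph is realized by $\varphi$ in $\Sa N_0$.
\item Fix an arbitrary $k$-hypergraph $(V;E)$, and introduce new constants $c_v$ for $v \in V$. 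Consider the theory
\[
T \;\cup\; \{\, \varphi(c_{v_1},\ldots,c_{v_k}) : E(v_1,\ldots,v_k) \,\} \;\cup\; \{\, \neg\varphi(c_{v_1},\ldots,c_{v_k}) : \neg E(v_1,\ldots,v_k) \,\} \;\cup\; \{\, c_v \neq c_w : v \neq w \,\}.
\]
The previous point shows this theory is finitely satisfiable, so by compactness it has a model $\Sa N \models T$.
\item In $\Sa N$, the map $v \mapsto c_v$ embeds $(V;E)$ into the $\Sa N$-definable hypergraph $\varphi(\Sa N)$. (Since $\varphi$ is zero-definable, it still defines a $k$-hypergraph in $\Sa N$.)
\end{itemize}

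The remaining points to check are these. First, the constant for each element of $V$ must lie in the domain of $\varphi(\Sa N)$; this is handled by adding the zero-definable domain formula to the theory above. Second, the embedding lands in a model of $T$ that depends on $(V;E)$. That dependence is harmless: Proposition~\ref{prop:loc trace theories} lets us work with a single sufficiently saturated model of $T$, of size at least $|M|$, in which all the required hypergraphs embed simultaneously. Criterion (2) of Proposition~\ref{prop:hy} is then met by that one model $\Sa N$, so $T$ locally trace defines $\Sa M$.
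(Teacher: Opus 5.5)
Your proof is correct. For (1)$\Rightarrow$(2)$\Rightarrow$(3) it matches the paper, which also derives these from Fact~\ref{fact:nip char}. The local form of that fact already contains the use of Proposition~\ref{prop:qe} that the paper invokes for (1)$\Rightarrow$(3).

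For (3)$\Rightarrow$(1) your route differs from the paper's. The paper reduces via Proposition~\ref{prop:loc equiv} to arbitrary $k$-ary relations. It codes such a relation as $R_F$ for a $k$-hypergraph $F$, using the claim inside the proof of Proposition~\ref{prop:hy}. It then embeds $F$ into an elementary extension of $\Sa H_k$, where $R_F$ is definable. Finally it uses that $T$ trace defines every model of $\Th(\Sa H_k)$, together with transitivity.

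You instead apply Proposition~\ref{prop:hy}(2) directly. You pull $\Sa H_k$ back into $T$ as a single zero-definable formula $\varphi_k$ (via Lemma~\ref{lem:local 2}) that realizes every finite $k$-hypergraph. You then run the compactness argument inside $T$. Zero-definability of $\varphi_k$ is exactly what makes this universality hold in every model of $T$. As a result you never need the theory-level transfer of trace definability from $\Sa H_k$ to its elementary extensions, so your argument is somewhat more self-contained.

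One justification needs fixing. Proposition~\ref{prop:loc trace theories} does not give you a single model of $T$ in which all $\Sa M$-definable hypergraphs embed simultaneously. That proposition moves between models of the theory being defined, not between models of $T$. The correct reason is as follows. For each $k$ and each $\Sa M$-definable $k$-hypergraph $(V;E)$, the description of $(V;E)$ via $\varphi_k$ is a type over $\emptyset$ in $|V|\le|M|$ variables, and it is consistent with $T$. Hence any $|M|^+$-saturated model of $T$ realizes all of these types at once. Alternatively, take a common elementary extension of the models you produced; elementary embeddings preserve the embeddings into $\varphi_k$.
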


\begin{proof}
Fact~\ref{fact:nip char} shows that (2) and (3) are equivalent.
Proposition~\ref{prop:qe} shows that (1) implies (3).
Suppose that (3) holds.
We show that $T$ locally trace defines every structure.
By Proposition~\ref{prop:loc equiv} it is enough to fix $k \ge 2$ and show that any $k$-ary relation embeds into a $k$-ary relation that is definable in a model of $T$.
It is enough to show that any $k$-ary relation embeds into a $k$-ary relation that is definable in an elementary extension of $\Sa H_k$.
This follows by the proof of Proposition~\ref{prop:hy} and the fact that any $k$-hypergraph embeds into an elementary extension of $\Sa H_k$.
\end{proof}

\subsection{Trace definability and the Shelah completion}\label{section:shcomp}
We now consider the relationship between trace definability and the Shelah completion.

\begin{lemma}
\label{lem:she-last}
Let $e$ be an elementary embedding $\Sa M \to \Sa N$ between $\nip$ structures.
Then $e$ gives a trace embedding $\Sh M \to \Sh N$ and if $\Sa N$ is $|M|^+$-saturated then $e$ gives a trace embedding $\Sh M \to \Sa N$.
\end{lemma}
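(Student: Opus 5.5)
We prove the second claim first, then deduce the first from it by passing to a saturated extension.

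\emph{Second claim.} Suppose $\Sa N$ is $|M|^+$-saturated and let $X \subseteq M^m$ be $\Sh M$-definable. By Fact~\ref{fact:shelah}, $X$ is externally definable in $\Sa M$. So there is an elementary extension $\Sa N'$ of $\Sa M$ and an $\Sa N'$-definable set $Y' = \varphi(N'^m,\beta)$ with $X = Y' \cap M^m$. Identify $M$ with its image $e(M)$.

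Let $p = \mathrm{tp}(\beta/M)$ in $\Sa N'$. This type is over $|M|$ many parameters, so by $|M|^+$-saturation it is realized in $\Sa N$ by some $\beta^*$. For $a \in M^m$ we have $\varphi(a,y) \in p$ if and only if $a \in X$. Hence $X = e^{-1}(\varphi(N^m,\beta^*))$. This shows that every $\Sh M$-definable subset of every $M^m$ is the preimage under $e$ of an $\Sa N$-definable set, i.e.\ $e$ is a trace embedding $\Sh M \to \Sa N$.

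\emph{First claim.} Now let $e\colon \Sa M \to \Sa N$ be an arbitrary elementary embedding, and let $\Sa N \prec \Sa N^*$ with $\Sa N^*$ $|N|^+$-saturated. Let $X \subseteq M^m$ be $\Sh M$-definable. Then $e\colon \Sa M \to \Sa N^*$ is elementary and $\Sa N^*$ is $|M|^+$-saturated. By the second claim, $X = e^{-1}(Z)$ for some $\Sa N^*$-definable $Z \subseteq (N^*)^m$.

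The set $Z \cap N^m$ is externally definable in $\Sa N$, hence $\Sh N$-definable. Moreover $e^{-1}(Z \cap N^m) = e^{-1}(Z) = X$, since $e$ maps $M$ into $N$. Therefore $e$ is a trace embedding $\Sh M \to \Sh N$.

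The only mildly delicate point is the type-realization step in the second claim: we need the type of the external parameter over $M$ to be realized in $\Sa N$. This is exactly what $|M|^+$-saturation provides, since the type has only $|M|$ many parameters. Fact~\ref{fact:shelah} is what lets us reduce from $\Sh M$-definable sets to traces of single formulas.
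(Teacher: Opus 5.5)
Your proof is correct and is essentially the paper's argument. The paper simply says the lemma is immediate from Fact~\ref{fact:shelah} and Fact~\ref{fact:ksh}; you use Fact~\ref{fact:shelah} in the same way and realize the external parameter by saturation. The only difference is that you get the $\Sh N$ case by passing to a saturated extension of $\Sa N$ and applying the saturated case, instead of citing Fact~\ref{fact:ksh}, and your argument in effect reproves the direction of that fact that is needed here.
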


This is immediate from the definitions, Fact~\ref{fact:shelah}, and Fact~\ref{fact:ksh}.
Proposition~\ref{prop:she-0} now follows as $\Sa M$ is a reduct of $\Sh M$.

\begin{proposition}
\label{prop:she-0}
Every $\nip$ structure is trace equivalent to its Shelah completion.
\end{proposition}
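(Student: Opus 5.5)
Since trace equivalence of structures means each theory trace defines the other, it suffices to exhibit a trace definition in each direction between $\Sa M$ and $\Sh M$, or into elementary extensions of them.

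The first direction, $\Sh M$ trace defines $\Sa M$, is immediate. $\Sa M$ is a reduct of $\Sh M$ with the same domain, so every $\Sa M$-definable subset of $M^m$ is already $\Sh M$-definable. Hence the identity map $M \to M$ is a trace definition $\Sa M \rightsquigarrow \Sh M$.

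For the other direction I will use Lemma~\ref{lem:she-last}. Let $\Sa N$ be an $|M|^+$-saturated elementary extension of $\Sa M$. Then $\Sa N \models \Th(\Sa M)$ and $\Sa N$ is $\nip$, since $\nip$ is a property of the theory. The second clause of Lemma~\ref{lem:she-last} applied to the inclusion $\Sa M \to \Sa N$ shows that this inclusion is a trace embedding $\Sh M \to \Sa N$. Concretely, by Fact~\ref{fact:shelah} every $\Sh M$-definable $X \subseteq M^m$ is externally definable. By saturation (Fact~\ref{fact:ksh}) it is of the form $Y \cap M^m$ for some $\Sa N$-definable $Y \subseteq N^m$. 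So the model $\Sa N$ of $\Th(\Sa M)$ trace defines $\Sh M$, and therefore $\Th(\Sa M)$ trace defines $\Th(\Sh M)$ in the theory-level sense.

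Combining the two directions shows that $\Th(\Sa M)$ and $\Th(\Sh M)$ each trace define a model of the other, so they are trace equivalent. The only point needing care is the passage from structures to theories. Here I rely on the convention that a theory trace defines a theory if some model of the first trace defines some model of the second; the analogue of Proposition~\ref{prop:loc trace theories} for trace definability (\cite[Lemma~1.7]{trace1}) shows this choice of models is harmless. I do not expect a genuine obstacle: the real content is Shelah's theorem (Fact~\ref{fact:shelah}), which lets the saturated extension $\Sa N$ control all $\Sh M$-definable sets, not just the externally definable ones.
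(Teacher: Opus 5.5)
Your proposal is correct and follows the paper's argument. The identity map is a trace definition $\Sa M \rightsquigarrow \Sh M$ because $\Sa M$ is a reduct of $\Sh M$, and Lemma~\ref{lem:she-last} (Fact~\ref{fact:shelah} plus realizing the external parameters' type over $M$ in the $|M|^+$-saturated $\Sa N$) gives a trace embedding $\Sh M \to \Sa N$ with $\Sa N \models \Th(\Sa M)$. The only quibble is a citation: the step you credit to Fact~\ref{fact:ksh} is really just saturation, which you also name, so nothing is missing.
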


Corollary~\ref{cor:poizat-b} follows from Proposition~\ref{prop:she-0} and external definability of convex sets.

\begin{corollary}
\label{cor:poizat-b}
If $\Sa M$ is a $\nip$ expansion of a linear order then any expansion of $\Sa M$ by unary relations defining convex subsets of $M$ is trace equivalent to $\Sa M$.
\end{corollary}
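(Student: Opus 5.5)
Let $\Sa M^*$ be the expansion of $\Sa M$ by unary relations $C_i$ ($i \in I$), each $C_i$ a convex subset of $M$. We show that $\Sa M$ and $\Sa M^*$ trace define each other; trace equivalence of the theories follows.

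The direction in which $\Sa M^*$ trace defines $\Sa M$ is trivial: $\Sa M$ is a reduct of $\Sa M^*$, so the identity $M \to M$ is a trace definition $\Sa M \rightsquigarrow \Sa M^*$.

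For the other direction, the plan is to show that $\Sa M^*$ is a reduct of $\Sh M$. The underlying linear order $<$ on $M$ is $\Sa M$-definable, so Fact~\ref{fact:convex} makes each $C_i$ externally definable, hence $\Sh M$-definable. Every $\Sa M$-definable set is also $\Sh M$-definable. So every atomic relation of $\Sa M^*$ is $\Sh M$-definable, and therefore every $\Sa M^*$-definable set is $\Sh M$-definable. The identity $M \to M$ is thus a trace definition $\Sa M^* \rightsquigarrow \Sh M$.

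By Proposition~\ref{prop:she-0}, $\Sh M$ is trace equivalent to $\Sa M$, so $\Th(\Sa M)$ trace defines $\Th(\Sh M)$. Transitivity of trace definability between theories, via \cite[Lemma~1.7]{trace1}, then gives that $\Th(\Sa M)$ trace defines $\Th(\Sa M^*)$.

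The only point needing care is that last composition. Proposition~\ref{prop:she-0} is a statement about theories, so it supplies a trace definition of $\Sh M$ in some model of $\Th(\Sa M)$, not necessarily in $\Sa M$ itself. Concretely, Lemma~\ref{lem:she-last} gives a trace embedding $\Sh M \to \Sa N$ for an $|M|^+$-saturated $\Sa N \succ \Sa M$. Composing it with the identity $\Sa M^* \rightsquigarrow \Sh M$ yields a trace definition $\Sa M^* \rightsquigarrow \Sa N$, which is exactly what trace definability between theories requires.
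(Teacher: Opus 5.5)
Your proof is correct and follows the same route as the paper, which derives the corollary from Proposition~\ref{prop:she-0} together with external definability of convex sets. Your explicit composition with the trace embedding $\Sh M \to \Sa N$ from Lemma~\ref{lem:she-last} spells out the theory-level step that the paper leaves implicit.
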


We now reconsider the examples given in Section~\ref{section:new}.

\begin{proposition}\label{prop:shn}
Let $\Sa H_\lambda$, $\Sa P$, $\Sa R$, and $\Sa B$ be as in Section~\ref{section:new}.
Then $\Sa H_\lambda$ is trace equivalent to $\rfield$, $\Sa P$ is trace equivalent to $\Sa R$, and $\Sa B$ is trace equivalent to $\Q_p$.
\end{proposition}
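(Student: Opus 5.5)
Each of the three equivalences splits into two directions, and I will handle them in parallel. In each case one direction is that the "small" structure ($\Sa H_\lambda$, $\Sa P$, $\Sa B$) is trace definable in the "big" one ($\rfield$, $\Sa R$, $\Q_p$). The other direction is that the big structure is trace definable in the theory of the small one. For the latter, the work is already done in Section~\ref{section:new}. By Proposition~\ref{prop:hlamb}, Proposition~\ref{prop:newdm} (in the form proven after Lemma~\ref{lem:regular}) and Proposition~\ref{prop:p-adic}, the Shelah completion of a suitable model of the small theory interprets the big structure. An interpretation is in particular a trace definition into $\Sh{M}^{\mathrm{eq}}$, and trace definability passes through imaginaries. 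Then Proposition~\ref{prop:she-0} shows that this Shelah completion is trace equivalent to the model, and transitivity of trace definability finishes. The one point to check is that trace definability of a structure interpretable in $\Sa O$ reduces to trace definability in $\Sa O$. I would handle this by lifting: choose a representative section $D/E \to D$ (not definable, but trace definitions need not be). Pulling back definable subsets of $(D/E)^m$ gives $E$-invariant definable subsets of $D^m$, so the section is a trace definition into $\Sa O$. For $\Sa P$ the interpretation was shown in $\Sh P$ for $\Sa P$ inside an $\aleph_1$-saturated $\Sa N$, which is exactly our setup, so $\Th(\Sa P)$ trace defines $\Sa R$. For $\Sa B$ we use $\Sa B_\K$ with $\K$ $\aleph_1$-saturated, a model of $\Th(\Sa B)$.

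For the other direction I use the quantifier elimination results and Fact~\ref{fact:embed}, or directly the definition (2) of Fact~\ref{fact:trace-def}. For $\Sa H_\lambda$, Fact~\ref{fact:vddgun} says every definable subset of $H^m$ is the preimage of a semialgebraic set under $h\mapsto \lambda^h$, and this map is injective. So $h\mapsto\lambda^h$ is itself a trace definition $\Sa H_\lambda\rightsquigarrow\rfield$. For $\Sa P$, Fact~\ref{fact:dms} says every $\Sa P$-definable set is $Y\cap P^m$ with $Y$ $\Sa N$-definable. Hence the inclusion is a trace embedding $\Sa P\to\Sa N$, and $\Sa N\equiv\Sa R$, so $\Th(\Sa R)$ trace defines $\Sa P$. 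For $\Sa B$, the set $\B$ is a $\Q_p$-definable set of imaginaries and $\Sa B$ is the induced structure. Realize $\B$ as a definable set rather than imaginary: a ball $B(a,\delta)$ can be coded by a tuple in $\Q_p^2$ via a definable choice of center (for instance, using that $\Q_p$ has definable Skolem functions, or explicitly choosing the center among finite expansions in a fixed system of representatives). This identifies $\B$ with a definable subset of $\Q_p^2$. Every $\Sa B$-definable set is then the trace of a $\Q_p$-definable set, so clause (2) of Fact~\ref{fact:trace-def} applies.

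The main obstacle is bookkeeping rather than anything new. The substantive points are the passage from interpretability in $\Sh M$ to trace definability in $\Th(\Sa M)$, namely the imaginary-sort lifting above, and, for $\Sa B$, elimination of the imaginary sort of balls. If definable choice of centers proves awkward, I would instead apply the same representative-section argument to $\B$ as a quotient of the definable set $\Q_p\times\Gamma$. That argument gives trace definability without needing any definable section.
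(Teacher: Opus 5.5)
Your outline matches the paper's proof. The directions ``small into big'' come from Fact~\ref{fact:vddgun}, Fact~\ref{fact:dms}, and interpretability of $\Sa B$ in $\Q_p$. The directions ``big into $\Th(\text{small})$'' come from the interpretations in Shelah completions in Section~\ref{section:new} combined with Proposition~\ref{prop:she-0}. Your representative-section argument is exactly the reason, left implicit in the paper, that interpretability implies trace definability. You should also note that $\Sa H_\lambda$, $\Sa P$ and $\Sa B_\K$ are $\nip$, since Proposition~\ref{prop:she-0} requires it.

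One step is not merely ``awkward'' but false: $\B$ cannot be identified with a definable subset of $\Q_p^2$. Lemma~\ref{lem:map} shows that every definable map $\B^m \to \Q_p^n$ has finite image. The reason is that $\B$ is countable, while every infinite definable subset of $\Q_p$ has interior and so is uncountable.
\begin{itemize}
\item Definable Skolem functions only give a center $g(a,c) \in B(a,v(c))$ that depends on the representative $(a,c)$, not on the ball. Choosing the center canonically would amount to eliminating this imaginary, which $\Q_p$ does not do.
\item The truncated-expansion center has countably infinite image, so it is not definable.
\item The radius sort $\Gamma$ cannot be coded by a definable set either.
\end{itemize}

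So drop that route and use your fallback, which is precisely the paper's ``interpretable, hence trace definable'' step. State it with a genuinely definable domain, since $\Q_p \times \Gamma$ is not a definable set of $\Q_p$. The set $\B$ is the quotient of $\Q_p \times \Q_p^\times$ by the definable relation $(a,c) \sim (a',c')$ if and only if $v(c) = v(c') \le v(a - a')$. Any section of this quotient map is then a trace definition $\Sa B \rightsquigarrow \Q_p$; it is necessarily non-definable, and that is harmless.
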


\begin{proof}
We showed above that $\Sa R$ is interpretable in $\Sh P$, $\rfield$ is interpretable in the Shelah completion of any $\aleph_1$-saturated elementary extension of $\Sa H_\lambda$, and $\Q_p$ is interpretable in the Shelah completion of a certain elementary extension of $\Sa B$.
It follows from Proposition~\ref{prop:she-0} that $\Sa R$, $\rfield$, $\Q_p$, is trace definable in $\Th(\Sa P)$, $\Th(\Sa H_\lambda)$, $\Th(\Sa B)$, respectively.
Now $\Sa B$ is interpretable in, and hence trace equivalent to, $\Q_p$.
Fact~\ref{fact:dms} shows that $\Sa P$ trace embeds into an elementary extension of $\Sa R$, so $\Sa P$ is trace equivalent to $\Sa R$.
Fact~\ref{fact:vddgun} shows that $h \mapsto \lambda^h$ gives a trace embedding $\Sa H_\lambda \to \rfield$, hence $\Sa H_\lambda$ is trace equivalent to $\rfield$.
\end{proof}

\subsection{Multisorted structures and disjoint unions}\label{section:disjoint}
We briefly consider trace definability and local trace definability between multisorted structures and theories.
We need this to construct infinite disjoint unions of structures.
Let $\Sa N$ and $\Sa M$ be multi-sorted structures with sorts $(N_t)_{t \in S'}$ and $(M_t)_{t \in S}$, respectively.
We only treat the case when $\Sa N$ admits quantifier elimination and extend to the general case via Morleyization.

\medskip
We say that $\Sa M$ is trace definable in $\Sa N$ if for every $s\in S$ there is a finite $X_s\subseteq S'$ and a finite collection $\Cal E_s$ of functions $M_s \to N_t$, $t\in X_s$, such that every $\Sa M$-definable set is quantifier free definable in $(\Sa N, (M_t)_{t \in S'} ,(\Cal E_s)_{s\in S})$.
We say that $\Sa M$ is locally trace definable in $\Sa N$ if for every $s\in S$ there is a collection $\Cal E_s$ of functions $M_s \to N_t$, $t\in S'$, such that every $\Sa M$-definable set is quantifier free definable in $(\Sa N, (M_t)_{t \in S'} ,(\Cal E_s)_{s\in S})$.
It is easy to see that the previously proven results generalize in a suitable form to multi-sorted structures.
We leave this and other basic facts to the reader.

\medskip
Given a family of (possibly multi-sorted) languages $(L_i)_{i \in I}$ let $L_\sqcup$ be the disjoint union of the $L_i$, considered as a multi-sorted language in the natural way.
(So in particular if each $L_i$ is one-sorted the $L_\sqcup$ is $|I|$-sorted.)
Given a family $(\Sa M_i)_{i\in I}$ of structures we let $\bigsqcup_{i\in I} \Sa M_i$ be the disjoint union of the $\Sa M_i$ considered as an $L_\sqcup$-structure in the natural way.

\medskip
Fact~\ref{fact:fv} is immediate.

\begin{fact}\label{fact:fv}
Let $(\Sa M_i)_{i\in I}$ be a family of structures.
Fix $i_1, \ldots, i_k \in I$ and $m_1,\ldots,m_k$.
Then any subset of $M^{m_1}_{i_1} \times \cdots \times M^{m_k}_{i_k}$ that is definable in $\bigsqcup_{i\in I} \Sa M_i$ is of the form $X_1 \times \cdots \times X_k$ where each $X_j \subseteq M^{m_j}_{i_j}$ is definable in $\Sa M_{i_j}$.
\end{fact}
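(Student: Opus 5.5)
The plan is to argue by induction on formulas in the language $L_\sqcup$, using the fact that $L_\sqcup$ has no symbols linking distinct sorts. First group the coordinates: let $x_j$ be a tuple of $m_j$ variables of the sort $M_{i_j}$ for $j = 1,\ldots,k$. Since each symbol of $L_\sqcup$ comes from a single $L_i$, every atomic formula mentions variables of only one sort. Its solution set in $M^{m_1}_{i_1}\times\cdots\times M^{m_k}_{i_k}$ is therefore a cylinder: a product in which one factor $X_j$ is $\Sa M_{i_j}$-definable and every other factor is the whole $M^{m_{j'}}_{i_{j'}}$. The same holds for parameters, since a parameter from $M_i$ can only appear in formulas of sort $i$. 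This gives the base case.

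For the inductive step I would treat the three constructors separately. Conjunction is immediate, because the intersection of two products is the product of the coordinatewise intersections, and each of these is definable in the relevant $\Sa M_{i_j}$. For the existential quantifier $\exists y$ with $y$ of sort $i$, I would add a dummy block for $y$ and use the inductive hypothesis to write the formula's set as $X_1\times\cdots\times X_k\times Z$. Projecting away $y$ then yields either $X_1\times\cdots\times X_k$ (when $Z\neq\emptyset$) or $\emptyset$. If several blocks share the sort $i_j$, I would first merge them into one block before projecting, so that the projected factor is still definable in $\Sa M_{i_j}$.

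The step I expect to be the real obstacle is negation, and with it disjunction. The complement of $X_1\times X_2$ is $(X_1^c\times M)\cup(M\times X_2^c)$, so the class of single products is not visibly closed under complements. Here I would first pass to the Boolean-algebra version: every definable set is a finite union of products of this form. I would then try to collapse such a union to a single product by merging all blocks of the same sort into one $\Sa M_i$-definable set. This is where the statement's hypotheses must be used exactly as worded. I would check carefully whether the collapse goes through when the $i_j$ are distinct, or whether the single-product conclusion holds only after this normalization. This is the point I would scrutinize first before writing the proof.
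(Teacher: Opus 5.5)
Your worry about the negation step is justified, and it is fatal to the proposal as written. No ``collapse'' of a finite union of products into a single product is possible, even when $i_1,\ldots,i_k$ are pairwise distinct, because the statement as literally worded is false.

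Take $k=2$, $i_1\neq i_2$, $m_1=m_2=1$, and fix $c\in M_{i_1}$ and $d\in M_{i_2}$. Let $S=(M_{i_1}\times M_{i_2})\setminus(\{c\}\times\{d\})$, which is defined by $\neg(x_1=c\wedge x_2=d)$. Suppose $S=X_1\times X_2$. For any $a\neq c$ we get $\{a\}\times M_{i_2}\subseteq S$, so $X_2=M_{i_2}$. Symmetrically $X_1=M_{i_1}$, hence $(c,d)\in S$, a contradiction.

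If two blocks share a sort, things are worse. For $i_1=i_2$ the diagonal, defined by the atomic formula $x_1=x_2$, is not a product. This also shows your base case (``every atomic formula gives a cylinder'') fails unless same-sort blocks are merged from the start, not just before projecting.

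What is true is the Feferman--Vaught form, and this is presumably what the paper intends: it gives no proof, calling the fact immediate, and its later uses such as Lemma~\ref{lem:k-dis} need nothing more. First merge blocks of the same sort so that the $i_j$ are pairwise distinct; a merged factor is then an arbitrary $\Sa M_i$-definable subset of some $M_i^m$, not itself a product. Then every definable set is a \emph{finite union} of products $X_1\times\cdots\times X_k$ with each $X_j$ definable in $\Sa M_{i_j}$.

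Your induction proves exactly this once you take ``finite union of such products'' as the invariant:
\begin{enumerate}
\item Atomic formulas give cylinders, now correctly, thanks to the merging.
\item Unions are trivial, and intersections distribute over unions.
\item The complement of a single product is
\[
\bigcup_{j=1}^{k} M^{m_1}_{i_1}\times\cdots\times\bigl(M^{m_j}_{i_j}\setminus X_j\bigr)\times\cdots\times M^{m_k}_{i_k},
\]
so complements follow by De Morgan and distributivity.
\item For $\exists y$, projection commutes with unions. On a single product it either deletes the block consisting of $y$ alone, when the sort of $y$ is not among the $i_j$ (leaving the product or $\emptyset$), or projects the unique block of that sort, which stays $\Sa M_{i_j}$-definable.
\end{enumerate}
So change the target from a single product to a finite union of products, and drop the collapse step.
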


Lemma~\ref{lem:k-dis} is immediate from the definition of (local) trace definability between multisorted structures and Fact~\ref{fact:fv}.

\begin{lemma}
\label{lem:k-dis}
Suppose that $T$ is a theory and $(\Sa M_i)_{i\in I}$ is a family of structures, both possibly multi-sorted.
Then $T$ (locally) trace defines $\bigsqcup_{i\in I} \Sa M_i$ if and only if $T$ (locally) trace defines each $\Sa M_i$.
\end{lemma}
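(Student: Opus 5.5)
The plan is to prove both directions at the level of structures, then pass to theories using Proposition~\ref{prop:loc trace theories} and its trace analogue \cite[Lemma~1.7]{trace1}. Throughout we Morleyize $T$ and fix a model $\Sa N$ with quantifier elimination, so the multisorted definitions apply directly.

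\emph{Left to right.} This is the easy direction. For each $i$, the identity map $M_i \to M_i$ (for each sort of $\Sa M_i$) gives a trace embedding $\Sa M_i \to \bigsqcup_{j\in I} \Sa M_j$, since every $\Sa M_i$-definable set is definable in the disjoint union. Composing this with the given (local) trace definition of the disjoint union in a model of $T$ yields a (local) trace definition of $\Sa M_i$. The only point to record is transitivity of (local) trace definability for multisorted structures. For trace definability this is finiteness of the composed families; for local trace definability we compose the witnessing collections as in the remark after the definition.

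\emph{Right to left, local case.} We may take a single model $\Sa N$ of $T$ that locally trace defines every $\Sa M_i$ simultaneously. To get it, choose models $\Sa N_i \models T$ witnessing each $\Sa M_i$ and take $\Sa N$ to be a sufficiently saturated model into which every $\Sa N_i$ elementarily embeds. Elementary embeddings are trace embeddings (Fact~\ref{fact:embed}), so composing gives witnessing families $\Cal E_s$ into $\Sa N$ for every sort $s$ of every $\Sa M_i$. Now take the union of these families over all sorts of the disjoint union and check that every $\bigsqcup \Sa M_i$-definable set is quantifier-free definable in $(\Sa N, (M_t), (\Cal E_s))$. By Fact~\ref{fact:fv} such a set is a finite product $X_1 \times \cdots \times X_k$ with each $X_j$ definable in some $\Sa M_{i_j}$. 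Each factor is defined by a quantifier-free condition on $\Cal E$-images into $\Sa N$, and a conjunction of these defines the product.

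\emph{Right to left, trace case.} Here the definition allows, for each sort $s$, only a finite set $X_s$ of target sorts and a finite family $\Cal E_s$. The same construction works because each sort belongs to a single $\Sa M_i$, so only the finitely many functions for that $\Sa M_i$ are attached to it.

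The main obstacle is realizing all the $\Sa M_i$ in one common model of $T$, which requires amalgamation into a saturated model. A second, subtler point is that in the one-sorted reading "some model of $T$" quantifies over models, so one must check that the multisorted versions of Proposition~\ref{prop:loc trace theories} and \cite[Lemma~1.7]{trace1} hold. I would assert this by the paper's standing remark that the earlier results generalize to the multisorted setting.
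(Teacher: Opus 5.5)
Your proposal is correct and follows the paper's route. The paper simply declares the lemma immediate from the multisorted definitions and Fact~\ref{fact:fv}, and you fill in what that leaves implicit: moving all the witnesses into one common saturated model of $T$ via elementary embeddings. That step also makes the appeal to Proposition~\ref{prop:loc trace theories} unnecessary, since the $\Sa M_i$ are fixed structures.

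One small adjustment: Fact~\ref{fact:fv} really yields a finite union of products, after grouping coordinates that come from the same $\Sa M_i$. So the defining condition is a disjunction of your conjunctions, which is still quantifier-free.
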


Lemma~\ref{lem:disjoint union} is immediate from Lemma~\ref{lem:k-dis}.

\begin{lemma}\label{lem:disjoint union}
Let $(\Sa M_i)_{i \in I}$ and $(\Sa M^*_i)_{i \in I}$ be  families of (possibly multisorted) structures.
Suppose that $\Sa M_i$ is (locally) trace equivalent to $\Sa M^*_i$ for each $i \in I$.
Then $\bigsqcup_{i\in I}\Sa M_i$ is (locally) trace equivalent to $\bigsqcup_{i\in I}\Sa M^*_i$.
\end{lemma}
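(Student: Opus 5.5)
The plan is to apply Lemma~\ref{lem:k-dis} twice, once in each direction, together with transitivity of (local) trace definability between theories.

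First suppose $T = \Th\bigl(\bigsqcup_{i\in I}\Sa M^*_i\bigr)$ and fix $j \in I$. The statement follows from two claims:
\begin{itemize}
\item $T$ (locally) trace defines $\Sa M^*_j$;
\item $\Th(\Sa M^*_j)$ (locally) trace defines $\Sa M_j$.
\end{itemize}

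For the first claim, let $\Sa N$ be a model of $T$. By Fact~\ref{fact:fv}, each sort of $\Sa N$ corresponding to $\Sa M^*_j$ carries induced structure elementarily equivalent to $\Sa M^*_j$. Alternatively, apply the easy direction of Lemma~\ref{lem:k-dis} to the family itself: $T$ trivially (locally) trace defines $\bigsqcup_i \Sa M^*_i$ via the identity maps on sorts, so it (locally) trace defines each $\Sa M^*_j$. The second claim is the hypothesis.

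Transitivity then gives that $T$ (locally) trace defines each $\Sa M_j$. Transitivity for theories comes from Proposition~\ref{prop:loc trace theories} in the local case and from \cite[Lemma~1.7]{trace1} in the trace case, each extended to the multisorted setting as the paper allows. The converse direction of Lemma~\ref{lem:k-dis} then shows that $T$ (locally) trace defines $\bigsqcup_i \Sa M_i$. By symmetry, $\Th\bigl(\bigsqcup_i \Sa M_i\bigr)$ (locally) trace defines $\bigsqcup_i \Sa M^*_i$, which gives the equivalence.

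The only step needing care is the transitivity across theories. A model of $\Th(\Sa M^*_j)$ that defines $\Sa M_j$ may not itself be a sort of a model of $T$. I would handle this with Proposition~\ref{prop:loc trace theories} (and its trace analogue): every model of $\Th(\Sa M^*_j)$, in particular one that (locally) trace defines $\Sa M_j$, is (locally) trace definable in some model of $T$. Composing the two witnesses finishes the argument. In the trace case this composition also keeps the collections $\Cal E_s$ finite, as the multisorted definition requires.
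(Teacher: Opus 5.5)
Your proposal is correct and follows the paper's route. The paper simply says the lemma is immediate from Lemma~\ref{lem:k-dis}, and your argument is that deduction, with the transitivity step across theories (Proposition~\ref{prop:loc trace theories} and its trace analogue) made explicit.
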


\section{Zarankiewicz bounds}\label{section:zaran}
Consider a bipartite graph $\Sa G$ with sorts $V,W$ and edge relation $E \subseteq V \times W$.
Then $\Sa G$ is {\bf $K_{m,n}$-free} if the complete bipartite graph with sorts $\{1,\ldots,m\}, \{1,\ldots,n\}$ is not a subgraph of $\Sa G$.
We say that a class $\Cal C$ of finite bipartite graphs has near linear Zarankiewicz bounds if for any $m$ and positive $\varepsilon \in \R$ there is positive $\lambda \in \R$ such that any $K_{m,m}$-free member $\Sa G^*$ of $\Cal C$ has at most $\lambda |\Sa G^*|^{1 + \varepsilon}$ edges.
We say that $\Sa G$ has \textbf{near linear Zarankiewicz bounds} if its age does.
Furthermore a structure has near linear Zarankiewicz bounds if every definable bipartite graph has near linear Zarankiewicz bounds and a theory $T$ has near linear Zarankiewicz bounds if every (equivalently: some) $\Sa M \models T$ has near linear Zarankiewicz bounds.
The name\footnote{Feel free to come up with a better name.} comes from Zarankiewicz's problem.
The ``near" is necessary, as even $(\Q;<)$ fails to have linear bounds, see \cite{zaran}.

\medskip
We recall a notion of Chernikov and Mennen~\cite{chernikov-mennen}.
A collection of sets is a $(2,1)$-collection if whenever two members of the collection intersect then one is contained in the other.
A formula $\varphi(x,y)$ in the language of $T$ is a \textbf{(2,1)-semi-equation} if the collection $(\varphi(\alpha,M^{|y|}))_{\alpha\in M^{|x|}}$ is a $(2,1)$-collection for any (equivalently: some) $\Sa M\models T$.
Furthermore $T$ is $(2,1)$-semi-equational if every formula $\varphi(x,y)$ is a boolean combination of $(2,1)$-semi-equations and a structure is $(2,1)$-semi-equational when its theory is.
Fact~\ref{fact:am} is proven in~\cite[Prop.~2.26]{chernikov-mennen}.

\begin{fact}
\label{fact:am}
If $T$ is $(2,1)$-semi-equational then $T$ has near linear Zarankiewicz bounds.
\end{fact}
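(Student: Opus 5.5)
The plan is to reduce to the combinatorics of laminar families, and from there to point--box incidences in a $k$-dimensional grid. Fix a definable bipartite graph $E \subseteq V \times W$ and a finite induced subgraph with parts $A \subseteq V$, $B \subseteq W$, $|A|+|B| = n$. By $(2,1)$-semi-equationality there are $(2,1)$-semi-equations $\varphi_1(x,y),\ldots,\varphi_k(x,y)$ such that $E(x,y)$ is a boolean combination of them. Parameters can be absorbed into $x$ or handled by Morleyization, and $k$ depends only on $E$. Writing this boolean combination in disjunctive normal form over atoms, $E$ is a \emph{disjoint} union of at most $2^k$ relations of the form $\bigwedge_i \varphi_i^{\pm}(x,y)$. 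So it suffices to bound each atom separately, and the edge count of $E$ is the sum over atoms.

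The key structural step is to linearize each semi-equation on $B$. For fixed $i$, the family $\{\varphi_i(a,B) : a \in A\}$ is laminar. A laminar family on the finite set $B$ is a forest under inclusion, and a depth-first ordering $<_i$ of $B$ makes every member an interval of $(B;<_i)$. Hence $\varphi_i(a,b)$ holds iff $b \in I_i(a)$ for an interval $I_i(a)$ of $<_i$. The complement $\neg\varphi_i(a,b)$ is then a union of at most two disjoint intervals. Let $p(b) = (\mathrm{pos}_1(b),\ldots,\mathrm{pos}_k(b)) \in \{1,\ldots,n\}^k$ record the positions of $b$ in the $k$ orders. Expanding once more, each atom becomes a disjoint union of at most $2^k$ point--box incidence relations $b \sim a \iff p(b) \in \beta(a)$, where $\beta(a)$ is an axis-parallel box in $\{1,\ldots,n\}^k$. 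In total $E \cap (A \times B)$ is a disjoint union of at most $4^k$ such box-incidence graphs.

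The bound for each box-incidence graph uses a $k$-dimensional range tree. Every box in $\{1,\ldots,n\}^k$ is a disjoint union of $O(\log^k n)$ canonical boxes, and each point lies in $O(\log^k n)$ canonical boxes. Thus the incidence graph is a disjoint union of complete bipartite graphs $A_j \times B_j$, one per canonical box $c_j$, where $A_j$ is the set of $a$ using $c_j$ and $B_j = p^{-1}(c_j)$. These satisfy $\sum_j (|A_j| + |B_j|) \le C_k\, n \log^k n$. Since the graph is $K_{m,m}$-free, each $A_j \times B_j$ has $\min(|A_j|,|B_j|) \le m-1$, so it has at most $(m-1)(|A_j|+|B_j|)$ edges. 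Summing, the graph has at most $(m-1) C_k\, n \log^k n$ edges. Over the $4^k$ pieces, $E\cap(A\times B)$ has $O_{m,k}(n\log^k n) \le \lambda n^{1+\varepsilon}$ edges for any $\varepsilon>0$, which is near linear Zarankiewicz bounds.

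I expect the main obstacle to be the linearization step. One must check carefully that a laminar family admits a single order making all its members intervals, including repeated sets and the case where different $a$ give the same set. One must also handle the negations introduced by the boolean combination without losing disjointness, since $K_{m,m}$-freeness does not pass through non-disjoint unions in the counting. If the DFS argument becomes awkward, I would instead argue directly by induction on the forest depth that laminar incidences decompose into products with total size $O(n\log n)$.
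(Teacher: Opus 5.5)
Your proof is correct. The paper gives no argument of its own for this statement; it simply quotes Chernikov--Mennen [Prop.~2.26], so there is no in-text proof to match or diverge from. Your reduction is a valid self-contained proof. The steps are: each laminar trace $\{\varphi_i(a,B) : a\in A\}$ becomes a family of intervals in a DFS order; each DNF atom then becomes a union of at most $2^k$ point--box incidence relations in $\{1,\ldots,n\}^k$; finally a dyadic (range-tree) decomposition writes each such relation as a union of complete bipartite pieces $A_j\times B_j$. These pieces have total size $O_k(n\log^k n)$, and each has one side of size $<m$. This gives the explicit bound $O_{m,k}(n\log^k n)$. It is the standard point--box-incidence route to near-linear bounds, the same kind of argument as in the Basit--Chernikov--Starchenko--Tao--Tran work that the paper cites for $(\mathbb{R};+,<)$. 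The only difference is that you prove the box-incidence bound by hand rather than quoting it.

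Your closing worry about disjointness is unfounded. Disjointness is never needed. Every piece produced at every stage is a subset of $E\cap(A\times B)$, hence automatically $K_{m,m}$-free. The edge count of any union satisfies $|\bigcup_j E_j|\le\sum_j|E_j|$, whether or not the $E_j$ are disjoint. What the argument actually requires is only that at each stage $E$ is written as a \emph{union} of subsets of itself. A Boolean combination involving differences would not do. DNF, splitting the complement of an interval into two intervals, and the canonical-box decomposition all have this property. So the care you took to keep everything disjoint is harmless but unnecessary.
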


Lemma~\ref{lem:disjoint zaran} follows by Fact~\ref{fact:fv} and the fact that $\varphi(x, y) \land \varphi^*(x^*, y^*)$ is a $(2, 1)$-semi-equation when $\varphi(x,y)$ and $\varphi^*$ are $(2, 1)$-semi-equations.
We leave the details to the reader.

\begin{lemma}
\label{lem:disjoint zaran}
Let $(\Sa M_i)_{i \in I}$ be a family of structures and suppose that each $\Sa M_i$ is $(2,1)$-semi-equational.
Then the disjoint union $\bigsqcup_{i\in I}\Sa M_i$ is also $(2,1)$-semi-equational.
\end{lemma}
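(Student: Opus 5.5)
The plan is to show directly that every formula of the disjoint union $\Sa M := \bigsqcup_{i\in I}\Sa M_i$ is a boolean combination of $(2,1)$-semi-equations. Fix a formula $\varphi(x,y)$ in the multi-sorted language $L_\sqcup$. The variables of $x$ and $y$ each lie in finitely many sorts, so only finitely many indices $i_1,\ldots,i_k$ are involved. Write $x = (x^1,\ldots,x^k)$ and $y = (y^1,\ldots,y^k)$, where $x^j, y^j$ collect the variables of sort $M_{i_j}$. Some blocks may be empty.

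First I would apply Fact~\ref{fact:fv}, together with its evident uniform version: the same Feferman--Vaught type argument shows that $\varphi(x,y)$ is equivalent in $\Sa M$ to a finite disjunction of conjunctions $\bigwedge_{j=1}^k \psi_j(x^j,y^j)$, with each $\psi_j$ an $L_{i_j}$-formula. This is because a definable subset of $\prod_j M_{i_j}^{|x^j|+|y^j|}$ is a finite union of boxes, and parameters can be absorbed into the $\psi_j$. By hypothesis each $\psi_j(x^j,y^j)$ is a boolean combination of $(2,1)$-semi-equations $\theta(x^j,y^j)$ of $\Th(\Sa M_{i_j})$.

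Next I would check that a $(2,1)$-semi-equation $\theta(x^j,y^j)$ of $\Sa M_{i_j}$, regarded as a formula $\theta(x,y)$ of $\Sa M$ in the larger variable tuples, is still a $(2,1)$-semi-equation. Its fiber at $\alpha$ is $\theta(\alpha^j, M_{i_j}^{|y^j|}) \times \prod_{l\neq j} M_{i_l}^{|y^l|}$. Two such fibers intersect exactly when the $j$-th factors intersect, and then one $j$-th factor contains the other, so the corresponding products are nested. Here I use that models of $\Th(\Sa M)$ are disjoint unions of models of the $\Th(\Sa M_i)$, so the property holds in every model.

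It follows that $\varphi$ is a boolean combination of $(2,1)$-semi-equations, which proves the lemma. In effect this is the remark in the text that conjunctions of semi-equations in disjoint variables are again semi-equations. The main point needing care is the uniform version of Fact~\ref{fact:fv} for formulas with separate $x$ and $y$ variables spanning several sorts. The cleanest route is quantifier elimination for $\bigsqcup_{i}\Sa M_i$ after Morleyizing each $\Sa M_i$: atomic formulas then involve a single sort, so the decomposition into boxes is immediate.
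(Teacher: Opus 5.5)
Your proof is correct. It follows the paper's outline: use Fact~\ref{fact:fv} to reduce $\varphi(x,y)$ to a finite disjunction of conjunctions $\bigwedge_j\psi_j(x^j,y^j)$ over distinct sorts. At the one substantive step, however, you use a different fact from the paper's sketch, and yours is the true one.

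The paper asserts that $\varphi(x,y)\wedge\varphi^*(x^*,y^*)$ is itself a $(2,1)$-semi-equation whenever $\varphi$ and $\varphi^*$ are. As literally stated this is false. Take $\varphi(x,y)$ to be $y<x$ in $(\Q;<)$, whose fibers $(-\infty,a)$ form a chain. The fibers of $(y<x)\wedge(y^*<x^*)$ at $(1,2)$ and $(2,1)$ are $(-\infty,1)\times(-\infty,2)$ and $(-\infty,2)\times(-\infty,1)$. These meet at $(0,0)$ but neither contains the other. This happens whether $x,x^*$ lie in one sort or in two copies of $(\Q;<)$.

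Your step is different. A semi-equation $\theta(x^j,y^j)$, padded with dummy variables from the other sorts, remains a semi-equation, because its fibers are cylinders $\theta(\alpha^j,M_{i_j}^{|y^j|})\times\prod_{l\neq j}M_{i_l}^{|y^l|}$. The conjunction is then absorbed into the boolean combination, which is all the lemma needs.

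So you should drop your closing sentence identifying your argument with ``conjunctions of semi-equations in disjoint variables are again semi-equations.'' What you proved is that such a conjunction is a boolean combination of semi-equations, not a semi-equation.

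You were also right to work with finite unions of boxes, with variables grouped by sort (e.g.\ via quantifier elimination after Morleyizing each $\Sa M_i$). Fact~\ref{fact:fv} as written, with a single box and arbitrary $i_1,\ldots,i_k$, needs exactly that correction.
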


The proof of Lemma~\ref{lem:blankm} is easy and left to the reader.


\begin{lemma}
\label{lem:blankm}
Suppose that $\Sa M$ is locally trace definable in $\Sa N$.
Then every $\Sa M$-definable bipartite graph embeds into a $\Sa N$-definable bipartite graph.
\end{lemma}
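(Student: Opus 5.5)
We need to show: if $\Sa M$ is locally trace definable in $\Sa N$, then every $\Sa M$-definable bipartite graph $(V,W;E)$ embeds into an $\Sa N$-definable bipartite graph. The plan is to reduce to the characterization of local trace definability in terms of single relations, Proposition~\ref{prop:loc equiv}(7). The only real work is keeping track of the two sorts, which a single binary relation on one domain does not do directly.

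First, fix an $\Sa M$-definable bipartite graph with sorts $V \subseteq M^p$, $W \subseteq M^q$ and edge relation $E \subseteq V \times W$. Replacing $\Sa M$ by an isomorphic copy, fix two distinct elements $0,1 \in M$. Let $D = (V \times \{0\}) \cup (W \times \{1\})$, padding tuples with a fixed element so that $D \subseteq M^{r}$ for $r = \max(p,q)+1$. Let $R \subseteq D^2$ be the binary relation that holds of $((v,0),(w,1))$ exactly when $(v,w) \in E$, and fails on all other pairs. Then $(D;R)$ is an $\Sa M$-definable structure with a single binary relation.

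Next, apply Proposition~\ref{prop:loc equiv}(7). This gives an embedding $\uptau$ of $(D;R)$ into an $\Sa N$-definable structure $(P;S)$ with $S \subseteq P^2$. Set $V^* = P$, $W^* = P$ (as two formal sorts) and $E^* = S \subseteq V^* \times W^*$, which is an $\Sa N$-definable bipartite graph. Define $v \mapsto \uptau(v,0)$ on $V$ and $w \mapsto \uptau(w,1)$ on $W$. Both maps are injective, and since $\uptau$ is an embedding we have $(v,w) \in E$ iff $R((v,0),(w,1))$ iff $S(\uptau(v,0),\uptau(w,1))$. So this pair of maps is an embedding of bipartite graphs.

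The one point needing care is whether "embedding of bipartite graphs" requires the two sides of the target to be disjoint as sets. If so, replace $V^*$ by $P \times \{0\}$ and $W^*$ by $P \times \{1\}$ inside $N^{n+1}$, again using two fixed distinct elements, now of $N$. Both remain $\Sa N$-definable and the same argument goes through. Everything else is routine bookkeeping, so I expect no real obstacle.
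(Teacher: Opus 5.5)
Your argument is correct: encoding the two sorts as the tagged binary relation $(D;R)$ and applying Proposition~\ref{prop:loc equiv}(7) does give an embedding of $(V,W;E)$ into an $\Sa N$-definable bipartite graph. Your passage to $P\times\{0\}$ and $P\times\{1\}$ settles the disjointness formality.

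The paper leaves this lemma to the reader, and the intended proof is the same direct unwinding of Proposition~\ref{prop:loc equiv}. A marginally shorter version applies item (4) to the single set $E\subseteq M^{p+q}$. This gives one injection $\uptau\colon M\to N^n$ which, applied coordinatewise, embeds $(V,W;E)$ into the $\Sa N$-definable bipartite graph $(N^{np},N^{nq};Y_E)$, with no padding or tagging needed.
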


Lemma~\ref{lem:blank-2} follows by Lemma~\ref{lem:blankm}.

\begin{lemma}
\label{lem:blank-2}
Near linear Zarankiewicz bounds are preserved under local trace definability.
\end{lemma}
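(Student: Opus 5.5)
The plan is to reduce the statement to Lemma~\ref{lem:blankm} together with the observation that near linear Zarankiewicz bounds only depend on the age of a bipartite graph, and ages are monotone under embeddings. Suppose $\Sa M$ is locally trace definable in $\Sa N$ and $\Sa N$ has near linear Zarankiewicz bounds. We must show that every $\Sa M$-definable bipartite graph $\Sa G$ has near linear Zarankiewicz bounds. For theories, we first pass to models: by Proposition~\ref{prop:loc trace theories}, if $T$ locally trace defines $T^*$ then some (indeed every) model of $T^*$ is locally trace definable in some model of $T$. Since the property is defined to hold for a theory iff it holds for every (equivalently some) model, it suffices to treat structures.

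\textbf{Step 1: embed $\Sa G$.} By Lemma~\ref{lem:blankm}, $\Sa G$ embeds into some $\Sa N$-definable bipartite graph $\Sa G^*$. I will take this embedding to be an embedding of two-sorted structures, so that it sends $V$ into $V^*$ and $W$ into $W^*$ and preserves both edges and non-edges. If the lemma only provides an embedding of a binary relation, then I would first encode the two sorts by adding unary predicates via Lemma~\ref{lem:nfusion}. Alternatively, I would simply note that restricting to the images of $V$ and $W$ yields an induced bipartite subgraph.

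\textbf{Step 2: compare ages.} Every finite substructure of $\Sa G$ is isomorphic to a finite substructure of $\Sa G^*$, so the age of $\Sa G$ is contained in the age of $\Sa G^*$. The near linear bound for a class $\Cal C$ is a universal statement over the members of $\Cal C$: for every $m$ and every $\varepsilon > 0$ there is a $\lambda$ such that every $K_{m,m}$-free member of $\Cal C$ has at most $\lambda|\cdot|^{1+\varepsilon}$ edges. Such a statement is inherited by subclasses, and the same constant $\lambda$ works. Hence $\Sa G$ has near linear Zarankiewicz bounds. Since $\Sa G$ was an arbitrary $\Sa M$-definable bipartite graph, $\Sa M$ has near linear Zarankiewicz bounds.

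\textbf{Main obstacle.} The only point needing care is Step 1, namely that the embedding respects the bipartition and preserves non-edges. Preserving non-edges matters only for ages. The edge count and $K_{m,m}$-freeness are both preserved under passing to induced subgraphs, so even if the embedding only gave a subgraph that is not induced, a weaker injection preserving edges still suffices. This is because a finite subgraph of a $K_{m,m}$-free graph is $K_{m,m}$-free and has at least as many edges as the image of any finite piece of $\Sa G$.
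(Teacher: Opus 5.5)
Your proof is correct and follows the paper's route. The paper derives Lemma~\ref{lem:blank-2} directly from Lemma~\ref{lem:blankm}, and the step it leaves implicit is your observation: an embedding of bipartite graphs puts the age of $\Sa G$ inside the age of $\Sa G^*$, so the same constants $\lambda$ work.

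One correction to your closing paragraph. Your claim that an injection which merely preserves edges would suffice is false. Preserving non-edges is essential, and it matters precisely because near linear Zarankiewicz bounds are defined through ages.

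If $f$ only preserves edges, take a $K_{m,m}$-free finite piece of $\Sa G$ and the finite induced subgraph of $\Sa G^*$ spanned by its image. That subgraph may acquire new edges and contain $K_{m,m}$, and then the bound for $\Sa G^*$ says nothing about it.

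Concretely, the complete bipartite graph $(N, N; N \times N)$ is definable in any infinite $\Sa N$. Each of its finite induced subgraphs is some $K_{a,b}$, which is $K_{m,m}$-free only when $\min(a,b) < m$. Such a $K_{a,b}$ has at most $(m-1)(a+b)$ edges, so this graph even has linear bounds. Yet the $K_{2,2}$-free point--line incidence graph over $\Q$ from the proof of Proposition~\ref{prop:field-blank} fails near linear bounds. It is countable, so it injects edge-preservingly into $(N, N; N \times N)$.

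Since Lemma~\ref{lem:blankm} gives genuine embeddings, this does not affect your main argument; just drop the fallback.
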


We show that near linear Zarankiewicz bounds is a $\nip$-theoretic property.

\begin{fact}
\label{fact:blank-0}
Any theory with near linear Zarankiewicz bounds is $\nip$.
\end{fact}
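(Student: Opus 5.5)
We argue by contraposition: if $T$ is not $\nip$ then $T$ fails to have near linear Zarankiewicz bounds. By Fact~\ref{fact:nip char}, if $T$ is not $\nip$ then $T$ (locally) trace defines the \Fraisse limit $\Sa B$ of the class of finite bipartite graphs. By Lemma~\ref{lem:blank-2}, near linear Zarankiewicz bounds are preserved under local trace definability. It therefore suffices to show that $\Sa B$ itself does not have near linear Zarankiewicz bounds. Since a theory has the property if and only if some (equivalently every) model does, this reduces the problem to a purely combinatorial statement about $\Sa B$.

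The age of the edge relation of $\Sa B$, viewed as a definable bipartite graph, is the class of all finite bipartite graphs. So we must exhibit, for some $m$ and some $\varepsilon>0$, finite $K_{m,m}$-free bipartite graphs $\Sa G_n$ on $N_n \to \infty$ vertices with more than $\lambda N_n^{1+\varepsilon}$ edges for every fixed $\lambda$. We take $m = 2$ and $\varepsilon = 1/4$, and use the classical point--line incidence graph of a finite projective plane of order $q$. Its sorts are the points and lines of $\mathbb{P}^2(\F_q)$, each of size $q^2+q+1$, and two distinct points lie on at most one common line, so the graph is $K_{2,2}$-free. It has $N = 2(q^2+q+1)$ vertices and $(q+1)(q^2+q+1)$ edges, which is of order $N^{3/2}$. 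Since $N^{3/2}/N^{5/4} \to \infty$ as $q \to \infty$, no constant $\lambda$ works for $\varepsilon = 1/4$. One could alternatively give a probabilistic construction, deleting one edge from each $K_{2,2}$ in a random graph, but the explicit construction is cleaner.

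There is a definitional point to check: whether ``$\Sa B$ has near linear bounds'' refers to definable bipartite graphs in $\Sa B$ whose age contains all finite bipartite graphs. The edge relation of $\Sa B$ is itself such a graph, since it embeds every finite bipartite graph. Given that, the main step is simply invoking Fact~\ref{fact:nip char} and Lemma~\ref{lem:blank-2}. The only genuine content is the incidence-graph lower bound, which is standard.
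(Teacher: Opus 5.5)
Your proposal is correct and follows the paper's argument: Fact~\ref{fact:nip char}, preservation of near linear Zarankiewicz bounds under local trace definability (Lemma~\ref{lem:blank-2}), and the fact that the class of all finite bipartite graphs fails near linear bounds. The only difference is cosmetic. You use the projective-plane incidence graph, whereas the paper cites Erd\H{o}s and the affine point/non-vertical-line incidence graph over $\F_q$ from the proof of Proposition~\ref{prop:field-blank}. Both give $K_{2,2}$-free graphs on $N$ vertices with on the order of $N^{3/2}$ edges.
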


As we will see below the class of all finite bipartite graphs does not have near linear Zarankiewicz bounds, see \cite{erdos-zaran} for the original proof.
Hence the \Fraisse limit of the class of finite bipartite graphs does not have near linear Zarankiewicz bounds.
Hence Fact~\ref{fact:blank-0} follows from Fact~\ref{fact:nip char}.
We next show that certain rings, in particular infinite fields, do not have near linear Zarankiewicz bounds.
We first gather some tools.
All rings are unitary.
A {\bf domain} is a (possibly noncommutative) ring without zero divisors.

\begin{fact}\label{fact:jacobsen}
Any infinite domain interprets an infinite field of the same characteristic.
\end{fact}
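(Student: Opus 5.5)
The plan is to reduce to the commutative case via Wedderburn's little theorem and then cite the classical fact that an infinite division ring has an infinite commutative definable subfield. Let $D$ be an infinite domain. The first step is to interpret a division ring in $D$. If $D$ is finite-dimensional in some sense this is automatic, but in general I would use the model-theoretic result that an infinite ring without zero divisors interprets (indeed defines) an infinite division ring. Concretely, the natural candidate is to pass to a definable subring. For $a \in D$ the centralizer $C(a) = \{ b : ab = ba\}$ is definable, and a maximal-type choice gives a commutative domain. Specifically I would pick $a$ and consider the double centralizer $C(C(a))$, which is commutative. The trouble is that it is not clear this is infinite.

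So the intended route is to cite the theorem that this fact is really a known result (it is attributed to Jacobson-type arguments in the model theory literature, hence the label): an infinite domain interprets an infinite field of the same characteristic. I would prove it in two stages. \emph{Stage 1}: from a domain to a division ring. If $D$ is not a division ring, then I would argue using a definable Ore-type construction. This is the main obstacle, since noncommutative domains need not satisfy the Ore condition and need not embed in a division ring at all. So instead I would avoid division rings and look directly for a commutative infinite definable subring $C$: take the center $Z(D)$ if it is infinite. Otherwise take $C = Z(C_D(a))$ for a suitable $a$, chosen via a descending chain argument on centralizers, which is available in NIP or stable settings but not in general. That is why I expect this step to need care. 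In full generality I would instead cite the known fact (Mal'cev/Jacobson) that a domain with finite center and all centralizers of noncentral elements finite leads to a contradiction with infinitude: each element lies in its centralizer, so some $C(C(a))$ is an infinite commutative definable subdomain.

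\emph{Stage 2}: from an infinite commutative domain $C$ to its fraction field. I would interpret $\mathrm{Frac}(C)$ as pairs $(x,y) \in C \times (C\setminus\{0\})$ modulo the definable equivalence $xy' = x'y$, with addition and multiplication given by the usual formulas. These are definable, so $\mathrm{Frac}(C)$ is an interpretable infinite field. Its characteristic equals that of $C$, which equals that of $D$ because $1 \in C$ and $C$ is a subring. The hardest step is guaranteeing an infinite commutative definable subdomain in Stage~1. For that I would first try the double-centralizer argument: if $Z(D)$ is infinite we are done; otherwise some noncentral $a$ generates an infinite commutative subring, so the double centralizer is infinite.
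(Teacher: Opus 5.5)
There is a genuine gap. Your architecture matches the paper's: interpret the fraction field in the commutative case, and otherwise find an infinite definable commutative subring. Your double centralizer is exactly the subring the paper uses, since $a \in C(a)$ forces $C(C(a)) \subseteq C(a)$ and hence $C(C(a)) = Z(C(a))$.

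The entire content of the noncommutative case is showing that some element has infinitely many distinct powers, which then all lie in $C(C(a))$. You never prove this. You assert that ``some noncentral $a$ generates an infinite commutative subring'' without justification, and the reason you give does not work:
\begin{itemize}
\item ``Each element lies in its centralizer'' yields no contradiction when the center and all centralizers of noncentral elements are finite, since an infinite set is perfectly well covered by infinitely many finite sets.
\item Even if some $C(a)$ were infinite, that says nothing about its center $C(C(a))$ being infinite.
\item Wedderburn's little theorem, which you invoke at the start, cannot do this job either, because it only concerns finite division rings.
\end{itemize}

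The missing ingredient is Jacobson's commutativity theorem: a ring in which every $r$ satisfies $r^{n} = r$ for some $n \ge 2$ (depending on $r$) is commutative. So if $D$ is a noncommutative domain, there is $r$ with $r^n \neq r$ for all $n \ge 2$. In particular $r \neq 0$. Suppose $r^m = r^n$ with $m < n$. Then $r^m(1 - r^{n-m}) = 0$ and $r^m \neq 0$ give $r^{n-m} = 1$, hence $r^{n-m+1} = r$, a contradiction. So the powers of $r$ are pairwise distinct. Each power commutes with everything that commutes with $r$, so $C(C(r))$ is an infinite definable commutative subring. Its fraction field is interpretable and has the characteristic of $D$, since it contains $1$.

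With this step supplied your proof becomes the paper's; without it, the noncommutative case is unproved. Once you have $r$, the case split on whether $Z(D)$ is infinite is unnecessary, and the detours through Ore conditions and chain conditions on centralizers can be dropped.
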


This is essentially proven in \cite[Prop.~2.3]{distal-valued}.
We recount their argument.
We apply the following theorem of Jacobsen: if $R$ is ring such that every $r \in R$ satisfies $r^n = r$ for some $n \ge 2$ then $R$ is commutative~\cite[Thm.~12.10]{Lam}.

\begin{proof}
Let $R$ be an infinite domain.
If $R$ is commutative then $R$ interprets its field of fractions.
Suppose $R$ is not commutative.
It is enough to produce an infinite definable commutative subring of $R$.
By Jacobsen's theorem there is $r \in R$ such that $r^n \ne r$ for every $n \ge 2$.
As $R$ is a domain $r$ is not nilpotent, so the powers of $r$ are distinct.
Let $C$ be the set of $s \in R$ which commute with $r$, so $C$ is a definable subring of $R$.
Let $Z$ be the center of $C$, so $Z$ is a definable commutative subring of $R$.
Finally, each $r^n$ is in $Z$, so $Z$ is infinite.
\end{proof}

We let $\F_p$ be the field with $p$ elements.

\begin{proposition}\label{prop;p}
Fix a prime $p$.
If $E$ is an infinite domain of characteristic $p$ then $\Th(E)$ locally trace defines the theory of the algebraic closure of $\F_p$.
\end{proposition}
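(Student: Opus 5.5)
By Fact~\ref{fact:jacobsen}, $E$ interprets an infinite field $F$ of characteristic $p$. Interpretation implies trace definability, and trace definability implies local trace definability. Local trace definability is transitive. So it suffices to show that $\Th(F)$ locally trace defines $\mathrm{ACF}_p$, and we may assume $E = F$ is an infinite field of characteristic $p$.

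Since $\mathrm{ACF}_p$ has quantifier elimination in the ring language, we work with the Morleyization, or more directly with Lemma~\ref{lem:qe case} and Proposition~\ref{prop:loc equiv}(4). Fix finitely many $\overline{\F}_p$-definable sets. By quantifier elimination each is a boolean combination of zero sets of finitely many polynomials $f_1,\ldots,f_k$ with coefficients from $\overline{\F}_p$. All these coefficients lie in some finite field $\F_{q}$ with $q = p^d$.

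The plan is to realize a model of $\mathrm{ACF}_p$ as a direct limit of finite fields, and to handle a given finite configuration inside a finite extension of a model of $\Th(F)$. Concretely, pass to an $\aleph_1$-saturated $F^* \equiv F$. Then $F^*$ is infinite, and its finite extensions $F^*(\alpha)$ of degree $n$ are interpretable in $F^*$, via $F^{*n}$ with multiplication given by the structure constants as parameters. We need $\overline{\F}_p$ (or a model of $\mathrm{ACF}_p$) to embed, as a field, into some field interpretable in $F^*$. The inclusion is then a field embedding. Since $\mathrm{ACF}_p$ has quantifier elimination, the zero set of each $f_i$ pulls back to the zero set of $f_i$ in the big field. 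So Fact~\ref{fact:embed} and Fact~\ref{fact:trace-def}(4), or simply Proposition~\ref{prop:qe} applied with the graphs of $+$ and $\times$ as relations, give local, indeed genuine, trace definability.

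The main obstacle is therefore obtaining an embedding of an algebraically closed field of characteristic $p$ into a field definable or interpretable in a model of $\Th(F)$. One finite extension only captures finitely many degrees. This is where locality enters: by Proposition~\ref{prop:qe} we only need that the ternary relations $\{(a,b,c): a+b=c\}$ and $\{(a,b,c): ab=c\}$ on $\overline{\F}_p$ embed into $F^*$-definable ternary relations, possibly different ones. My first attempt would be to use the fact that a field with $\mathrm{ACF}_p$ quantifier elimination is controlled by equations. I would embed $\overline{\F}_p$ into an ultraproduct, or elementary extension, of the fields $F^*(\F_{p^{n!}})$. If $F^*$ contains $\overline{\F}_p \cap F^*$ only partially, the degrees $[F^*\F_{p^{n!}} : F^*]$ grow, so no single interpretable field contains $\overline{\F}_p$. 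Instead I would use the injection $\uptau: \overline{\F}_p \to F^{*I}$ from Proposition~\ref{prop:loc equiv}(3), coordinatizing via bases of the fields $\F_{p^{n!}}$ over $\F_p$ in a compatible way.

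Failing that, I would switch to the approach I expect the paper takes. Use that $\Th(F)$ is either stable or not, and show first that any infinite field of characteristic $p$ locally trace defines $(\overline{\F}_p;+,\times)$ directly. To do this, take an element $t \in F^*$ transcendental over $\F_p$, which exists by saturation since $F^*$ is infinite, and use the field $\F_p(t)$ together with ultraproduct and compactness arguments to reduce each finite system $f_1,\ldots,f_k$ to a finite extension of $F^*$ of bounded degree. That finite extension is interpretable, which yields the witness required by Proposition~\ref{prop:loc equiv}(4).
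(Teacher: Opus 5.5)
Your reduction to an infinite field via Fact~\ref{fact:jacobsen} is the same as the paper's, and the case where the field contains a copy of $\overline{\F}_p$ is fine. But the remaining case is never proved, and the mechanisms you propose cannot work there.

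Let $E_{\mathrm{alg}}$ be the relative algebraic closure of $\F_p$ in $E$. It is determined by $\Th(E)$, so passing to a saturated $F^*$ does not change it. If $E_{\mathrm{alg}}$ is not algebraically closed (for instance $E=\F_p((t))$, where it is $\F_p$), then $\overline{\F}_p$ has infinite degree over $E_{\mathrm{alg}}$. So no finite extension of $F^*$ contains it, and there is no field embedding to use.

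Localizing does not rescue this. Proposition~\ref{prop:loc equiv} lets you treat one polynomial relation at a time, but the witnessing map must still be defined on all of $\overline{\F}_p$, whose elements have unbounded degree. So bounding the coefficients of $f_1,\ldots,f_k$ inside some $\F_q$ buys nothing. The ``compatible bases'' idea fails because each $F^*$-formula sees only finitely many coordinates. Your final suggestion, with a transcendental $t$ and bounded-degree extensions, runs into exactly the same obstruction.

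Separately, Proposition~\ref{prop:qe} does not reduce the problem to the graphs of $+$ and $\times$. The field $\overline{\F}_p$ does not eliminate quantifiers in the relational language of these two graphs; for example, $y=x^2+1$ is not quantifier-free definable there.

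The missing idea is that in this case one should look for combinatorial wildness rather than algebraic structure. The paper splits on $E_{\mathrm{alg}}$:
\begin{itemize}
\item If it is algebraically closed, the inclusion $E_{\mathrm{alg}}\to E$ is a trace embedding by quantifier elimination for algebraically closed fields.
\item If it is finite, some $x^p-x-a$ with $a\in E_{\mathrm{alg}}$ has no root in $E_{\mathrm{alg}}$. Hence it has none in $E$, since a root would be algebraic over $\F_p$. So $E$ is not Artin--Schreier closed.
\item If it is infinite but not algebraically closed, it is $\pac$ (an infinite algebraic extension of a finite field). Being perfect, it is not separably closed, and it is algebraically closed in $E$.
\end{itemize}
In the last two cases Hempel's theorems show that $E$ is $\infty$-$\ip$. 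Proposition~\ref{prop:loctrmax} then gives that $\Th(E)$ locally trace defines every structure, in particular $\overline{\F}_p$. Without Hempel's results, or some other argument producing the required non-algebraic encodings, your proposal does not establish the statement.
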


We apply the following results of Hempel~\cite[Thm.~6.3 and 7.3]{hempel-field}.
\begin{enumerate}[leftmargin=*]
\item If $E$ is a field which has a non-separably closed $\mathrm{PAC}$ subfield $F$ such that $F$  is algebraically closed in $E$, then $E$ is $\infty$-$\ip$.
\item An infinite field which is not Artin-Schreier closed is $\infty$-$\ip$.
\end{enumerate}

\begin{proof}
By Fact~\ref{fact:jacobsen} we may suppose that $E$ is an infinite field of characteristic $p$.
Let $E_\mathrm{alg}$ be the algebraic closure of $\F_p$ in $E$.
If $E_\mathrm{alg}$ is finite then $E_\mathrm{alg}$ has an Artin-Schreier extension, hence $E$ has an Artin-Schreier extension, hence $E$ is $\infty$-$\ip$ by (2) above, and so $\Th(E)$ locally trace defines every structure by Proposition~\ref{prop:loctrmax}.
If $E_\mathrm{alg}$ is algebraically closed then  the inclusion $E_\mathrm{alg}\to E$ is a trace embedding by quantifier elimination for algebraically closed fields.
Suppose that $E_\mathrm{alg}$ is infinite and not algebraically closed.
An infinite algebraic extension of a finite field is $\pac$~\cite[Cor.~11.2.1]{field-arithmetic}.
Hence $E$ is $\infty$-$\ip$ by (1) above.
\end{proof}

\begin{proposition}
\label{prop:field-blank}
Suppose that $R$ is a ring.
If either $R$ has characteristic zero or $R$ is an infinite domain then $R$ does not have near linear Zarankiewicz bounds.
\end{proposition}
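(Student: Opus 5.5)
Since near linear Zarankiewicz bounds are preserved under local trace definability (Lemma~\ref{lem:blank-2}) and interpretations are in particular trace definitions, it suffices to exhibit, in each case, a structure that locally trace embeds (or is interpretable) in $R$ and that visibly violates near linear bounds. The standard witness is a point--line incidence graph. Over a field $F$, take $P = F^2$, $L = F^2$ (lines $y = ax+b$), and $E = \{((x,y),(a,b)) : y = ax+b\}$. This graph is $K_{2,2}$-free, since two points determine a line. Every finite subgraph of it belongs to the age, and the Szemerédi--Trotter lower bound construction gives $n$ points and $n$ lines with $\gtrsim n^{4/3}$ incidences. So the age contains $K_{2,2}$-free graphs with $\Omega(|G|^{4/3})$ edges, contradicting the bound with $\varepsilon < 1/3$.

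Over a finite field $\F_q$ the full point--line graph has $q^2$ points, $q^2$ lines and $q^3$ edges, i.e.\ $\Omega(N^{3/2})$ edges in $N = 2q^2$ vertices, and is still $K_{2,2}$-free.

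The case analysis runs as follows.

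First, suppose $R$ is an infinite domain. By Fact~\ref{fact:jacobsen}, $R$ interprets an infinite field $F$ of the same characteristic, and $F$ defines the incidence graph above.
\begin{itemize}
\item If $F$ has characteristic zero, $\Q \subseteq F$ and the grid construction $\{(i,j) : i \le n,\ j \le 2n^2\}$ with lines of slope $a \le n$ and intercept $b \le n^2$ gives $\gtrsim N^{4/3}$ incidences inside the definable graph.
\item If $F$ has characteristic $p$, I would instead invoke Proposition~\ref{prop;p}: $\Th(R)$ locally trace defines $\mathrm{Th}(\overline{\F_p})$. The incidence graph over $\overline{\F_p}$ contains the finite incidence graphs over every $\F_{p^k}$, and these have $\Omega(N^{3/2})$ edges. (Alternatively, if $F$ contains $\F_{p^k}$ for arbitrarily large $k$ this is immediate; the proposition handles the general case.)
\end{itemize}
Since near linear bounds for $\Th(R)$ are equivalent to near linear bounds for $R$, and the bound is a property of the age, passing to a model of the theory is harmless.

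Second, suppose $R$ has characteristic zero but is not a domain. Then $\Z \subseteq R$, and I would use the $R$-definable graph $E = \{((x,y),(a,b)) \in R^2 \times R^2 : y = ax+b\}$ directly. Restricted to integer points and integer lines it coincides with the integer incidence graph, which is $K_{2,2}$-free there, because two distinct integer points $(x_1,y_1),(x_2,y_2)$ with $x_1 \ne x_2$ determine $a(x_1-x_2) = y_1-y_2$. The main obstacle is exactly this step: $x_1 - x_2$ need not be invertible, or could be a zero divisor acting on $a$, in $R$. This is harmless, however, because we only need a finite \emph{induced} subgraph on integer vertices. On those vertices the edge relation is the statement $y = ax+b$ evaluated in $\Z \subseteq R$, which holds in $R$ if and only if it holds in $\Z$, since $\Z \to R$ is injective in characteristic zero. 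Thus the age of $E$ contains the Szemerédi--Trotter configurations, which are $K_{2,2}$-free with $\gtrsim N^{4/3}$ edges.

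This handles characteristic zero in general, including commutative-free cases, since only the subring $\Z$ is used. It also lets the domain case in characteristic zero be absorbed here, leaving only positive characteristic domains for the first step.
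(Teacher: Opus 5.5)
Your proposal is correct and is essentially the paper's proof. The paper also uses the point--line incidence graph $b = ca + d$ on $R^2 \times R^2$ and observes that its restriction to $D^2 \times D^2$ is $K_{2,2}$-free for any subring $D$ that is a domain, which is your resolution of the zero-divisor worry with $D = \Z$. It then uses the same grid with $n^4$ incidences among $3n^3$ vertices in characteristic zero, and in the positive characteristic domain case reduces via Proposition~\ref{prop;p} to $\overline{\F_p}$ and the full incidence graphs over finite subfields with $q^3$ incidences. Your preliminary passage through Fact~\ref{fact:jacobsen} is redundant: your second case already covers all of characteristic zero, and Proposition~\ref{prop;p} applies directly to $R$.
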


This is sharp in that there are infinite positive characteristic rings with near linear Zarankiewicz bounds.
Fix an infinite $\F_p$-vector space $V$.
Let $R$ be the ring with domain $\F_p \times V$, componentwise addition, and multiplication given by
\[
(\lambda, v) \cdot (\lambda^*, v^*) = (\lambda \lambda^*, \lambda v + \lambda^* v^*).
\]
Then $R$ is interpretable in $V$ and hence has near linear Zarankiewicz bounds by Proposition~\ref{prop:am} below.

\begin{proof}
Let $\Sa G$ be the bipartite graph given by the incidence relation between points and non-vertical lines in $R^2$, so $\Sa G = (R^2, R^2; E)$ where  we have $E((a,b), (c,d))$ when $b = ca + d$.
Note that if $D$ is a subring of $R$ which is a domain then the substructure of $\Sa G$ with sorts $D^2, D^2$ is $K_{2,2}$-free.

\medskip
We first treat the case when $R$ is of characteristic zero.
We just need to recall the usual witness for sharpness in the lower bounds of Szemeredi-Trotter.
We fix $n \ge 1$, declare $V = \{1,\ldots,n\} \times \{1,\ldots,2n^2\}$ and let $W = \{1,\ldots,n\} \times \{1,\ldots,n^2\}$.
(So $W$ is the set of lines with slope in $\{1,\ldots,n\}$ and $y$-intercept in $\{1,\ldots,n^2\}$.)
Then $(V, W; E)$ is a $K_{2,2}$-free subgraph of $\Sa G$.
Furthermore $|V + W| = 3n^3$ and there are $n^4$ incidences between $V$ and $W$.
Hence $\Sa G$ does not have near linear Zarankiewicz bounds.

\medskip
Now suppose that $R$ is a characteristic $p$ domain.
Then $\Sa G$ is $K_{2,2}$-free.
By Proposition~\ref{prop;p} and preservation of near linear Zarankiewicz bounds under local trace definability we may suppose that $R$ is the algebraic closure of $\F_p$.
Fix $n \ge 1$ and let $F$ be the subfield of $R$ with $q = p^n$ elements.
Let $V = F^2 = W$.
(So $W$ is the set of non-vertical lines between elements of $F^2$.)
Then $|V| = q^2 = |W|$ and as every $\ell \in W^*$ contains $q$ points in $E^2$ there are $q^3$ incidences between $V$ and $W$.
Hence $\Sa G$ does not have near linear Zarankiewicz bounds.
\end{proof}

We now show that certain structures have near linear Zarankiewicz bounds.
An {\bf abelian structure} is structure which is, up to interdefinability, an abelian group $A$ equipped with a family of subgroups of various $A^n$.
For example any module is an abelian structure.
It is an important fact that an expansion of an abelian group is one-based if and only if it is an abelian structure if and only if  every definable subset of $A^n$ is a boolean combination of cosets of definable subgroups of $A^n$~\cite{HP-weakly-normal}.

\begin{proposition}
\label{prop:am}
Any abelian structure is $(2,1)$-semi-equational and hence has near linear Zarankiewicz bounds.
\end{proposition}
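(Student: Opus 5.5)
Using the characterization of abelian structures quoted just before the statement, every definable subset of $A^n$ is a boolean combination of cosets of definable subgroups of $A^n$. By Fact~\ref{fact:am} it suffices to show that $(2,1)$-semi-equationality holds. So it is enough to show that any formula $\varphi(x,y)$ is a boolean combination of $(2,1)$-semi-equations. I work in a model, which is again an abelian structure since this is a property of the theory (one-basedness).

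Fix a formula $\varphi(x,y)$ with $|x| = m$ and $|y| = n$. The set $\varphi(A^{m}, A^n) \subseteq A^{m+n}$ is a boolean combination of cosets $H + (a,b)$ of definable subgroups $H \le A^{m+n}$. Hence it suffices to show that each formula $\psi(x,y)$ of the form $(x,y) \in H + c$ is a $(2,1)$-semi-equation, with the parameter $c$ allowed. The fibers are
$$\psi(\alpha, A^n) = \{ \beta : (\alpha,\beta) - c \in H\}.$$
Each such fiber is either empty or a coset of the fixed subgroup $H_0 = \{ \beta : (0,\beta) \in H\}$ of $A^n$. Indeed, if $(\alpha,\beta_0) - c \in H$, then $(\alpha,\beta) - c \in H$ if and only if $(0,\beta-\beta_0) \in H$. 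Any two cosets of the same subgroup are either disjoint or equal. Hence the family of fibers $(\psi(\alpha,A^n))_{\alpha}$ is a $(2,1)$-collection, and $\psi$ is a $(2,1)$-semi-equation.

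It remains to handle the definition of semi-equation, which presumably refers to parameter-free formulas in the language of $T$. If parameters are not allowed, I absorb $c$ into the variables as follows. A coset formula with parameter $c = (c_1,c_2)$ is an instance of $(x,y) - (z_1,z_2) \in H$. I view $z_1$ as part of the $x$-variables and $z_2$ as part of the $y$-variables; this does not affect the semi-equation property, since fibers are still cosets of $H_0$. The subgroup $H$ may itself need parameters; I would use that, in an abelian structure, we may pass to the expansion by the relevant subgroups (interdefinability) so that $H$ is named. A boolean combination of instances of semi-equations then handles arbitrary $\varphi$, which is the standard reading. This bookkeeping about parameters is the only mildly delicate point; the coset computation itself is immediate. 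The conclusion then follows from Fact~\ref{fact:am}.
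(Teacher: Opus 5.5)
Your argument is correct and is essentially the paper's proof. In both, $\varphi(x,y)$ is written as a boolean combination of cosets of definable subgroups of $A^{m+n}$, the $x$-fibers of a single coset are shown to be empty or cosets of one fixed subgroup of $A^n$ (so they form a $(2,1)$-collection), and Fact~\ref{fact:am} finishes the proof; you are in fact a little more careful than the paper about empty fibers and about the fixed subgroup living in $A^n$.

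The paper ignores parameter bookkeeping entirely, and your patch is the loose part: naming $H$ in an expansion changes the theory. A cleaner route is Baur--Monk pp-elimination in the language naming the subgroups, which writes a parameter-free $\varphi(x,y)$ directly as a boolean combination of parameter-free pp-formulas (plus sentences), each defining a $\emptyset$-definable subgroup, so neither cosets nor parameters arise.
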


\begin{proof}
Let $\Sa H$ be an an abelian structure.
By Fact~\ref{fact:am} it suffices to show that $\Sa H$ is $(2,1)$-semi-equational.
Let $\varphi(x,y)$ be a formula with  $|x|=m,|y|=n$.
Then $\varphi(x,y)$ is equivalent to a finite boolean combination of  formulas  which define cosets of subgroups of $H^{m+n}$.
Suppose that $\varphi(x,y)$ defines a coset of a subgroup $J$ of $H^{m+n}$.
Then there is a subgroup $J'$ of $H^{m + n}$ such that $J_\alpha$ is a coset of $J'$ for every $\alpha \in H^m$.
Hence $(J_\alpha)_{\alpha \in H^m}$ is a $(2, 1)$-collection and so $\varphi(x,y)$ is a $(2,1)$-semi-equation.
\end{proof}

\begin{fact}
\label{fact:mct}
Any ordered vector space over an ordered division ring is $(2,1)$-semi-equational and hence has near linear Zarankiewicz bounds.
\end{fact}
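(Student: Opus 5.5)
The plan is to check that every formula in an ordered vector space $\Sa V$ over an ordered division ring $\D$ is a boolean combination of $(2,1)$-semi-equations, and then invoke Fact~\ref{fact:am}.

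\emph{Step 1: quantifier elimination.} Ordered $\D$-vector spaces, in the language with $+$, $<$, $0$, and unary scalar multiplication by each $\lambda \in \D$, admit quantifier elimination. This is the standard Fourier--Motzkin argument. So every formula $\varphi(x,y)$ is a boolean combination of atomic formulas
\[
\sum_i \lambda_i x_i + \sum_j \mu_j y_j + c \;\square\; 0, \qquad \square \in \{=,<\},
\]
with $\lambda_i,\mu_j \in \D$ and a parameter $c$. Boolean combinations of boolean combinations of semi-equations are again such combinations, so it suffices to treat a single atomic formula of this shape.

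\emph{Step 2: the two kinds of atomic formula.} Write $f(x) = \sum_i \lambda_i x_i + c$ and $g(y) = \sum_j \mu_j y_j$.
\begin{enumerate}[leftmargin=*]
\item For the equation $f(x) + g(y) = 0$, each fiber $\{ \beta : g(\beta) = -f(\alpha)\}$ is either empty or a coset of $\ker g$. Two such cosets are either disjoint or equal, so the fibers form a $(2,1)$-collection.
\item For the inequality $f(x) + g(y) < 0$, each fiber is $\{\beta : g(\beta) < -f(\alpha)\}$. These fibers are preimages under $g$ of initial rays $(-\infty, t)$ of the linear order $(V;<)$. The initial rays form a chain under inclusion, and preimages of a chain form a chain. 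So any two intersecting fibers are nested, and again we have a $(2,1)$-collection.
\end{enumerate}

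Hence every atomic formula is a $(2,1)$-semi-equation and $\Th(\Sa V)$ is $(2,1)$-semi-equational. Near linear Zarankiewicz bounds then follow from Fact~\ref{fact:am}.

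The only real input is quantifier elimination for ordered vector spaces over an ordered division ring, which I cite as standard. The noncommutativity of $\D$ causes no trouble as long as scalars act on one fixed side, so the main thing to check is that the elimination goes through with that convention. Once quantifier elimination is in hand, the verification in Step 2 is immediate.
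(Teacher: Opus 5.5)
Your proof is correct and follows the paper's route. Quantifier elimination for ordered vector spaces reduces everything to atomic linear equalities and inequalities, each of these is a $(2,1)$-semi-equation, and Fact~\ref{fact:am} then gives the near linear Zarankiewicz bounds. The only difference is that the paper cites \cite[Prop.~3.6]{chernikov-mennen} for the middle step, which you instead check directly with the coset and initial-ray argument.
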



It follows from quantifier elimination that ordered vector spaces are $(2, 1)$-semi-equational, see \cite[Prop.~3.6]{chernikov-mennen}.
The second claim follows from the first by Fact~\ref{fact:am}.
(It was previously shown in  \cite[Thm.~C]{zaran} that $\rgoup$ has near linear Zarankiewicz bounds.)
Corollary~\ref{cor:o-min} follows as an o-minimal expansion of an ordered abelian group that does not interpret an infinite field is a reduct of an ordered vector space over an ordered division ring~\cite{PS-Tri}.

\begin{corollary}\label{cor:o-min}
If $\Sa R$ is an o-minimal expansion of an ordered group then $\Th(\Sa R)$ locally trace defines an infinite field if and only if $\Sa R$ interprets an infinite field.
\end{corollary}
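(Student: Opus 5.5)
We prove both directions separately; one is trivial and the other uses the Peterzil--Starchenko dichotomy, Fact~\ref{fact:mct}, and Lemma~\ref{lem:blank-2}.

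\emph{If $\Sa R$ interprets an infinite field $F$, then $\Th(\Sa R)$ locally trace defines $F$.} Interpretability gives trace definability, hence local trace definability. To see this, pass to $\Sa R^{\mathrm{eq}}$. Alternatively, use the fact that o-minimal expansions of ordered groups eliminate imaginaries, so after adding constants $F$ is definable. Then the identity map on the domain of $F$, viewed inside some $R^n$, witnesses trace definability, since every $F$-definable set is $\Sa R$-definable.

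\emph{If $\Sa R$ does not interpret an infinite field, then $\Th(\Sa R)$ does not locally trace define one.} The argument has three steps.

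\begin{enumerate}[leftmargin=*]
\item By Peterzil--Starchenko~\cite{PS-Tri}, as recalled just before the corollary, $\Sa R$ is a reduct of an ordered vector space $\Sa V$ over an ordered division ring. The identity map $R \to V$ is then a trace embedding, so $\Sa V$ trace defines $\Sa R$, and in particular locally trace defines it.
\item By Fact~\ref{fact:mct}, $\Sa V$ has near linear Zarankiewicz bounds. By Lemma~\ref{lem:blank-2} this property is preserved under local trace definability, so $\Sa R$ and every model of $\Th(\Sa R)$ have near linear Zarankiewicz bounds. Here the property for a theory is independent of the chosen model, since it depends only on the ages of definable bipartite graphs, which are determined by the theory.
\item Now suppose $\Th(\Sa R)$ locally trace defined an infinite field $F$. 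Another application of Lemma~\ref{lem:blank-2} would give $F$ near linear Zarankiewicz bounds. But an infinite field is an infinite domain, so Proposition~\ref{prop:field-blank} says it does not have such bounds. This is a contradiction.
\end{enumerate}

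The main point to check is that local trace definability by a theory means by some model of it. The preservation in Lemma~\ref{lem:blank-2} must therefore be applied to the witnessing model $\Sa M \models \Th(\Sa R)$, which has near linear Zarankiewicz bounds because the theory does. The rest is direct citation.
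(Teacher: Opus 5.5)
Your proposal is correct and follows the paper's own argument. The paper also derives the nontrivial direction from the Peterzil--Starchenko fact that an o-minimal expansion of an ordered group which does not interpret an infinite field is a reduct of an ordered vector space, and then applies Fact~\ref{fact:mct}, Lemma~\ref{lem:blank-2} and Proposition~\ref{prop:field-blank}; the converse direction is trivial there as well. Your care in applying Lemma~\ref{lem:blank-2} to the model that witnesses local trace definability is appropriate, and the model-independence you invoke is exactly the ``every (equivalently: some) model'' clause in the paper's definition.
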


Lemma~\ref{lem:vector space}  will be applied to some examples below.

\begin{lemma}
\label{lem:vector space}
A disjoint union of an abelian structure with an ordered vector space is $(2,1)$-semi-equational and hence has near linear Zarankiewicz bounds.
\end{lemma}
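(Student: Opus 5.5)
The plan is to reduce to Lemma~\ref{lem:disjoint zaran}, Proposition~\ref{prop:am} and Fact~\ref{fact:mct}. Let $\Sa H$ be an abelian structure and $\Sa V$ an ordered vector space over an ordered division ring. Proposition~\ref{prop:am} shows that $\Sa H$ is $(2,1)$-semi-equational. The first half of Fact~\ref{fact:mct}, i.e. \cite[Prop.~3.6]{chernikov-mennen}, shows that $\Sa V$ is $(2,1)$-semi-equational. Lemma~\ref{lem:disjoint zaran}, applied to the two-element family $(\Sa H, \Sa V)$, then gives that $\Sa H \sqcup \Sa V$ is $(2,1)$-semi-equational. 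The final claim follows from Fact~\ref{fact:am}.

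One point needs checking. Lemma~\ref{lem:disjoint zaran} and Fact~\ref{fact:am} are stated for theories and structures in general, but the disjoint union is two-sorted. So I would check that the notion of a $(2,1)$-semi-equation and Fact~\ref{fact:am} make sense in the multisorted setting. Here $x$ and $y$ range over tuples of variables from possibly different sorts, and a definable bipartite graph has vertex sets that are definable subsets of products of sorts.

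For this I would use Fact~\ref{fact:fv}. Any formula $\varphi(x,y)$ in the disjoint union, with $x = (x_1,x_2)$ and $y = (y_1,y_2)$ split by sort, is a finite boolean combination of conjunctions $\psi(x_1,y_1) \land \chi(x_2,y_2)$. Here $\psi$ is an $\Sa H$-formula and $\chi$ is a $\Sa V$-formula. We may take $\psi$ and $\chi$ to be $(2,1)$-semi-equations, after rewriting each factor as a boolean combination of semi-equations and distributing. It remains to verify that such a conjunction is a $(2,1)$-semi-equation.

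Its fibers are products $A \times B$, where $A$ ranges over a $(2,1)$-collection in the $\Sa H$-sort and $B$ over one in the $\Sa V$-sort. Suppose $A \times B$ and $A' \times B'$ intersect. Then $A \cap A'$ and $B \cap B'$ are both nonempty, so $A, A'$ are comparable and $B, B'$ are comparable. The case where the two comparisons go in opposite directions is the main obstacle, since then the products need not be comparable. I would handle it by replacing the single conjunction with the boolean combination of the semi-equations $\psi(x_1,y_1) \land (y_2 = y_2)$ and $(y_1 = y_1) \land \chi(x_2,y_2)$. Each of these has fibers that are cylinders over a $(2,1)$-collection, hence forms a $(2,1)$-collection. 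This shows every formula is a boolean combination of $(2,1)$-semi-equations, which is exactly what Lemma~\ref{lem:disjoint zaran} requires.

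Near linear Zarankiewicz bounds then follow from Fact~\ref{fact:am}, whose proof in \cite{chernikov-mennen} applies verbatim in the multisorted case.
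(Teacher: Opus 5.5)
Your proof is correct and uses the same chain as the paper: Proposition~\ref{prop:am}, Fact~\ref{fact:mct}, Lemma~\ref{lem:disjoint zaran}, then Fact~\ref{fact:am}. Your extra multisorted check is also right and worth keeping. The conjunction $\psi(x_1,y_1)\land\chi(x_2,y_2)$ really can fail to be a $(2,1)$-semi-equation: take $y<x$ in two copies of $(\Q;<)$, which gives the fibers $(-\infty,1)\times(-\infty,2)$ and $(-\infty,2)\times(-\infty,1)$. So the paper's remark justifying Lemma~\ref{lem:disjoint zaran} only works when read your way, with each factor viewed as a cylinder semi-equation and the conjunction as a boolean combination of these.
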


\begin{proof}
Apply 
Proposition~\ref{prop:am},
Fact~\ref{fact:mct}, Lemma~\ref{lem:disjoint zaran}, Fact~\ref{fact:am}, and Lemma~\ref{lem:blank-2}.
\end{proof}

\subsection{The strong \Erdos-Hajnal Property}\label{section:eh}
We now consider a property which rules out local trace definability of infinite positive characteristic fields.
Let $\Sa G$ be as in the previous section.
We say that $\Sa G$ has the \textbf{strong \Erdos-Hajnal property} if there is a real number $\delta > 0$ such that for every finite $A \subseteq V, B \subseteq W$ there are $A^* \subseteq A$, $B^* \subseteq B$ such that $|A^*| \geq \delta |A|$, $|B^*| \geq \delta B$, and $A^* \times B^*$ is either contained in or disjoint from $E$.
A structure has the \textbf{strong \Erdos-Hajnal property} if all definable bipartite graphs have the strong \Erdos-Hajnal property and a theory has the strong \Erdos-Hajnal property when all of its models do.

\medskip
As we will see below there are countable bipartite graphs that do not have the strong \Erdos-Hajnal property, so the generic bipartite graph does not have the strong \Erdos-Hajnal property.
Thus by Fact~\ref{fact:nip char} the strong \Erdos-Hajnal property implies $\nip$.

\medskip
Proposition~\ref{prop:eh-trace} is clear from the definitions.

\begin{proposition}
\label{prop:eh-trace}
The strong \Erdos-Hajnal property is preserved under local trace definability.
\end{proposition}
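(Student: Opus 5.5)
The plan follows Lemma~\ref{lem:blank-2}: transfer the property along embeddings of bipartite graphs. Suppose $\Sa N$ has the strong \Erdos-Hajnal property with respect to each of its definable bipartite graphs, and that $\Sa N$ locally trace defines $\Sa M$. Fix an $\Sa M$-definable bipartite graph $\Sa G = (V,W;E)$. By Lemma~\ref{lem:blankm}, which rests on Proposition~\ref{prop:loc equiv}, there is an $\Sa N$-definable bipartite graph $\Sa G^* = (V^*,W^*;E^*)$ and an embedding $\uptau \colon \Sa G \to \Sa G^*$ of bipartite graphs. Here an embedding means that $\uptau$ is injective on each sort, sends $V$ into $V^*$ and $W$ into $W^*$, and satisfies $E(a,b) \Leftrightarrow E^*(\uptau(a),\uptau(b))$.

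Let $\delta > 0$ witness the strong \Erdos-Hajnal property for $\Sa G^*$. Given finite $A \subseteq V$ and $B \subseteq W$, apply the property to the finite sets $\uptau(A)$ and $\uptau(B)$. This gives $A' \subseteq \uptau(A)$ and $B' \subseteq \uptau(B)$ with $|A'| \ge \delta|\uptau(A)| = \delta|A|$ and $|B'| \ge \delta|B|$, where the equalities use injectivity of $\uptau$. Moreover $A' \times B'$ is either contained in or disjoint from $E^*$.

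Pull back by setting $A^* = \uptau^{-1}(A') \cap A$ and $B^* = \uptau^{-1}(B') \cap B$. Injectivity gives $|A^*| = |A'|$ and $|B^*| = |B'|$. Since $\uptau$ preserves and reflects the edge relation, $A^* \times B^*$ is contained in or disjoint from $E$ according to the corresponding case for $A' \times B'$. So the same $\delta$ works for $\Sa G$. As $\Sa G$ was an arbitrary $\Sa M$-definable bipartite graph, $\Sa M$ has the strong \Erdos-Hajnal property.

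For theories, if $T$ locally trace defines $T^*$, some model of $T$ locally trace defines some model of $T^*$. The property then transfers to that model of $T^*$ by the argument above, and hence holds for $T^*$.

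The only point needing care is the convention that a theory has the property when all of its models do. If the property is only known for one model, one needs it to be elementary: for a fixed formula and a fixed $\delta$, the failure of the property on sets of a given finite size is a first order sentence. Hence the property for a given definable family transfers between elementarily equivalent models. This also matches the phrasing ``every (equivalently: some)'' used for the Zarankiewicz bounds, and is the only mildly non-trivial step.
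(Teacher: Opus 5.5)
Your argument is correct and is what the paper means when it calls this clear from the definitions. You embed an $\Sa M$-definable bipartite graph into an $\Sa N$-definable one via Lemma~\ref{lem:blankm}, apply the property there, and pull the homogeneous rectangle back using injectivity and the fact that the embedding reflects edges.

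For the theory-level version you can drop the elementarity detour and simply cite Proposition~\ref{prop:loc trace theories}: every model of $T^*$ is then locally trace definable in some model of $T$, and your structure-level argument applies to each such model directly. The detour as written needs $\delta$ uniform over the parameters of a definable family, whereas the paper's $\delta$ is chosen per graph.
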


Fact~\ref{fact:distal} is due to Chernikov and Starchenko~\cite[Thm.~1.9]{CS}.

\begin{fact}
\label{fact:distal}
Any distal structure has the strong \Erdos-Hajnal property.
\end{fact}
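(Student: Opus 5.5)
The plan is to prove the strong \Erdos-Hajnal property directly from a \emph{distal cell decomposition}, by first deriving a cutting lemma and then finishing with a pigeonhole argument.

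Fix a distal structure $\Sa M$ and a definable bipartite graph $E \subseteq V \times W$. Distality of $\Sa M$ yields a uniform definable cell decomposition for $E$. Concretely, there is a formula $\psi(x; z_1, \ldots, z_k)$ with the following property. For every finite $B \subseteq W$, the set $V$ is covered by a family $\Psi(B)$ of sets $\psi(M; b_1,\ldots,b_k)$ with $b_i \in B$. Each such cell is \emph{not crossed} by any $b \in B$, meaning that for each $b \in B$ the cell is either contained in $E^b$ or disjoint from it. Since $k$ is fixed, $|\Psi(B)| = O(|B|^k)$. I take this characterization of distality (via strong honest definitions) as the starting point; this is the one input from distality itself.

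The next step, which I expect to be the main obstacle, is a \emph{cutting lemma}. Fix a parameter $r$. I want to cover $V$ by at most $C r^{d}$ definable cells, where $C$ and $d$ depend only on $E$, such that each cell is crossed by at most $|B|/r$ elements of $B$. I would prove this by the Clarkson--Shor random sampling method.
\begin{enumerate}[leftmargin=*]
\item Choose a random subset $S \subseteq B$ of size about $r \log r$ and form the cell decomposition $\Psi(S)$.
\item Since ``$b$ crosses $\psi(x;\bar c)$'' is a relation of bounded VC-dimension (NIP is implied by distality), the $\varepsilon$-net theorem shows that with positive probability every cell of $\Psi(S)$ is crossed by at most $|B|/r$ elements of $B$. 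This works because a cell crossed by more elements would, with high probability, be crossed by some element of $S$, which contradicts its membership in $\Psi(S)$.
\item This yields $O((r\log r)^k)$ cells. That weaker bound suffices for our purposes, so the refined Clarkson--Shor bound on the number of cells is not needed.
\end{enumerate}

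Finally, apply the cutting lemma with $r = 4$, so that $r$ is a fixed constant and the number of cells is bounded by a constant $N$. Given finite $A \subseteq V$ and $B \subseteq W$, pigeonhole gives a cell $\Delta$ with $|A \cap \Delta| \ge |A|/N$. At most $|B|/4$ elements of $B$ cross $\Delta$. Every remaining $b \in B$ satisfies either $\Delta \subseteq E^b$ or $\Delta \cap E^b = \emptyset$. One of these two alternatives holds for at least $\frac{3}{8}|B|$ elements $b$; let $B^*$ be that set. Put $A^* = A \cap \Delta$. Then $A^* \times B^*$ is entirely contained in $E$ or entirely disjoint from $E$. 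The required constant is $\delta = \min(1/N, 3/8)$, which depends only on $E$.
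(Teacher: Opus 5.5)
The paper gives no proof of this fact; it quotes it from Chernikov and Starchenko. Your argument is correct and self-contained, modulo two standard black boxes: the Chernikov--Simon characterization of distality by strong honest definitions (which gives the distal cell decomposition), and the $\varepsilon$-net theorem for the NIP crossing relation. The overall structure is:
\begin{itemize}
\item the strong honest definition gives boundedly many uncrossed cells over a sample of bounded size;
\item each of these cells is crossed by few elements of $B$;
\item pigeonhole over the cells finishes.
\end{itemize}
This is in substance the argument behind the cited result. As I recall it, Chernikov and Starchenko run it for generically stable Keisler measures rather than finite sets, which yields the measure version of the property; your finite-set version is the case of counting measures. Your proof also gives \emph{definable} homogeneous pieces, uniformly in $A$ and $B$. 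Indeed $A^* = A \cap \psi(M;\bar c)$, and $B^*$ is cut out by $\forall x\,\bigl(\psi(x;\bar c)\wedge x \in V \rightarrow E(x,y)\bigr)$ or by its analogue with $\neg E$.

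One point of exposition needs fixing. The sentence ``a cell crossed by more elements would, with high probability, be crossed by some element of $S$'' is not the right justification, because the cells of $\Psi(S)$ depend on $S$, so a per-cell probability estimate does not apply. What you actually use is that a $1/r$-net for the whole definable family of crossing sets $\{\,b \in B : b \text{ crosses } \psi(M;\bar c)\cap V\,\}$ meets every large member of that family simultaneously. That covers the cells of $\Psi(S)$ as well. No randomness is needed: take any such net of size $O(d\, r\log r)$, where $d$ is the VC-dimension, or take $S = B$ if $B$ is smaller than that.
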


Fact~\ref{fact:eh} for fields is also due to Chernikov and Starchenko~\cite[Section 6]{CS}.
The general case reduces to the field case by Fact~\ref{fact:jacobsen}.

\begin{fact}
\label{fact:eh}
An infinite domain of positive characteristic does not satisfy the strong \Erdos-Hajnal property.
\end{fact}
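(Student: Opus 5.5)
The proof proceeds in two steps, as the paper indicates: first the field case, then the general case by reduction via Fact~\ref{fact:jacobsen}.

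For the field case, let $E$ be an infinite field of characteristic $p$. By Proposition~\ref{prop;p}, $\Th(E)$ locally trace defines the theory of $\overline{\F}_p$, the algebraic closure of $\F_p$. Since the strong \Erdos-Hajnal property is preserved under local trace definability (Proposition~\ref{prop:eh-trace}), it is enough to show that $\overline{\F}_p$ fails it. Alternatively, $E$ may be $\infty$-$\ip$, in which case it is not $\nip$ and so fails the property directly. In either case, I would exhibit the point--line incidence graph $\Sa G = (\overline{\F}_p^{\,2}, \overline{\F}_p^{\,2}; E)$, with $E((a,b),(c,d))$ iff $b = ca+d$, as the definable bipartite graph witnessing failure.

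Fix $q = p^n$ and let $F = \F_q$. Take $A = F^2$ as points and $B = F^2$ as non-vertical lines, so $|A| = |B| = q^2$ and there are $q^3$ incidences. Now suppose $A^* \subseteq A$ and $B^* \subseteq B$ each have size at least $\delta q^2$.

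First, $A^* \times B^*$ cannot lie inside $E$ once $q$ is large. The graph is $K_{2,2}$-free, since two distinct points lie on at most one common line. So if $A^* \times B^*$ were complete, then $|A^*| \le 1$ or $|B^*| \le 1$, which is impossible when $\delta q^2 > 1$.

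Second, $A^* \times B^*$ cannot be disjoint from $E$, and this is the main obstacle. It requires a counting (expander mixing) estimate for incidences in $\F_q^2$. The incidence graph is a bounded-degree-like bipartite graph: each point lies on $q$ lines and each line contains $q$ points. Its second singular value is $O(\sqrt{q})$, which follows from standard computations of $MM^T$ using that two distinct points determine at most one line. The mixing lemma then gives
\[
\bigl| e(A^*,B^*) - |A^*||B^*|/q \bigr| \le C\sqrt{q}\sqrt{|A^*||B^*|}.
\]
With $|A^*|,|B^*| \ge \delta q^2$, the main term is at least $\delta^2 q^3$, while the error is at most $C q^{5/2}$. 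Hence $e(A^*,B^*) > 0$ for $q$ large. Since $\delta$ was arbitrary, no $\delta$ works, so $\Sa G$ fails the strong \Erdos-Hajnal property. This is presumably the content of the cited Chernikov--Starchenko argument.

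For the general case, let $R$ be an infinite domain of characteristic $p$. By Fact~\ref{fact:jacobsen}, $R$ interprets an infinite field of characteristic $p$. Interpretation implies trace definability, and hence local trace definability, so by Proposition~\ref{prop:eh-trace} the failure of the property for that field transfers to $R$.
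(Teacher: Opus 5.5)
Your proof is correct. The domain-to-field step via Fact~\ref{fact:jacobsen} is exactly the paper's. The difference is in the field case: the paper only cites Chernikov--Starchenko, while you prove it. You reduce to $\overline{\F}_p$ using Proposition~\ref{prop;p} and Proposition~\ref{prop:eh-trace}, and then on $\F_q^2\times\F_q^2$ you use $K_{2,2}$-freeness to rule out complete pairs and the bipartite expander mixing lemma to rule out empty pairs.

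Your spectral step can be made exact. Let $M$ be the point--line incidence matrix, $J$ the all-ones matrix, and $K$ the block all-ones matrix grouping points by first coordinate. Then $MM^{T}=qI+J-K$, whose eigenvalues are $q^2,q,0$. So you may take $C=1$, and any $A^*,B^*$ of size at least $\delta q^2$ span an incidence as soon as $\delta\sqrt q>1$.

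Your route gives an argument built from tools already in the paper, modulo Hempel's theorems inside Proposition~\ref{prop;p}; the citation gives brevity.

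The argument can also be streamlined in two ways. First, Proposition~\ref{prop;p} is already stated for domains, so your final Jacobsen step is redundant. Second, the incidence argument only needs arbitrarily large finite subfields, not $\overline{\F}_p$ itself. If the relative algebraic closure of $\F_p$ in $E$ is infinite, you can run the argument directly inside $E$ without any trace definability. Only the case where that closure is finite needs a deep input: there $E$ has an Artin--Schreier extension and hence is not $\nip$.
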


We say that a structure is \textbf{predistal} if it has a distal expansion, or equivalently is interpretable in a distal structure.
Proposition~\ref{prop:trace-distal} follows from the previous three results.

\begin{proposition}
\label{prop:trace-distal}
A predistal structure cannot locally trace define an infinite domain of positive characteristic.
\end{proposition}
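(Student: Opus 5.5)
The plan is to argue by contradiction, chaining the three preceding results: Fact~\ref{fact:distal}, Proposition~\ref{prop:eh-trace}, and Fact~\ref{fact:eh}. Let $\Sa M$ be predistal, so $\Sa M$ is interpretable in some distal structure $\Sa D$. Suppose $\Sa M$ locally trace defines an infinite domain $E$ of positive characteristic.

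First we pass from $\Sa M$ to $\Sa D$. An interpretation is in particular a trace definition after naming the imaginary sort: any $\Sa M$-definable set is a definable set of imaginaries in $\Sa D$. Trace definability implies local trace definability, and local trace definability is transitive (composing the witnessing families of functions as noted after its definition). So $\Sa D$, or more precisely $\Sa D$ with the relevant imaginary sort, locally trace defines $E$.

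The only subtlety is imaginaries. Distality passes to $\Sa D^{\mathrm{eq}}$, since distality is a property of the theory preserved under adding imaginary sorts. Alternatively, one can work directly with Proposition~\ref{prop:eh-trace} in its multisorted form from Section~\ref{section:disjoint}. Either way, by Fact~\ref{fact:distal} the structure $\Sa D$, together with its imaginaries, has the strong \Erdos-Hajnal property. The more elementary route is simpler still: any bipartite graph definable in $\Sa M$ is a bipartite graph definable on quotients in $\Sa D$. Pulling it back to the preimages gives a $\Sa D$-definable bipartite graph in which any finite configuration lifts by choosing representatives. Hence the strong \Erdos-Hajnal property passes to $\Sa M$ directly, and $\Sa M$ itself has it.

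Next, Proposition~\ref{prop:eh-trace} transfers the property along local trace definability, to $E$ or to a model of $\Th(E)$. We should check that the strong \Erdos-Hajnal property for theories is consistent here: local trace definability of a theory means some model is locally trace defined, and the property for one model of $\Th(\Sa M)$ holds for all models by elementarity of the finite-configuration statements. This contradicts Fact~\ref{fact:eh}, which says no infinite domain of positive characteristic has the strong \Erdos-Hajnal property. The main obstacle is the bookkeeping about imaginaries and about theories versus models. I expect the lifting-representatives argument to handle both.
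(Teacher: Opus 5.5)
Your proposal is correct and takes the same route as the paper, which simply states that the proposition follows from Fact~\ref{fact:distal}, Proposition~\ref{prop:eh-trace}, and Fact~\ref{fact:eh}. Your lifting-of-representatives argument supplies the step the paper leaves implicit: passing the strong \Erdos-Hajnal property from the distal structure to the structure it interprets. Your transfer to other models of the theory is also sound, because the Chernikov--Starchenko $\delta$ depends only on the formula and not on the parameters, so the relevant finite-configuration statements are genuinely first order.
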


Again, this is sharp in that there are infinite positive characteristic rings which are interpretable in distal structures.
The ring described below Proposition~\ref{prop:field-blank} is interpretable in the theory of $\F_p$-vector spaces and the theory of $\F_p$-vector spaces has a distal expansion~\cite[\S~2.1]{distal-valued}.
Corollary~\ref{cor:trace-distal} enumerates some special cases of Proposition~\ref{prop:trace-distal}.

\begin{corollary}
\label{cor:trace-distal}
The following structures cannot locally trace define infinite positive characteristic domains.
\begin{enumerate}[leftmargin=*]
\item Any dp-minimal expansion of a linear order.
In particular any o-minimal structure.
\item Any P-minimal structure with definable Skolem functions, in particular any $p$-adically closed field.
\item Any algebraically closed valued field of equicharacteristic zero.
\end{enumerate}
\end{corollary}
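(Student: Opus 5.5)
By Proposition~\ref{prop:trace-distal}, it suffices to show that each listed structure is predistal, i.e. has a distal expansion or is interpretable in a distal structure. I will check the three cases separately, citing known distality results.

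For (1), I use Simon's theorem that a dp-minimal expansion of a linear order is distal. Dp-minimal expansions of linear orders are exactly the dp-minimal theories with a definable order on the home sort, and Simon showed that every such theory is distal. Hence these structures are predistal. Any o-minimal structure is a dp-minimal expansion of a linear order, so the ``in particular'' clause follows at once. If one wants to avoid the general dp-minimal result, o-minimal distality is classical: generically stable types in o-minimal theories are realized.

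For (2), I use the result of Chernikov and Starchenko that P-minimal theories with definable Skolem functions are distal. A $p$-adically closed field is P-minimal and has definable Skolem functions, by van den Dries' work on $\Q_p$ and its extension to $p$-adically closed fields. So $p$-adically closed fields are covered.

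For (3), $\mathrm{ACVF}_{0,0}$ is not distal, since its residue field is algebraically closed and hence stable and non-distal. I therefore need a distal expansion instead. The natural candidate is an algebraically closed valued field of equicharacteristic zero expanded by a $T$-convex or Hahn-field structure. Such a field can be realized as the algebraic closure, namely the field extension by $\sqrt{-1}$, of a real closed valued field $R$ with convex valuation ring. The latter is weakly o-minimal, hence dp-minimal and distal by the argument in (1). A model of $\mathrm{ACVF}_{0,0}$ is then interpretable in $R$: identify $K = R(\sqrt{-1})$ with $R^2$, and note that its valuation ring is definable from the valuation on $R$ via the norm $a^2+b^2$. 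By completeness of $\mathrm{ACVF}_{0,0}$, every model of the theory has the same (local) trace-definability behaviour, so predistality of one model suffices for Proposition~\ref{prop:trace-distal} applied to the theory. I expect this step to be the main obstacle. It requires checking that the valuation extends definably and that predistality can be transferred between models, for which the theory-level formulation of local trace definability together with Proposition~\ref{prop:loc trace theories} should suffice.
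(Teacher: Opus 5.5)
Your proposal is correct and follows the paper's proof. Case (1) uses Simon's distality of dp-minimal expansions of linear orders, case (2) uses Chernikov--Starchenko for P-minimal structures with definable Skolem functions, and case (3) interprets $\mathrm{ACVF}_{0,0}$ as $R(\sqrt{-1})$ over a weakly o-minimal, hence distal, real closed valued field; each case then feeds into Proposition~\ref{prop:trace-distal}, with the model-versus-theory transfer you flag handled, as in the paper, by working with theories throughout.
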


\begin{proof}
Dp-minimal expansions of  linear orders are distal~\cite[Example 9.20]{Simon-Book} and a P-minimal structure with definable Skolem functions is distal by \cite[\S~2.3.3]{CS}.
Finally, the theory of equicharacteristic zero algebraically closed valued fields is interpretable in the theory of real closed valued fields in the obvious way, and the theory of real closed valued fields is weakly o-minimal, hence any real closed valued field is a dp-minimal expansion of a linear order.
\end{proof}

\section{Ordered abelian groups, cyclically ordered abelian groups, and ordered vector spaces}\label{section:oag}

We prove a number of results about abelian groups, ordered abelian groups, and ordered vector spaces modulo trace equivalence and local trace equivalence.

\subsection{Vector spaces and abelian groups}
We begin with a result on vector spaces.

\begin{proposition}
\label{prop:lve}
Any  vector space over a division ring is locally trace equivalent to its underlying additive group.
Hence the local trace equivalence type of an infinite vector space over a division ring is determined by the characteristic of the ring.
\end{proposition}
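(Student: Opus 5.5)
Let $\D$ be a division ring and $V$ an infinite $\D$-vector space, viewed as a structure with $+$ and unary scalar maps $v \mapsto \lambda v$ for $\lambda \in \D$. One direction is trivial: $(V;+)$ is a reduct of $V$, so $V$ trace defines, and hence locally trace defines, $(V;+)$. For the converse I will use Lemma~\ref{lem:tech} together with quantifier elimination. Infinite vector spaces over a division ring admit quantifier elimination in the language $\{+,-,0\} \cup \{\lambda\cdot : \lambda \in \D\}$. After Morleyizing to a relational language, every definable set is a boolean combination of cosets of subgroups of the form $\{(x_1,\ldots,x_m) : \lambda_1 x_1 + \cdots + \lambda_m x_m = 0\}$, intersected over finitely many such equations.

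By Proposition~\ref{prop:loc equiv}(4) it suffices to handle finitely many definable sets $X_1,\ldots,X_k$ at a time. Only finitely many scalars $\lambda_1,\ldots,\lambda_\ell$ occur in formulas defining them. Let $\D_0$ be the prime subring of $\D$, which is $\Z$ or $\F_p$. Each scalar map $v \mapsto \lambda_j v$ is an additive endomorphism of $V$. The idea is to encode an element $v$ by the tuple $\uptau(v) = (v, \lambda_1 v, \ldots, \lambda_\ell v) \in V^{\ell+1}$. Each $X_i$ is a boolean combination of sets defined by equations $\sum_s \mu_s x_s = c$ with $\mu_s \in \{\lambda_1,\ldots,\lambda_\ell\}$, together with integer coefficients and sums. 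Such an equation becomes an equation in the coordinates of the $\uptau(x_s)$ that uses only $+$, and this is $(V;+)$-definable.

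Concretely, fix a formula $\varphi$ defining $X_i$. I replace each term $\lambda_j x_s$ by the $j$th coordinate of the variable block for $x_s$, and each parameter $c \in V$ stays as a parameter. The resulting $(V;+)$-definable $Y_i \subseteq V^{(\ell+1)m_i}$ satisfies $\alpha \in X_i \iff \uptau(\alpha) \in Y_i$. Here $\uptau$ is injective because its first coordinate is the identity. One caveat: terms such as $\lambda_j\lambda_{j'} x$ or $(\lambda_j+\lambda_{j'})x$ may appear. I close the finite set of scalars under the products and sums actually occurring in the quantifier-free formulas, which is still finite, or I simply normalize each atomic formula to the form $\sum \mu_s x_s = c$ first. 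This gives (4), so $V$ is locally trace definable in $(V;+)$. The same argument works uniformly across models, which handles the theory-level statement.

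For the second claim, $(V;+)$ is a nontrivial $\D_0$-vector space whose dimension is infinite or whose field is infinite. In characteristic $p$, any two infinite $\F_p$-vector spaces are elementarily equivalent. In characteristic zero, $(V;+)$ is a nonzero divisible torsion-free group, hence elementarily equivalent to $(\Q;+)$. So the local trace equivalence type is determined by the characteristic. This includes the case of a finite ground field, where $V$ infinite forces infinite dimension.

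The main obstacle is justifying the quantifier elimination normal form so that the finitely many scalars suffice. This is standard for modules over division rings, where every pp formula reduces to linear equations, so I do not expect real trouble.
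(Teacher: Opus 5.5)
Your proof is correct and is essentially the paper's argument: the paper takes the whole family of scalar maps $v \mapsto \lambda v$ ($\lambda \in \D$) as the witnessing collection in the definition of local trace definability, and uses quantifier elimination together with the normalization of terms to $\lambda_1 x_1 + \cdots + \lambda_n x_n$. Your map $\uptau(v) = (v, \lambda_1 v, \ldots, \lambda_\ell v)$ through Proposition~\ref{prop:loc equiv}(4) is a finite packaging of that same witness, your second claim matches the paper's prime-field reduct argument, and the announced appeal to Lemma~\ref{lem:tech} is never actually needed.
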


\begin{proof}
The case of a finite vector space is trivial.
Suppose that $\Sa V, \Sa V^*$ are infinite vector spaces over division rings $\E,\E^*$ of the same characteristic, respectively.
Then $\E,\E^*$ have the same prime subfield $\F$, hence the $\F$-vector space reducts of $\Sa V, \Sa V^*$ are elementarily equivalent, hence the underlying additive groups of $\Sa V, \Sa V^*$ are elementarily equivalent.
Hence it is enough to prove the first claim.
Every term in the theory of $\E$-vector spaces is equivalent to a term of the form $\lambda_1 x_1 + \cdots + \lambda_n x_n$ for $\lambda_1,\ldots,\lambda_n \in \E$.
It follows by quantifier elimination for vector spaces that the collection of functions $v \mapsto \lambda v$ for $\lambda\in \E$ witnesses local trace definability of any $\E$-vector space in its underlying additive group.
\end{proof}

\begin{proposition}
\label{prop:ab}
If $\Gamma$ is an infinite exponent group then $\Th(\Gamma)$ trace defines $(\Q;+)$.
\end{proposition}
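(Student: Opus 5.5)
The plan is to find a copy of $(\Q;+)$ as a subgroup of a sufficiently saturated model of $\Th(\Gamma)$, and then apply Fact~\ref{fact:embed}. The theory of $(\Q;+)$, i.e. of nontrivial torsion-free divisible abelian groups, admits quantifier elimination in the language $\{+,-,0\}$. So by Fact~\ref{fact:embed}, any injective group homomorphism $\Q \to \Gamma^*$, where $\Gamma^* \equiv \Gamma$, is a trace embedding $(\Q;+) \to (\Gamma^*;+)$. Hence it suffices to produce such a homomorphism for some $\aleph_1$-saturated $\Gamma^* \succ \Gamma$.

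To build the homomorphism I will realize a ``divisible hull chain'' of a non-torsion element. Let $p(x_1,x_2,\ldots)$ be the partial type consisting of the following formulas:
\begin{enumerate}[leftmargin=*]
\item $(j+1)x_{j+1} = x_j$ for all $j \ge 1$;
\item $k x_1 \ne 0$ for all $k \ge 1$.
\end{enumerate}
Given a realization $(a_j)$ in $\Gamma^*$, the relations give $a_j = \frac{n!}{j!}\,a_n$ for $j \le n$. Define $\uptau(k/n!) = k a_n$. This is well defined and additive, because passing from the representation $k/n!$ to $k(n+1)/(n+1)!$ gives $k(n+1)a_{n+1} = k a_n$. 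Hence $\uptau$ is a group homomorphism $\Q \to \Gamma^*$.

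It is injective: if $k a_n = 0$ with $k \ne 0$, then $k\,\frac{n!}{1!}\,a_n = k\, n!\, a_n = 0$, while $n!\,a_n = a_1$. So $k a_1 = 0$, contradicting condition (2).

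The main step is checking that $p$ is finitely satisfiable in $\Gamma$; this is where infinite exponent is used. A finite fragment of $p$ involves only $x_1,\ldots,x_n$ and the inequations $kx_1 \ne 0$ for $k \le K$. Let $M = K!\cdot n!$. Since $\Gamma$ has infinite exponent, $M\Gamma \ne 0$, so there is $z \in \Gamma$ with $Mz \ne 0$. Set $x_n = z$ and $x_j = \frac{n!}{j!} z$ for $j < n$. The divisibility equations then hold by construction. Moreover $k x_1 = k\,n!\,z$, and $k\,n!$ divides $M$ for every $k \le K$, so $k\,n!\,z \ne 0$.

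By compactness and $\aleph_1$-saturation, $p$ is realized in $\Gamma^*$. This yields the injective homomorphism $\uptau$, hence a trace embedding $(\Q;+) \to \Gamma^*$, so $\Th(\Gamma)$ trace defines $(\Q;+)$.

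I expect no real obstacle beyond this compactness step. The only subtlety is that $\Gamma$ itself may be torsion, for example $\bigoplus_p \Z/p\Z$ or a Prüfer-type group. For that reason the argument uses the choice of $z$ with $Mz \ne 0$, rather than assuming an element of infinite order.
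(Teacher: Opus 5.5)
Your proof is correct and takes the same route as the paper: reduce via Fact~\ref{fact:embed} and quantifier elimination for $(\Q;+)$ to embedding $(\Q;+)$ into an elementary extension of $\Gamma$. The paper leaves that embedding as a compactness exercise. Your type of compatible divisions $(j+1)x_{j+1}=x_j$ with $kx_1\neq 0$, together with the choice of $z$ satisfying $K!\,n!\,z\neq 0$, carries out that exercise correctly, including when $\Gamma$ is torsion.
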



\begin{proof}
By Fact~\ref{fact:embed} and quantifier elimination for $(\Q; +)$ it is enough to show that $(\Q;+)$ embeds into an elementary extension of $\Gamma$.
This is a compactness exercise.
\end{proof}

We let $\Sa C$ be the structure with domain the Cantor set and a unary relation for each clopen subset.
This is trace equivalent to any set $X$ equipped with $\aleph_0$ relations which form an atomless boolean algebra of subsets of $X$.
Fact~\ref{fact:cantor} is proven in \cite[Prop.~3.3]{trace1}.

\begin{fact}\label{fact:cantor}
An arbitrary theory $T$ trace defines $\Sa C$ if and only if $T$ is not totally transcendental.
Any structure in a countable unary relational language is trace definable in $\Th(\Sa C)$.
\end{fact}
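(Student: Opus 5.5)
The statement to prove is Fact~\ref{fact:cantor}: (a) $T$ trace defines $\Sa C$ if and only if $T$ is not totally transcendental, and (b) any structure in a countable unary relational language is trace definable in $\Th(\Sa C)$. I will prove the two directions of (a) separately and then derive (b) directly.

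\emph{$\Sa C$ is not totally transcendental, so trace definability of $\Sa C$ forces non-total-transcendence.} The clopen subsets of the Cantor set form a binary tree of definable sets that splits forever, so $\Sa C$ has $2^{\aleph_0}$ types over a countable set. Hence $\Sa C$ is not totally transcendental. By Fact~\ref{fact:preserve}, total transcendence is preserved under trace definability. So if $T$ trace defines $\Sa C$, then $T$ is not totally transcendental.

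\emph{Converse.} Suppose $T$ is not totally transcendental. Then there is a binary tree of consistent formulas $\varphi_s(x,b_s)$, $s\in 2^{<\omega}$, with $|x|=n$. The children of each node are disjoint and both are contained in the parent. Pass to an $\aleph_1$-saturated $\Sa M\models T$ containing the parameters $b_s$. For each branch $\sigma\in 2^\omega$, saturation gives a realization $a_\sigma\in M^n$ of $\bigwedge_k\varphi_{\sigma|k}$. The map $\sigma\mapsto a_\sigma$ is injective, since distinct branches eventually land in disjoint formulas.

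I will then use quantifier elimination for $\Sa C$. Its definable sets are boolean combinations of equalities and unary clopen predicates. This is the step I expect to need the most care: I would prove it by a back-and-forth argument, using that the clopen sets form an atomless Boolean algebra and that distinct points are separated by clopens. Given QE, Fact~\ref{fact:embed} and Fact~\ref{fact:trace-def}(2) reduce everything to checking that the basic relations pull back to $\Sa M$-definable sets under $\sigma\mapsto a_\sigma$. Equality pulls back to equality by injectivity. Each basic clopen $[s]$ pulls back to the intersection with $\varphi_s(M^n,b_s)$. Since every clopen set is a finite union of basic ones, every clopen set pulls back to an $\Sa M$-definable set. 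So $T$ trace defines $\Sa C$.

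\emph{Part (b).} Let $\Sa O=(O;(U_i)_{i\in\omega})$ be a structure in a countable unary relational language. By Fact~\ref{fact:trace-def}(2) it suffices to find a map $O\to M$, for some model $M$ of $\Th(\Sa C)$, that is injective and sends each $U_i$ to a clopen predicate. First map each $o\in O$ to the sequence $(\chi_{U_i}(o))_i\in 2^\omega$. This is not injective, so I will interleave the characteristic sequence with extra coordinates, placing $\chi_{U_i}(o)$ in the even coordinates. Under this map, $U_i$ is the pullback of the clopen set $\{\text{even coordinate } i \text{ equals } 1\}$. To make the map injective, I will pass to a $|O|^+$-saturated elementary extension $\Sa C^*$. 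Each type determined by fixing the even coordinates has at least $|O|$ realizations, because it is consistent with every finite configuration of the odd coordinates. So I can choose distinct images for the points of $O$, sending each $o$ to a realization of the type whose even coordinates are given by $(\chi_{U_i}(o))_i$. Each $U_i$ is then the trace of a unary predicate of $\Sa C^*$, and $\Sa O$ is quantifier-free in the unary language, so definable sets pull back correctly. Hence $\Sa O$ is trace definable in $\Th(\Sa C)$.
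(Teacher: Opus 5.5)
Your proof is correct. The paper gives no argument of its own here; it only cites \cite[Prop.~3.3]{trace1}. Your route is the standard one: realize the branches of a binary tree in an $\aleph_1$-saturated model for one direction, use preservation of total transcendence under trace definability for the other, and for the second claim map $\Sa O$ injectively onto realizations of unary types in a saturated model of $\Th(\Sa C)$.

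There are two small points.

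\begin{enumerate}
\item The quantifier elimination you invoke for $\Sa C$ and for $\Sa O$ is an instance of a general fact that deserves one line of justification. A complete theory in a purely unary relational language eliminates quantifiers, because the theory fixes the size of each cell cut out by finitely many predicates.
\item The interleaving in part (b) is unnecessary. Every nonempty clopen set is infinite, so every type of $\Th(\Sa C)$ over $\emptyset$ is already non-algebraic. Alternatively, apply Fact~\ref{fact:trace-def}(3) to the map $o \mapsto (\chi_{U_i}(o))_i$ together with any injection of $O$ into the domain of $\Sa C^*$.
\end{enumerate}
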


An abelian group $A$ is {\bf non-singular} if the $p$-torsion subgroup of $A$ is finite and $|A/pA| < \infty$ for all primes $p$.
In particular torsion free abelian group is non-singular if and only if $pA$ is a finite index subgroup for all primes $p$ if and only if $nA$ is a finite index subgroup for all $n \ge 1$.
Several people independently showed that an ordered abelian group is dp-minimal if and only if it is non-singular~\cite{Farre,JSW-field,Johnson-thesis}.

\begin{fact}
\label{fact:finite rank}
Any finite rank torsion free abelian group is non-singular.
\end{fact}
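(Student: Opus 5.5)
The goal is to show that if $A$ is a torsion free abelian group of finite rank $n$, then $pA$ has finite index in $A$ for every prime $p$. Torsion freeness already gives that the $p$-torsion subgroup is trivial, so this is all that non-singularity requires. I will reduce to linear algebra over $\F_p$ by embedding $A$ into $\Q^n$ and comparing $A$ with a free subgroup of rank $n$.

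First, choose a maximal $\Z$-linearly independent set $a_1,\ldots,a_n \in A$. Such a set exists with exactly $n$ elements because $A$ has rank $n$. Let $F = \Z a_1 + \cdots + \Z a_n$, so $F \cong \Z^n$ and $A/F$ is torsion.

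The key step is to show that $A/pA$ has $\F_p$-dimension at most $n$. Take $b_1,\ldots,b_{n+1} \in A$. By rank considerations there are integers $k_1,\ldots,k_{n+1}$, not all zero, with $\sum k_i b_i = 0$. Since $A$ is torsion free we may divide out the gcd of the $k_i$ and assume they are coprime. In particular, not all $k_i$ are divisible by $p$.

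Reducing this relation modulo $pA$ then gives a nontrivial $\F_p$-linear relation among the images of the $b_i$ in $A/pA$. Hence any $n+1$ elements of $A/pA$ are $\F_p$-linearly dependent, so $\dim_{\F_p} A/pA \le n$ and $|A/pA| \le p^n$.

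With both conditions in place, $A$ is non-singular. The only point needing care is the division by the gcd: a relation $\sum k_i b_i = 0$ with $g = \gcd(k_i)$ gives $g \cdot \sum (k_i/g) b_i = 0$. Torsion freeness then forces $\sum (k_i/g) b_i = 0$, and these reduced coefficients are not all divisible by $p$. I do not expect a genuine obstacle; this is a routine fact, and the subgroup $F$ is not even needed beyond fixing the rank.
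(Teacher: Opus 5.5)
Your argument is correct, and it takes a genuinely different route from the paper.

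The paper proves $\dim_{\F_p}(A/pA) \le r$ by induction on the rank $r$:
\begin{itemize}
\item The rank one case is quoted from Fuchs.
\item For $r \ge 2$ it splits off the pure rank one subgroup $A'$ generated by the rational multiples of a nonzero element, and notes that $A/A'$ is torsion free of rank $r-1$ by purity.
\item It then uses the exact sequence $0 \to A'/pA' \to A/pA \to (A/A')/p(A/A') \to 0$, whose exactness on the left again comes from purity via $A' \cap pA = pA'$, and adds dimensions.
\end{itemize}

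You instead bound the dimension in one step. Any $n+1$ lifts $b_1,\ldots,b_{n+1}$ satisfy a nontrivial $\Z$-relation. Torsion freeness lets you cancel the gcd so the coefficients are coprime, and reduction mod $p$ then gives a nontrivial $\F_p$-relation among the images.

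What each approach buys:
\begin{itemize}
\item Your version is self-contained and uniform in the rank. It in particular proves the rank one case that the paper cites.
\item It also isolates exactly where torsion freeness enters, namely cancelling the gcd. In the paper's proof, torsion freeness is used through purity, both to keep the quotient torsion free for the induction and to get injectivity of $A'/pA' \to A/pA$.
\item The paper's route is more structural and leans on cited results. Yours needs nothing beyond the fact that any $n+1$ elements of a rank $n$ torsion free group are $\Z$-dependent.
\end{itemize}

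As you observe, the subgroup $F$ plays no role and can be dropped.
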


Let $B$ be an abelian group.
An embedding $A \to B$  is \textbf{pure} if any element of $A$ is $k$-divisible in $A$ if and only if its image is $k$-divisible in $B$.
A subgroup of $B$ is pure if the inclusion is pure.
If $B$ is torsion free then a subgroup $A$ is pure if and only if $B/A$ is torsion free.


\begin{proof}
Fix a prime $p$.
Let $r$ be the rank of $A$ and $\dim_p(A)$ be the dimension of $A/pA$ as an $\F_p$-vector space.
We show that $\dim_p(A) \le r$ by applying induction on $r$.
The rank one case follows by \cite[Example~9.10]{fuchs}.
Suppose $r \ge 2$.
Fix non-zero $\beta\in A$ and let $A'$ be the set of $\beta^*\in A$ such that $\beta^*=q\beta$ for some $q\in\Q$.
Let $A'' = A/A'$.
Then $A'$ is a pure rank one subgroup of $A$ and $A''$ is torsion free by purity.
By \cite[Thm.~29.1(c)]{Fuchs_vol_1} we have an exact sequence $0\to A'/pA'\to A/pA\to A''/pA''\to 0$.
Hence $\dim_p(A) = \dim_p(A')  + \dim_p(A'')$.  
So by induction we have
$\dim_p(A) \le 1 + (r - 1) = r$.
\end{proof}

Our next goal is to prove the following.

\begin{proposition}\label{prop:Z-decomp}
Any non-divisible non-singular torsion free abelian group is trace equivalent to $(\Q; +) \sqcup \Sa C$.
Hence any such group is also trace equivalent to $(\Z; +)$.
Furthermore any non-singular torsion free abelian group is trace definable in the theory of any non-totally transcendental expansion of an unbounded exponent group.
\end{proposition}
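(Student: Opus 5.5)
The plan is to realize $A$, a non-divisible non-singular torsion free abelian group, inside $(\Q;+)\sqcup\Sa C$ via two ``invariants'' of each element: its image in the divisible hull and its image in the profinite completion. Then I show that $\Th(A)$ trace defines both pieces, and glue the pieces together with Lemma~\ref{lem:k-dis}.

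\emph{$A$ is trace definable in $(\Q;+)\sqcup\Sa C$.}
\begin{enumerate}[leftmargin=*]
\item I will use the classical relative quantifier elimination for torsion free abelian groups (Szmielew). In the language $\{+,-,0\}$ expanded by unary predicates $D_n$ for $nA$, every definable subset of $A^m$ is a boolean combination of sets of two kinds:
\begin{itemize}[leftmargin=*]
\item solution sets of $\Z$-linear equations $k_1x_1+\cdots+k_mx_m=0$;
\item sets of the form $k_1x_1+\cdots+k_mx_m\in nA$ (together with constants from $A$).
\end{itemize}
Non-singularity enters through a finiteness statement: $A/nA$ is finite for every $n\ge 1$, so each condition of the second kind says that $(x_1,\ldots,x_m)$ lies in one of finitely many cosets of $(nA)^m$.
\item Let $D=A\otimes\Q$ be the divisible hull. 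It is a nonzero $\Q$-vector space, so $D\equiv(\Q;+)$. Linear equations in $A$ are exactly traces of linear equations in $D$.
\item Let $\hat A=\varprojlim A/nA$ be the profinite completion, with natural map $\iota\colon A\to\hat A$. I need $\hat A$ to be homeomorphic to the Cantor set:
\begin{itemize}[leftmargin=*]
\item it is compact, metrizable and totally disconnected, since there are countably many finite quotients;
\item it has no isolated points, because non-divisibility gives some prime $p$ with $pA\ne A$, and then torsion freeness gives $|A/p^kA|\ge p^k\to\infty$, so every basic clopen coset splits further.
\end{itemize}
Each coset of $nA$, and each coset of $(nA)^m$ read coordinatewise, is the $\iota$-preimage of a clopen set.
\item Hence the pair of maps $A\to D$ and $A\to\hat A$ makes every basic set of type (1) a preimage of a $D$-definable set and every basic set of type (2) a preimage of a boolean combination of unary clopen relations. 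By the multisorted version of Lemma~\ref{lem:qe case}, $A$ is trace definable in $D\sqcup\Sa C$, hence in $\Th((\Q;+)\sqcup\Sa C)$.
\end{enumerate}

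\emph{The converse, and the two consequences.}
\begin{itemize}[leftmargin=*]
\item Since $A$ is torsion free and nonzero it has infinite exponent, so $\Th(A)$ trace defines $(\Q;+)$ by Proposition~\ref{prop:ab}.
\item Fix $p$ with $pA\ne A$. The chain $A\supsetneq pA\supsetneq p^2A\supsetneq\cdots$ is an infinite strictly descending chain of definable subgroups; it is strict by torsion freeness and $pA\ne A$. This contradicts the descending chain condition on definable subgroups of totally transcendental groups, so $A$ is not totally transcendental. Fact~\ref{fact:cantor} then gives that $\Th(A)$ trace defines $\Sa C$.
\item Lemma~\ref{lem:k-dis} combines the two: $\Th(A)$ trace defines $(\Q;+)\sqcup\Sa C$, which proves the first claim.
\item $(\Z;+)$ is itself non-divisible, torsion free and non-singular, so it is trace equivalent to $(\Q;+)\sqcup\Sa C$ as well, which gives the second claim by transitivity.
\item For the last claim, let $T$ be the theory of a non-totally-transcendental expansion of an unbounded exponent group. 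Then $T$ trace defines $\Sa C$ by Fact~\ref{fact:cantor} and $(\Q;+)$ by Proposition~\ref{prop:ab}, which applies because that expansion has unbounded exponent. So $T$ trace defines $(\Q;+)\sqcup\Sa C$ by Lemma~\ref{lem:k-dis}, and hence every non-divisible $A$ as above. If $A$ is divisible then either $A=0$, which is trivial, or $A\equiv(\Q;+)$, which is already covered.
\end{itemize}

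The main obstacle is step (1): stating and applying the right quantifier elimination. The standard result is phrased with predicates for $nA$ and for the ranks of certain quotients, and I need it to reduce, in the non-singular torsion free case, exactly to linear equations plus congruences modulo $nA$. I would cite Szmielew's elimination (or the Eklof--Fisher version). I would then check that the extra invariants ($p$-ranks of $p^kA/p^{k+1}A$, which are finite by non-singularity) contribute only sentences and no new definable sets. A secondary point is making the multisorted trace definition with constants from $A$ honest. Constants in the linear equations are handled by using parameters in $D$, and constants in the congruences by choosing the clopen set accordingly.
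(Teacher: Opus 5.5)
Your proposal is correct and follows the paper's proof. It uses the same two directions: Proposition~\ref{prop:ab} together with the strictly descending chain $A\supsetneq pA\supsetneq p^2A\supsetneq\cdots$, Fact~\ref{fact:cantor} and Lemma~\ref{lem:k-dis} in one direction, and in the other the divisible hull for the linear equations plus a Cantor-set component for the finitely many residue classes mod $kA$. The only real difference is that you realize the residue-class data concretely through the profinite completion $\hat A$, which is homeomorphic to the Cantor set, whereas the paper passes through the unary structure $\Sa U$ and the second half of Fact~\ref{fact:cantor}; you also spell out the divisible case of the last claim, which the paper leaves implicit.
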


So, from our perspective, $(\Z; +)$ is the simplest structure which at least as complicated as $(\Q; +)$ and is not totally transcendental.
The disjoint union decomposition is non-trivial.
First, $\Th(\Q; +)$ is totally transcendental and hence cannot trace define $\Sa C$ by Fact~\ref{fact:cantor}.
Secondly any unary relational structure is stable, so $\Sa C$ is monadically stable and hence $\Th(\Sa C)$ cannot trace define an infinite group by Fact~\ref{fact:lb} below.
Proposition~\ref{prop:Z-decomp} is also sharp in that one can apply the work of Evans, Pillay, and Poizat~\cite{group-in-a-group} to show that $\Th(\Z^n ; +)$ interprets $\Th(\Z^m ; +)$ if and only if $n$ divides $m$.
Furthermore, Palac\'{\i}n-Sklinos showed that any proper expansion of $(\Z^n;+)$ has infinite U-rank~\cite{palacin-sklinos}.
As $(\Z; +)$ has U-rank one it follows by Fact~\ref{fact:preserve} that $\Th(\Z; +)$ cannot trace define a proper expansion of $(\Z; +)$.

\medskip
We first recall the quantifier elimination for abelian groups.
Let $A$ be an abelian group, written additively.
Given  $k \in \N$, we write $k|\alpha$ when $\alpha = k \beta$ for some $\beta \in A$.
We consider each $k|$ to be a unary relation symbol and let $L_{\mathrm{div}}$ be the expansion of the language of abelian groups by the $k|$.
Note that an embedding of abelian groups is pure if and only if it is an $L_\mathrm{div}$-embedding. 
Fact~\ref{fact:abelian qe} is due to Szmielew, see~\cite[Thm.~A.2.2]{Hodges}.

\begin{fact}
\label{fact:abelian qe}
Any abelian group admits quantifier elimination in $L_\mathrm{div}$.
\end{fact}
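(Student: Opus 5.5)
The plan is to reduce, via the Baur--Monk theorem, to showing that every primitive positive formula is equivalent in $A$ to a quantifier-free $L_\mathrm{div}$-formula. Baur--Monk says that in any module, in particular in any abelian group regarded as a $\Z$-module, every formula $\varphi(x)$ is equivalent modulo $\Th(A)$ to a boolean combination of pp formulas in $x$ and invariant sentences. Since we work in the complete theory of the fixed group $A$, the invariant sentences are just true or false. So it suffices to show that each pp formula
\[
\exists y\,\bigl(Ny = Mx\bigr),
\]
with $N,M$ integer matrices, is equivalent in $A$ to a quantifier-free $L_\mathrm{div}$-formula.

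The first step is to put $N$ in Smith normal form $N = PDQ$ with $P,Q$ invertible over $\Z$ and $D$ diagonal with entries $d_1,\ldots,d_r,0,\ldots,0$. Substituting $y' = Qy$ and multiplying the system by $P^{-1}$ is a definable change of coordinates, so the formula becomes a conjunction of formulas of two kinds:
\begin{enumerate}[leftmargin=*]
\item $\exists y_i\,(d_i y_i = t_i(x))$, which is literally $d_i \mid t_i(x)$;
\item $0 = t_j(x)$, coming from the rows where $D$ has zero entries.
\end{enumerate}
Here each $t_i$ is a $\Z$-linear term. Because the $y'_i$ are now separated, one per equation, the existential quantifier distributes over the conjunction, and the result is a quantifier-free $L_\mathrm{div}$-formula. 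Combining this with Baur--Monk gives quantifier elimination.

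The step I expect to be the main obstacle is justifying that the pp formulas really have the form $\exists y\,(Ny = Mx)$ with the witnesses otherwise unconstrained. A pp formula may also contain equations involving only $y$, such as $\exists y\,(py = x \wedge p^2y = 0)$. In the Smith normal form computation these equations couple the divisibility conditions to torsion conditions on the witness, so they do not automatically decouple into plain $k\mid t(x)$. My first attempt will be to apply the Smith normal form to the full block matrix $(N \mid -M)$ acting on $(y,x)$, and to track what happens to conditions of the form $d\,y'_i = 0$. In the torsion-free case these force $y'_i = 0$ and disappear. In general they produce the formulas $\exists y\,(dy = t(x) \wedge ey = 0)$. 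I would then check that these are equivalent to quantifier-free $L_\mathrm{div}$-formulas in $A$, using the structure of the $p$-primary components. If that check fails in general, one has to enlarge $L_\mathrm{div}$ by predicates for these formulas, as in Szmielew's formulation in \cite{Hodges}. This is exactly the point where the precise form of the language cited from \cite[Thm.~A.2.2]{Hodges} must be matched.
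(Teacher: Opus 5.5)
Your first two paragraphs already give a complete proof. Baur--Monk reduces everything to pp formulas $\exists \bar y\,(N\bar y = M\bar x)$. The Smith normal form of $N$ then turns each such formula into a conjunction of atomic $L_\mathrm{div}$-formulas $d_i \mid t_i(\bar x)$ and $t_j(\bar x) = 0$. The paper gives no argument here and just quotes Szmielew's theorem from \cite[Thm.~A.2.2]{Hodges}. Your Baur--Monk plus elementary-divisor argument is a self-contained proof of exactly the $L_\mathrm{div}$ form the paper uses.

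The obstacle in your last paragraph does not exist, and the remedies you propose there are misguided. An equation involving only the witnesses, such as $p^2y = 0$, is just a row of $N\bar y = M\bar x$ whose $M$-row is zero. So every pp formula has this form, with no extra constraint on the witnesses. You take the Smith normal form of $N$ alone: row operations act on the whole system, and column operations act only among the $\bar y$. Afterwards each $y'_i$ occurs in exactly one equation $d_i y'_i = t_i(\bar x)$. If $t_i$ is $0$, that equation is witnessed by $y'_i = 0$ and contributes nothing, whatever the torsion of $A$. Your own example shows this:
\[
\begin{pmatrix} p \\ p^2 \end{pmatrix} y = \begin{pmatrix} x \\ 0 \end{pmatrix}
\quad\iff\quad
\begin{pmatrix} p \\ 0 \end{pmatrix} y = \begin{pmatrix} x \\ -px \end{pmatrix},
\]
so $\exists y\,(py = x \wedge p^2 y = 0)$ is equivalent to $p \mid x \wedge px = 0$.

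More generally, let $g = \gcd(d,e) = ad + be$. Multiplying by $\begin{pmatrix} a & b \\ e/g & -d/g \end{pmatrix} \in GL_2(\Z)$ shows that $\exists y\,(dy = t \wedge ey = 0)$ is equivalent to $g \mid at \wedge (e/g)t = 0$. So formulas of the shape you worry about never survive the reduction.

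Taking the Smith normal form of the block matrix $(N \mid -M)$ would in general need column operations that add $\bar x$-columns to $\bar y$-columns. That amounts to substituting bound variables into the free ones, so it does not produce a formula equivalent in $\bar x$. Finally, enlarging $L_\mathrm{div}$ is unnecessary, since the statement holds as written, and it would not prove the statement as given.
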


\begin{proof}[Proof of Proposition~\ref{prop:Z-decomp}]
The last claim follows from the first claim, Proposition~\ref{prop:ab}, and Fact~\ref{fact:cantor}.
The second claim follows from the first as $(\Z; +)$ is a non-divisible non-singular torsion free abelian group.
We prove the first claim.
Let $A$ be a non-divisible non-singular torsion free abelian group.
We show that $\Th(A)$ trace defines $(\Q; +) \sqcup \Sa C$.
By Lemma~\ref{lem:k-dis} it suffices to show that $(\Q; +)$ and $\Sa C$ are both trace definable in $\Th(A)$.
The first case follows by Proposition~\ref{prop:ab}.
The second case follows by Fact~\ref{fact:cantor} as $A$ is not totally transcendental.
For the latter, fix a prime $p$ such that $A$ is not $p$-divisible and note that $A, pA, p^2A, \ldots$ is an infinite strictly descending chain of definable subgroups of $A$.

\medskip
We now show that $A$ is trace definable in the theory of $(\Q; +) \sqcup \Sa C$.
Let $B = A \otimes_\Z \Q$ be the divisible hull of $A$.
Then $B \equiv (\Q; +)$.
Let $\Sa U$ be the structure with domain the underlying set of $A$ and unary relations defining every coset of every $kA$.
Fact~\ref{fact:cantor} shows that $\Sa U$ is trace definable in $\Th(\Sa C)$.
It suffices to show that $A$ is trace definable in $B \sqcup \Sa U$.
By Fact~\ref{fact:abelian qe}  every $A$-definable subset of $A^n$ is a boolean combination of sets $X$ of the following forms:
\begin{enumerate}[leftmargin=*]
\item $X=\{(\beta_1, \ldots, \beta_n) \in A^n : m_1\beta_1 + \cdots + m_n \beta_n = \gamma\}$, or
\item $X=\{(\beta_1, \ldots, \beta_n) \in A^n : m_1\beta_1 + \cdots + m_n \beta_n \equiv \gamma \pmod{k}\}$
\end{enumerate} 
for some $m_1, \ldots, m_n \in \Z$, $\gamma \in A$, and $k \ge 1$.
If $X$ is as in (1) then $X$ is the intersection of a $B$-definable subset of $B^n$ with $A^n$.
Let $X$ be as in (2).
Whether $(\beta_1, \ldots, \beta_n) \in A^n$ is in $X$ depends only on the residue mod $k$ of the $\beta_i$.
As $A/k A$ is finite $X$ is definable in $\Sa U$.
Hence the inclusion $A \to B$ and the identity $A \to \Sa U$ witness trace definability of $A$ in $B \sqcup \Sa U$.
\end{proof}

Corollary~\ref{cor:Z-decomp} follows from Proposition~\ref{prop:Z-decomp}, the fact that a torsion free abelian group is superstable if and only if it is non-singular~\cite[Thm.~A.2.13]{Hodges}, and preservation of superstability under trace definability, see Fact~\ref{fact:preserve}.

\begin{corollary}\label{cor:Z-decomp}
A torsion free abelian group is trace definable in $\Th(\Z; +)$ if and only if it is superstable and is trace equivalent to $(\Z; +)$ if and only if it is superstable and not totally transcendental.
\end{corollary}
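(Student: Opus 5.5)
Corollary~\ref{cor:Z-decomp} has two claims, and I would prove each as a pair of implications. Throughout, let $A$ be a torsion free abelian group.

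\textbf{First claim.} Suppose $A$ is trace definable in $\Th(\Z;+)$. The group $(\Z;+)$ is superstable, since it is torsion free and non-singular. By \cite[Thm.~A.2.13]{Hodges} and Fact~\ref{fact:preserve}, superstability passes down along trace definitions, so $A$ is superstable. Conversely, suppose $A$ is superstable. Then $A$ is non-singular by the same theorem of Szmielew--Hodges. Now $(\Z;+)$ is a non-totally transcendental expansion of an unbounded exponent group: the chain $\Z \supsetneq 2\Z \supsetneq 4\Z \supsetneq \cdots$ is an infinite descending chain of definable subgroups. So the third claim of Proposition~\ref{prop:Z-decomp} shows that $A$ is trace definable in $\Th(\Z;+)$.

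\textbf{Second claim, forward direction.} Suppose $A$ is trace equivalent to $(\Z;+)$. Then $A$ is superstable by the first claim. To see that $A$ is not totally transcendental, note that $(\Z;+)$ is not totally transcendental, so Fact~\ref{fact:cantor} shows that $\Th(\Z;+)$ trace defines $\Sa C$. Since $\Th(A)$ trace defines $\Th(\Z;+)$ and trace definability between theories is transitive (\cite[Lemma~1.7]{trace1}), $\Th(A)$ trace defines $\Sa C$. Fact~\ref{fact:cantor} then gives that $A$ is not totally transcendental.

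\textbf{Second claim, reverse direction.} Suppose $A$ is superstable and not totally transcendental. Then $A$ is non-singular. It is also non-divisible: a divisible torsion free group is a $\Q$-vector space, hence totally transcendental. The second claim of Proposition~\ref{prop:Z-decomp} now gives that $A$ is trace equivalent to $(\Z;+)$.

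The only step needing any care is the transfer of total transcendence and superstability across theories, and this is covered by Fact~\ref{fact:preserve} and Fact~\ref{fact:cantor} together with transitivity. No step seems to be a real obstacle.
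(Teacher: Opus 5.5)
Your proof is correct and follows the paper's route: it combines Proposition~\ref{prop:Z-decomp} with the equivalence of superstability and non-singularity for torsion free groups \cite[Thm.~A.2.13]{Hodges} and preservation of superstability under trace definability (Fact~\ref{fact:preserve}). Two of your steps spell out details the paper leaves implicit: deducing ``not totally transcendental'' via Fact~\ref{fact:cantor} and transitivity (preservation of total transcendence in Fact~\ref{fact:preserve} would work equally well), and noting that non-total-transcendence forces non-divisibility.
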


We finally prove a general result on abelian groups which will be applied to ordered abelian groups below.
Recall that a direct summand of an abelian group is a  pure subgroup.
See \cite[10.7.1, 10.7.3]{Hodges} or \cite[Cor.~3.3.38]{trans} for Fact~\ref{fact:pure}.

\begin{fact}\label{fact:pure}
If $A$ is a pure subgroup of an abelian group $B$ and the expansion of $B$ by a unary relation defining $A$ is $\aleph_1$-saturated then $A$ is a direct summand of $B$.
\end{fact}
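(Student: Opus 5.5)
The plan is to show two things: that $A$ is pure-injective (algebraically compact), and that a pure-injective pure subgroup is a retract of $B$. I will do both at once by building a retraction $r \colon B \to A$ with a Zorn's lemma argument. Throughout, ``pp formula'' means a positive primitive formula in the language of abelian groups, i.e.\ one of the form $\exists \bar z \bigwedge (\text{linear equations over } \Z)$. I will use two standard facts. First, an embedding of abelian groups is pure if and only if it preserves and reflects pp formulas; this can be read off Fact~\ref{fact:abelian qe} together with the fact that pp formulas reduce to conjunctions of equations and divisibility conditions. Second, a homomorphism preserves pp formulas.

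Consider pairs $(C, r)$ with $A \le C \le B$ and $r \colon C \to A$ a map such that for every pp formula $\varphi(\bar x)$ and every tuple $\bar c$ from $C$, $B \models \varphi(\bar c)$ implies $A \models \varphi(r(\bar c))$. Such an $r$ is automatically a homomorphism, since $x + y = z$ is pp, and $r$ fixes $A$ pointwise whenever we start from the identity. The pair $(A, \mathrm{id}_A)$ satisfies the invariant precisely because $A$ is pure in $B$. The invariant is preserved under unions of chains, so Zorn's lemma gives a maximal pair $(C, r)$. It remains to show $C = B$.

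Suppose $b \in B \setminus C$. Let $p(x)$ be the pp-type of $b$ over $C$ in $B$, and let $r_*p$ be the set of formulas $\varphi(x, r(\bar c))$ with $\varphi(x, \bar c) \in p$. This set is finitely satisfiable in $A$: a finite conjunction is again a pp formula $\psi(x, \bar c)$, the formula $\exists x\, \psi(x, \bar y)$ is pp, and it holds of $\bar c$ in $B$. By the invariant it then holds of $r(\bar c)$ in $A$. If $a \in A$ realizes $r_*p$, then setting $r(b) = a$ and extending additively to $C + \Z b$ preserves the invariant. (Check: a pp formula about elements of $C + \Z b$ is a pp formula about $b$ and elements of $C$, so it lies in $p$.) This contradicts maximality of $(C, r)$. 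Once $C = B$, $r$ is a retraction onto $A$ and $B = A \oplus \ker r$.

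The main obstacle is realizing $r_*p$ in $A$, because it is a type over possibly uncountably many parameters while we only have $\aleph_1$-saturation. I would handle this using the fact that the ring $\Z$ is countable. For each of the countably many pp formulas $\varphi(x, \bar y)$, the set $\varphi(A, \bar a)$ is either empty or a coset of the subgroup $\varphi(A, \bar 0)$. Since $r_*p$ is finitely satisfiable, any two instances of the same $\varphi$ occurring in it define cosets that intersect, so they define the same coset. Choosing one instance for each $\varphi$ therefore gives a countable subtype equivalent to $r_*p$. That subtype is finitely satisfiable in $A$, with countably many parameters from $A$, and $A$ is a definable set in the $\aleph_1$-saturated structure $(B; A)$. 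So some $a \in A$ realizes it, which completes the extension step.
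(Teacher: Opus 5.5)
Your proof is correct. It is the standard argument behind the results the paper cites instead of giving a proof: $\aleph_1$-saturation of $(B;A)$, together with the countability of the pp formulas, makes $A$ algebraically compact (pure-injective), and a pure-injective pure subgroup is a retract via exactly your Zorn's-lemma extension of pp-preserving maps; you have simply fused these two steps into one argument.

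Two small fixes. First, run Zorn on the pairs extending $(A,\mathrm{id}_A)$, because the invariant alone is also satisfied by $(B,0)$. Second, pp-purity of $A$ in $B$ should come from the uniform (Smith normal form) reduction of pp formulas to conjunctions of divisibility conditions, not from Fact~\ref{fact:abelian qe}, whose quantifier elimination depends on the theory of the group.
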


We now give the connection between purity and trace definability.

\begin{lemma}
\label{lem:pure}
Any pure embedding between abelian groups is a trace embedding.
If $A$ is a pure subgroup of an abelian group $B$ then $\Th(B)$ trace defines both $A$ and $B/A$.
\end{lemma}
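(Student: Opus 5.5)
For the first claim I would use the quantifier elimination of Fact~\ref{fact:abelian qe} for $A$ in $L_\mathrm{div}$ together with Fact~\ref{fact:embed}. Let $\iota\colon A \to B$ be a pure embedding. Purity says exactly that $\iota$ is an $L_\mathrm{div}$-embedding. Give $A$ and $B$ their $L_\mathrm{div}$-structures. Then $A$ is an $L_\mathrm{div}$-structure with quantifier elimination, and $\iota$ is an embedding of $A$ into the $L_\mathrm{div}$-structure $B$. Fact~\ref{fact:embed} makes $\iota$ a trace embedding of the $L_\mathrm{div}$-structure $A$ into the $L_\mathrm{div}$-structure $B$. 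Each relation $k|$ is definable in the pure group language, so the $L_\mathrm{div}$-structures on $A$ and $B$ are interdefinable with the groups $A$ and $B$. Hence $\iota$ is a trace embedding $A \to B$ of abelian groups. If one wants to avoid the Morleyization subtlety, the argument can be made directly. Every $A$-definable set is a boolean combination of sets of the forms (1) and (2) from the proof of Proposition~\ref{prop:Z-decomp}, with parameters $\gamma \in A$. Each such set is the preimage under $\iota$ of the set defined in $B$ by the same formula with parameter $\iota(\gamma)$, because $\iota$ is injective, additive, and preserves and reflects $k$-divisibility.

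For the second claim, trace definability of $A$ in $\Th(B)$ is immediate from the first claim applied to the inclusion. For $B/A$ I would pass to a saturated model. Let $(B^*, A^*)$ be an $\aleph_1$-saturated elementary extension of the pair $(B, A)$, that is, of the expansion of $B$ by a unary predicate for $A$. Purity of $A$ in $B$ is expressed by the first-order sentences $\forall x\,(x \in A \wedge \exists y\, ky = x \to \exists y \in A\; ky = x)$, one for each $k$. So $A^*$ is a pure subgroup of $B^*$. Fact~\ref{fact:pure} then gives $B^* = A^* \oplus C$ for some subgroup $C$ with $C \cong B^*/A^*$. A direct summand is pure, so the first claim shows that $B^*$, and hence $\Th(B)$, trace defines $B^*/A^*$. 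It remains to see that $B/A$ is trace definable in $\Th(B^*/A^*)$, and then conclude by transitivity.

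For this last step I would note two things. First, $B/A \to B^*/A^*$ is injective, since $A^* \cap B = A$. Second, it is a pure embedding. If $b + A^* = k(c + A^*)$ with $c \in B^*$, then the pair satisfies $\exists y\,\exists a \in A\; b = ky + a$. By elementarity this holds in $(B, A)$, so $b + A$ is $k$-divisible in $B/A$. The converse direction is clear. So $B/A$ embeds purely into $B^*/A^*$, and the first claim makes this a trace embedding. The main obstacle I expect is this bookkeeping: ensuring that the quotient in the saturated model genuinely contains $B/A$ purely, and that trace definability in a model of $\Th(B)$ suffices. The latter is exactly how trace definability by a theory is defined.
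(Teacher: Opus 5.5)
Your proof is correct and follows the paper's argument. The first claim comes from quantifier elimination in $L_{\mathrm{div}}$ together with Fact~\ref{fact:embed}. The second passes to an $\aleph_1$-saturated elementary extension $(B^*,A^*)$ of the pair and applies Fact~\ref{fact:pure} to realize $B^*/A^*$ as a direct, hence pure, summand of $B^*$.

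The differences are cosmetic. Where the paper simply notes $B^*/A^* \equiv B/A$, you check directly that $B/A \to B^*/A^*$ is a pure (in fact elementary) embedding. You also spell out that purity of $A^*$ in $B^*$ transfers by elementarity, which the paper leaves implicit.
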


\begin{proof}
The first claim follows by Facts~\ref{fact:abelian qe} and \ref{fact:embed}.
Let $A$ be a pure subgroup of $B$ and  $(B, A)$ be the expansion of $B$ by a unary relation defining $A$.
Let $(B^*, A^*)$ be an  $\aleph_1$-saturated elementary extension of $(B, A)$.
Now $B^* \equiv B$ and $B^*/A^* \equiv B/A$, so it is enough to show that $B^*$ trace defines $B^*/A^*$.
By Fact~\ref{fact:pure} $A^*$ is a direct summand of $B^*$.
Therefore $B^*/A^*$ is also a direct summand of $B^*$, hence $B^*$ trace defines $B^*/A^*$.
\end{proof}

\subsection{Ordered abelian groups and ordered vector spaces}
We first prove the ordered analogue of Proposition~\ref{prop:lve}.

\begin{proposition}
\label{prop:lvoe}
Any ordered vector space over an ordered division ring is locally trace equivalent to $\rgoup$.
If $\E/\D$ is a finite extension of ordered division rings then the theory of ordered $\E$-vector space is trace equivalent to the theory of ordered $\D$-vector spaces.
\end{proposition}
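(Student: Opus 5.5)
Both claims will follow the proof of Proposition~\ref{prop:lve}, using quantifier elimination for ordered vector spaces. Let $\Sa V$ be an ordered $\D$-vector space. In the language with $+$, $<$, $0$ and unary function symbols $v \mapsto \lambda v$ for $\lambda \in \D$, the theory of nontrivial ordered $\D$-vector spaces is complete and admits quantifier elimination. Every term is equivalent to one of the form $\lambda_1 x_1 + \cdots + \lambda_n x_n + c$. Hence every $\Sa V$-definable set is a boolean combination of sets defined by $\lambda_1 x_1 + \cdots + \lambda_n x_n < c$ or $= c$.

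\textbf{First claim, one direction.} I will show that the family $\Cal E = \{ v \mapsto \lambda v : \lambda \in \D\}$ witnesses local trace definability of $\Sa V$ in its ordered group reduct $(V;+,<)$. Fix an atomic set of the above form. Composing the $n$ maps $v \mapsto \lambda_i v$ with the $(V;+,<)$-definable set $\{ (y_1,\ldots,y_n) : y_1 + \cdots + y_n < c\}$ recovers it. Boolean combinations are handled by concatenating the finitely many functions used.

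\textbf{First claim, other direction.} I also need that $\rgoup$ locally trace defines $\Sa V$, and conversely. Here $(V;+,<)$ is a nontrivial divisible ordered abelian group, so it is elementarily equivalent to $(\Q;+,<)$, which is an elementary substructure of $\rgoup$ by quantifier elimination for ODAG. So $(V;+,<)$ is locally trace equivalent to $\rgoup$, and transitivity gives that $\rgoup$ locally trace defines $\Sa V$. For the reverse, $\rgoup$ is locally trace definable in $\Sa V$: the ODAG reduct of $\Sa V$ is elementarily equivalent to $\rgoup$, and any structure trace defines its reducts via the identity map. By Proposition~\ref{prop:loc trace theories} it suffices to work with some model.

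\textbf{Second claim.} Let $\E/\D$ be finite with basis $e_1,\ldots,e_d$ as a left $\D$-vector space.
\begin{itemize}
\item The $\D$ side is trace definable in the $\E$ side. Take an ordered $\E$-vector space, for instance $\E$ itself. Its $\D$-reduct is a nontrivial ordered $\D$-vector space, so by completeness it is a model of the $\D$-theory and is trace defined via the identity.
\item The $\E$ side is trace definable in the $\D$ side. Let $\Sa V$ be an ordered $\E$-vector space. Take the finite family $\{v \mapsto e_i v : i \le d\}$ of functions into the $\D$-reduct $\Sa V_\D$. For $\lambda = \sum_i \mu_i e_i \in \E$ with $\mu_i \in \D$ we get $\lambda v = \sum_i \mu_i (e_i v)$.
\end{itemize}
So each $\E$-term $\lambda_1 x_1 + \cdots + \lambda_n x_n + c$ is a $\D$-linear combination of the values $e_i x_j$. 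Each atomic $\E$-formula is therefore the preimage of a $\Sa V_\D$-definable set under these $d$ functions, and quantifier elimination finishes it via Fact~\ref{fact:trace-def}(3).

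The main obstacle is checking that the scalar multiplication maps interact correctly with the order. This only needs $\lambda v$ to be a genuine $\D$-combination, so it is routine. The remaining care is being precise about the side of the scalar multiplication and keeping track of which theories are complete.
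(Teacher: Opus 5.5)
Your proof is correct and follows the paper's argument. Quantifier elimination for ordered vector spaces reduces everything to linear inequalities, and the scalar maps $v \mapsto bv$, for $b$ in a left $\D$-basis of $\E$, witness (local) trace definability of the $\E$-space in its $\D$-reduct, with a finite basis giving trace definability. The only cosmetic difference is in the first claim: you use all maps $v \mapsto \lambda v$, $\lambda \in \D$, into $(V;+,<)$, as in Proposition~\ref{prop:lve}, whereas the paper obtains it as the case of the extension $\D/\Q$, since the ordered $\Q$-vector space reduct is interdefinable with the underlying ordered divisible group.
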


\begin{proof}
Let $\E/\D$ be an extension of ordered division rings, let  $\Sa V$ be an ordered $\E$-vector space, let $\Sa V^*$ be the ordered $\D$-vector space reduct of $\Sa V$, and let $B$ be a basis for $\E$ as a vector space over $\D$.
It suffices to show that $(v \mapsto bv)_{b \in B}$ witnesses local trace definability of $\Sa V$ in $\Sa V^*$.
By quantifier elimination for ordered vector spaces~\cite[Cor.~7.8]{tametop} every $\Sa V$-definable subset of $V^n$ is a boolean combination of sets given by inequalities of the form $\lambda_1 v_1 + \cdots + \lambda_n v_n \ge \gamma$
for $\lambda_1, \ldots, \lambda_n \in \E$ and $\gamma \in V$.
Expressing each $\lambda_i$ as a $\D$-linear combination of elements of $B$, we see that such an inequality is equivalent to an inequality $\eta_1(b_1 v_{i_1}) + \cdots + \eta_m(b_m v_{i_m}) \ge \gamma$ for some $\eta_1, \ldots, \eta_m \in \D$, $b_1, \ldots, b_m \in B$, and $i_1, \ldots, i_m \in \nset$.
\end{proof}

We now give the ordered version of Proposition~\ref{prop:ab}.

\begin{proposition}\label{prop:re-oag}
The theory of any ordered abelian group trace defines $\rgoup$.
\end{proposition}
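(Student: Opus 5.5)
Let $\hgroup$ be an ordered abelian group, written $H$ below. My plan mirrors the proof of Proposition~\ref{prop:ab}: find an elementary extension $\Sa H^*$ of $H$ and a map $\rgoup \to \Sa H^*$ that is a trace embedding. Then $\Th(H)$ trace defines $\rgoup$. The tool for the trace embedding is Fact~\ref{fact:embed}, which needs quantifier elimination on the source side.

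First, if $H$ is divisible then $\Th(H)$ is the theory of divisible ordered abelian groups, i.e.\ ordered $\Q$-vector spaces. This theory is complete and has quantifier elimination in the language $(+,-,0,<)$. Moreover $\rgoup$ is a model of it, so $\Th(H)=\Th\rgoup$ and there is nothing to prove.

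In general I reduce to the divisible case by interpreting the divisible hull. Let $B$ be the divisible hull of $H$. Then $B$ is interpretable in $H$ with the order: take pairs $(h,n)$ with $n\ge 1$, where $(h,n)$ represents $h/n$. Two pairs are identified when $mh = nh'$, order is $(h,n)<(h',m)$ iff $mh<nh'$, and addition is defined via the common denominator $nm$. The difficulty is that $n$ ranges over $\N$, which is not a definable set. So I fix a single $n$ at a time and argue by compactness.

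Here is the cleaner route. The quotient $B = \bigcup_n \frac1n H$ is not uniformly interpretable, but it does not need to be. The key observation is that $\rgoup$ has quantifier elimination in the language $(+,-,<,0)$ together with division by integers. So any quantifier-free formula in finitely many variables only involves finitely many integer coefficients $m_i$, in atoms $\sum m_i x_i \,\square\, c$. Let $N$ be a common multiple of the denominators involved. Then this finite piece of $\rgoup$ embeds into an elementary extension $H^*$ of $H$ via $x\mapsto Nx$, after clearing denominators. More precisely, I will show that every finite partial substructure of $(\Q;+,<)$ of the relevant kind embeds into $H$, or into an elementary extension of it, realizing the required sign conditions. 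Then compactness plus $\aleph_1$-saturation gives an embedding of $(\Q;+,<)$, and then of some $\R$-sized structure, into $H^*$ with $H^*$ sufficiently saturated.

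The main obstacle is that the embedding must be an embedding of groups. The image of $\Q$ must then sit inside $H^*$ as a subgroup, which forces divisibility of the image and is only available inside the divisible part of $H^*$. Divisible elements need not exist, e.g.\ in $\zgoup$. So I would instead aim for a trace definition rather than an embedding, using Fact~\ref{fact:trace-def}(3) with finitely many functions $\R\to H^*$. Concretely, I would take a copy of $\R$ inside an $\aleph_1$-saturated $H^*\succ H$ as an ordered subset. The definable sets in $\rgoup$ are boolean combinations of sets $\{x : \sum m_i x_i > c\}$ with $m_i\in\Z$, since $\Q$-coefficients reduce to $\Z$-coefficients. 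So it suffices to produce an injective $\uptau:\R\to H^*$ that is additive and order preserving. In $\zgoup$ this can be done using a nonstandard infinite element $t$ and the archimedean-class structure, but not with uniform divisibility. I therefore expect the real work to be showing that $(\R;+,<)$ embeds as an ordered group into an elementary extension of any ordered abelian group $H$, torsion-free but not necessarily divisible, and then checking that this group embedding already is a trace embedding. For that second check, I would verify that atoms $\sum m_i x_i > c$ with $c\in\R$ pull back correctly. The constants $c$ are in the image, and rational divisions are handled by multiplying through by integers.
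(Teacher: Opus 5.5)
You arrive at the paper's strategy: an ordered-group embedding of $(\Q;+,<)$ (or of $\rgoup$) into an elementary extension of $\hgroup$, which Fact~\ref{fact:embed} and quantifier elimination for divisible ordered abelian groups turn into a trace embedding. The gap is that you never establish this embedding. You call it ``the real work,'' and the obstruction you give for it is false.

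It is true that $\Z$ has no nonzero divisible elements. But any $\aleph_1$-saturated $H^*\succ H$ has one. The type $\{x>0\}\cup\{\exists y\,(ny=x): n\ge 1\}$ is finitely satisfiable in $H$ by $N!\,h$ for any positive $h$. Let $a$ realize it. Torsion-freeness makes each $a/n$ unique, so $m/n\mapsto m(a/n)$ is a well-defined injective homomorphism $\Q\to H^*$. It preserves order because $n(a/n)=a>0$ forces $a/n>0$. This is the whole ``compactness exercise'' in the paper. The paper gets a group embedding $\Q\to H^*$ by the argument of Proposition~\ref{prop:ab} and notes that it or its negative preserves order. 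Since $(\Q;+,<)\equiv\rgoup$, the trace analogue of Proposition~\ref{prop:loc trace theories} then gives $\rgoup$ itself. Alternatively, the same compactness argument with more saturation embeds $\rgoup$ directly: finitely many reals subject to finitely many integer linear (in)equalities can be matched by integer multiples of one positive $h$.

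Your paragraph on ``finite pieces'' already contained a correct version of this. The subgroup of $\Q$ generated by $a_1/N,\dots,a_k/N$ embeds in $H$ via $a_i/N\mapsto a_ih$, so the atomic diagram of $(\Q;+,<)$ is consistent with the elementary diagram of $H$. You discarded this because of the mistaken divisibility worry.

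Your proposed workaround does not escape that worry anyway. An injective additive order-preserving $\uptau\colon\R\to H^*$ \emph{is} an ordered-group embedding, and its image is a divisible subgroup. The ``archimedean-class'' construction in elementary extensions of $\zgoup$ gives an interpretation of $\rgoup$ in the Shelah completion, as a quotient $V/\mfrak$ of externally definable convex subgroups. It does not give an additive map into $H^*$.
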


\begin{proof}
By Fact~\ref{fact:embed} and quantifier elimination for $\Th\rgoup$ it is enough to suppose that $\hgroup$ is an $\aleph_1$-saturated ordered abelian group and embed $(\Q;+,<)$ into $\hgroup$.
Note that if $\chi$ is any embedding $\Q \to H$ of abelian groups then either $\chi$ or $-\chi$ is an embedding of ordered abelian groups.
So we may apply the proof of Proposition~\ref{prop:ab}.
\end{proof}

Let $\hgroup$ an ordered abelian group.
Our next goal is to show that $\hgroup$ is (locally) trace equivalent to $(H;+)\sqcup \rgoup$ under certain circumstances.
We first describe an important class of ordered abelian groups.
We say that $\hgroup$ has {\bf bounded regular rank} if the following equivalent conditions hold.
\begin{enumerate}[leftmargin=*]
\item Any elementary extension  has only countably many definable convex subgroups.
\item There is an upper bound on the cardinality of the collection of definable convex subgroups in an elementary extension of $\hgroup$.
\item Any $\hgroup$-definable family of convex subgroups has only finitely many elements.
\item $\hgroup$ has finite $p$-regular rank for every prime $p$.
\end{enumerate}
The first three conditions are equivalent by a routine model-theoretic argument.
See \cite[Prop.~2.3]{Farre} for the equivalence of the first and last conditions, and for an explanation of $p$-regular rank.
It follows directly from the definition of $p$-regular rank that any non-singular ordered abelian group has bounded regular rank.
Any archimedean ordered abelian group, more generally any substructure of a lexicographic power of $\rgoup$, has bounded regular rank as it has only finitely many convex subgroups.
Any strongly dependent ordered abelian group has bounded regular rank~\cite{DGoag}.
An infinite lexicographic power of a non-divisible ordered abelian group does not have bounded regular rank.

\medskip
We let $\Cal C(H)$ be the collection of definable convex subgroups of $\hgroup$.
If $H$ is discrete then  let $1_H$ be the minimal positive element of $H$.
Given a collection $\mathcal{J}$ of convex subgroups of $H$ let $L_\mathcal{J}$ be the expansion of the language of ordered groups by:
\begin{enumerate}
\item a unary relation defining each  $J\in \mathcal{J}$,
\item a unary relation defining the coset $1_J$ for every $J\in\mathcal{J}$ such that $H/J$ is discrete,
\item and a unary relation defining $J+p^m H$ for every prime $p$, $J\in\mathcal{J}$, and $m \ge 1$.
\end{enumerate}
Let $H_\mathcal{J}$ be the natural $L_\mathcal{J}$-structure on $H$.
Note that $H_\mathcal{J}$ is interdefinable with $\hgroup$ when $\mathcal{J} = \mathcal{C}(H)$.
Fact~\ref{fact:farre} is a special case of the Gurevich-Schmitt theorem~\cite[Thm.~2.4]{Farre}.
It is usually stated in terms of a certain collection $\mathrm{RJ}(H)$ of specific definable convex subgroups, i.e. in a certain sublanguage of $L_{\mathcal{C}(H)}$.
Note that our version immediately follows.

\begin{fact}
\label{fact:farre}
If $\hgroup$ has bounded regular rank then $H_{\mathcal{C}(H)}$ admits quantifier elimination.
\end{fact}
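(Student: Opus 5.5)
The plan is to obtain this from the Gurevich--Schmitt quantifier elimination as stated in \cite[Thm.~2.4]{Farre}, and not to reprove it. That theorem gives, for $\hgroup$ of bounded regular rank, quantifier elimination in a language $L_{\mathrm{RJ}}$. This language expands the ordered group language by predicates attached to a specific collection $\mathrm{RJ}(H)$ of definable convex subgroups $J$. For each such $J$ these predicates name $J$, name the coset $1_J$ when $H/J$ is discrete, and name $J + p^mH$ (or the divisibility predicates modulo $J$). Condition~(3) in the definition of bounded regular rank guarantees that each relevant family of convex subgroups is finite, so every member of $\mathrm{RJ}(H)$ is a genuinely definable convex subgroup. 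Hence $\mathrm{RJ}(H) \subseteq \mathcal{C}(H)$.

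The argument itself is a general transfer principle. Let $L \subseteq L'$, let $\Sa M$ be an $L'$-structure, and suppose every symbol of $L'$ is $\Sa M$-definable and every atomic $L$-formula is equivalent in $\Sa M$ to a quantifier-free $L'$-formula. If the $L$-reduct has quantifier elimination, then so does $\Sa M$. The reason is that any $L'$-formula is equivalent to an $L$-formula, since all symbols of $L'$ are definable in $\hgroup$ and hence in the $L$-reduct. That $L$-formula is equivalent to a quantifier-free $L$-formula, which we then translate atom by atom into $L'$. Here $L = L_{\mathrm{RJ}}$ and $L' = L_{\mathcal{C}(H)}$. Every symbol of $L_{\mathcal{C}(H)}$ is definable in $\hgroup$, because $J \in \mathcal{C}(H)$ is definable, $1_J$ is a definable coset, and $J+p^mH$ is definable. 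It therefore remains to check that every atomic $L_{\mathrm{RJ}}$-formula is quantifier-free $L_{\mathcal{C}(H)}$-expressible.

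The step I expect to be the main obstacle is the exact syntactic form of the Gurevich--Schmitt predicates. If they are literally the unary predicates ``$t \in J$'', ``$t \in 1_J$'' and ``$t \in J + p^mH$'' applied to group terms $t$, the translation is immediate. The formulation in \cite{Farre} may instead use predicates such as $t \equiv k\cdot 1_J \pmod{J + p^mH}$, or order relations in the discrete quotient $H/J$. In that case I would first check that $1_J$ enters only through these congruences. Then, for $0 \le k < p^m$, I would write ``$t - k\cdot 1_J \in J+p^mH$'' as ``$t \in k\cdot 1_J + J + p^mH$''. This set is a coset of $J+p^mH$, and it is definable using only the listed unary predicate for $1_J$ together with the finite quotient $H/(J+p^mH)$, which is cyclic of order $p^m$ in the relevant image when $H/J$ is discrete. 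I would express it quantifier-freely as a Boolean combination of the predicates for $J+p^{m'}H$, the predicate for $1_J$, and group terms. If this proves awkward, the fallback is to note that the statement only concerns interdefinability-invariant quantifier elimination once the language is fixed, and to enlarge the list in (2)--(3) by the finitely many definable cosets $k\cdot 1_J + J + p^mH$. This is harmless for all later uses, and it is consistent with the remark that ``our version immediately follows.''
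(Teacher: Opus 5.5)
Your main line is the paper's argument. The paper simply cites the Gurevich--Schmitt theorem in the form \cite[Thm.~2.4]{Farre}, notes that it is stated for the sublanguage of $L_{\mathcal{C}(H)}$ determined by $\mathrm{RJ}(H)\subseteq\mathcal{C}(H)$, and concludes because adding predicates for definable sets preserves quantifier elimination.

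Your contingency paragraph, however, contains a step that fails. A coset $k\cdot 1_J+J+p^mH$ is in general not quantifier-free definable \emph{without parameters} from the predicate for $1_J$, the predicates for the $J+p^{m'}H$, and group terms.

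Take $H=\Z$ and $J=\{0\}$, so $1_J=\{1\}$. The only terms in one variable are the $kx$ with $k\in\Z$. So a parameter-free quantifier-free formula in $x$ sees only the sign of $x$, whether $kx\in\{0\}$ or $kx\in\{1\}$, and finitely many conditions $p^m\mid kx$. Let $N$ be the product of the primes occurring. For $t\ge 1$ the integers $1+3Nt$ and $-1+3Nt$ satisfy the same atomic formulas. Only the first lies in the $\emptyset$-definable set $1+3\Z$. So the translation you propose does not exist.

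The same example shows that $(\Z;+,<)$ has no parameter-free quantifier elimination in $L_{\mathcal{C}(\Z)}$ at all, so your worry is substantive. The Fact holds only in the sense in which the paper actually uses it: every definable set is a Boolean combination of sets defined by atomic $L_{\mathcal{C}(H)}$-formulas \emph{with parameters}. Note the parameter $\rho\in H$ in the proofs of Propositions~\ref{prop:bounded-reg} and~\ref{prop:big order}.

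In that reading your obstacle disappears. Fix $\gamma\in 1_J$. Then $t\in k\cdot 1_J+J+p^mH$ if and only if $t-k\gamma\in J+p^mH$. Likewise, equality and order conditions against $k\cdot 1_J$ in $H/J$ become $t-k\gamma\in J$ and $t-k\gamma<0\wedge t-k\gamma\notin J$.

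Your fallback of adding unary predicates for the cosets $k\cdot 1_J+J+p^mH$ does not rescue the parameter-free version either. In $\Z$ the $\emptyset$-definable singleton $\{2\}$ is still not quantifier-free definable, since $2$ and $2+Nt$ are indistinguishable. Also, quantifier elimination is not invariant under interdefinability, so that remark does not justify changing the language. The parameter reading is the right fix.
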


We now show that the local trace equivalence type of an ordered abelian group of bounded regular rank is determined by the local trace equivalence type of the underlying group.

\begin{proposition}
\label{prop:bounded-reg}
\hspace{.0000000000000001cm}
\begin{enumerate}[leftmargin=*]
\item If $\mathcal{J}$ is a collection of convex subgroups of $H$ such that $H_\mathcal{J}$ has quantifier elimination then $H_\mathcal{J}$ is locally trace equivalent to $(H;+)\sqcup\rgoup$.
\item If $\hgroup$ has bounded regular rank then $\hgroup$ and $(H;+)\sqcup\rgoup$ are locally trace equivalent.
\item If $\hgroup$ has only finitely many definable convex subgroups then $\hgroup$ is trace equivalent to $(H; +) \sqcup \rgoup$. 
In particular $\hgroup$ and $(H;+)\sqcup\rgoup$ are trace equivalent when $\hgroup$ is archimedean.
\end{enumerate}
\end{proposition}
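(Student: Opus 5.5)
We prove (1) first; (2) then follows from Fact~\ref{fact:farre} with $\mathcal{J} = \mathcal{C}(H)$. For (3), when $\mathcal{C}(H)$ is finite the language $L_{\mathcal{C}(H)}$ contains only finitely many relations of arity $\ge 2$ (the order, the graph of addition), and we upgrade local to genuine trace definability as described at the end.

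Consider (1). The direction ``$H_\mathcal{J}$ locally trace defines $(H;+)\sqcup\rgoup$'' should be easy. By Lemma~\ref{lem:k-dis} it suffices to trace define each summand. $(H;+)$ is a reduct of $H_\mathcal{J}$. $\Th\rgoup$ is trace definable in $\Th(H;<,+)$ by Proposition~\ref{prop:re-oag}, and $(H;+,<)$ is a reduct of $H_\mathcal{J}$. For the converse we use Lemma~\ref{lem:tech}, taking $L$ to be the language of $L_{\mathrm{div}}$-groups together with the unary predicates, and $L^* = L_\mathcal{J}$ with quantifier elimination. The unary relations of $L_\mathcal{J}$ (the $J$, the cosets $1_J$, the sets $J+p^mH$) are handled by Lemma~\ref{lem:nfusion}: unary predicates always embed. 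The $L_{\mathrm{div}}$-structure of $(H;+)$ comes for free from the $(H;+)$ sort by Fact~\ref{fact:abelian qe}. So the only new relation of arity $\ge 2$ is the order $<$. Under quantifier elimination, atomic formulas are of the form $t(x)>\gamma$ with $t$ a $\Z$-linear term, so what we really need is that the ternary relation $\{(a,b,c): a+b<c\}$ (or $m_1x_1+\cdots+m_nx_n<\gamma$) embeds compatibly with $+$.

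To handle the order, we embed $(H;+,<)$, which is an ordered $\Z$-module, into its divisible hull $(D;+,<)$, with $D = H\otimes\Q$. The divisible hull is an ordered $\Q$-vector space, hence elementarily equivalent to $\rgoup$ via $\Th(\Q;+,<)$. This inclusion $H\to D$ is a group embedding, so every term $\sum m_ix_i$ computed in $H$ agrees with its value in $D$. Hence $\sum m_i x_i < \gamma$ holds in $H$ iff the image tuple satisfies the same $\rgoup$-definable condition in $D$. Thus the map $h\mapsto (h,h)\in H\sqcup D$ witnesses the hypothesis of Lemma~\ref{lem:tech} for each $k$-ary relation (via Lemma~\ref{lem:qe case}). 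This gives local trace definability of $H_\mathcal{J}$ in $(H;+)\sqcup\rgoup$, after passing to the elementarily equivalent model $(H;+)\sqcup D$.

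For (3), $H_{\mathcal{C}(H)}$ has quantifier elimination in a language whose only relations are: finitely many unary relations per $J$, plus countably many $J+p^mH$ and $k|$ predicates, plus $<$ and addition. The embedding above is a single map $H\to H\sqcup D$, i.e.\ a finite family $\Cal E$ of functions. Every atomic formula is then quantifier-free definable in $((H;+)\sqcup D, H, \Cal E)$, once the unary predicates are shown to be definable in $(H;+)$ or in the disjoint union. The main obstacle is exactly these unary predicates: the convex subgroups $J$ and the cosets $1_J$ are not $(H;+)$-definable. We would handle them by noting that there are only finitely many of them, so we could add finitely many extra coordinates — maps $H\to D$ or $H \to H$ — to encode them. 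Alternatively, we could observe that convex subsets are externally definable in $D$ (Fact~\ref{fact:convex}) and use Proposition~\ref{prop:she-0}/Corollary~\ref{cor:poizat-b} to absorb them: $(D;+,<,J\otimes\Q)$ is trace equivalent to $D$. The sets $J+p^mH$ are then boolean combinations of $J$-membership and $(H;+)$-definable congruences after composing with the quotient. Once the unary predicates are absorbed, Lemma~\ref{lem:qe case} with finite $\Cal E$ yields trace equivalence. The archimedean case is the special case where $\mathcal{C}(H)=\{0,H\}$.
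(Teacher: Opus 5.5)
There is a genuine gap in how you handle the predicates $J$, $1_J$, $J+p^kH$ of $L_\mathcal{J}$. Quantifier elimination for $H_\mathcal{J}$ is in a language containing $+$. So the atomic formulas you must trace are $m_1x_1+\cdots+m_nx_n+\rho\in U$, with $U$ one of these predicates, and these are genuinely $n$-ary relations on $H$. Lemma~\ref{lem:nfusion} concerns only unary relations of a relational structure, so it does not apply, and it is false that the order is the only new relation of arity $\ge 2$. Put differently: if these predicates go into $L$ in Lemma~\ref{lem:tech}, you need an embedding of $(H;+,J,\ldots)$, as a group with predicates, into a structure definable in $(H;+)\sqcup D$. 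The map $h\mapsto(h,h)$ is not such an embedding, because a convex subgroup $J$ is in general traced by neither sort.

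In (3) you notice the problem. Passing to the Shelah completion $\Sa K$ of $D$ does handle the atoms $\sum m_ix_i+\rho\in J$ and those for $1_J$, as in the proof of Proposition~\ref{prop:big order}. It does not handle $\sum m_ix_i+\rho\in J+p^kH$. That set is a finite union of cosets of $p^kH$ only when $J/p^kJ$ is finite, e.g.\ when $H$ is non-singular, which is exactly the setting of Proposition~\ref{prop:big order}. For instance, take $A\subseteq\R$ free abelian of infinite rank, $H=A\times_{\mathrm{lex}}A$ and $J=\{0\}\times A$. Then $H$ has only three convex subgroups (so (3) applies), but $J+pH=pA\times A$ is not a finite union of sets $B\cap C$ with $B$ $(H;+)$-definable and $C$ convex. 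Since $\Sa K$ is weakly o-minimal, such sets are the only subsets of $H$ the diagonal map into $(H;+)\sqcup\Sa K$ can trace. Your alternative of adding ``finitely many extra coordinates'' is where the real content would have to go, and you never say what those coordinates are.

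The missing idea is to use the quotient maps $\uptau_J\colon H\to H/J$, for $J\in\mathcal{J}$, alongside the inclusion $\chi\colon H\to K$ into the divisible hull.
\begin{itemize}
\item A convex subgroup is pure, so by Lemma~\ref{lem:pure} $\Th(H;+)$ trace defines $H/J$. The reason is that in an $\aleph_1$-saturated extension $(H^*,J^*)$ the subgroup $J^*$ is a direct summand by Fact~\ref{fact:pure}, so $H^*/J^*$ is isomorphic to a pure subgroup of $H^*$.
\item By Lemma~\ref{lem:k-dis}, $\Th(H;+)$ then trace defines $\Sa E=\bigsqcup_{J\in\mathcal{J}}H/J$.
\item Order atoms are handled by $\chi$ in $K$.
\item $\sum m_i\alpha_i+\rho\in J+p^kH$ holds iff $\sum m_i\uptau_J(\alpha_i)+\uptau_J(\rho)\in p^k(H/J)$.
\item Membership in $J$ or in the coset $1_J$ becomes an equation in $H/J$.
\end{itemize}
Thus $\chi$ and the $\uptau_J$ witness local trace definability of $H_\mathcal{J}$ in $\Sa E\sqcup K$. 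When $\mathcal{C}(H)$ is finite this is a finite family of maps, which gives (3). Your easy direction of (1), and your deduction of (2) from (1) via Fact~\ref{fact:farre}, are fine.
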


\begin{proof}
Note that (2) follows from (1) by the first claim by Fact~\ref{fact:farre}.
Furthermore (3) follows from the proof of (2) as our witness of local trace definability in (2) will only contain finitely many functions when $\hgroup$ has only finitely many definable convex subgroups.

\medskip
We prove (1).
By Propositions~\ref{prop:re-oag} and Lemma~\ref{lem:k-dis} the theory of any ordered abelian group $\hgroup$ trace defines $(H;+)\sqcup\rgoup$.
Let $(K;+,\triangleleft)$ be the divisible hull of $\hgroup$.
Then we have $(H;+)\sqcup\rgoup \equiv (H;+) \sqcup (K;+,\triangleleft)$ as $\rgoup \equiv (K;+,\triangleleft)$.
We show that $H_\mathcal{J}$ is locally trace definable in $(H;+)\sqcup(K;+,\triangleleft)$.
Let $\Sa E$ be the disjoint union of the abelian groups $H/J$ for $J\in \mathcal{J}$.
If $J$ is a convex subgroup of $H$, then $J$ is a pure subgroup of $H$, hence $(H;+)$ trace defines $H/J$ by Lemma~\ref{lem:pure}.
Therefore $(H;+)$ trace defines $\Sa E$ by Lemma~\ref{lem:k-dis}.
It is enough to show that $\Sa E\sqcup(K;+,\triangleleft)$ locally trace defines $\hgroup$.
Let $\chi \colon H \to K$ be the inclusion and let $\uptau_J \colon H \to H/J$ be the quotient map for every $J \in \mathcal{J}$.
We show that $\chi$ and the $\uptau_J$ witness local trace definability of $\hgroup$ in $\Sa E\sqcup(K;+,\triangleleft)$.
By quantifier elimination every $H_\mathcal{J}$-definable subset of $H^n$ is a boolean combination of sets $X$ where $X$ is the set of $(\alpha_1,\ldots,\alpha_n)$ satisfying a condition of one of the following forms for integers $m_1, \ldots, m_n$, $\rho \in H$, and $J \in\mathcal{J}$.
\begin{enumerate}[leftmargin=*]
\item $m_1\alpha_1+\cdots+m_n\alpha_n+\rho \triangleright 0$.
\item $ m_1\alpha_1+\cdots+m_n\alpha_n+\rho\in J$.
\item $  m_1\alpha_1+\cdots+m_n\alpha_n+\rho\in J+p^kH$ for a prime $p$ and $k \ge 1$.
\end{enumerate}

First suppose that $X$ is as in (1).
Let $Y$ be the set of $(\beta_1,\ldots,\beta_n)\in K^n$ such that we have $0 \triangleleft m_1\beta_1+\cdots+m_n\beta_n+\uptau(\rho)$.
Then $Y$ is $\Sa E\sqcup(K;+,\triangleleft)$-definable and we have $\alpha \in X$ if and only if $\uptau(\alpha)\in Y$ for all $\alpha \in H^n$.
We suppose that $X$ is as in (3), the case when $X$ is as in (2) follows in the same way.
Now let $Y$ be the set of $(\beta_1,\ldots,\beta_n)\in(H/J)^n$ such that $m_1\beta_1+\cdots+m_n\beta_n+\uptau(\rho)$ is in $p^m(H/J)$.
Then $Y$ is $\Sa E\sqcup(K;+,\triangleleft)$-definable and we have $\alpha\in X$ if and only if $\uptau(\alpha)\in Y$ for all $\alpha \in H^n$.
\end{proof}

\begin{corollary}\label{cor:zaran oag}
Any ordered abelian group with bounded regular rank has near linear Zarankiewicz bounds.
\end{corollary}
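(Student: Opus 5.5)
By Proposition~\ref{prop:bounded-reg}(2), an ordered abelian group $\hgroup$ of bounded regular rank is locally trace equivalent to $(H;+)\sqcup\rgoup$. By Lemma~\ref{lem:blank-2}, near linear Zarankiewicz bounds are preserved under local trace definability. So it suffices to show that $(H;+)\sqcup\rgoup$ has near linear Zarankiewicz bounds; by elementary equivalence we may equally work with any model of its theory.

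The key observation is that $(H;+)$ is an abelian structure, namely an abelian group with no extra structure. By Fact~\ref{fact:abelian qe}, every definable subset of $H^n$ is a boolean combination of cosets of definable subgroups of the form $\{\beta : m_1\beta_1+\cdots+m_n\beta_n \in kH\}$ or kernels of integer linear maps. So Proposition~\ref{prop:am} applies, and $(H;+)$ is $(2,1)$-semi-equational. Also $\rgoup$ is an ordered $\Q$-vector space, hence $(2,1)$-semi-equational by Fact~\ref{fact:mct}. Lemma~\ref{lem:vector space} then says the disjoint union $(H;+)\sqcup\rgoup$ is $(2,1)$-semi-equational, so by Fact~\ref{fact:am} it has near linear Zarankiewicz bounds.

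Combining these, $\Th(H;+)\sqcup\Th\rgoup$ has near linear Zarankiewicz bounds. It locally trace defines $\hgroup$, so $\hgroup$ has near linear Zarankiewicz bounds.

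The only point needing care is that Lemma~\ref{lem:blank-2} is stated for structures, while here local trace definability is between theories, i.e.\ $\hgroup$ is locally trace definable in some model of the theory of the disjoint union. This is harmless because near linear Zarankiewicz bounds are a property of the theory: the age of a definable bipartite graph is determined by the defining formula's theory. A second point is that the Zarankiewicz machinery must be applied to the two-sorted disjoint union. This is already covered by Lemma~\ref{lem:disjoint zaran} via Fact~\ref{fact:fv}, so I expect no real obstacle.
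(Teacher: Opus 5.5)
Your proof is correct and takes the same route as the paper, which derives the corollary from Proposition~\ref{prop:bounded-reg}(2) together with Lemma~\ref{lem:vector space} (itself proved from Proposition~\ref{prop:am}, Fact~\ref{fact:mct}, Lemma~\ref{lem:disjoint zaran} and Fact~\ref{fact:am}), with Lemma~\ref{lem:blank-2} transferring the bounds back to $\hgroup$. Your remarks on passing between structures and theories, and on applying the machinery to the two-sorted disjoint union, cover exactly the points the paper leaves implicit.
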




Corollary~\ref{cor:zaran oag} follows by Proposition~\ref{prop:bounded-reg} and Lemma~\ref{lem:vector space}.
We now prove a result about ordered vector spaces which will be useful when dealing with o-minimal expansions of ordered groups.
Let $\D$ be an ordered division ring and $\Sa V$ be an ordered $\D$-vector space.
Given a division subring $\D^*$ of $\D$ we let $\Sa V^*$ be the ordered $\D^*$-vector space reduct of $\Sa V$.

\begin{lemma}
\label{lem:ovs}
Let $\D$ and $\Sa V$ be as above.
Suppose that $\Sa M$ is a reduct of $\Sa V$ expanding $(V;+,<)$.
Then there is a division subring $\D^*$ of $\D$ such that $\Sa M$ is trace equivalent to $\Sa V^*$.
\end{lemma}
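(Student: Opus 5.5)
Let $\D^*$ be the set of $\lambda \in \D$ such that the map $v \mapsto \lambda v$ is $\Sa M$-definable. I will show that $\D^*$ is a division subring of $\D$ and that $\Sa M$ is trace equivalent to $\Sa V^*$.

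First, $\D^*$ is a division subring. The composition and sum of two $\Sa M$-definable endomorphisms are $\Sa M$-definable, so $\D^*$ is closed under products and sums. It contains $0$ and $1$. The inverse of a definable bijection is definable, so $\D^*$ is closed under inverses of nonzero elements. Since $\Sa M$ expands $(V;+,<)$ and $\D^*$ acts by definable maps, $\Sa V^*$ is a reduct of $\Sa M$. Hence $\Sa M$ trace defines $\Sa V^*$ via the identity map.

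For the converse it suffices to show that $\Sa V^*$ trace defines $\Sa M$. Up to interdefinability, every $\Sa M$-definable set is $\Sa V$-definable. By quantifier elimination for ordered vector spaces, such a set is a boolean combination of half-spaces $\lambda_1 v_1 + \cdots + \lambda_n v_n \ge \gamma$ with $\lambda_i \in \D$. The crux is a linear-algebra fact about reducts of ordered vector spaces: every $\Sa M$-definable set is already a boolean combination of such half-spaces with all $\lambda_i \in \D^*$. Granting this, $\Sa M$ is interdefinable with $\Sa V^*$, and we are done.

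To prove the crux I would use cell decomposition, which holds because $\Sa V$ is o-minimal. Let $X\subseteq V^n$ be $\Sa M$-definable. The boundary of $X$ is an $\Sa M$-definable finite union of pieces of affine hyperplanes. Taking the Zariski-type closure of the codimension-one part, one recovers finitely many affine hyperplanes whose linear parts $\sum \lambda_i v_i$ are determined up to scalar. Intersecting a hyperplane with a generic line parallel to a coordinate axis, and using that $\Sa M$ defines the relevant germs, should give that the ratios $\lambda_i/\lambda_j$ yield definable partial endomorphisms $v \mapsto (\lambda_i/\lambda_j) v$ near some point. Translating and using the group structure, a definable germ of a linear map at a point extends to a globally definable map. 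More precisely, I would try to show that a linear map definable on an interval around $0$ is definable on all of $V$: define it on the multiples $kv$ by additivity, and extend to arbitrary $v$ by compactness-free piecewise arguments using $<$ and $+$. This forces $\lambda_i/\lambda_j \in \D^*$, so each hyperplane is $\Sa V^*$-definable. Then $X$ is a boolean combination of $\Sa V^*$-definable half-spaces and lower-dimensional pieces, and induction on dimension finishes the argument.

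I expect the main obstacle to be the extension of locally definable linear germs to global definable scalar maps, together with the bookkeeping that recovers the hyperplane coefficients from $X$. If extending the germs globally fails, there is a fallback. Since the lemma only asks for trace equivalence, it would suffice to show that $\Sa V^*$ locally defines $\Sa M$ in a suitable elementary extension. One could then use that a bounded germ of $v\mapsto\lambda v$ already suffices there, for instance by working in a saturated model and restricting to an infinitesimal neighbourhood.
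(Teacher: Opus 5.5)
\textbf{There is a genuine gap.} It sits exactly at the step you flag. A definable germ of $v\mapsto\lambda v$ on a bounded interval need not extend to a globally definable map. Consequently your crux, that every $\Sa M$-definable set is a boolean combination of half-spaces with coefficients in your $\D^*$, is false.

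\textbf{Counterexample.} Take $\Sa V$ to be $\R$ as an ordered $\Q(\sqrt 2)$-vector space, and $\Sa M=(\R;+,<,\Gamma)$ with $\Gamma$ the graph of $x\mapsto\sqrt2 x$ on $[0,1]$. This is the basic semi-bounded example: $\Sa M$ defines no additive order-preserving $g$ with $g(g(x))=x+x$. To see this:
\begin{itemize}
\item Let $W=\R^3$ with coordinatewise operations and the lexicographic order, first coordinate dominant. Embed $\R$ as the last factor; by quantifier elimination for ordered vector spaces this gives an elementary extension of $\Sa M$.
\item Let $k\colon\R\to\R$ be $\Q$-linear with $k(1)=0\neq k(\sqrt2)$. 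Then $(a,b,u)\mapsto(a,b+k(a),u)$ is an automorphism of this extension fixing $\R$ pointwise, so any such $g$ definable in $\Sa M$ would be invariant under it.
\item But this automorphism fixes $(1,0,0)$ and moves $g(1,0,0)$, whose first coordinate must be $\sqrt2$.
\end{itemize}
So your $\D^*$ is $\Q$, $\Sa V^*$ is interdefinable with $(\R;+,<)$, and $\Gamma$ is not $\Sa V^*$-definable.

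Worse, no choice of $\D^*$ makes $\Sa M$ interdefinable with $\Sa V^*$. Definable unary functions in an ordered $\D^*$-vector space are piecewise $\D^*$-affine, so defining $\Gamma$ forces $\sqrt2\in\D^*$, and then $\Sa V^*$ defines the global map. The lemma asserts only trace equivalence precisely because interdefinability fails, so any strategy aiming at interdefinability cannot succeed.

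\textbf{What is needed.} Let $\D^*$ be the set of $\lambda$ such that $v\mapsto\lambda v$ restricted to some $[0,\gamma)$, $\gamma>0$, is $\Sa M$-definable.
\begin{itemize}
\item Your cell/hyperplane analysis is the right tool for the easy half, provided it only outputs such germs. It shows $\Sa M$ is interdefinable with the expansion of $(V;+,<)$ by these restricted maps, so $\Sa M$ is a reduct of $\Sa V^*$. This is the reverse of the direction you treat as easy.
\item The real content is that $\Th(\Sa M)$ trace defines $\Sa V^*$. Pass to a $|\D|^+$-saturated $\Sa W\succ\Sa V$ with corresponding reduct $\Sa N\succ\Sa M$. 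Let $I$ be the set of $w$ with $|w|<|v|$ for all nonzero $v\in V$.
\item Then $I$ is a nontrivial convex subgroup closed under multiplication by $\D^*$, and each germ $v\mapsto\lambda v$ with $\lambda\in\D^*$ is total on $I$. So the ordered $\D^*$-vector space $I$ trace embeds into $\Sa N$ by quantifier elimination.
\end{itemize}

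Your fallback gestures at this infinitesimal trick, but as stated it runs the wrong way. The infinitesimal neighbourhood lives in a model of $\Th(\Sa M)$ and is used to trace define $\Sa V^*$, not to trace define $\Sa M$ in $\Th(\Sa V^*)$. Moreover, with your global $\D^*$ the germs for $\lambda\notin\D^*$ are not visible in $\Sa V^*$ at all.
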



\begin{proof}
Let $\Sa M'$ be the expansion of $(V;+,<)$ by all  $f\colon [0,\gamma)\to V$ such that:
\begin{enumerate}
\item $\gamma\in V\cup\{\infty\}$,
\item $f$ is definable in $\Sa M$,
\item there is $\lambda\in\D$ such that $f(v)=\lambda v$ for all $0\le v<\gamma$,
\end{enumerate}
Then $\Sa M'$ is a reduct of $\Sa M$.
We sketch a proof that $\Sa M$ and $\Sa M'$ are interdefinable.
Following the semilinear cell decomposition \cite[1.7.8]{lou-book} one can show that any $\Sa M$-definable subset of $V^n$ is a finite union of $\Sa M$-definable semilinear cells.
An easy induction on $n$ shows that any $\Sa M$-definable semilinear cell $X \subseteq V^n$ is definable in $\Sa M'$.
So we suppose $\Sa M=\Sa M'$.

\medskip
Let $\D^*$ be the set of $\lambda\in\D$ such that the function $[0,\gamma)\to V$ given by $v\mapsto\lambda v$ is $\Sa M$-definable for some positive $\gamma\in V$.
Note that $\D^*$ is a division subring of $\D$.
We show that $\Sa M$ and $\Sa V^*$ are trace equivalent.
First note that $\Sa M$ is a reduct of $\Sa V^*$.
We show that $\Th(\Sa M)$ trace defines $\Sa V^*$.
Suppose that $\Sa V\prec\Sa W$ is $|\D|^+$-saturated and let $\Sa N$ be the reduct of $\Sa W$ such that $\Sa M\prec\Sa N$.
Then $\Sa N$ is a $|\D|^+$-saturated elementary extension of $\Sa M$.
It is enough to show that $\Sa N$ trace defines some ordered $\D^*$-vector space as the theory of ordered $\D^*$-vector spaces is complete.
Let $I$ be the set of $w\in W$ such that $|w|<|v|$ for all non-zero $v \in V$.
Note that $I$ is a convex subgroup of $(W;+,<)$ and $I$ is non-trivial by saturation.
If $\lambda\in\D^*\setminus\{0\}$, $w \in I$, and $v \in V\setminus\{0\}$, then $|w|<|\lambda^{-1}v|$ as $\lambda^{-1}v\in V\setminus\{0\}$, so multiplying through by $|\lambda|$ yields $|\lambda w|<|v|$.
Hence $I$ is closed under multiplication by any $\lambda\in\D^*$, so $(I;+,<)$ has a natural $\D^*$-vector space expansion $\Sa I$.
Now for every $\lambda\in\D^*$ there is an $\Sa N$-definable $f\colon W\to W$ such that $f(w)=\lambda w$ for all $w \in I$.
Hence the inclusion $\Sa I \to \Sa N$ is a trace embedding by quantifier elimination for ordered vector spaces.
\end{proof}

Finally, we prove Lemma~\ref{lem:Z}.
This is generalized in Proposition~\ref{prop:big order} below.

\begin{lemma}\label{lem:Z}
$\rgoup$ is trace equivalent to $\zgoup$.
\end{lemma}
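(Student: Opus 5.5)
Two directions. First, $\Th\zgoup$ trace defines $\rgoup$; this is immediate from Proposition~\ref{prop:re-oag}, since $\zgoup$ is an ordered abelian group.

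The content is the converse: $\Th\rgoup$ should trace define $\zgoup$. The plan is to use Proposition~\ref{prop:bounded-reg}(3). $\zgoup$ is archimedean, so it is trace equivalent to $(\Z;+)\sqcup\rgoup$. By Lemma~\ref{lem:k-dis}, it then suffices to show that $\Th\rgoup$ trace defines $(\Z;+)$. By Proposition~\ref{prop:Z-decomp}, $(\Z;+)$ is trace equivalent to $(\Q;+)\sqcup\Sa C$. Since $(\Q;+)$ is a reduct of $\rgoup$, Lemma~\ref{lem:k-dis} reduces everything to showing that $\Th\rgoup$ trace defines $\Sa C$. By Fact~\ref{fact:cantor}, this holds exactly when $\Th\rgoup$ is not totally transcendental.

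So the remaining step is to check that $\Th\rgoup$ is not totally transcendental. This should be immediate: $\rgoup$ defines the order, and an infinite definable linear order gives the order property. Hence the theory is unstable, and in particular not totally transcendental. (Alternatively, one can use that the definable intervals $(0,2^{-n})$ code an infinite binary tree of consistent formulas, giving $2^{\aleph_0}$ types over a countable set.)

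Everything is thus a chain of earlier results, and the only point needing care is the bookkeeping in the multi-sorted disjoint union. Concretely, we must check that trace definability of each summand by $\Th\rgoup$ yields trace definability of the disjoint union. Lemma~\ref{lem:k-dis} provides this, with transitivity of trace definability used throughout. I expect no real obstacle.

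One could instead try a direct argument: trace embed $\zgoup$ into $\rgoup\sqcup$(something unary) via the inclusion $\Z\to\R$ together with residue maps mod $k$. The congruence predicates are the only non-$\rgoup$ data, and quantifier elimination for Presburger arithmetic shows that these predicates together with $<$ and $+$ suffice. The route through Proposition~\ref{prop:bounded-reg} packages exactly this argument.
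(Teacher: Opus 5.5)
Your proof is correct and is essentially the paper's argument. You use Proposition~\ref{prop:re-oag} for one direction, then Proposition~\ref{prop:Z-decomp} to see that $\Th\rgoup$ trace defines $(\Z;+)$; your unpacking via Fact~\ref{fact:cantor} and instability is exactly how its last claim is proved, though strictly $(\Q;+)$ is a reduct of the model $(\Q;+,<)$ of $\Th\rgoup$, not of $\rgoup$ itself. The only difference is the final step, trace definability of $\zgoup$ in $(\Z;+)\sqcup\rgoup$: the paper checks it directly, showing via Presburger quantifier elimination that $m\mapsto(m,m)$ is a trace definition $\zgoup\rightsquigarrow(\Z;+)\sqcup\rgoup$, whereas you cite Proposition~\ref{prop:bounded-reg}(3), whose proof specializes to the same map with the divisible hull $\Q$ in place of $\R$.
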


\begin{proof}
By Proposition~\ref{prop:re-oag} it suffices to show that $\zgoup$ is trace definable in $\Th\rgoup$.
Proposition~\ref{prop:Z-decomp} shows that $(\Z; +)$ is trace definable in $\Th\rgoup$, so it is enough to show that $\zgoup$ is trace definable in $(\Z; +) \sqcup \rgoup$.
An easy application of the quantifier elimination for Presburger arthimetic shows that the map $\Z \to \Z \times \R$ given by $m \mapsto (m, m)$ is a trace definition $\zgoup \rightsquigarrow (\Z; +) \sqcup \rgoup$.
\end{proof}

\subsection{Cyclically ordered abelian groups and the trace equivalence class of $\rgoup$}
We first recall some background.
A \textbf{cyclic order} on a set $J$ is a ternary relation $\cyc$ such that for all $a,b,c \in J$ we have:
\begin{enumerate}[leftmargin=*]
\item If $\cyc(a,b,c)$ then $a,b,c$ are distinct.
\item If $\cyc(a,b,c)$ and $\cyc(a,c,d)$ then $\cyc(a,b,d)$.
\item If $a,b,c$ are distinct, then either $\cyc(a,b,c)$ or $\cyc(c,b,a)$.
\item  $\cyc(a,b,c)$, implies $\cyc(b,c,a)$ and $\cyc(a,b,c)$, implies $\cyc(c,b,a)$. 
\end{enumerate}

If $\triangleleft$ is a linear order on $J$ then we define a cyclic order $C_\triangleleft$ on $J$ by declaring 
\[
C_\triangleleft(a,b,c) \quad\Longleftrightarrow\quad (a\triangleleft b\triangleleft c)\vee(b\triangleleft c\triangleleft a)\vee(c\triangleleft a\triangleleft b)\quad\text{for all 
 } a,b,c\in J.
 \]

\medskip
A \textbf{cyclic group order} on an abelian group $(J;+)$ is a cyclic order preserved under translation.
In this case, we call  $(J; +, \cyc)$ a \textbf{cyclically ordered abelian group}.
The obvious example is the counterclockwise cyclic order on $\R/\Z$.
A cyclically ordered abelian group is archimedean if it embeds into this.


\medskip
Suppose that $\hgroup$ is  an ordered abelian group and $u \in H$ is positive.
Let $\I=[0,u)$ and let $\oplus_u$ be the binary operation on $\I$ given by $\beta\oplus_u\beta^*=\beta+\beta^*$ when $\beta+\beta^*<u$ and $\beta\oplus_u\beta^*=\beta+\beta^*-u$ otherwise.
Then $(\I;\oplus_u,C_\triangleleft)$ is a cyclically ordered abelian group.
We describe a second   view of this construction.


\medskip
Let $\jgroup$ be a cyclically ordered abelian group.
Then $(H;+,\triangleleft,u,\uppi)$ is the \textbf{universal cover} of $\jgroup$ if  $(H; +,\triangleleft)$ is an ordered abelian group, $u$ is a positive element of $H$ such that $u\Z$ is cofinal in $H$, $\uppi\colon H \to J$ is a surjective group morphism with kernel $u\Z$, and for any $a,b,c\in\I$ we have $\cyc(\uppi(a), \uppi(b), \uppi(c))$ if and only if $C_\triangleleft(a,b,c)$.
Note that the restriction of $\uppi$ to $\I$ gives an isomorphism $(\I;\oplus_u,C_\triangleleft)\to\jgroup$.
Any cyclically ordered abelian group has a universal cover which is unique up to unique isomorphism~\cite{rieger}.
We will often drop $\uppi$ when it is clear from context.
Abusing terminology, we also refer to $\hgroup$ as the universal cover of $\jgroup$.
 
\begin{lemma}\label{lem:ce-oag}
The theory of any infinite cyclically ordered abelian group trace defines $\rgoup$.
\end{lemma}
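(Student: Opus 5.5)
By Fact~\ref{fact:embed} and quantifier elimination for $\Th\rgoup$, it suffices to produce a model $\jgroup$ of the given theory and an embedding of ordered abelian groups $(\Q;+,<) \to (I;+,<)$. Here $I$ is a group with a linear order which is definable (or at least traceable) in $\jgroup$. The plan follows the proof of Proposition~\ref{prop:re-oag}, localized near the identity.

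Let $\jgroup$ be an $\aleph_1$-saturated infinite cyclically ordered abelian group and let $(H;+,\triangleleft,u,\uppi)$ be its universal cover. Identify $J$ with $\I=[0,u)$ via $\uppi$. I split into two cases according to whether $J$ is archimedean-like near $0$.

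\emph{First case.} Suppose there is a nonzero infinitesimal, i.e.\ $0 \triangleleft \epsilon$ with $n\epsilon \triangleleft u$ for all $n$. Saturation gives this whenever $J$ is infinite: an infinite cyclically ordered group has elements arbitrarily close to $0$, by pigeonhole on the finitely many arcs. Compactness then gives an element below $u/n$ for every $n$.

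Fix a positive $\delta\in H$ with $\delta \triangleleft u/n$ for all $n$, working in $H$ but definably in $J$ via elements $x$ with $C(0,x,\delta)$. The plan is to use saturation to find a copy of $(\Q;+)$ inside the convex subgroup $I$ of $H$ consisting of elements bounded in absolute value by $\delta$, or rather by an infinitesimal with all its rational multiples infinitesimal. This goes as in Proposition~\ref{prop:ab}: the type of a system $(x_q)_{q\in\Q}$ with $x_q+x_{q'}=x_{q+q'}$, $x_q$ positive for $q>0$, and each $x_q$ infinitesimal is finitely satisfiable. To see this, take a finite subsystem in $\frac1N\Z$, choose $\epsilon$ infinitesimal, and after saturation choose one that is $N!$-divisible, setting $x_{k/N}=k\epsilon/N$. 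Divisibility is the delicate point. It can be handled using divisibility of $H$ in an elementary extension, or by replacing $\Q$ with $\Z$ and using Lemma~\ref{lem:Z}, which may be cleaner: embed $\zgoup$ into $J$ via $k\mapsto k\epsilon$ and show the result is a trace embedding.

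The key observation is that on infinitesimal elements the cyclic order and $\oplus_u$ agree with the linear order and $+$ of $H$. Hence any linear inequality $\sum m_i x_i + \rho > 0$ among elements of the embedded copy is equivalent, after translating by a suitable constant, to a cyclic-order condition $C(\text{const}, \sum m_i x_i, \text{const}')$ in $J$. The sums stay infinitesimal, so no wraparound occurs. Thus every quantifier-free $\rgoup$-definable set, or $\zgoup$-definable set via Presburger QE together with $(\Z;+)$ being trace definable as in Proposition~\ref{prop:Z-decomp}, pulls back to a $\jgroup$-definable set.

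The main obstacle is ensuring that the embedded group sits in the "no wraparound" zone uniformly for all the finitely many coefficients in a given formula. Using infinitesimals in the sense of $u/n$ for all $n$ removes this, since integer combinations of infinitesimals remain infinitesimal. Combining with transitivity of trace definability gives the lemma.
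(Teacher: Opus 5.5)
Your proposal is correct and is essentially the paper's argument. You pass to an $\aleph_1$-saturated model and use the universal cover to embed $(\Q;+,<)$ by compactness into the elements of $H$ that are infinitesimal relative to $u$; this is also where the paper's embedding into the partial group $((-u,u);R_+,\triangleleft)$ must land, since any subgroup of $H$ inside $(-u,u)$ consists of infinitesimals. You then pull back linear inequalities using quantifier elimination for $\Th\rgoup$. The only real difference is that you check by hand that $\uppi$ is injective on infinitesimals and that their order is recovered from $C$, whereas the paper cites the interpretability of $((-u,u);R_+,\triangleleft)$ from \cite{SW-dp}.

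Two small remarks. First, the divisibility step you flag as delicate is trivial: for $q\in\frac1N\Z$ take $x_q=qN!\,\delta$ with $\delta$ a positive infinitesimal. Second, the cleanest cyclic-order condition is $C(0,s,2s)$, which for an infinitesimal $s$ holds exactly when $s>0$. With it you do not have to choose comparison constants relative to the countable image of $\Q$. This matters because no single constant separates all positive from all negative infinitesimals in general, for example when $H$ is $\Z\times K$ ordered lexicographically with $u=(1,0)$.
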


\begin{proof}
Suppose that $\jgroup$ is an infinite cyclically ordered abelian group with universal cover $(H; +, \triangleleft, u)$.
Let $I = (-u, u)$ and let $R_+$ be the ternary relation on $I$ given by declaring $R_+(a, b, c)$ when $a + b = c$.
It is shown in \cite[\S~6.1]{SW-dp} that $(I; R_+, \triangleleft)$ is interpretable in $\jgroup$.
Hence we can follow the proof of Proposition~\ref{prop:re-oag}.
\end{proof}

Now suppose that $\Sa J$ is an expansion of $\jgroup$.
We say that the universal cover of $\Sa J$ is the expansion of $\hgroup$ by all sets of the form $\uppi^{-1}(X) \cap \I^n$ for $\Sa J$-definable $X \subseteq J^n$.

\begin{proposition}\label{prop:u cover}
Let $\jgroup$ be an infinite cyclically ordered abelian group with universal cover $(H; +, \triangleleft, u)$, $\Sa J$ be an expansion of $\jgroup$, and $\Sa H$ be the universal cover of $\Sa J$.
Then $\Sa J$ is trace equivalent to both $\Sa H$ and $(\Sa H, u\Z)$.
\end{proposition}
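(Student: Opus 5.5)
We will show the chain of trace definitions $\Sa J \rightsquigarrow \Sa H$, $\Sa H \rightsquigarrow (\Sa H, u\Z)$, and $(\Sa H, u\Z) \rightsquigarrow \Th(\Sa J)$. By transitivity this gives trace equivalence of all three. The middle step is trivial, since $(\Sa H,u\Z)$ expands $\Sa H$.

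\textbf{First step.} For $\Sa J \rightsquigarrow \Sa H$ we use the inverse of the isomorphism $\uppi|_{\I}\colon \I \to J$, i.e.\ the map $J \to H$, $\uppi(a)\mapsto a$ for $a\in\I$. By the definition of the universal cover of $\Sa J$, every $\Sa J$-definable $X\subseteq J^n$ pulls back to $\uppi^{-1}(X)\cap \I^n$, which is $\Sa H$-definable. So this map is a trace definition, in fact a trace embedding.

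\textbf{Third step.} The plan is to show that $(\Sa H,u\Z)$ is interpretable in $\Sa J \sqcup (\Z;+,<)$, or at least trace definable in it. We then use that $\Th(\Sa J)$ trace defines $(\Z;+,<)$: it trace defines $\rgoup$ by Lemma~\ref{lem:ce-oag}, and $\rgoup$ is trace equivalent to $\zgoup$ by Lemma~\ref{lem:Z}. Finally Lemma~\ref{lem:k-dis} shows that $\Th(\Sa J)$ trace defines $\Sa J\sqcup\zgoup$.

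Every $h\in H$ is uniquely $h = ku + a$ with $k\in\Z$ and $a\in \I$, since $u\Z$ is cofinal. Consider the map $h\mapsto (k,\uppi(h)) \in \Z\times J$. This is a bijection $H\to \Z\times J$, and we transport the structure along it.
\begin{itemize}[leftmargin=*]
\item Addition: $(k,\uppi(a)) + (k',\uppi(a')) = (k+k'+\epsilon,\uppi(a+a'))$, where the carry $\epsilon\in\{0,1\}$ is determined by whether $a+a'\ge u$. This is the condition $\cyc(0,\uppi(a)+\uppi(a'),\uppi(a))$ or a variant handling zero cases, so it is $\jgroup$-definable.
\item Order: compare $k$ first, then compare $a,a'$ in $\I$ via $\cyc(0,\uppi(a),\uppi(a'))$.
\item The subgroup $u\Z$ corresponds to $\Z\times\{0\}$.
\end{itemize}

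The remaining point is the $\Sa H$-definable sets. By definition $\Sa H$ is generated over $\hgroup$ by the sets $\uppi^{-1}(X)\cap\I^n$, which correspond to $\{0\}^n\times X$. All generators are thus definable in $\Sa J\sqcup \zgoup$, viewed through the product coding, so the transported structure is definable there. This gives an interpretation, and hence a trace definition $(\Sa H,u\Z)\rightsquigarrow \Sa J\sqcup\zgoup$, since the domain is a definable product set up to the two-sorted bookkeeping.

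\textbf{Main obstacle.} The delicate point is making the interpretation in a disjoint union legitimate. In $\Sa J\sqcup\zgoup$ the product $\Z\times J$ is a definable set in a multisorted sense, and by Fact~\ref{fact:fv} no nontrivial interaction between sorts is needed. Our definitions are boolean combinations of conditions on the $\Z$-coordinates and conditions on the $J$-coordinates, except for addition, whose carry links them. We handle addition by defining its graph as a finite union, over $\epsilon\in\{0,1\}$, of products of a $\zgoup$-definable set ($k''=k+k'+\epsilon$) and a $\Sa J$-definable set (the carry condition plus $c=a+a'$). This has the allowed form, so the argument goes through.
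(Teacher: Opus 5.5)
Your proposal is correct and matches the paper's proof. You recover $\Sa J$ on $\I$ inside $\Sa H$, get $\zgoup$ from $\Th(\Sa J)$ via Lemmas~\ref{lem:ce-oag} and \ref{lem:Z}, and define $(\Sa H,u\Z)$ in $\Sa J\sqcup\zgoup$ on $\Z\times J$ using the lexicographic order and carry addition. The one detail to tighten is that comparing elements of $\I$ through $\cyc(0,\cdot,\cdot)$ needs the zero case ($a=0\neq a'$) added, exactly as in the paper's order $\prec$; the carry condition needs the corresponding zero case ($c=0\neq a$) as well.
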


Hence any infinite cyclically ordered abelian group is trace equivalent to its universal cover.

\begin{proof}
It is clear from the definitions that $\Sa H$ interprets $\Sa J$.
Hence it is enough to show that $(\Sa H, u\Z)$ is trace definable in $\Th(\Sa J)$.
By Lemmas~\ref{lem:ce-oag} and \ref{lem:Z} $\Th(\Sa J)$ trace defines $\zgoup$.
It therefore suffices to show that $(\Sa H, u\Z)$ is interpretable in $\Sa J \sqcup \zgoup$.
This follows immediately from the usual construction of the universal cover, which we now recall.

\medskip
Let $\prec$ be the linear order on $J$ given by declaring $a \prec b$ when either  $a= 0 \ne b$ or $C(0, a, b)$.
Let $\lex$ be the resulting lexicographic order on $\Z \times J$.
Let $\oplus$ be the binary operation on $\Z \times J$ given by letting $(m, a) \oplus (m^*, a^*)$ be $(m + m^*, a + a^*)$ when either $a = 0$, $a^* = 0$, or $C(0, a, a + a^*)$, and otherwise $(m + m^* + 1, a + a^*)$.
Finally, let $\uppi$ be the projection $\Z \times J \to J$.
Then $(\Z \times J; \oplus, \lex, (1, 0), \uppi)$ is the universal cover of $\jgroup$.
\end{proof}

We now show that a number of structures are trace equivalent to $\rgoup$.

\begin{proposition}\label{prop:big order}
The following structures are trace equivalent to $\rgoup$.
\begin{enumerate}[leftmargin=*]
\item $(\R; +, <, \Z, \Q)$.
\item Any non-singular  ordered abelian group.
\item Any infinite non-singular cyclically ordered abelian group.
\item $(\R;+,<,t\mapsto\lambda t)$ for a fixed irrational algebraic real number $\lambda$.
\end{enumerate}
\end{proposition}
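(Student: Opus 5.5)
In each case one direction is easy: the theory of the structure trace defines $\rgoup$, either by Proposition~\ref{prop:re-oag} (for items (1), (2), (4), all of which expand an ordered abelian group) or by Lemma~\ref{lem:ce-oag} (for item (3)). So the work is to show that $\Th\rgoup$ trace defines each structure. Throughout we use that $\Th\rgoup$ trace defines $\zgoup$ by Lemma~\ref{lem:Z}, and hence trace defines $(\Z;+)$. Together with Lemma~\ref{lem:k-dis}, this means it suffices to trace define each structure in a disjoint union of copies of $\rgoup$ and $(\Z;+)$, or in any finite disjoint union of structures already known to be trace equivalent to $\rgoup$.

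\textbf{Item (2).} Let $\hgroup$ be non-singular. Non-singular implies bounded regular rank. I would first check that it has only finitely many definable convex subgroups: the definable convex subgroups appearing in the Gurevich--Schmitt language are indexed by primes $p$ and $n$ with $|H/p^nH|$ bounded, and non-singularity bounds how many distinct ones there are. Granting this, Proposition~\ref{prop:bounded-reg}(3) gives that $\hgroup$ is trace equivalent to $(H;+)\sqcup\rgoup$.

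Next split into two cases. If $H$ is divisible, then $(H;+)\equiv(\Q;+)$, which is trace definable in $\rgoup$. Otherwise $H$ is torsion free, non-singular and non-divisible, so $(H;+)$ is trace equivalent to $(\Z;+)$ by Proposition~\ref{prop:Z-decomp}. Either way $(H;+)\sqcup\rgoup$ is trace definable in $\Th\rgoup$. This finiteness check is the step I expect to be the main obstacle. If it fails, I would instead show directly that non-singular groups are dp-minimal and argue via the quotients $H/J$ as in the proof of Proposition~\ref{prop:bounded-reg}, noting that only finitely many $J$ occur.

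\textbf{Item (3).} By Proposition~\ref{prop:u cover}, the cyclically ordered group is trace equivalent to its universal cover $\hgroup$. I would check that the universal cover is non-singular: it is an extension of $J$ by $u\Z$, so $H/pH$ is finite when $J/pJ$ is, and $H$ is torsion free. Item (2) then applies.

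\textbf{Item (1).} $(\R;+,<,\Z,\Q)$ is interpretable in $(\R;+,<,\Z)$ together with the countable dense set $\Q$. I would use the known quantifier elimination for $(\R;+,<,\Z,\Q)$ (Weispfenning-style), under which definable sets are boolean combinations of linear conditions, conditions of the form $\lfloor t\rfloor$ of linear terms, and membership of linear terms in $\Q+\Z$-type sets. Guided by this, I would write down the map $t\mapsto(t,\lfloor t\rfloor,\text{class of }t \bmod \Q)$ into $\rgoup\sqcup\zgoup\sqcup(\R/\Q;+)$. Here $(\R/\Q;+)$ is a $\Q$-vector space, so it is trace definable in $\rgoup$, and the map should be a trace definition. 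Membership in $\Q$ of a term $\sum m_it_i+c$ depends only on the classes $t_i\bmod\Q$, and integer-part conditions reduce to $\zgoup$ combined with fractional parts lying in $\rgoup$.

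\textbf{Item (4).} $(\R;+,<,t\mapsto\lambda t)$ is an ordered $\Q(\lambda)$-vector space. Since $\Q(\lambda)/\Q$ is a finite extension, Proposition~\ref{prop:lvoe} gives trace equivalence with ordered $\Q$-vector spaces, that is, with $\rgoup$.
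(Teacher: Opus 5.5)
Your reduction of (3) to (2) and your treatment of (4) are fine. The argument for (2), however, rests on a false claim: a non-singular ordered abelian group need not have only finitely many definable convex subgroups. Non-singularity controls, for each fixed prime $p$, the convex subgroups coming from $p$-divisibility, but there are infinitely many primes. Here is an example. Let $p_0<p_1<\cdots$ enumerate the primes, and let $\Z_{(p_j)}$ be the group of rationals whose denominator is prime to $p_j$. Let $H=\bigoplus_j\Z_{(p_j)}$, ordered lexicographically so that the sign of $h$ is the sign of its first non-zero coordinate. Then $H/p_jH\cong\Z/p_j\Z$, so $H$ is non-singular. Yet for every $j$ the convex subgroup $C_{j+1}=\{h : h_0=\cdots=h_j=0\}$ is definable: it is the set of $h$ with $[-|h|,|h|]\cap(e_j+p_jH)=\emptyset$, where $e_j$ is the $j$th unit vector (this uses that $p_j\Z_{(p_j)}$ is dense in $\Q$). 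So Proposition~\ref{prop:bounded-reg}(3) is not available. Your fallback has the same defect. Each definable set involves only finitely many convex subgroups $J$, but a trace definition needs one finite family of maps that works for all definable sets at once. The quotient maps $H\to H/J$, taken over all definable $J$, only give the local trace equivalence of Proposition~\ref{prop:bounded-reg}(1). Since you deduce (3) from (2), it inherits the gap.

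The missing idea is to avoid quotient maps altogether. Let $(K;+,\triangleleft)$ be the divisible hull of $\hgroup$, and let $\Sa K$ be its Shelah completion. By Proposition~\ref{prop:she-0}, $\Sa K$ is trace equivalent to $(K;+,\triangleleft)\equiv\rgoup$. Now use the quantifier elimination of Fact~\ref{fact:farre}. A condition $m_1\alpha_1+\cdots+m_n\alpha_n+\rho\in J$ is the trace on $H^n$ of the same condition with $J$ replaced by its convex hull in $K$; that hull is externally definable, hence $\Sa K$-definable. A condition $m_1\alpha_1+\cdots+m_n\alpha_n+\rho\in J+p^kH$ is already $(H;+)$-definable: $p^kH$ has finite index, so $J+p^kH$ is a finite union of cosets of $p^kH$. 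Hence $a\mapsto(a,a)$ is a trace definition of $\hgroup$ in $(H;+)\sqcup\Sa K$, however many definable convex subgroups there are.

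Item (1) also needs repair. The map $t\mapsto(t,\lfloor t\rfloor,[t]_\Q)$ is not a trace definition, because the disjoint union does not link the real coordinate $t$ with $\lfloor t\rfloor$. Consider $\{(t_1,t_2):\lfloor t_1+t_2\rfloor\text{ is even}\}$. The $\R/\Q$-coordinate contributes only null or co-null sets. So on each unit square this set would have to agree, up to a null set, with a boolean combination of finitely many fixed semilinear sets. Those sets cannot produce the infinitely many boundary lines $t_1+t_2=k$, $k\in\Z$. To fix this, put the fractional part $t-\lfloor t\rfloor$ in the $\rgoup$-coordinate, as in the decomposition $\Z\times[0,1)$ from the proof of Proposition~\ref{prop:u cover}. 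You would also still need a proof or reference for the quantifier elimination you invoke for $(\R;+,<,\Z,\Q)$. The paper sidesteps this: Proposition~\ref{prop:u cover} reduces to $(\R/\Z;+,C,\Q/\Z)$, which is interpretable in $(\R;+,<,\Q)$. There the known quantifier elimination leaves only linear inequalities and conditions $q_1\alpha_1+\cdots+q_n\alpha_n\in\rho+\Q$. These are handled by the identity and a map $\R\to\R/\Q\cong\R$.
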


In particular any finite rank ordered abelian group is trace equivalent to $\rgoup$.
By Facts~\ref{fact:simon} and \ref{fact:preserve}  $\Th\rgoup$ cannot trace define a proper expansion of $\zgoup$.

\begin{proof}
First note that (4) follows from Proposition~\ref{prop:lvoe}.
We prove (1).
Proposition~\ref{prop:u cover}  shows that $(\R; +, <, \Z, \Q)$ is trace equivalent to $(\R/\Z; +, C, \Q/\Z)$, where $C$ is the counterclockwise cyclic order on $\R/\Z$.
Now $(\R/\Z; +, C, \Q/\Z)$ is interpretable in $(\R; +, <, \Q)$, so it suffices to show that $\rgoup$ trace defines $(\R, +, <, \Q)$.
By \cite{DMS1} $(\R;+,<,\Q)$ admits quantifier elimination after we add a unary function for scalar multiplication $\R \to \R$ by each rational.
Hence every $(\R;+,<,\Q)$-definable subset of $\R^n$ is a boolean combination of sets of one of the following forms for some $q_1, \ldots, q_n \in \Q$ and $\rho\in\R$.
\begin{enumerate}[label=(\alph*)]
\item $\{ (\alpha_1, \ldots, \alpha_n)\in \R^n: q_1\alpha_1 + \cdots +  q_n\alpha_n\ge \rho \}$.
\item  $\{(\alpha_1, \ldots, \alpha_n)\in \R^n: q_1\alpha_1 + \cdots +  q_n\alpha_n \in \rho + \Q\}$.
\end{enumerate}
Let $\mathrm{id}_\R$ be the identity $\R\to\R$.
Note that  $(\R/\Q;+)$ is a continuum size $\Q$-vector space, hence there is an isomorphism $(\R/\Q;+) \to (\R;+)$.
Let $\chi\colon \R\to \R$ be the composition of the quotient map $\R\to\R/\Q$ with such an isomorphism.
Then $\mathrm{id}_\R,\chi$ witnesses trace definability of $(\R;+,<,\Q)$ in $\rgoup$.
Here $\mathrm{id}_\R$ handles (a) and  in case (b) we have $(\alpha_1,\ldots,\alpha_n)\in X$ if and only if $q_1\chi(\alpha_1) + \cdots + q_n\chi(\alpha_n) = \chi(\rho)$ for any $(\alpha_1,\ldots,\alpha_n)\in\R^n$.

\medskip
The universal cover of a non-singular cyclically ordered abelian group is non-singular~\cite[Lemma~6.1]{SW-dp}.
Hence (3) follows from (2) and Proposition~\ref{prop:u cover}.
We finally prove (2).
Fix a non-singular ordered abelian group $\hgroup$.
By Proposition~\ref{prop:re-oag} it suffices to show that $\hgroup$ is trace definable in $\Th\rgoup$. 
Proposition~\ref{prop:Z-decomp}  shows that $\Th\rgoup$ trace defines $(H; +)$.
Hence it suffices to show that $\hgroup$ is trace definable in the theory of $(H;+) \sqcup \rgoup$.
We adapt the proof of Proposition~\ref{prop:bounded-reg}.
Let $(K; +, \triangleleft)$ be as in that proof and let $\Sa K$ be the Shelah completion of $(K; +, \triangleleft)$.
Then $(K; +, \triangleleft)$ is trace equivalent to $\Sa K$ by Proposition~\ref{prop:she-0}, so it is enough to show that $\hgroup$ is trace definable in $(H ; +) \sqcup \Sa K$.
As in the proof of Proposition~\ref{prop:bounded-reg} any definable subset of $H^n$ is a boolean combination of sets $X$ where $X$ is the set of $(\alpha_1,\ldots,\alpha_n) \in H^n$ satisfying a condition of one of the following forms for integers $m_1, \ldots, m_n$, $\rho \in H$, and a definable convex subgroup $J \subseteq H$.
\begin{enumerate}[label=(\alph*), leftmargin=*]
\item $m_1\alpha_1+\cdots+m_n\alpha_n+\rho \triangleright 0$.
\item $ m_1\alpha_1+\cdots+m_n\alpha_n+\rho\in J$.
\item $  m_1\alpha_1+\cdots+m_n\alpha_n+\rho\in J+p^kH$ for a prime $p$ and $k\in\N$.
\end{enumerate}
We handle (a) as in the proof of \ref{prop:bounded-reg}.
Suppose $X$ is as in (b).
Let $J^*$ be the convex hull of $J$ in $K$, so $J^*$ is $\Sa K$-definable by convexity.
Hence $X$ is of the form $Y \cap H^n$ for $\Sa K$--definable $Y \subseteq K^n$.
Finally, suppose that $X$ is as in (c).
Then $p^k H$ has finite index in $H$, so $J + p^k H$ is a finite union of cosets of $p^k H$.
Hence $X$ is definable in $(H; +)$.
\end{proof}


\begin{fact}\label{fact:lex}
Suppose that $\hgroup$ is an ordered abelian group, $K$ is a convex subgroup of $H$, and $(H; +, \triangleleft, K)$ is $\aleph_1$-saturated.
Then $\hgroup$ is isomorphic to the lexicographic product $H/K \times K$, where $K$ and $H/K$ are both given the induced ordered group structure.
\end{fact}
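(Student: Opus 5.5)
The plan is to realize $\hgroup$ directly as a lexicographic product over the saturated pair. Fix an $\aleph_1$-saturated structure $(H;+,\triangleleft,K)$ with $K$ a convex subgroup. Then $H/K$ carries the induced order: $a+K \triangleleft b+K$ if and only if $a \triangleleft b$ and $a - b \notin K$. This is well defined by convexity of $K$. The main step is to find a \emph{section}, i.e. a subgroup $S \le H$ such that $S \cap K = 0$ and $S + K = H$. Given such $S$, the map
\[
H \to H/K \times K, \qquad s + k \mapsto (s + K, k)
\]
is a group isomorphism.

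We then check that this isomorphism respects the orders, with the lexicographic order on the right. Take $h = s + k$ positive. If $s \ne 0$ then $s \notin K$, and by convexity of $K$ we get $h + K \triangleright K$. If instead $s = 0$ then $h = k \triangleright 0$ in $K$. In either case the image of $h$ is lex-positive, so the map is an ordered isomorphism.

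The difficulty is to produce $S$, and we get it from Fact~\ref{fact:pure}. First, $K$ is a pure subgroup of $H$. Indeed, if $n\beta \in K$ with $n \ge 1$, then $0 \le |\beta| \le |n\beta|$, so $\beta \in K$ by convexity. Thus $H/K$ is torsion free and $K$ is pure.

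Next, $(H;+,K)$ is a reduct of the $\aleph_1$-saturated structure $(H;+,\triangleleft,K)$, so it is itself $\aleph_1$-saturated. Fact~\ref{fact:pure} therefore applies and shows that $K$ is a direct summand of $H$, which gives the complement $S$ needed above. The only real point is the saturation transfer to the reduct, and that is immediate because every type over a countable set in the reduct is a type in the full structure.
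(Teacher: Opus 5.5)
Your proposal is correct and follows essentially the same route as the paper: $K$ is pure by convexity, Fact~\ref{fact:pure} (applied to the reduct $(H;+,K)$) makes $K$ a direct summand of $H$, and the resulting splitting respects the lexicographic order. Your explicit complement $S$ and positive-cone check spell out what the paper only asserts, namely that the splitting is compatible with $0 \to K \to H \to H/K \to 0$. Your remark that the reduct inherits $\aleph_1$-saturation likewise fills a point the paper leaves implicit.
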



\begin{proof}
First note that $K$ is a pure subgroup of $H$ by convexity.
Hence by Fact~\ref{fact:pure} there is a group isomorphism $H \to H/K \oplus K$.
Note that $0 \to K \to H \to H/K \to 0$ gives a short exact sequence of ordered abelian groups, i.e. a short exact sequence of abelian groups where the morphisms are homomorphisms of ordered groups.
It follows that $H \to H/K \oplus K$ gives an ordered group isomorphism between $\hgroup$ and the lexicographic order on $H/K \oplus K$.
\end{proof}

\begin{proposition}\label{prop:nscover}
Let $\hgroup$ be a non-singular ordered abelian group, $u$ be a positive element of $H$, and $\Sa H$ be an expansion of $\hgroup$ by a collection $\Cal X$ of subsets of the $\I^n$.
Then $\Sa H$ is trace equivalent to the structure $\Sa J$ induced on $\I$ by $\Sa H$.
\end{proposition}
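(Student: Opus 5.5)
One direction is immediate: $\Sa J$ is the structure induced on the $\Sa H$-definable set $\I$, so every $\Sa J$-definable set is $\Sa H$-definable and the inclusion $\I \to H$ is a trace embedding. The whole task is therefore to show that $\Th(\Sa J)$ trace defines $\Sa H$.

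The plan is to reconstruct $\Sa H$ from $\Sa J$ together with the underlying group, mimicking the proof of Proposition~\ref{prop:u cover}.

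\emph{Step 1: recover a cyclically ordered group.} The relation $\oplus_u$ on $\I$ is $\Sa H$-definable, so it is $\Sa J$-definable. Hence $\Sa J$ expands the cyclically ordered abelian group $(\I;\oplus_u,C_\triangleleft)$, which is infinite as $u>0$ and $H$ is non-singular hence infinite (dense or discrete). By Lemma~\ref{lem:ce-oag} and Lemma~\ref{lem:Z}, $\Th(\Sa J)$ trace defines $\rgoup$ and $\zgoup$.

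\emph{Step 2: reduce to a bounded expansion plus the group.} Each $\alpha\in H$ can be written uniquely as $\alpha = mu + \beta$ with $m\in\Z$ and $\beta\in\I$, provided $u\Z$ is cofinal in $H$. In general it is not, so I would instead work with the convex hull $H_u$ of $u\Z$, which is a convex subgroup.

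By Proposition~\ref{prop:big order}, $\hgroup$ is trace equivalent to $\rgoup$, which is trace definable in $\Th(\Sa J)$. I then claim that $\Sa H$ is trace definable in $\hgroup \sqcup \Sa J$, via the maps
\[
\mathrm{id}\colon H\to H
\qquad\text{and}\qquad
f\colon H\to \I,\ f(\alpha)=\alpha-\lfloor\alpha/u\rfloor u \ \text{ on } H_u,
\]
with $f$ arbitrary (say $0$) off $H_u$. A set in $\Cal X$ is a subset of $\I^n$ and is $\hgroup$-definably identified with its preimage under $f$ intersected with $\I^n$. More generally, the $\Sa H$-definable sets are boolean combinations of sets built from $\hgroup$-definable data and sets from $\Cal X$ composed with group terms.

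\emph{Step 3: the main obstacle.} This is quantifier elimination, or at least a normal form, for $\Sa H$: a formula may quantify over all of $H$ and combine $\Cal X$-sets with linear terms in ways that do not factor through $f$ and $\mathrm{id}$ separately. I would try to prove that every $\Sa H$-definable set is a boolean combination of $\hgroup$-definable sets and sets of the form
\[
\{\alpha : (f(t_1(\alpha)),\ldots,f(t_k(\alpha))) \in Y\}
\]
with $Y$ $\Sa J$-definable and $t_i$ linear terms. The argument would use that translating $\alpha$ by multiples of $u$ changes only the $\Z$-part, which is handled by $\hgroup$, and the behaviour on $H_u$ decomposes as $\Z\times\I$ via the universal cover construction of Proposition~\ref{prop:u cover}. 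I expect non-singularity to enter here, through Proposition~\ref{prop:big order}, to absorb $H/H_u$ and the discrete $\Z$-coordinate into $\rgoup$.

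If the normal form is hard to prove directly, I would pass to an $\aleph_1$-saturated model and use Fact~\ref{fact:lex} to split $H \cong H/H_u \times H_u$ lexicographically. This would reduce the problem to the cover $H_u$ of $\Sa J$, handled by Proposition~\ref{prop:u cover}, together with a disjoint factor $H/H_u$ that the $\Cal X$-sets cannot see.
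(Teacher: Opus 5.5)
\paragraph{Where the main route breaks.} Steps~2--3 do not work as stated. The maps $\mathrm{id}$ and $f$ are not a trace definition once $H_u\neq H$, and the values of $f$ off $H_u$ are not ``arbitrary''.

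Take $H=\Z\times\R$ with the lexicographic order, which is non-singular. Let $u=(0,1)$, so $H_u=\{0\}\times\R$. Let $X=\{((0,a),(0,a^2)) : 0\le a<1\}\in\Cal X$. Then
\[
S=\{(\alpha_1,\alpha_2,\beta)\in H^3 : (\alpha_1-\beta,\alpha_2-\beta)\in X\}
\]
is $\Sa H$-definable. On $((1,0)+H_u)^3$ your $f$ is constantly $0$. Definable sets of $\hgroup\sqcup\Sa J$ are boolean combinations of products, so any pullback of one agrees there with an $\hgroup$-definable set. Fixing $\beta=(1,0)$ and translating would then make $X$ the trace on $(\{0\}\times\R)^2$ of an $\hgroup$-definable set. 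Such traces are semilinear by Fact~\ref{fact:farre}, and $X$ is not.

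The underlying issue is that a witness of your shape needs $f$ to be additive across cosets of $H_u$. That requires a group complement of $H_u$, and $H_u$ need not be a direct summand of $H$. For the same reason your Step~3 normal form with $f(t_i(\alpha))$ would not factor through $(\alpha,f(\alpha_1),\ldots,f(\alpha_n))$, quite apart from being unproved.

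Separately, $\I$ need not be infinite. If $H$ is discrete and $u$ is a positive integer multiple of $1_H$, then $\I$ is finite and the statement fails. The paper excludes this only through its standing convention that structures are infinite.

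\paragraph{Your fallback is the paper's proof.} Your last paragraph is exactly the argument the paper gives, and it should replace Steps~2--3.
\begin{enumerate}
\item Let $K=H_u$. Then $\Sa K=(K;+,\triangleleft,\Cal X)$ is the universal cover of $\Sa J$, after harmlessly enlarging $\Cal X$ to all $\Sa J$-definable sets. So $\Sa K$ is trace equivalent to $\Sa J$ by Proposition~\ref{prop:u cover}.
\item Pass to an $\aleph_1$-saturated elementary extension $(\Sa H^*,K^*)$ of the \emph{pair} $(\Sa H,K)$. Saturating $\Sa H$ alone does not let you apply Fact~\ref{fact:lex}, because $K$ is not $\Sa H$-definable. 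In the extension $K^*$ is no longer the convex hull of $u\Z$, but $(K^*;+,\triangleleft,\Cal X^*)\equiv\Sa K$, and that is all you need at the level of theories.
\item Fact~\ref{fact:lex} gives $(H^*;+,\triangleleft)\cong G\times K^*$ lexicographically, with $G=H^*/K^*$.
\item $G$ is non-singular as a quotient of $H^*$, so it is trace equivalent to $\rgoup$ by Proposition~\ref{prop:big order}. This is where non-singularity enters, as you anticipated.
\item The sets of $\Cal X^*$ lie in $(\{0\}\times K^*)^n$, so $\Sa H^*$ is isomorphic to a structure definable in $(K^*;+,\triangleleft,\Cal X^*)\sqcup G$. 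Lemma~\ref{lem:k-dis} finishes.
\end{enumerate}
This route never needs a normal form for $\Sa H$, because every step is an interpretation. The obstacle you identified in Step~3 therefore does not arise.
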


\begin{proof}
Note that $\Sa H$ is interdefinable with the expansion of $\hgroup$ by the collection of all $\Sa H$-definable subsets of all $\I^n$.
Hence we may suppose that $\Cal X$ is the collection of all $\Sa J$-definable sets.
Let $K$ be the convex hull of $u\Z$ in $H$ and let $\Sa K$ be  $(K ; +, \triangleleft, \Cal X)$.
Consider $\Sa J$ as an expansion of the cyclically ordered group $(\I; \oplus_u, C_\triangleleft)$.
Then $\Sa K$ is the universal cover of $\Sa J$, hence $\Sa K$ is trace equivalent to $\Sa J$ by Proposition~\ref{prop:u cover}.
We show that $\Th(\Sa K)$ trace defines $\Sa H$.
After possibly passing to an elementary extension we suppose that the expansion of $\Sa H$ by $K$ is $\aleph_1$-saturated.
Let $G = H/K$.
As $K$ is a convex subgroup we consider $G$ to be an ordered abelian group.
Furthermore $G$ is non-singular as it is a quotient of a non-singular abelian group.
Hence $G$ is trace definable in $\Th(K; +, \triangleleft)$ by Proposition~\ref{prop:big order}.
By Fact~\ref{fact:lex} $\hgroup$ is isomorphic to the lexicographic product of $G$ with $(K; +, \triangleleft)$, and this lexicographic product is definable in $(K; +, \triangleleft) \sqcup G$.
It follows that $\Sa H$ is isomorphic to a structure definable in $\Sa K \sqcup G$.
\end{proof}

\section{Dp-minimal and o-minimal expansions of ordered groups}
\label{section:dp-min oag}
Laskowski and Steinhorn~\cite{LasStein} showed that any o-minimal expansion of an archimedean ordered abelian group is elementarily equivalent to an o-minimal expansion of $\rgoup$.
Their result obviously does not extend beyond o-minimal structures.
We first prove Theorem~\ref{thm:dp} and then use it to give a dichotomy between field structure and linearity in Section~\ref{section:dicho}.

\begin{theorem}\label{thm:dp}
Suppose that $(R;+,<)$ is an archimedean ordered abelian group and $\Sa R$ is a dp-minimal expansion of $(R;+,<)$.
Then $\Sa R$ is trace equivalent to an o-minimal expansion $\Sq R$ of $\rgoup$ such that $\Sq R$ is interpretable in the Shelah completion of any $\aleph_1$-saturated elementary extension of $\Sa R$.
\end{theorem}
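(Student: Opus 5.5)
Since $(R;+,<)$ is archimedean we regard it as a subgroup of $(\R;+,<)$, and we define $\Sq R$ to be the structure on $\R$ generated by the closures of $\Sa R$-definable sets. Concretely, for every $\Sa R$-definable $X \subseteq R^n$ we let $\cl(X) \subseteq \R^n$ be its closure in the euclidean topology, and we let $\Sq R$ be the expansion of $\rgoup$ by all such $\cl(X)$. The proof then has three parts: show that $\Sq R$ is o-minimal, show that it is interpretable in $\Sh N$ for any $\aleph_1$-saturated $\Sa N \succ \Sa R$, and show that $\Sa R$ is trace definable in $\Th(\Sq R)$.

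\textbf{Interpretability.} This follows the pattern of Propositions~\ref{prop:hlamb} and the argument after Lemma~\ref{lem:regular}. Let $O$ be the set of finite elements of $N$, that is the convex hull of $R$. Let $\mfrak$ be the convex subgroup of infinitesimals. Both are convex, so they are $\Sh N$-definable, and by saturation and density $O/\mfrak$ is identified with $\R$ through the standard part map. For $\Sa R$-definable $X$ with canonical extension $X^*$, the set $\st(X^* \cap O^n)$ is $\Sh N$-definable, and it equals $\cl(X)$ because $X$ is dense in $X^*\cap O^n$ modulo $\mfrak$ (by saturation every point of $\cl(X)$ is the standard part of some point of $X^*$). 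The order and addition on $\R$ are handled the same way. Hence $\Sq R$ is interpretable in $\Sh N$, and by Proposition~\ref{prop:she-0} $\Th(\Sa R)$ trace defines $\Sq R$.

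\textbf{O-minimality.} This is the main obstacle. Here I would use the structure theory of dp-minimal expansions of archimedean groups from \cite{Simon-dp, SW-dp}. When $R$ is dense, $\Sa R$ is weakly o-minimal with a divisible hull. Otherwise $R$ is discrete, $R\cong\Z$, and every dp-minimal expansion of $\zgoup$ is interdefinable with $\zgoup$, which is handled by Lemma~\ref{lem:Z}. In the dense case, take a definable $X\subseteq R$. It is a finite union of convex sets, so $\cl(X)$ is a finite union of closed intervals, possibly with endpoints outside $R$. For higher arity I would show that the structure induced on $\R$ is o-minimal by proving that the sets $\st(X^*\cap O^n)$ have finitely many connected components in dimension one uniformly. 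Since definable sets of $\Sh N$ are externally definable, I would use that the structure induced on $O/\mfrak$ by the weakly o-minimal $\Sh N$ is weakly o-minimal. A weakly o-minimal expansion of $\rgoup$ in which every bounded convex set has a supremum in $\R$ is o-minimal, by completeness of $\R$. I would try this route first.

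\textbf{Trace definability of $\Sa R$ in $\Sq R$.} Consider the inclusion $R\to\R$. A definable $X\subseteq R^n$ need not be $\cl(X)\cap R^n$, so I would use weak o-minimal cell decomposition. Every $\Sa R$-definable set is a boolean combination of sets of the form $\{x : f(x') \le x_n\}$ with $f$ definable and continuous, and their strict versions. Replacing $f$ by the $\Sq R$-definable function whose graph is $\cl(\Gamma_f)$ should give these sets as traces, possibly after passing to an elementary extension of $\Sq R$ to separate points of $R$ lying on the graph. This is also the place to check that o-minimality yields such cell decompositions. Finally, $\Sq R$ is trace definable in $\Th(\Sa R)$ by the interpretability part above, so the two structures are trace equivalent, which proves Theorem~\ref{thm:dp}.
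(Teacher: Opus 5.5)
Your interpretability argument in the dense case is fine and is the paper's. The rest has a genuine gap, which is already visible in your claim that for dense $R$ the structure $\Sa R$ is weakly o-minimal. That claim holds only when $R$ is divisible. For $R=\Z[1/2]$ the pure group $(R;+,<)$ is dp-minimal, yet $3R$ is definable, dense and codense. In general Fact~\ref{fact:simon}(1) only gives finite unions of sets $I\cap[nR+\beta]$.

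This sinks your final step. If $\tau$ is any order-preserving map from $R$ into a model $S$ of the o-minimal theory $\Th(\Sq R)$, then $\tau^{-1}(Y)$ is a finite union of convex sets for every definable $Y\subseteq S$. So neither the inclusion $R\to\R$ nor any such map into an elementary extension can trace $nR+\beta$. The missing idea is a second coordinate that carries the congruence data. The paper first replaces $\Sa R$ by the trace equivalent, Shelah complete $\Sh R$. It then proves in Lemma~\ref{lem:combination} that every definable subset of $R^n$ is a boolean combination of $(R;+)$-definable sets and traces $Y\cap R^n$ of $\Sq R$-definable $Y$. Hence $a\mapsto(a,a)$ is a trace definition $\Sa R\rightsquigarrow(R;+)\sqcup\Sq R$, and the $(R;+)$ factor is absorbed using Proposition~\ref{prop:Z-decomp}.

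Your o-minimality argument also leans on weak o-minimality of $\Sh N$, so it fails in the non-divisible case. The paper avoids this: $\Sq R$ is the structure induced on an $\Sh N$-definable quotient of $V$, so it has dp-rank one by Fact~\ref{fact:dp}, and is therefore o-minimal by Fact~\ref{fact:simon}(3).

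Even for divisible $R$, replacing a cell boundary $f$ by the function with graph $\cl(\Gamma_f)$ is unjustified. Weakly o-minimal cell boundaries are cut-valued, which is why the paper works with $\overline{R}$ and Lemma~\ref{lem:ind}, and $\cl(\Gamma_f)$ need not be a graph. The paper gets control from two ingredients. First, the equality $\dim\cl(X)=\dprk_{\Sa R}X$ (Lemma~\ref{lem:dim-inequality}) together with the o-minimal fiber lemma shows that off a lower-dimensional $\Sq R$-definable set $Z_0$ the closure is a finite union of graphs. Second, to recurse on the part of $X$ lying over $Z_0$, you need traces on $R^n$ of $\Sq R$-definable sets to be definable in $\Sh R$; this is Fact~\ref{fact:Sq}. ``Passing to an elementary extension to separate points'' addresses neither.

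The discrete case is also not handled by Lemma~\ref{lem:Z} alone. For $R=\Z$, taking closures in $\R^n$ changes nothing, so your $\Sq R$ defines $\Z$ and is not o-minimal. Likewise your $O/\mfrak$ is $\Z$ rather than $\R$. Lemma~\ref{lem:Z} gives trace equivalence with $\rgoup$, but the interpretability clause needs the paper's rescaling. Take $\Sq R=\rgoup$ and realize it as $V/\mfrak$, where $V$ is the convex hull of $\gamma\Z$ for a nonstandard positive $\gamma$ and $\mfrak=\{a: n|a|<\gamma \text{ for all } n\}$.
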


Fact~\ref{fact:simon} summarizes what is known about dp-minimal expansions of archimedean ordered abelian groups.

\begin{fact}
\label{fact:simon}
Suppose that $(R;+,<)$ is an archimedean ordered abelian group and $\Sa R$ is an expansion of $(R;+,<)$.
\begin{enumerate}[leftmargin=*]
\item  $\Sa R$ is dp-minimal if and only if the following holds for any elementary extension $\Sa S$ of $\Sa R$: every definable subset of $S$ is a finite disjoint union of sets of the form $I \cap [nS + \beta]$ for convex $I\subseteq S$, $n\in\N$, and $\beta\in S$.
\item If $(R;+,<)$ is divisible then $\Sa R$ is dp-minimal if and only if $\Sa R$ is weakly o-minimal.
\item If $(R;+,<)=(\R;+,<)$ then $\Sa R$ is dp-minimal if and only if $\Sa R$ is o-minimal.
\item There are no proper strongly dependent expansions of $(\Z;+,<)$.
\end{enumerate}
\end{fact}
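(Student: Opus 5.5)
Since this is a summary of known results, my plan is to assemble it from the literature and from facts already recalled in the paper, not to prove anything from scratch. I will treat the four items in the order (1), (2), (3), (4), deducing the later ones from the first wherever possible. Throughout, $\Sa S$ is an elementary extension of $\Sa R$.

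For (1), the right-to-left direction is routine. Suppose every definable subset of $S$ is a finite union of sets $I \cap [nS + \beta]$. Such a set is a boolean combination of a convex set and a coset of $nS$. Inp-patterns can then be handled one family at a time.
\begin{itemize}
\item A convex family cannot be one row of an inp-pattern of depth two together with another convex family, by the usual interval argument for linear orders.
\item The cosets of $nS$ form finitely many cosets, because the archimedean group $(R;+,<)$ is non-singular (dp-minimal ordered abelian groups are exactly the non-singular ones).
\end{itemize}
Hence no inp-pattern of depth two exists, and $\Sa R$ is dp-minimal.

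The left-to-right direction of (1) is the substantial part, and I expect it to be the main obstacle. I would not reprove it. I would cite Simon's analysis of dp-minimal expansions of ordered groups~\cite{Simon-dp}, combined with the structure results for dp-minimal expansions of archimedean (equivalently, non-singular) groups in~\cite{SW-dp}. The key input is that in a dp-minimal expansion of an archimedean group every infinite definable subset of $S$ has nonempty interior modulo some $nS$. One then decomposes a definable set into finitely many pieces, using compactness to bound the number of pieces uniformly in $\Sa S$.

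Items (2) and (3) should follow from (1).
\begin{itemize}
\item For (2): if $R$ is divisible then $nS = S$ for every $n \ge 1$. The condition in (1) then says that every definable subset of every $S$ is a finite union of convex sets, which is exactly weak o-minimality of $\Th(\Sa R)$.
\item For (3): take $R = \R$. A definable convex subset of $\R$ is an interval with endpoints in $\R \cup \{\pm\infty\}$, since $\R$ is Dedekind complete. So weak o-minimality of $\Sa R$ is o-minimality. Conversely, o-minimality implies dp-minimality, as o-minimal theories are dp-minimal.
\end{itemize}

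Item (4) is a cited theorem, namely that $\zgoup$ has no proper strongly dependent expansion; I would quote it from the literature (Dolich and Goodrick) rather than reprove it. If I wanted to connect it to (1), I would argue as follows. A strongly dependent expansion of $\zgoup$ is dp-minimal, and $\zgoup$ has bounded regular rank, so unary definable sets are finite unions of the form given in (1). The remaining difficulty is to pass from unary sets to all definable sets, for which I would follow Dolich and Goodrick's argument.
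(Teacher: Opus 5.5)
Your overall route is the paper's route: cite \cite{SW-dp} for (1), read (2) off from (1) and (3) off from (2), and cite Dolich--Goodrick \cite{DG} for (4). The problem is in the one piece you actually argue, the right-to-left direction of (1).

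\textbf{The false step.} You get finiteness of the coset families from the assertion that an archimedean ordered abelian group is non-singular. You also write ``archimedean (equivalently, non-singular)''. This is false. The subgroup $R$ of $\R$ generated by $1,\pi,\pi^2,\ldots$ is archimedean and free abelian of infinite rank, so $R/2R$ is infinite. By the result recalled in the paper, $(R;+,<)$ is then not even dp-minimal. Your parenthetical does not rescue the step, since in this direction dp-minimality is exactly what you are trying to prove.

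The finiteness is genuinely needed, not a technicality. If $S/nS$ were infinite, take distinct cosets of $nS$ as one row and pairwise disjoint intervals as the other. Since the cosets are dense, every path is consistent, so this is an inp-pattern of depth two.

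\textbf{The repair.} Non-singularity has to be extracted from the hypothesis.
\begin{enumerate}
\item If $R$ is discrete then $R\cong\Z$, so assume $R$ is dense. Write $R\setminus pR$ as a finite union of pieces $I_j\cap[m_jR+\beta_j]$.
\item Choose a nonempty open interval $J\subseteq R$ that is contained in or disjoint from each $I_j$. Also make $J$ avoid the finitely many points lying in pieces with $m_j=0$.
\item Let $F$ be the set of $j$ with $m_j\ge 1$ and $J\subseteq I_j$. Let $m$ be the least common multiple of $p$ and the $m_j$ with $j\in F$.
\item Every coset of $mR$ is dense, since $mR\cong R$ is a non-cyclic subgroup of $\R$. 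So every coset of $mR$ meets $J$. Each such coset lies in a single coset of $pR$, and is contained in or disjoint from each $\beta_j+m_jR$ with $j\in F$. Comparing inside $J$ gives $R\setminus pR=\bigcup_{j\in F}(\beta_j+m_jR)$ exactly.
\item If $p\nmid m_j$ then $m_jR+pR=R$, so $\beta_j+m_jR$ would meet $pR$. Hence $p\mid m_j$ for all $j\in F$.
\item So $R\setminus pR$ is covered by at most $|F|$ cosets of $pR$, and $[R:pR]\le|F|+1$.
\end{enumerate}
With this in hand your inp-pattern sketch can go through, modulo the usual compactness argument for uniformity over a definable family. Alternatively, simply cite \cite{SW-dp} for both directions, as the paper does.

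\textbf{A smaller point on (4).} Your optional sketch starts from ``a strongly dependent expansion of $\zgoup$ is dp-minimal''. That is not an available input; it is essentially a consequence of the Dolich--Goodrick theorem you are trying to connect to, so the sketch is circular. Citing \cite{DG}, as you ultimately propose, is the right move.
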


Here (1) is proven in \cite{SW-dp}, (2) follows from (1), (3) follows from (2), and (4) is due to Dolich-Goodrick~\cite{DG}.
(Recall that dp-minimality implies strong dependence.)
Note (3) was first proven in \cite{Simon-dp}.
We let $\dprk(\Sa M)$ be the dp-rank of a structure $\Sa M$ and $\dprk_{\Sa M}(X)$ be the dp-rank of an $\Sa M$-definable set $X$, we drop the subscript when it is clear from context.
See \cite[Chap.~4]{Simon-Book} for background on dp-rank.
Fact~\ref{fact:dp} contains some basic facts about dp-rank.

\begin{fact}\label{fact:dp}
Let $\Sa M$ be a $\nip$ structures and $X,Y$ be $\Sa M$-definable sets.
Then we have the following.
\begin{enumerate}[leftmargin=*]
\item $\dprk(X \times Y) \le \dprk(X) + \dprk(Y)$.
\item If there is a definable surjection $X \to Y$ then $\dprk(Y) \le \dprk(X)$.
\item If there is a trace embedding $\Sa M \to \Sa N$ then $\dprk(\Sa M) \le \dprk(\Sa N)$.
\item $\dprk_{\Sa M}(X) = \dprk_{\Sh M}(X)$ and in particular $\dprk(\Sh M) = \dprk(\Sa M)$.
\item If $\Sa M$ is dp-minimal then $\dprk(M^n) = n$ for all $n$.
\end{enumerate}
\end{fact}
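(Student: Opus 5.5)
The plan is to work throughout with the characterization of dp-rank by ict-patterns \cite[Chap.~4]{Simon-Book}. Recall that $\dprk(X) \ge k$ if and only if there are formulas $\varphi_1(x,y_1),\ldots,\varphi_k(x,y_k)$ and parameters $(b_{i,j})_{i \le k, j \in \N}$ such that for every $f \colon \{1,\ldots,k\} \to \N$ there is $a \in X$ with $\varphi_i(a, b_{i,j})$ holding if and only if $j = f(i)$. By compactness this condition reduces to finite configurations. So whether $\dprk(X) \ge k$ holds depends only on which finite configurations of instances of definable sets occur, and one may pass freely to elementary extensions. Item (1) is then the standard subadditivity of dp-rank, which follows from the Kaplan--Onshuus--Usvyatsov additivity theorem as presented in \cite[Chap.~4]{Simon-Book}. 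I would simply cite it rather than reprove it, since reproving it is the only genuinely nontrivial step here.

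For (2), let $g \colon X \to Y$ be a definable surjection. Given an ict-pattern $\varphi_i(z,y_i)$, $b_{i,j}$ in $Y$, use the formulas $\varphi_i(g(x), y_i)$ with the same parameters. Any $a \in Y$ realizing a given path lifts to some $a' \in X$ with $g(a') = a$, and $a'$ realizes the same path. Hence every ict-pattern in $Y$ yields one in $X$ of the same depth.

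For (3), let $\uptau \colon \Sa M \to \Sa N$ be a trace embedding and take a finite ict-configuration in $\Sa M$ given by $\varphi_i(x,y_i)$ and $b_{i,j}$, $j \le m$. Each $\Sa M$-definable set $\varphi_i(M^{|x|+|y_i|})$ has the form $\uptau^{-1}(Z_i)$ with $Z_i$ definable in $\Sa N$, say by $\psi_i(x,y_i)$. Then $\psi_i$ with parameters $\uptau(b_{i,j})$ gives the same finite configuration in $\Sa N$, realized by the points $\uptau(a)$. Since this holds for every $m$, compactness in $\Th(\Sa N)$ yields an ict-pattern of depth $k$ there. Item (4) follows by the same argument combined with Lemma~\ref{lem:she-last}: for $\Sa N$ an $|M|^+$-saturated elementary extension, the inclusion gives a trace embedding $\Sh M \to \Sa N$, and the argument above applies to subsets of $X$. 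Thus $\dprk_{\Sh M}(X) \le \dprk_{\Sa N}(X(N)) = \dprk_{\Sa M}(X)$. The reverse inequality holds because $\Sa M$ is a reduct of $\Sh M$.

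For (5), the upper bound $\dprk(M^n) \le n$ comes from (1) by induction. For the lower bound, take formulas $\varphi_i(x,y)$ stating $x_i = y$ for $i = 1,\ldots,n$, and pairwise distinct parameters $b_{i,j} \in M$ (possible since $M$ is infinite). For a path $f$, the point $a$ with $a_i = b_{i,f(i)}$ satisfies $\varphi_i(a, b_{i,j})$ exactly when $j = f(i)$. This gives an ict-pattern of depth $n$, so $\dprk(M^n) = n$. The only real obstacle is (1), and I would handle it by citation.
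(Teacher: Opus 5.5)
Your proposal is correct and follows essentially the same route as the paper. The paper cites Simon's book for (1), treats (2) as immediate from the definition, derives (4) from (3) together with Lemma~\ref{lem:she-last}, and gets (5) from (1) plus the trivial lower bound $\dprk(M^n)\ge n$; the only difference is that you give the ict-pattern transfer argument for (3) directly, where the paper cites \cite[Prop.~4.2]{trace1}.
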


Here (3) is immediate from the definition of the dp-rank, (3) is proven in \cite[Prop.~4.2]{trace1}, and (4) follows by (3) and Lemma~\ref{lem:she-last}.
See \cite[Prop.~4.20]{Simon-Book} for (1).
Finally, it follows from the definition of dp-rank that $\dprk_{\Sa M}(M^n) \ge n$ for any structure $\Sa M$ and $n \ge 1$.
Hence (5) follows from (1).

\medskip
We first treat the discrete case of Theorem~\ref{thm:dp}.
By Fact~\ref{fact:simon}(4) we may suppose that $\Sa R$ is $(\Z;+,<)$.
In this case we take $\Sq R$ to be $\rgoup$.
By Lemma~\ref{lem:Z}  it is enough to show that $\rgoup$ is interpretable in the Shelah completion of any $\aleph_1$-saturated elementary extension of $(\Z;+,<)$.
If $(Z;+,<)$ is such an extension then we fix positive $\gamma \in Z \setminus \Z$, let $V$ be the convex hull of $\gamma\Z$, and let $\mfrak$ be the set of $a \in Z$ such that $n|a| < \gamma$ for all $n$.
Then $V$ and $\mfrak$ are both definable in $(Z ; + , <)^{\mathrm{Sh}}$ and $V/\mfrak$ is isomorphic to $\rgoup$.

\medskip
{\bf We suppose throughout the remainder of this section that $(R;+,<)$ is a dense archimedean ordered abelian group and $\Sa R$ is a dp-minimal expansion of $(R;+,<)$.}
We first define the  o-minimal completion $\Sq R$ of $\Sa R$.
Now $(R;+,<)$ admits a unique up to rescaling embedding into $(\R;+,<)$, so we suppose that $(R;+,<)$ is a substructure of $\rgoup$.
Let $\Sa R\prec\Sa N$ be $\aleph_1$-saturated.
Declare
\begin{align*}
V &= \{ a \in N : |a| < n, \text{ for some } n\in\N\}\\
\mfrak &= \{ a \in N : |a| < 1/n, \text{ for all } n\in\N, n \ge 1 \}.
\end{align*}
Then $V$ and $\mfrak$ are convex subgroups of $N$ and by saturation we may identify $V/\mfrak$ with $\R$ and the quotient map $V \to \R$ with the usual standard part map $\st\colon V \to \R$.
Note that $V$ and $\mfrak$ are externally definable, so we consider $\R$ to be an $\Sh N$-definable set of imaginaries.
We let $\Sq R$ be the structure induced on $\R$ by $\Sh N$.
Fact~\ref{fact:Sq} is proven in \cite{big-nip} more generally for strongly dependent expansions of archimedean ordered abelian groups.

\begin{fact}
\label{fact:Sq}
The structure induced on $R$ by $\Sq R$ is a reduct of $\Sh R$.
\end{fact}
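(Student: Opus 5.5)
Fix an $\Sa R$-definable $X \subseteq R^n$; we must show $X = Y \cap R^n$ for some $\Sa R^{\mathrm{Sh}}$-definable $Y$, where $Y$ ranges over the $\Sq R$-definable subsets of $\R^n$. Since every $\Sq R$-definable set is, by definition, of the form $\st(Z \cap V^n)$ for some $\Sh N$-definable $Z \subseteq N^n$ (more precisely, its preimage under $\st$ is $\Sh N$-definable), the plan is to pull the induced set $Y \cap R^n$ back to $N$ and recognise it as an externally definable subset of $R^n$. Fact~\ref{fact:ksh} then applies: the structure induced on $R$ by $\Sh N$ is $\Sh R$, so any set of the form $W \cap R^n$ with $W$ $\Sh N$-definable is $\Sh R$-definable.

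Fix an $\Sq R$-definable $Y \subseteq \R^n$ and let $W = \st^{-1}(Y) \subseteq V^n$, which is $\Sh N$-definable. The key observation is that the identity $R \to R$ factors through $\st$: for $r \in R \subseteq \R$, the element $r$ of $N$ lies in $V$ and satisfies $\st(r) = r$, since $R$ is embedded in $N$ and also sits inside $\R$ via the fixed embedding of $(R;+,<)$. This is where the archimedean hypothesis is used: every element of $R$ is finite, and distinct elements of $R$ have distinct standard parts because a nonzero difference is not infinitesimal. It follows that
\[
Y \cap R^n = \{ a \in R^n : \st(a) \in Y \} = W \cap R^n,
\]
so $Y \cap R^n$ is the trace on $R^n$ of an $\Sh N$-definable set and hence is $\Sh R$-definable by Fact~\ref{fact:ksh}.

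The main subtlety is confirming that $\R$, viewed as a set of $\Sh N$-imaginaries, is coherently identified with the real line containing $R$: the standard part map $V/\mfrak \to \R$ must restrict to the given inclusion $R \subseteq \R$. This holds by construction, since $\st(a)$ is the unique real determined by the cut of $a$ over $\Q$, and for $a \in R$ that cut is the cut of $a$ itself. A second point is that Fact~\ref{fact:ksh} is stated for one-sorted sets $M^m$ while $W$ lives on an imaginary sort; this is avoided by using $W \subseteq N^n$, a real-sort $\Sh N$-definable set, which is the preimage of the imaginary set. No o-minimality or dp-minimality beyond $\nip$ is needed, and $\nip$ is required only so that Fact~\ref{fact:ksh} applies.
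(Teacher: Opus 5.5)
Your argument is correct. The paper does not prove this statement itself; it cites \cite{big-nip}, where the fact is established more generally for strongly dependent expansions of archimedean ordered abelian groups.

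Your route unwinds the definitions directly. $\Sq R$ is by definition the structure induced on the $\Sh N$-interpretable set $V/\mfrak$. So every $\Sq R$-definable $Y\subseteq\R^n$ has $\Sh N$-definable preimage $\st^{-1}(Y)\subseteq V^n\subseteq N^n$. This includes sets obtained by quantifying over $\R$ or by using parameters from $\R$, since those parameters are classes of elements of $V$. Because $\st$ restricts to the identity on $R$, you get $Y\cap R^n=\st^{-1}(Y)\cap R^n$, and Fact~\ref{fact:ksh} finishes.

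This gives a self-contained proof that needs only $\nip$, for Fact~\ref{fact:ksh}, together with density of $R$ in $\R$ and $\aleph_1$-saturation, to identify $V/\mfrak$ with $\R$. The paper simply defers to the more general source.

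One point of precision. $R$ need not contain $\Q$, or even $1$. It is cleaner to normalize $1\in R$ by rescaling, read $|a|<1/n$ as $n|a|<1$, and describe $\st(a)$ as the supremum in $\R$ of $\{r\in R: r<a\}$ rather than via the cut of $a$ over $\Q$. With this, $\st(r)=r$ for $r\in R$ is immediate.
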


Lemma~\ref{lem:sq-o-min} follows by Facts~\ref{fact:dp}(2) and \ref{fact:simon}(4).

\begin{lemma}
\label{lem:sq-o-min}
$\Sq R$ is o-minimal.
\end{lemma}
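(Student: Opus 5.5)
Note first that the text says the lemma follows from Facts~\ref{fact:dp}(2) and \ref{fact:simon}(4); the second reference is presumably meant to be Fact~\ref{fact:simon}(3). The plan is as follows. $\Sq R$ expands $\rgoup$, since the standard part map $\st\colon V \to \R$ is a group homomorphism. It also respects the order weakly: $a < b$ implies $\st(a) \le \st(b)$, and the preimage of $\{(s,t) : s<t\}$ under $\st$ is an $\Sh N$-definable subset of $V^2$. So by Fact~\ref{fact:simon}(3) it suffices to show that $\Sq R$ is dp-minimal.

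\textbf{Step 1.} $\Sh N$ is dp-minimal. By Fact~\ref{fact:dp}(4), $\dprk(\Sh N) = \dprk(\Sa N) = 1$, since $\Sa N \equiv \Sa R$ is dp-minimal.

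\textbf{Step 2.} Bound the dp-rank of the imaginary sort $V/\mfrak$. Here $V$ is an $\Sh N$-definable subset of $N$, so $\dprk(V) \le 1$. The quotient map $V \to V/\mfrak = \R$ is an $\Sh N$-definable surjection, so Fact~\ref{fact:dp}(2), applied in $(\Sh N)^{\mathrm{eq}}$, gives $\dprk(\R) \le 1$. Since $\R$ is infinite, equality holds.

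\textbf{Step 3.} Pass to the induced structure. Every $\Sq R$-definable set is, by definition, the trace on $\R^n$ of an $\Sh N$-definable set in the eq-sort. The identity map $\R \to \R$ is therefore a trace embedding of $\Sq R$ into $(\Sh N)^{\mathrm{eq}}$ restricted to that sort. Fact~\ref{fact:dp}(3), or directly the definition of dp-rank via ict-patterns, then gives $\dprk(\Sq R) \le \dprk_{\Sh N}(\R) = 1$. Hence $\Sq R$ is dp-minimal, and by Fact~\ref{fact:simon}(3) it is o-minimal.

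\textbf{Main obstacle.} The delicate point is Step 3. The induced structure on $\R$ comes from formulas over the whole of $(\Sh N)^{\mathrm{eq}}$, so one must check that an ict-pattern in $\Sq R$ of depth 2 yields one in $\Sh N$ living in the sort $\R$. A pattern in $\Sq R$ uses formulas defining sets $Y \cap \R^{n}$ with $Y$ definable in $\Sh N$, and the realizing elements lie in $\R$, though possibly only in an elementary extension. So I would invoke Fact~\ref{fact:dp}(3) literally: dp-rank does not increase along trace embeddings, which handles the elementary-extension issue by compactness. I would also note that $\Sh N$ is $\nip$, which the argument requires.
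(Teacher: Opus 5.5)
Your proposal is correct and is essentially the paper's argument: $\Sh N$ is dp-minimal by Fact~\ref{fact:dp}(4), so $\R = V/\mfrak$ has dp-rank at most $1$ by Fact~\ref{fact:dp}(2), which makes $\Sq R$ a dp-minimal expansion of $\rgoup$, and then Fact~\ref{fact:simon}(3) gives o-minimality. You are right that the paper's citation of Fact~\ref{fact:simon}(4) should read (3). The step you flag as delicate is only the standard fact that the structure induced on a definable set $D$ has dp-rank equal to $\dprk(D)$ computed in the ambient structure, which the paper uses implicitly.
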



We let $\dim X$ be the usual o-minimal dimension of an $\Sq R$-definable subset of $\R^n$ and $\cl(X)$ be the closure in $\R^n$ of $X\subseteq\R^n$.
We apply the well-known and easy to prove fact that o-minimal dimension and dp-rank agree over o-minimal structures.

\begin{lemma}
\label{lem:dim-inequality}
Suppose that $X$ is an $\Sa R$-definable subset of $R^n$.
Then $\dim \cl(X) = \dprk_{\Sa R} X$.
\end{lemma}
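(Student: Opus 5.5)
We prove the two inequalities $\dim \cl(X) \le \dprk_{\Sa R} X$ and $\dprk_{\Sa R} X \le \dim \cl(X)$ separately. Throughout, $X^*$ denotes the subset of $N^n$ defined by a formula defining $X$, and we use the identity
\[
\cl(X) = \st(X^* \cap V^n).
\]
This identity holds by $\aleph_1$-saturation and density of $R$ in $\R$, exactly as in the proof of Proposition~\ref{prop:hlamb}. It shows that $\cl(X)$ is $\Sq R$-definable, so $\dim\cl(X)$ makes sense. Since $\Sq R$ is o-minimal by Lemma~\ref{lem:sq-o-min}, $\dim \cl(X)$ equals the dp-rank of $\cl(X)$ computed in $\Sq R$.

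\emph{Upper bound $\dim \cl(X) \le \dprk_{\Sa R} X$.} The set $X^* \cap V^n$ is $\Sh N$-definable, and $\st$ restricted to it is an $\Sh N$-definable surjection onto $\cl(X)$, the latter regarded as a set of imaginaries. Fact~\ref{fact:dp}(2), applied in $(\Sh N)^{\mathrm{eq}}$, then gives
\[
\dprk \cl(X) \le \dprk_{\Sh N}(X^*\cap V^n) \le \dprk_{\Sh N}(X^*).
\]
By Fact~\ref{fact:dp}(4) the last term equals $\dprk_{\Sa N}(X^*)$, which equals $\dprk_{\Sa R}(X)$ because dp-rank depends only on the formula. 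One point needs checking: dp-rank of $\cl(X)$ in $\Sq R$ must not exceed its dp-rank in $\Sh N^{\mathrm{eq}}$. This holds because $\Sq R$ is the induced structure on $\R$, so every $\Sq R$-formula is an $\Sh N$-formula and an ict-pattern in $\Sq R$ remains an ict-pattern in $\Sh N$. Passing to imaginaries is harmless here, since dp-rank is preserved in $T^{\mathrm{eq}}$ for definable quotients.

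\emph{Lower bound $\dprk_{\Sa R} X \le \dim \cl(X)$.} This is the main obstacle, because we must rule out $X$ being ``large'' while its closure is ``small''. By Fact~\ref{fact:Sq}, $\cl(X)\cap R^n$ is $\Sh R$-definable, and it contains $X$. Then Fact~\ref{fact:dp}(4) gives
\[
\dprk_{\Sa R} X \le \dprk_{\Sh R}(\cl(X)\cap R^n).
\]
It therefore suffices to show that for any $\Sq R$-definable $Y\subseteq\R^n$ of dimension $d$, the set $Y\cap R^n$ has dp-rank at most $d$ in $\Sh R$. We prove this using o-minimal cell decomposition in $\Sq R$. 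Decompose $Y$ into cells $C$, each of dimension at most $d$. For each such $C$ there is a coordinate projection $C \to \R^d$ that is injective on $C$. Restricting to $R^n$, the induced projection $C\cap R^n\to R^d$ is still injective. Since $C\cap R^n$ is $\Sh R$-definable (again by Fact~\ref{fact:Sq}), this projection is $\Sh R$-definable, so its inverse is a definable surjection from a subset of $R^d$ onto $C\cap R^n$. By Fact~\ref{fact:dp}(2) and (5), together with Fact~\ref{fact:dp}(4), $\dprk(C\cap R^n)\le d$. Finally, the dp-rank of a finite union is the maximum of the ranks of the pieces, so $\dprk_{\Sh R}(Y\cap R^n) \le d$. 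Combining the two bounds gives $\dim \cl(X) = \dprk_{\Sa R} X$.
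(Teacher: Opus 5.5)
Your proof is correct and takes essentially the same route as the paper's. The upper bound is the same chain through $\cl(X)=\st(X^*\cap V^n)$ and Fact~\ref{fact:dp}, and the lower bound is the same o-minimal cell decomposition of $\cl(X)$ in $\Sq R$, pulled back by Fact~\ref{fact:Sq} and bounded using injective coordinate projections and $\dprk R^d=d$. The one cosmetic difference is that you bound the superset $\cl(X)\cap R^n$ directly in $\Sh R$, whereas the paper first replaces $\Sa R$ by $\Sh R$ and then cuts $X$ along the cells.
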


\begin{proof}
We first show that $\dim \cl(X) \le \dprk_{\Sa R} X$.
Let $X^*$ be the subset of $\Sa N$ defined by the same formula as $X$.
A routine saturation argument shows that $\cl(X)=\st(X^* \cap V^n)$.
By Fact~\ref{fact:dp} we have
\[
\dprk_{\Sh N} \cl(X) \le \dprk_{\Sh N} X^* \cap V^n \le \dprk_{\Sh N} X^*=\dprk_{\Sa N} X^* = \dprk_{\Sa R} X.
\]
Finally, $\Sq R$ is the structure induced on $\R$ by $\Sh N$ hence 
\[\dprk_{\Sh N} \cl(X)=\dprk_{\Sq R} \cl(X)=\dim \cl(X).\]

\medskip
We now show that $\dim\cl(X)\ge\dprk_{\Sa R}X$.
Passing to $\Sh R$ does not change the dp-rank of $\Sa R$-definable sets and by definition $(\Sh R)^\square$ is interdefinable with $\Sq R$.
So after possibly replacing $\Sa R$ with $\Sh R$ we suppose that $\Sa R$ is Shelah complete.
By Fact~\ref{fact:dp}(5) we have $\dprk_{\Sa R}R^n=n$ for all $n$.
Let $Y_1,\ldots,Y_k$ be a partition of $\cl(X)$ into $\Sq R$-definable cells.
By Fact~\ref{fact:Sq} and Shelah completeness each $X\cap Y_i$ is $\Sa R$-definable.
We have
\[
\dim \cl(X) = \max\{\dim Y_1,\ldots,\dim Y_k\} \quad \text{and}\quad \dprk_{\Sa R} X = \max\{\dprk_{\Sa R} X\cap Y_1,\ldots,\dprk_{\Sa R} X \cap Y_k \}.
\]
Hence it is enough to show that $\dprk_{\Sa R}X\cap Y_i \le \dim Y_i$ for each $i = 1,\ldots,k$.
Fix $i$ and let $d = \dim Y_i$.
As $Y_i$ is a cell there is a coordinate projection $\uppi\colon \R^n \to \R^d$ such that the restriction of $\uppi$ to $Y_i$ is injective.
By Fact~\ref{fact:dp} 
\[\dprk_{\Sa R} X\cap Y_i = \dprk_{\Sa R}\uppi(X \cap Y_i) \le \dprk_{\Sa R}R^d=d.\]
\end{proof}

We will now need to work with cuts.
For our purposes a \textbf{cut} in $R$ is a nonempty downwards closed bounded above set $C \subseteq R$ such that either $C$ does not have a supremum in $R$ or $C$ does not contain its supremum.
So if $r \in R$ then $(-\infty,r)$ is a cut but $(-\infty,r]$ is not.
We identify each cut in $R$ with its supremum in $\R$.
We let $\overline{R}$ be the set of  definable cuts in $R$.
Note that if $\Sa R$ is Shelah complete then $\overline{R}=\R$.
We equip $\overline{R}$ with the inclusion ordering on cuts.
We identify each $r \in R$ with $(-\infty,r)$ and hence consider $R$ to be a subset of $\overline{R}$.

\medskip
It would be convenient if there was a single definable family of cuts which contained every element of $\overline{R}$, as we could then think of $\overline{R}$ as an $\Sa R$-definable set of imaginaries.
However this is generally not the case.
We can naturally view $\overline{R}$ as an ind-definable set, e.g. an object in the ind-category of the category of definable sets of imaginaries and definable maps.
See \cite{kamensky} for a description.
The following definitions follow this formalism, but they are also natural enough to be taken on their own.
We say that a subset $X$ of $\overline{R}^n$ is definable if $X=\{ (C^1_\beta,\ldots,C^n_\beta) : \beta \in R^m\}$ for a definable family $( (C^1_\beta,\ldots,C^n_\beta): \beta \in R^m)$ of $n$-tuples of definable cuts.
Let $Y\subseteq R^m$ be definable.
Then a function $f\colon Y \to \overline{R}^n$ is definable if $( f(\beta) : \beta \in Y)$ is a definable family of $n$-tuples of cuts.
Finally we say that a function $f \colon \overline{R}^n \to Y$ is definable if the function $f\circ g \colon X \to Y$ is definable for any definable $X\subseteq R^m, g \colon X \to \overline{R}^n$.

\begin{lemma}
\label{lem:ind}
If $X$ is an $\Sa R$-definable subset of $\overline{R}^n$ then $\cl(X)$ is an $\Sq R$-definable subset of $\R^n$.
If $Y\subseteq R^m$ and $f\colon Y \to \overline{R}^n$ are definable then $\dim\cl(f(Y)) \le \dprk_{\Sa R} Y$.
\end{lemma}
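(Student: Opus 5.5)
The plan is to move everything into the $\aleph_1$-saturated extension $\Sa N$ and then push down to $\R$ with the standard part map, much as in the proof of Lemma~\ref{lem:dim-inequality}.

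For the first claim, let $X=\{(C^1_\beta,\ldots,C^n_\beta):\beta\in Y\}$, where $Y \subseteq R^m$ is definable and $(C^i_\beta)_{\beta\in Y}$ are definable families of cuts. Let $D^i\subseteq R^m\times R$ define the families, so that $C^i_\beta = D^i_\beta$. Let $(D^i)^*$ and $Y^*$ be the corresponding sets in $\Sa N$. For each $b\in Y^*$ the fiber $(D^i)^*_b$ is a downward closed, bounded-above subset of $N$ (we may add to the formula the condition that fibers are cuts). Since $\Sa N$ is $\aleph_1$-saturated, its supremum in the sense of standard parts is well defined: set $s^i(b)=\sup\{\st(a): a\in (D^i)^*_b\cap V\}\in\R\cup\{\pm\infty\}$. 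Now define
\[ Z=\{(b,t)\in Y^*\times V^n : t_i \in (D^i)^*_b \text{ and } t_i+\epsilon\notin (D^i)^*_b \text{ for all } \epsilon>\mfrak\ \text{for all } i\}. \]
More concretely, take $Z$ to be the set of tuples $(b,t)$ with $t_i$ in the fiber but $t_i+\delta$ not in it, for a fixed infinitesimal-scale $\delta$ chosen by saturation. Then $\st$ of the projection of $Z\cap (Y^*\times V^n)$ is $\Sh N$-definable, so it is $\Sq R$-definable.

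The key step is to check that this set equals $\cl(X)$. There are two inclusions.
\begin{itemize}
\item[$\supseteq$] A point $(\st(t))$ coming from $b\in Y^*$ with finite standard parts is approximated by $\beta\in Y$. This uses saturation together with elementarity applied to the formula ``there is $\beta\in Y$ with $|C^i_\beta-\st t_i|<1/k$''. So the point lies in $\cl(X)$.
\item[$\subseteq$] Each $(C^i_\beta)$ lies in the image via $b=\beta$ and $t_i$ an element of $R$ near the supremum. Closure is then handled by saturation.
\end{itemize}
A cleaner alternative is to observe that $\cl(X)$ is the set of $r\in\R^n$ such that for every $k$ there is $\beta\in Y$ with each $C^i_\beta$ within $1/k$ of $r_i$. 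That condition is witnessed in $\Sa N$ by realizing a single $b\in Y^*$ whose cuts have standard part $r$. So $\cl(X)=\st$ of an $\Sh N$-definable set, namely
\[ \{t\in V^n : \exists b\in Y^*\ \forall i\ (t_i\in (D^i)^*_b \wedge t_i+\eta\notin (D^i)^*_b)\} \]
for a positive infinitesimal $\eta\in\mfrak$. This set is definable with the parameter $\eta$. I expect this closure identification, and in particular getting the quantifier over $\eta$ right, to be the main technical obstacle.

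For the second claim, take $X=f(Y)$ and use the same presentation with the same $Y^*$. Then $\cl(f(Y))=\st(W)$, where $W\subseteq V^n$ is the image of the $\Sa N$-definable set
\[ Z=\{(b,t): b\in Y^*, \ t_i\in (D^i)^*_b,\ t_i+\eta\notin (D^i)^*_b\} \]
under the projection to $t$. The map $(b,t)\mapsto b$ may have infinite fibers, so instead we choose a definable selection. For each $b$ the set of valid $t$ is a product of intervals of infinitesimal length, and $\st$ is constant on it. Hence $\cl(f(Y))=\st\circ g(Y^*)$ for any $\Sh N$-definable choice function $g$ of a box in the fiber. If no definable choice is available, we instead use Fact~\ref{fact:dp}(2) directly on the $\Sh N$-definable surjection $Z\to\st(\pi(Z))$ composed with the standard part.

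Then Fact~\ref{fact:dp} gives
\[ \dim\cl(f(Y))=\dprk_{\Sq R}\le \dprk_{\Sh N}(Z) \le \dprk_{\Sh N}(Y^*)+\dprk(\text{fibers}). \]
The fibers are the difficulty: dp-rank need not be additive in this way. I would therefore first try the definable choice $t_i=$ ``$\eta$ times the integer part of $\sup/\eta$''. This is not available in general, so the fallback is to replace $\eta$ by an element of $R$ chosen uniformly in $b$: use $\delta=1/k$-approximations and take the intersection over $k$. That yields a finite-to-one approximation, and the dp-rank bound $\dprk_{\Sa N}(Y^*)=\dprk_{\Sa R}Y$ passes to the limit.
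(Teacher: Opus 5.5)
\paragraph{First claim.} Your argument works. With a fixed positive $\eta\in\mfrak$, which is nonzero by $\aleph_1$-saturation, the set $\{t\in V^n:\exists b\in Y^*\,\forall i\,(t_i\in(D^i)^*_b\wedge t_i+\eta\notin(D^i)^*_b)\}$ is $\Sh N$-definable. Your elementarity/saturation argument then shows that its image under $\st$ is $\cl(X)$. This is just a concrete version of the paper's identity $\cl(X)=\st(X^*\cap\overline{V}^n)$, and the quantifier over $\eta$ causes no difficulty. You should also record that each fiber over $b\in Y^*$ is nonempty. This holds because ``no $C_\beta$ with $\beta\in Y$ is closed under adding a positive element'' is a first-order sentence true in the archimedean $\Sa R$, so it transfers to $\Sa N$.

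\paragraph{Second claim: there is a gap.} You do not establish the inequality, and none of your three routes reaches it.
\begin{itemize}
\item The surjection $Z\to\st(\pi(Z))$ bounds $\dim\cl(f(Y))$ only by $\dprk_{\Sh N}(Z)$. This is too large: it is positive even when $Y$ is finite, because each fiber $Z_b$ is a product of infinite intervals.
\item The integer-part selection does not exist, as you note yourself.
\item The $1/k$ fallback makes nothing finite-to-one, since for each $k$ the fibers are still infinite intervals. There is also no mechanism by which a dp-rank bound ``passes to the limit'' over $k$.
\end{itemize}

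\paragraph{The missing idea.} You already wrote it down: $\st$ is constant on each $Z_b\cap V^n$, so no choice function is needed. Let $Y'=\{b\in Y^*:Z_b\cap V^n\neq\emptyset\}$, which is $\Sh N$-definable. The set $\{(b,\st(t)):(b,t)\in Z,\ t\in V^n\}$ is then the graph of an $\Sh N$-definable \emph{function} $b\mapsto\st(C^*_b)$ on $Y'$. By your first part, this function maps onto $\cl(f(Y))$. Fact~\ref{fact:dp}(2),(4) now give
\[
\dim\cl(f(Y))\le\dprk_{\Sh N}\cl(f(Y))\le\dprk_{\Sh N}Y'\le\dprk_{\Sh N}Y^*=\dprk_{\Sa N}Y^*=\dprk_{\Sa R}Y.
\]
The first inequality is justified as in the proof of Lemma~\ref{lem:dim-inequality}. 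This is exactly the paper's argument, which uses the surjection $\beta\mapsto\st(f^*(\beta))$ defined on the appropriate subset of $Y^*$.
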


We let $\overline{V}$ be the convex hull of $V$ in $\overline{N}$.
Note that $\st\colon V\to \R$ extends uniquely to a monotone map $\overline{V}\to\R$ which we also denote by $\st$.

\begin{proof}
Let $X^*$ be the subset of $\overline{N}^n$ defined by the same formula as $X$.
Again, a standard saturation argument shows that $\cl(X)=\st(X^* \cap \overline{V}^n)$, so $\cl(X)$ is $\Sh N$-definable.
Let $f^*\colon Y^*\to \overline{N}^n$ be the function defined by the same formula as $f$.
The map $Y^*\to \cl(f(Y))$ given by $\beta\mapsto \st(f(\beta))$ is an $\Sh N$-definable surjection.
The proof of Lemma~\ref{lem:dim-inequality} shows that $\dim\cl(f(Y))\le\dprk_{\Sa R}Y$.
\end{proof}

\begin{lemma}
\label{lem:combination}
Suppose that $\Sa R$ is Shelah complete.
Then $X\subseteq R^n$ is $\Sa R$-definable if and only if $X$ is a boolean combination of $(R;+)$-definable sets and sets of the form $Y \cap R^n$ for $\Sq R$-definable $Y \subseteq\R^n$.
\end{lemma}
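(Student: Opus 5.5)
The right-to-left direction is straightforward, and the substance lies in the converse. For the easy direction: $(R;+)$-definable sets are $\Sa R$-definable since $\Sa R$ expands $(R;+,<)$. By Fact~\ref{fact:Sq}, for $\Sq R$-definable $Y\subseteq\R^n$ the trace $Y\cap R^n$ is $\Sh R$-definable, and $\Sh R=\Sa R$ up to interdefinability because $\Sa R$ is Shelah complete. Boolean combinations are then definable.

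For the converse I would induct on $n$ and use a cell-decomposition-style analysis. I would treat $\Sa R$ as a ``dense trace'' of $\Sq R$ twisted by congruence data.

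\textbf{Case $n=1$.} By Fact~\ref{fact:simon}(1), $X$ is a finite union of sets $I\cap(mR+\beta)$ with $I$ convex. Here $mR+\beta$ is $(R;+)$-definable. The set $I$ is an interval whose endpoints lie in $\overline{R}=\R$ by Shelah completeness, so $I=J\cap R$ for an interval $J\subseteq\R$, which is $\Sq R$-definable.

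\textbf{Case $n>1$.} Write $X\subseteq R^{n-1}\times R$. I would first show that the fibers $X_a$ are uniformly such finite unions. By compactness and Fact~\ref{fact:simon}(1) applied in $\Sa N$, there is a uniform bound on the number of pieces and on the moduli $m$. Then I would partition $R^{n-1}$ definably according to the finitely many possible choices of moduli and residue classes of offsets. These sets are $(R;+)$-definable conditions, or boolean combinations of $X$-dependent definable sets to which induction applies. The residue class of $\beta$ mod $mR$ depends on $a$, so I would need the map $a\mapsto\beta(a)+mR$ into the finite group $R/mR$ to be definable with definable fibers. Those fibers are handled by induction.

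On each piece, $X$ becomes a boolean combination of $(R;+)$-definable sets $\{(a,b): b\in mR+\gamma\}$ and sets $\{(a,b): f_i(a)<b<g_i(a)\}$, where $f_i,g_i\colon R^{n-1}\to\overline{R}=\R$ are definable cut-valued functions. The main obstacle is showing that these functions are restrictions of $\Sq R$-definable functions, or at least that their subgraphs are traces of $\Sq R$-definable sets. A naive plan fails because $f$ need not be continuous.

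To handle this, I would use Lemma~\ref{lem:ind}. The graph $\Gamma_f\subseteq\R^n$ has closure $\cl(\Gamma_f)$ that is $\Sq R$-definable of dimension $\le n-1$ by the dimension bound there. Apply o-minimal cell decomposition in $\Sq R$ to $\cl(\Gamma_f)$. On the set of $a$ where the fiber of $\cl(\Gamma_f)$ is a single point, $f(a)$ agrees with an $\Sq R$-definable function $h$. Where $f$ disagrees with $h$, it does so on a set whose closure has smaller dimension, by Lemma~\ref{lem:dim-inequality}. That set is $\Sa R$-definable, so I would recurse on dimension, with an inner induction on $\dprk$ of the domain.

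Finally, $\{(a,b)\in R^n: b<h(a)\}$ is the trace of the $\Sq R$-definable set $\{(x,y): y<h(x)\}$, and intersecting with the pieces of the partition gives the desired boolean combination. The double induction, on $n$ and on the dimension of the exceptional set, is where I expect the bookkeeping to be hardest.
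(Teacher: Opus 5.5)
Your outline agrees with the paper's up to the treatment of the endpoint functions. The step you use there is false, however. From $\dim\cl(\Gamma(f))\le\dim\cl(Y)$, the o-minimal fiber lemma only shows that the fibers of $\cl(\Gamma(f))$ are \emph{finite} off a lower-dimensional set, not that they are singletons. Lemma~\ref{lem:dim-inequality} gives nothing more. So in general there is no single $\Sq R$-definable $h$ agreeing with $f$ off a set of lower-dimensional closure, and your recursion on dimension makes no progress.

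Here is a concrete counterexample. Let $R=\Z+\sqrt2\,\Z$ and let $\Sa R$ be the Shelah completion of $(R;+,<)$, which is dp-minimal and Shelah complete. Put $f(a)=0$ for $a\in 2R$ and $f(a)=1$ otherwise, and let $X=\{(a,b)\in R^2: f(a)<b<2\}$. Then $\cl(\Gamma(f))=\R\times\{0,1\}$ has two-point fibers everywhere. Since $2R$ and $R\setminus 2R$ are both dense in $\R$, the set $\{a: f(a)\neq h(a)\}$ has one-dimensional closure for every $\Sq R$-definable $h$: on an interval where $h$ is continuous, $h$ cannot agree with $f$ at all but finitely many points.

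What is missing is the paper's device for exactly this situation. First, use the induction on $\dim\cl(Y)$ to remove the lower-dimensional part of $\cl(Y)$ over which the closures of the endpoint graphs have infinite fibers. Over the remaining part, write these closures as finite unions $\Gamma(g_1)\cup\cdots\cup\Gamma(g_k)$ of graphs of $\Sq R$-definable functions. Since $\Gamma(f)\subseteq\cl(\Gamma(f))$, each relevant $a$ satisfies $f(a)=g_i(a)$ for some $i$. The sets $Q_i=\{a: f(a)=g_i(a)\}$ are $\Sa R$-definable: the cut $a\mapsto\{r\in R: r<g_i(a)\}$ is given by the trace of an $\Sq R$-definable set, hence is $\Sa R$-definable by Fact~\ref{fact:Sq} and Shelah completeness.

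These $Q_i$ can be dense and of full dimension. In the example they are $2R$ and $R\setminus 2R$, which are $(R;+)$-definable but are not traces of $\Sq R$-definable sets. So they must be put into the class $\Cal B$ of boolean combinations by the \emph{outer} induction on $n$, not by the dimension recursion. The piece of $X$ over $Q_i$ is then $[Q_i\times R]\cap[R^{n-1}\times(mR+\gamma)]$, intersected with the trace of $\{(x,y): g_i(x)<y\}$ and the analogous set for the upper endpoint. This is exactly where $(R;+)$-definable sets enter in dimension $n\ge 2$, and your argument as written has no mechanism for producing them.
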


We let $\Gamma(f)$ be the graph of a function $f$.

\begin{proof}
Let $\Cal B$ be the collection of boolean combinations of $(R;+)$-definable sets and sets of the form $Y \cap R^n$ for $\Sq R$-definable $Y\subseteq \R^n$.
We first show that $\Cal B$ is closed under products.
If $X \subseteq R^n$ and $Y \subseteq R^m$ then $X \times Y = [X \times R^m] \cap [R^n \times Y]$.
Hence it is enough to show that $X \times R^m$ is in $\Cal B$ when $X \subseteq R^n$ is $\Cal B$.
Now $X \mapsto X \times R^m$ gives an embedding from the boolean algebra of subsets of $R^n$ to the boolean algebra of subsets of $R^{n + m}$.
Hence it is enough to note that $X \times R^m$ is $(R; +)$-definable when $X$ is $(R; +)$-definable and that $(Y \cap R^n) \times R^m = (Y \times \R^n) \cap R^{n + m}$ when $Y$ is $\Sq R$-definable.

\medskip
By Fact~\ref{fact:Sq} and Shelah completeness of $\Sa R$ every set in $\Cal B$ is $\Sa R$-definable.
We suppose that $X\subseteq R^n$ is $\Sa R$-definable and show that $X$ is in $\Cal B$.
We apply induction on $n$.
The case when $n=1$ follows by Fact~\ref{fact:simon} and the fact that every convex subset of $R$ is of the form $I\cap R$ for an interval $I\subseteq\R$.
Suppose $n\ge 2$.
Let $\uppi\colon R^n \to R^{n-1}$ be the projection away from the last coordinate and let $Y=\uppi(X)$.
We also apply induction on $\dim \cl(Y)$.
If $\dim \cl(Y)=0$ then $\cl(Y)$ is finite, hence $Y$ is finite.
The case $n = 1$ shows that each $X_\beta$ is in $\Cal B$ for all $\beta\in Y$.
Hence $X = \bigcup_{\beta \in Y} [\{\beta\} \times X_\beta]$ is in $\Cal B$.
So we suppose that $\dim \cl(Y) \ge 1$.

\medskip
Fact~\ref{fact:simon} and a standard compactness argument together show that there are $\Sa R$-definable $X_1,\ldots,X_k$ and  $\Sa R$-definable families $(I^1_\beta : \beta \in Y),\ldots,(I^k_\beta : \beta \in Y)$ of nonempty convex open subsets of $R$ such that  $X=X_1\cup\cdots\cup X_k$ and for each $i = 1,\ldots,k$ we either have:
\begin{enumerate}[leftmargin=*]
\item there is $\ell$ such that $|(X_i)_\beta|\le\ell$ for all $\beta\in R^{n-1}$, or
\item there is $m>0$, $\gamma\in R$ such that either $(X_i)_\beta=\emptyset$ or $(X_i)_\beta=I^i_\beta\cap[mR+\gamma]$.
\end{enumerate}
It is enough to show that each $X_i$ is in $\Cal B$.
Hence we may suppose that there is a definable family $(I_\beta : \beta \in Y)$ of convex open subsets of $R$ such that either $X$ satisfies (1) or $X$ satisfies (2) with respect to $(I_\beta : \beta \in Y)$.
We first suppose that $X$ satisfies (1).
Algebraic closure and definable closure agree in $\Sa R$ as $\Sa R$ expands a linear order.
Hence we may suppose that $|X_\beta|\le 1$ for all $\beta\in R^{n-1}$.
Then the projection $X \to Y$ is bijective, hence $\dprk_{\Sa R}X=\dprk_{\Sa R} Y$. 
By Lemma~\ref{lem:dim-inequality} we have
\[
\dim \cl(X) = \dprk_{\Sa R} X = \dprk_{\Sa R} Y = \dim\cl(Y).
\]
Applying the o-minimal fiber lemma we partition $\cl(Y)$ into disjoint $\Sq R$-definable sets $Z_0,Z_1$ such that $\dim Z_0 < \dim \cl(Y)$ and $\cl(X)_\beta$ is finite for all $\beta \in Z_1$.
By induction $X \cap \uppi^{-1}(Z_0)$ is in $\Cal B$, so it is enough to show that $X \cap \uppi^{-1}(Z_1)$ is in $\Cal B$.
By agreement of algebraic and definable choice in $\Sq R$ there is a partition $W_1,\ldots,W_k$ of $Z_1$ into pairwise disjoint $\Sq R$-definable sets and $\Sq R$-definable functions $f_1 \colon W_1 \to R,\ldots,f_k \colon W_k \to R$ such that $\cl(X) \cap \uppi^{-1}(Z_1)$ agrees with $\Gamma(f_1)\cup\cdots\cup \Gamma(f_k)$.
It is enough to fix $i = 1,\ldots,k$ and show that $X^*:=X \cap \Gamma(f_i)$ is in $\Cal B$.
By induction $\uppi(X^*)$ is in $\Cal B$.
Note that
\begin{align*}
X^* &= \{ (\beta,\alpha)\in R^{n-1}\times R :\beta\in \uppi(X^*) \text{  and  } \alpha=f_i(\beta)\} \\
&= [\uppi(X^*)\times R] \cap \Gamma(f_i).
\end{align*}
Hence $X^*$ is in $\Cal B$.

\medskip
We now suppose that $X$ satisfies (2).
Fix $m$ and $\gamma$ as in (2).
We only treat the case when each $I_\beta$ is bounded, the other cases follow by slight modifications of our argument.
Let $f,f^*\colon R^{n-1}\to \overline{R}$ be the unique $\Sa R$-definable functions such that we have 
\[I_\beta = \{ \alpha\in R : f(\beta)<\alpha<f^*(\beta)\} \quad\text{for all  } \beta \in R^{n-1}.\]
Let $Z=\cl(\Gamma(f))$ and $Z^*=\cl(\Gamma(f^*)$.
By Lemma~\ref{lem:ind} $Z$ and $Z^*$ are both $\Sq R$-definable.
Note that $\Gamma(f)$ is the image of the function $Y \to Y \times \overline{R}$, $\alpha\mapsto(\alpha,f(\alpha))$, so by Lemma~\ref{lem:ind} we have $\dim Z\le\dim \cl(Y)$.
The same argument shows that $\dim Z^*\le \dim \cl(Y)$.
Applying the o-minimal fiber lemma we partition $\cl(Y)$ into disjoint $\Sq R$-definable sets $Y_0,Y_1$ such that $\dim Y_0 < \dim\cl(Y)$ and $|Z_\beta|,|Z^*_\beta|<\infty$ for all $\beta \in Y_1$.
By induction $X\cap\uppi^{-1}(Y_0)$ is in $\Cal B$, so it is enough to show that $X\cap\uppi^{-1}(Y_1)$ is in $\Cal B$.
Let $W_1,\ldots,W_k$ and $W^*_1,\ldots,W^*_k$ be partitions of $Y_1$ into pairwise disjoint sets and $g_i \colon W_i \to \R$, $g^*_i\colon W^*_i\to \R$, $i \in \{1,\ldots,k\}$ be $\Sq R$-definable functions such that $Z = \Gamma(g_1)\cup\cdots\cup\Gamma(g_k)$ and $Z^*=\Gamma(g^*_1)\cup\cdots\cup\Gamma(g^*_k)$.

\medskip
For each $i = 1,\ldots,k$ let $h_i \colon W_i \to \overline{R}$ and $h^*_i\colon W^*_i \to \overline{R}$ be given by
\begin{align*}
h_i(\alpha) &:= \{\beta \in R : \beta < g_i(\alpha) \}\\
h^*_i(\alpha) &:= \{\beta \in R : \beta < g^*_i(\alpha) \}\quad\text{for all  } \alpha\in R^{n-1}.
\end{align*}
Note that each $h_i,h^*_i$ is $\Sa R$-definable.
So for each $\beta \in Y_1 \cap \uppi(X)$ we have \[I_\beta=\{\alpha\in R : h_i(\beta)<\alpha<h^*_j(\beta)\}\quad\text{for some  }i,j\in\{1,\ldots,k\}.\]
For each $i,j\in \{1,\ldots,k\}$ let $Q_{ij}$ be the set of $\beta\in Y_1\cap\uppi(X)$ with this property.
Then $(Q_{ij} : i,j \in \{1,\ldots,k\})$ is a partition of $Y_1\cap \uppi(X)$ into $\Sa R$-definable sets.
It is enough to fix $i,j$ and show that $X':=X\cap\uppi^{-1}(Q_{ij})$ is in $\Cal B$.
By induction $Q_{ij}$ is in $\Cal B$.
Note that 
\begin{align*}
X'&= \{(\beta,\alpha)\in R^{n-1}\times R: \beta \in Q_{ij}, \alpha\in mR+\gamma, h_i(\beta)< \alpha < h^*_j(\beta)\}\\
&= [Q_{ij}\times R]\cap [R^{n-1}\times (mR+\gamma)]\cap \{(\beta,\alpha) \in \R^{n-1}\times \R : \beta \in W_i\cap W^*_j, g_i(\beta) < \alpha < g^*_j(\beta)\}.
\end{align*}
It follows that $X'$ is in $\Cal B$.
\end{proof}




We finally show that $\Sa R$ is trace equivalent to $\Sq R$.
As noted above $(\Sh R)^\square$ is interdefinable with $\Sq R$ and by Proposition~\ref{prop:she-0} $\Sa R$ is trace equivalent to $\Sh R$.
So after possibly replacing $\Sa R$ with $\Sh R$ we may suppose $\Sa R$ is Shelah complete.

\medskip
Now $\Th(\Sa R)$ trace defines $\Sq R$ as $\Sq R$ is interpretable in the Shelah completion of an elementary extension of $\Sa R$.
Proposition~\ref{prop:Z-decomp} shows that $(R; +)$ is trace definable in $\Th(\Sa R)$.
Hence it suffices to show that $\Sa R$ is trace definable in $(R; +) \sqcup \Sq R$.
Lemma~\ref{lem:combination} shows that the map $R \to R \times \R$ given by $a \mapsto (a,a)$ is a trace definition $\Sa R \rightsquigarrow  (R;+) \sqcup \Sq R$.

\subsection{The linear/field dichotomy}\label{section:dicho}
We continue to suppose that $\Sa R$ is a dp-minimal expansion of a substructure $(R;+,<)$ of $\rgoup$ with $R \ne \{0\}$.
We describe the dichotomy between linearity and field structure in this setting.
We first recall a special case of the Peterzil-Starchenko trichotomy~\cite{PS-Tri, Peterzil-Reducts}.
We let $\rvec$ be the ordered vector space of real numbers, i.e. $(\R;+,<,(x \mapsto \lambda x)_{\lambda \in \R})$.

\begin{fact}\label{fact:pz}
An o-minimal expansion of $\rgoup$ interprets $\rfield$ if and only if it interprets an infinite field if and only if it is not a reduct of $\rvec$.
\end{fact}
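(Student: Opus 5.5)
The plan is to prove the cycle (1)$\Rightarrow$(2)$\Rightarrow$(3)$\Rightarrow$(1), where (1) is interpretability of $\rfield$, (2) is interpretability of an infinite field, and (3) is the failure of being a reduct of $\rvec$. The implication (1)$\Rightarrow$(2) is trivial.

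For (2)$\Rightarrow$(3) I argue by contraposition, using the Zarankiewicz machinery of Section~\ref{section:zaran}. Suppose $\Sa S$ is an o-minimal expansion of $\rgoup$ which is a reduct of $\rvec$, and suppose $\Sa S$ interprets an infinite field $F$.
\begin{enumerate}[leftmargin=*]
\item An o-minimal expansion of an ordered group eliminates imaginaries, so $F$ is in fact definable in $\Sa S$. Hence $F$ is definable in $\rvec$.
\item A structure definable in $\Sa N$ is trace definable, and so locally trace definable, in $\Sa N$. So $\rvec$ locally trace defines an infinite field.
\item By Fact~\ref{fact:mct}, $\rvec$ has near linear Zarankiewicz bounds. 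By Lemma~\ref{lem:blank-2} these bounds pass to $F$.
\item This contradicts Proposition~\ref{prop:field-blank}, since $F$ is an infinite domain.
\end{enumerate}
This direction is thus a direct consequence of results already in the paper.

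For (3)$\Rightarrow$(1) the input is the Peterzil--Starchenko trichotomy together with the Loveys--Peterzil/Peterzil description of linear o-minimal expansions of ordered groups. Suppose $\Sa S$ is not a reduct of $\rvec$.
\begin{enumerate}[leftmargin=*]
\item First I would show that $\Sa S$ is not locally modular. The cited result says a locally modular o-minimal expansion of $\rgoup$ is a reduct of an ordered vector space over an ordered division ring $\D$. Here the underlying ordered group is $\rgoup$, so $\D$ embeds into the endomorphisms of $(\R;+,<)$ and is therefore a subfield of $\R$. Such a reduct is then a reduct of $\rvec$, contrary to assumption.
\item Since $\Sa S$ expands a group, there are no trivial points. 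So some point of $\R$ is non-trivial and non-locally-modular, and the trichotomy gives a real closed field $(I;\oplus,\otimes)$ definable on an open interval $I$.
\item I would arrange that the field order agrees with $<$ restricted to $I$, or its reverse, by replacing the order if necessary.
\item Bounded subsets of $I$ then have suprema in $I$, because $I$ is an interval of $\R$. So the ordered field is Dedekind complete and hence isomorphic to $\rfield$.
\end{enumerate}

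The main obstacle is step 1 of (3)$\Rightarrow$(1): deducing non-local-modularity from failure to be a reduct of $\rvec$. This is where the deep external theorem on linear o-minimal structures has to be invoked rather than reproved. The compatibility of the field order with $<$ in step 3 also needs the standard o-minimal fact that a definable field order on an interval coincides with the ambient order up to reversal on a subinterval, after which one transfers the field structure back to all of $I$.
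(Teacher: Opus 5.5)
Your proposal is correct, but it only partly follows the paper. The paper gives no argument: it quotes the statement as a special case of the Peterzil--Starchenko trichotomy together with \cite{Peterzil-Reducts}.

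\textbf{The hard direction.} For ``not a reduct of $\rvec$ $\Rightarrow$ interprets $\rfield$'' you rely on the same external input, namely Loveys--Peterzil plus the trichotomy, which is the content of Fact~\ref{fact:lovey}. You then use Dedekind completeness of a real interval, which is exactly the observation the paper makes in the real case of Proposition~\ref{prop:o-min}. Two small repairs are needed:
\begin{itemize}
\item The trichotomy is about $\aleph_1$-saturated models. You should find the field in an elementary extension and pull back the finitely axiomatizable statement ``$\oplus,\otimes$ make the interval an ordered field with the ambient order.'' Real closedness is not a single sentence, but over $\R$ you do not need it, since Dedekind completeness already yields $\rfield$.
\item Your step 3 is unnecessary, because Fact~\ref{fact:lovey} already produces the field ordered by $<$. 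If it were needed, you would move the field onto a subinterval where the orders agree, not ``back to all of $I$.''
\end{itemize}

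\textbf{The converse.} This is where your route genuinely differs. The standard argument goes through local modularity of $\rvec$; instead you use the paper's own machinery: Fact~\ref{fact:mct}, Lemma~\ref{lem:blank-2} and Proposition~\ref{prop:field-blank}. None of these depend on Fact~\ref{fact:pz}, so the argument is not circular. It also proves more: $\Th(\rvec)$ cannot even locally trace define an infinite field, which is essentially the implication (6)$\Rightarrow$(3) of Corollary~\ref{cor:final}.

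The cost is a dependence on Chernikov--Mennen, and in positive characteristic on Hempel's theorems via Proposition~\ref{prop;p}. You can avoid Hempel's theorems: Corollary~\ref{cor:trace-distal} (or the fact that fields definable in o-minimal structures have characteristic zero) reduces you to the elementary characteristic-zero Szemer\'edi--Trotter construction.

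Finally, elimination of imaginaries in your step 1 is not needed. Any interpretable structure is already trace definable, by choosing a (non-definable) section of the quotient map.
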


We now state the dichotomy.

\begin{corollary}\label{cor:final}
The following are equivalent:
\begin{enumerate}[leftmargin=*]
\item $\Sa R$ is locally trace equivalent to $\rgoup$.
\item $\Sa R$ is trace equivalent to an ordered vector space over a subfield of $\R$.
\item $\Th(\Sa R)$ does not locally trace define an infinite domain.
\item The Shelah completion of some $\aleph_1$-saturated elementary extension of $\Sa R$ does not interpret $\rfield$.
\item Every definable subset of every $R^n$ is a boolean combination of $(R;+)$-definable sets and sets of the form $Y \cap R^n$ for $\rvec$-definable $Y\subseteq \R^n$.
\item $\Sa R$ has near linear Zarankiewicz bounds.
\end{enumerate}
\end{corollary}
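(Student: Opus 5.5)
The plan is to route everything through the o-minimal completion $\Sq R$ of Theorem~\ref{thm:dp} and the Peterzil--Starchenko dichotomy Fact~\ref{fact:pz}. I would first dispose of the discrete case: by Fact~\ref{fact:simon}(4) $\Sa R$ is $\zgoup$. This is trace equivalent to $\rgoup$ by Lemma~\ref{lem:Z}, so (1), (2), (6) hold via Fact~\ref{fact:mct} and Lemma~\ref{lem:blank-2}, and (3) follows from (6) as below. Item (4) holds because $\zgoup$ is trace equivalent to $\rgoup$, which has near linear Zarankiewicz bounds, so Proposition~\ref{prop:she-0} together with Proposition~\ref{prop:field-blank} rules out interpreting $\rfield$. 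Item (5) follows from quantifier elimination for Presburger arithmetic, since the order atoms are half-spaces intersected with $\Z^n$. So I assume $R$ is dense and fix $\Sq R$ as in Theorem~\ref{thm:dp}. Note that $\Sq R$ is an o-minimal expansion of $\rgoup$: the group and order on $\R = V/\mfrak$ are induced from $\Sa N$.

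I would prove the cycle (4)$\Rightarrow$(2)$\Rightarrow$(1)$\Rightarrow$(6)$\Rightarrow$(3)$\Rightarrow$(4), and separately (4)$\Rightarrow$(5)$\Rightarrow$(6).

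For (4)$\Rightarrow$(2), Theorem~\ref{thm:dp} says $\Sq R$ is interpretable in the Shelah completion of \emph{every} $\aleph_1$-saturated elementary extension. So if (4) holds then $\Sq R$ cannot interpret $\rfield$, and by Fact~\ref{fact:pz} $\Sq R$ is a reduct of $\rvec$ expanding $\rgoup$. Lemma~\ref{lem:ovs} then gives a division subring, hence a subfield $\D^*\subseteq\R$, with $\Sq R$ trace equivalent to the ordered $\D^*$-vector space $\Sa V^*$. Since $\Sa R$ is trace equivalent to $\Sq R$, this is (2).

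For (2)$\Rightarrow$(1), apply Proposition~\ref{prop:lvoe}. For (1)$\Rightarrow$(6), use Fact~\ref{fact:mct} for $\rgoup$ and preservation under local trace definability (Lemma~\ref{lem:blank-2}). For (6)$\Rightarrow$(3), an infinite domain lacks near linear Zarankiewicz bounds by Proposition~\ref{prop:field-blank}, and the property would otherwise pass to it by Lemma~\ref{lem:blank-2}. For (3)$\Rightarrow$(4), if some such Shelah completion interpreted $\rfield$, then by Proposition~\ref{prop:she-0} $\Th(\Sa R)$ would trace define, hence locally trace define, the field $\rfield$.

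For (4)$\Rightarrow$(5), as above $\Sq R$ is a reduct of $\rvec$. We may pass to $\Sh R$: it is Shelah complete, $(\Sh R)^\square$ is interdefinable with $\Sq R$, and every $\Sa R$-definable set is $\Sh R$-definable. Then Lemma~\ref{lem:combination} writes each $\Sa R$-definable set as a boolean combination of $(R;+)$-definable sets and traces $Y\cap R^n$ of $\Sq R$-definable, hence $\rvec$-definable, sets $Y$.

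For (5)$\Rightarrow$(6), the diagonal map $a\mapsto(a,a)$ gives a trace definition of $\Sa R$ in $(R;+)\sqcup\rvec$. The latter is a disjoint union of an abelian structure with an ordered vector space, so it has near linear Zarankiewicz bounds by Lemma~\ref{lem:vector space}, and (6) follows by Lemma~\ref{lem:blank-2}.

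The step I expect to need most care is (4)$\Rightarrow$(2), namely checking that Lemma~\ref{lem:ovs} applies, i.e.\ that $\Sq R$ really is a reduct of the ordered $\R$-vector space which expands $\rgoup$. Fact~\ref{fact:pz} supplies exactly this once non-interpretability of $\rfield$ is known. A secondary subtlety is that (4) quantifies over \emph{some} extension, whereas Theorem~\ref{thm:dp} is stated for \emph{any} such extension; the universal quantifier in the theorem is what makes the reduction work.
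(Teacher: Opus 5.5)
Your proposal is correct and follows the paper's proof essentially step for step. It uses the same implications (2)$\Rightarrow$(1)$\Rightarrow$(6)$\Rightarrow$(3)$\Rightarrow$(4) and (5)$\Rightarrow$(6), with (4)$\Rightarrow$(2),(5) obtained in the dense case from $\Sq R$ via Fact~\ref{fact:pz}, Lemma~\ref{lem:ovs} and Lemma~\ref{lem:combination}. The only differences are minor: you check all six conditions directly for $\zgoup$ rather than only (2) and (5), and you make explicit the passage to $\Sh R$ needed for the Shelah completeness hypothesis of Lemma~\ref{lem:combination}, which the paper leaves implicit.
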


Can we replace infinite domains with infinite rings in (3)?
An application of Proposition~\ref{prop:field-blank} shows that $\Th(\Sa R)$ does not locally trace define a characteristic zero ring when $\Sa R$ has near linear Zarankiewicz bounds.
So it would be enough to handle positive characteristic rings.
Clearly, if $\Th(\Sa R)$ cannot locally trace define an infinite $\F_p$-vector space then $\Th(\Sa R)$ cannot locally trace define an infinite positive characteristic ring.
Conversely, the example discussed below Proposition~\ref{prop:field-blank} shows that if $\Th(\Sa R)$ locally trace defines an infinite $\F_p$-vector space then $\Th(\Sa R)$ locally trace defines an infinite positive characteristic ring.
I do not know if a dp-minimal expansion of a linear order can locally trace define an infinite $\F_p$-vector space.

\begin{proof}
Proposition~\ref{prop:lvoe} shows that (2) implies (1).
Fact~\ref{fact:mct} shows that (1) implies (6).
Proposition~\ref{prop:field-blank} and preservation of near linear Zarankiewicz bounds under local trace definability shows that (6) implies (3).
Furthermore (3) implies (4) by Proposition~\ref{prop:she-0}.

\medskip
We now show that (5) implies (6).
First note that (5) implies that $a \mapsto (a, a)$ is a trace definition $\Sa R \rightsquigarrow (R;+)\sqcup \rvec$.
Lemma~\ref{lem:vector space} shows that $(R;+)\sqcup \rvec$ has near linear Zarankiewicz bounds, hence $\Sa R$ has near linear Zarankiewicz bounds.

\medskip
We finish by showing that (4) implies both (5) and (2).
We first treat the case when $R$ is dense.
Then $\Sq R$ cannot interpret $\rfield$, hence $\Sq R$ is a reduct of $\rvec$ by Fact~\ref{fact:pz}.
Hence (5) holds by Lemma~\ref{lem:combination}.
By Theorem~\ref{thm:dp} $\Sa R$ is trace equivalent to $\Sq R$.
Furthermore $\Sq R$ is trace equivalent to $(\R;+,<,(x\mapsto \lambda x)_{\lambda \in F})$ for some subfield $F$ of $\R$ by Lemma~\ref{lem:ovs}.
Hence (4) implies (2).

\medskip
We now treat the case when $R$ is discrete.
After possibly rescaling we have $R = \Z$, so $\Sa R$ is interdefinable with $\zgoup$ by Fact~\ref{fact:simon}(4).
Hence (2) holds by Proposition~\ref{prop:big order}.
Finally, it follows from the usual description of definable sets in  $\zgoup$ that any definable subset of $\Z^n$ is a boolean combination of $(\Z;+)$-definable sets and sets of the form $Y \cap \Z^n$ for $\rgoup$-definable $Y \subseteq \R^n$.
\end{proof}

We prove one more result on dp-minimal expansions of archimedean ordered abelian groups.

\begin{corollary}\label{cor:final final}
Exactly one of the following holds:
\begin{enumerate}[leftmargin=*]
\item $\Sa R$ is locally trace equivalent to $\rgoup$.
\item $\Sa R$ is locally trace equivalent to an o-minimal expansion of $\rfield$.
\end{enumerate}
\end{corollary}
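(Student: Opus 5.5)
The dichotomy splits along clause (4) of Corollary~\ref{cor:final}. ``Exactly one'' has two parts: at least one of (1), (2) holds, and not both.

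For exclusivity, suppose $\Sa R$ were locally trace equivalent both to $\rgoup$ and to an o-minimal expansion $\Sa S$ of $\rfield$. Then $\Th(\Sa R)$ locally trace defines $\Sa S$. By transitivity and reduct-closure it therefore locally trace defines the infinite domain $\rfield$, which contradicts (1)$\Rightarrow$(3) of Corollary~\ref{cor:final}. Equivalently, $\rgoup$ has near linear Zarankiewicz bounds by Fact~\ref{fact:mct}, and these bounds are preserved by Lemma~\ref{lem:blank-2}, whereas $\rfield$ lacks them by Proposition~\ref{prop:field-blank}.

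For existence, suppose (1) fails, so by Corollary~\ref{cor:final} clause (4) fails.
\begin{enumerate}[leftmargin=*]
\item \emph{Discrete case.} Here $\Sa R$ is $\zgoup$, and Proposition~\ref{prop:big order} shows it is trace equivalent to $\rgoup$, so (1) holds. Hence $R$ must be dense.
\item \emph{Reduction to $\Sq R$.} By Theorem~\ref{thm:dp}, $\Sa R$ is trace equivalent, hence locally trace equivalent, to the o-minimal expansion $\Sq R$ of $\rgoup$. Since $\Sq R$ is trace equivalent to $\Sa R$, it is not locally trace equivalent to $\rgoup$.
\item \emph{$\Sq R$ is not linear.} If $\Sq R$ were a reduct of $\rvec$, then Lemma~\ref{lem:ovs} would make it trace equivalent to an ordered vector space over a subfield of $\R$. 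Proposition~\ref{prop:lvoe} would then make it locally trace equivalent to $\rgoup$, a contradiction. So $\Sq R$ is not a reduct of $\rvec$, and by Fact~\ref{fact:pz} it interprets $\rfield$.
\end{enumerate}

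It remains to show that an o-minimal expansion $\Sa S$ of $\rgoup$ which interprets $\rfield$ is locally trace equivalent to an o-minimal expansion of $\rfield$. This is the main step.

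By the Peterzil--Starchenko trichotomy, $\Sa S$ defines a real closed field on some open interval $I$. By o-minimality this field is definably isomorphic to $\rfield$, via a map which we transport to $I$. Let $\Sa F$ be the structure induced on $I$ by $\Sa S$, pulled back to $\R$ along the isomorphism $\R\to I$. Then $\Sa F$ is an o-minimal expansion of $\rfield$, since definable sets are unchanged under a definable bijection. Moreover $\Sa F$ is definable in $\Sa S$.

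Conversely, we must locally trace define $\Sa S$ in $\Sa F$. Compose with a definable homeomorphism $h\colon\R\to I$ built from the field structure, say a semialgebraic bijection $\R\to(-1,1)$ rescaled into $I$. This is a trace embedding once we check that $\Sa S$-definable sets pulled back along $h$ are $\Sa F$-definable. That holds provided $h$ is $\Sa S$-definable, because then $h^{-1}(X)$ for $\Sa S$-definable $X$ is an $\Sa S$-definable subset of $I^n$, hence definable in $\Sa F$.

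The obstacle is that a global definable bijection $\R\to I$ need not exist in $\Sa S$: for instance, $\Sa S$ may be $\rgoup$ expanded by multiplication restricted to a bounded box. In that case I would use local trace definability instead. For each finite collection of $\Sa S$-definable sets, by Proposition~\ref{prop:loc equiv}(4) it suffices to embed them. I would use the family of translations $x\mapsto x-a$ into $I$, as $a$ ranges over $\R$, as the collection $\Cal E$ of functions $\R\to\R$. I would then argue via o-minimal cell decomposition and the structure theorem of Edmundo for semi-bounded expansions that every $\Sa S$-definable set is a boolean combination of sets built from $\rvec$-linear data and bounded definable pieces. Translates of the bounded pieces live in $I$, and the linear pieces are handled by $\rgoup\subseteq\Sa F$.
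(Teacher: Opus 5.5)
Your exclusivity argument is fine. So is your reduction to an o-minimal expansion $\Sa S$ of $\rgoup$ that is not a reduct of $\rvec$; that is the paper's own reduction via Theorem~\ref{thm:dp}.

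\textbf{The gap.} The problem is the main step, the semi-bounded non-linear case, where you need $\Th(\Sa F)$ to locally trace define $\Sa S$. You rest this on the claim that every $\Sa S$-definable set is a boolean combination of semilinear data and bounded definable pieces. That claim is false.

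\begin{itemize}
\item Edmundo's quantifier elimination (Fact~\ref{fact:smib}) has atomic formulas $B(t_1(x),\ldots,t_k(x))$, with $B\in\Cal B$ bounded and the $t_i$ $\D$-linear terms. These sets are typically unbounded.
\item For example, take $\rgoup$ expanded by multiplication on $[0,1]^2$. The tube $\{(x,y,z) : 0\le x-z\le 1,\ y-z=(x-z)^2\}$ is definable, and its fibre over every $z$ is a parabolic arc. So it agrees with no semilinear set off any bounded region, and is not such a boolean combination.
\end{itemize}

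Your witnesses cannot capture sets like this.

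\begin{itemize}
\item Untruncated translations $\R\to\R$ add nothing beyond the identity, since $\Sa F$ already expands $\rgoup$.
\item Any finite family of translations truncated to $a+I$ is informative only on a bounded region.
\item So for tuples far from the origin you would need $\{x:(t_1(x),\ldots,t_k(x))\in B\}$ to be $\Sa F$-definable in the original coordinates. That essentially says $\Sa S$ is a reduct of $\Sa F$.
\item This would require the isomorphism $\R\to I$ to be locally $\Sa S$-definable, and it fails in general. Let $\Cal B$ be the bounded sets definable in $(\R;+,<,(x,y)\mapsto\log(e^x+e^y))$. Then $\Sa F$ is just $\rfield$. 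Yet $\{(x,y,z,w): x-w,\,y-w,\,z-w\in[0,1],\ e^x+e^y=e^z\}$ is $\Sa S$-definable and has non-semialgebraic fibres over all large $w$.
\end{itemize}

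\textbf{What is missing.} You need a \emph{periodic} encoding. This is how the paper argues in the real case of Proposition~\ref{prop:o-min}.

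\begin{enumerate}
\item The maps $(x\mapsto\lambda x)_{\lambda\in\D}$ witness local trace definability of $(\V,\Cal B)$ in $(\R;+,<,\Cal B)$. This removes the $\D$-scalars.
\item $(\R;+,<,\Cal B)$ is interdefinable with the expansion of $\rgoup$ by its definable subsets of $[0,u)^n$.
\item Proposition~\ref{prop:nscover}, via the universal cover of Proposition~\ref{prop:u cover}, makes this trace equivalent to the structure induced on $[0,u)$.
\item That induced structure is interpretable in $\Sa I$ once $[0,u)$ fits inside a translate of $I$.
\end{enumerate}

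In effect, $x$ is encoded by $\lfloor x/u\rfloor$ together with its remainder in $[0,u)$. A condition $\sum_j\pm y_j+c\in B$ with $B$ bounded then becomes a finite disjunction. Each disjunct is a linear condition on the integer parts together with an $\Sa S$-definable condition on the remainders in $[0,u)^m$. This is exactly the information that localized translations cannot supply.

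\textbf{Smaller points.}
\begin{itemize}
\item The non-semi-bounded case should be handled by Fact~\ref{fact:mund}. It gives a global definable field on $\R$, isomorphic to $\rfield$ by Dedekind completeness.
\item Your ``semialgebraic bijection $\R\to(-1,1)$'' only makes sense once such a global field is available.
\item The isomorphism $(I;\oplus,\otimes)\cong\rfield$ is not definable in general, so ``definably isomorphic'' is inaccurate.
\end{itemize}
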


We have shown that $\Sa R$ is locally trace equivalent to $\Sq R$.
Hence it is enough to treat the case when $\Sa R$ is an o-minimal expansion of $\rgoup$.
We do this in the next section.




\subsection{O-minimal expansions of ordered groups}
We have seen that $\Th\rgoup$ cannot locally trace define an infinite field.
We prove the following.

\begin{proposition}\label{prop:o-min}
Any o-minimal expansion of an ordered abelian group is either locally trace equivalent to $\rgoup$ or locally trace equivalent to an o-minimal expansion of an ordered field.
Any o-minimal expansion of $\rgoup$ is either locally trace equivalent to $\rgoup$ or locally trace equivalent to an o-minimal expansion of $\rfield$.
\end{proposition}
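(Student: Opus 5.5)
Let $\Sa R$ be an o-minimal expansion of an ordered abelian group $(R;+,<)$. The proof splits along the Peterzil--Starchenko trichotomy~\cite{PS-Tri}, in the form used for Corollary~\ref{cor:o-min}: either $\Sa R$ interprets an infinite field, or $\Sa R$ is a reduct of an ordered vector space $\Sa V$ over an ordered division ring $\D$ expanding $(R;+,<)$.

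\emph{Linear case.} Suppose $\Sa R$ does not interpret an infinite field. By Lemma~\ref{lem:ovs} there is a division subring $\D^*$ of $\D$ such that $\Sa R$ is trace equivalent to the ordered $\D^*$-vector space $\Sa V^*$. By Proposition~\ref{prop:lvoe}, $\Sa V^*$ is locally trace equivalent to $\rgoup$. Hence $\Sa R$ is locally trace equivalent to $\rgoup$.

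\emph{Field case.} Suppose $\Sa R$ interprets an infinite field. By o-minimality and the trichotomy there is a definable open interval $I$ and definable operations making $I$ into an ordered real closed field $F$. Let $\Sa F$ be the structure induced on $I$ by $\Sa R$; it is an o-minimal expansion of an ordered field. The inclusion $I \to R$ shows $\Sa R$ defines $\Sa F$, so $\Th(\Sa R)$ trace defines $\Sa F$. It remains to show that $\Th(\Sa F)$ locally trace defines $\Sa R$.

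First, $\Sa F$ interprets $(I;+,<)$ cut down to a neighbourhood of $0$, and from this it recovers the whole structure on any bounded interval of $R$. Second, the group $(R;+,<)$ itself is trace definable from $\Th(\Sa F)$: by Proposition~\ref{prop:re-oag}, $\Th(\Sa F)$ trace defines $\rgoup$, and bounded regular rank (o-minimal groups are divisible) together with Proposition~\ref{prop:bounded-reg}(2) gives that $(R;+,<)$ is locally trace equivalent to $\rgoup$.

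The key step is handling unbounded definable sets. Following the inversion trick in Lemma~\ref{lem:regular}, use the Edmundo--Eleftheriou--Prelli and Wilkie covering-type results, or more directly the structure theory of o-minimal expansions of groups with a definable field on an interval: every $\Sa R$-definable set should be a boolean combination of sets of the form
\[
\{ (a_1,\ldots,a_n) : (\ell_1(a),\ldots,\ell_k(a)) \in Y \},
\]
where the $\ell_j$ are $\D$-linear maps from the linear part and $Y$ is a set definable in the bounded (field) part, translated appropriately. Concretely, decompose $R = \bigcup_{r} (r+I)$ and use that $\Sa R$ is ``semi-bounded'' relative to its linear reduct. This is the Peterzil--Edmundo semi-bounded decomposition: $\Sa R$ is either semi-bounded, in which case definable sets are boolean combinations of linear sets and bounded sets; or $\Sa R$ defines a global field, in which case $\Sa R$ is itself bi-interpretable with an expansion of a field and we are done.

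In the semi-bounded case, a witnessing family $\Cal E$ of functions into a model of $\Th(\Sa F)\sqcup\rgoup$, consisting of the linear coordinates together with ``fractional part'' maps into $I$, gives local trace definability of $\Sa R$. Since $\Th(\Sa F)$ trace defines $\rgoup$, Lemma~\ref{lem:k-dis} then absorbs the disjoint union into $\Th(\Sa F)$.

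\emph{Mutual exclusivity.} The two alternatives cannot both hold: by Fact~\ref{fact:mct} and Lemma~\ref{lem:blank-2}, $\rgoup$ has near linear Zarankiewicz bounds, while by Proposition~\ref{prop:field-blank} an expansion of a field does not.

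\emph{Expansions of $\rgoup$.} For the second statement, take $R = \R$. In the field case, o-minimality and completeness allow the field on $I$ to be transported to a global field on $\R$ definably isomorphic to $\rfield$, by Fact~\ref{fact:pz} and the fact that an o-minimal expansion of $\rgoup$ interpreting a field interprets $\rfield$ on an interval. Then $\Sa F$ becomes an o-minimal expansion of $\rfield$.

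\emph{Main obstacle.} The semi-bounded step is the main obstacle: showing that the combination of bounded field structure with linear global structure is locally trace definable in the field part. The first thing to try is the quantifier-elimination-style description of semi-bounded structures due to Edmundo and Peterzil, combined with Lemma~\ref{lem:tech}.
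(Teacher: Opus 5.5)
\textbf{There is a genuine gap, in the general (non-archimedean) case.}

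Your linear case (Lemma~\ref{lem:ovs} followed by Proposition~\ref{prop:lvoe}) and your exclusivity argument are fine. For $\rgoup$ your outline is essentially the paper's:
\begin{enumerate}
\item Edmundo's dichotomy (Fact~\ref{fact:mund});
\item in the semi-bounded non-linear case, Fact~\ref{fact:smib} with the maps $x\mapsto\lambda x$, $\lambda\in\D$, as witnesses;
\item a universal-cover argument for the bounded sets. This is your ``fractional part'' maps; in the paper it is Proposition~\ref{prop:nscover}, using that over $\R$ every bounded interval is a finite union of translates of subintervals of $I$.
\end{enumerate}
Two side remarks. The covering results behind Lemma~\ref{lem:regular} play no role. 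In the semi-bounded case there is no definable global field to which $I$'s field could be transported, but none is needed: $(I;\oplus,\otimes,<)$ is Dedekind complete and hence already isomorphic to $\rfield$.

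The gap is at ``from this it recovers the whole structure on any bounded interval of $R$''. What makes the structure induced on a bounded interval $[0,\gamma)$ interpretable in $\Sa F$ is that $[0,\gamma)$ is \emph{short}, i.e.\ piecewise in definable bijection with a subinterval of $I$. Over $\R$ every bounded interval is short. For non-archimedean $R$ this fails. In a non-archimedean elementary extension of $(\R;+,<,\cdot|_{[0,1]^2})$, an interval $[0,\gamma]$ with $\gamma>\N$ is bounded but not short. The reason is that already over $\R$ each formula uniformly bounds the lengths of the images of $[0,1]$ under the functions it defines, and this bound transfers to the extension.

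Accordingly, Fact~\ref{fact:smib} gives quantifier elimination only relative to \emph{bounded} sets, not relative to sets lying in boxes of short intervals. So your normal form ``$\D$-linear maps into a set definable in the field part'' is not available. Your fractional-part maps into $I$ also cannot see a bounded set spread along a long interval, since no finite number of translates of $I$ covers it.

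The missing ingredient is Peterzil's theorem (Fact~\ref{fact:short}). The paper uses it as follows:
\begin{enumerate}
\item Pass to $\Sa R\prec\Sa S$, the expansion $\Sa N=(\V,\Cal S_{\Sa S})$ with quantifier elimination relative to short sets, and a shortnin' $\Sa M\prec\Sa N$.
\item The elementary equivalences $\Sa R\equiv\Sa S$, $\Sa N\equiv\Sa M$, $(S;+,<,\Cal S_{\Sa S})\equiv(M;+,<,\Cal S_{\Sa M})$, together with the scalar multiplications as witnesses, show that $\Sa R$ is locally trace equivalent to $(M;+,<,\Cal E)$, where $\Cal E$ is the collection of bounded $\Sa M$-definable sets.
\item Only after this reduction does your bounded-set step go through. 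For finitely many bounded $X_1,\ldots,X_m\subseteq[0,\gamma)^n$, Proposition~\ref{prop:nscover} makes $(M;+,<,X_1,\ldots,X_m)$ trace definable in the theory of the structure induced on $[0,\gamma)$. That structure is interpretable in $\Sa I$ precisely because $[0,\gamma)$ is short.
\end{enumerate}
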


We prove the real case and the general case of Proposition~\ref{prop:o-min} separately.
The real case also serves as a warm-up for the more difficult general case.
We first gather some more tools.
Recall that an o-minimal structure is \textbf{semi-bounded} if any definable function on a bounded interval is bounded.
Fact~\ref{fact:mund} is a theorem of Edmundo~\cite{ed-str}.

\begin{fact}
\label{fact:mund}
Suppose that $\Sa R$ is an o-minimal expansion of an ordered abelian group $(R;+,<)$.
Then the following are equivalent:
\begin{enumerate}
\item $\Sa R$ is not semi-bounded.
\item There are definable $\oplus,\otimes\colon R^2\to R$ such that $(R;\oplus,\otimes,<)\models\rcf$.
\end{enumerate}
\end{fact}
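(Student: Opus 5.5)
This statement is Edmundo's theorem, which the paper cites; here is how I would reconstruct it. I would prove the two directions separately, with the substantive work in (1)$\Rightarrow$(2).

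For (2)$\Rightarrow$(1), suppose $\oplus,\otimes$ are definable and $(R;\oplus,\otimes,<)\models\rcf$. Here the field order is the given order $<$ and the field lives on all of $R$. Let $0_\oplus<1_\oplus$ be the field's zero and one. The map $x\mapsto x^{-1}$, computed in the field, is definable on the bounded interval $(0_\oplus,1_\oplus)$. Its image is $\{y: y>1_\oplus\}$, which is unbounded in $R$. So $\Sa R$ is not semi-bounded.

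For (1)$\Rightarrow$(2) I would proceed in three steps.
\begin{enumerate}[leftmargin=*]
\item Fix a definable $f$ on a bounded interval that is unbounded. By o-minimal monotonicity, after shrinking the domain $f$ is continuous and strictly monotone near an endpoint and tends to $\pm\infty$ there. Using translations and $x\mapsto -x$ from the group structure, I would turn this into a definable order-preserving bijection $g$ from a bounded interval $I$ onto all of $R$.
\item Produce a definable real closed field on some open interval $J$. I would argue by contradiction using the Peterzil--Starchenko trichotomy. If no point admits a definable field on a neighborhood, then every non-trivial point is group-like, so $\Sa R$ is locally linear. I would then invoke the Loveys--Peterzil/Peterzil structure theorem for linear o-minimal expansions of ordered groups: $\Sa R$ is a reduct of an ordered vector space over an ordered division ring, up to restricted linear maps. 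In particular every definable unary function is piecewise affine on bounded intervals, hence bounded there. This contradicts non-semi-boundedness.
\item Transport the field from $J$ to $R$. Inside the field on $J$ there are definable order-preserving bijections between any two subintervals of $J$, via field-affine maps and $x\mapsto x^{-1}$ type maps, which make a field interval order-isomorphic to the whole field. In particular there is a definable order-preserving bijection from $J$ onto a bounded subinterval $J'\subseteq J$. Using group translations and these maps, I would obtain a definable order-preserving bijection $h\colon I\to J$. Then $h\circ g^{-1}\colon R\to J$ is a definable order isomorphism, and pulling back the field operations along it gives $\oplus,\otimes$ on $R$ with $(R;\oplus,\otimes,<)\models\rcf$.
\end{enumerate}

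I expect step 2 to be the main obstacle. The trichotomy is local, so passing from ``every point is locally linear'' to the global piecewise-affine description needs the nontrivial linearity theorem. I would first try to cite that theorem directly. Failing that, I would prove directly that a local field near one point yields a field on a neighborhood of any point, using translations, so that failure everywhere forces linearity. A smaller point is matching lengths in step 3 when $I$ is longer than $J$. I would handle it by subdividing $I$, or by first compressing $I$ into $J$ using $g$ followed by a definable bijection from $R$ onto a bounded interval inside the field.
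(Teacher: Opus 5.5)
The paper gives no proof of this fact; it is quoted from Edmundo, so there is no internal argument to compare with. Your overall route is the standard one, and most of it is sound.

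\begin{itemize}
\item Your (2)$\Rightarrow$(1) is correct as written.
\item Step 1 is correct.
\item Step 2 is less of an obstacle than you expect. It is exactly the direction ``not linear $\Rightarrow$ definable real closed field on an interval'' of Fact~\ref{fact:lovey} (Loveys--Peterzil plus Peterzil--Starchenko). Combine it with the easy remark that a reduct of an ordered vector space is semi-bounded, because its definable unary functions are piecewise of the form $x\mapsto\lambda x+c$.
\end{itemize}

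The step that fails as written is the length matching in step 3, which you call minor.

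Translations preserve length, and the field maps only move points inside $J$. So you obtain $h\colon I\to J$ only if $I$ can be cut into finitely many pieces, each fitting into a translate of $J$. That requires the length of $I$ to be at most $n$ times that of $J$ for some $n\in\N$. When $(R;+,<)$ is non-archimedean this can fail, so subdividing does not work in general.

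Your second remedy is circular. A definable order-preserving bijection from $R$ onto a bounded interval inside the field is exactly what step 3 is supposed to produce.

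The fix is to shrink the domain of $g$ rather than compress $I$. Write $I=(a,b)$. Since $g$ is an order isomorphism onto $R$, for any positive $\epsilon<b-a$ it maps $(a,a+\epsilon)$ onto $(-\infty,g(a+\epsilon))$ and $(b-\epsilon,b)$ onto $(g(b-\epsilon),+\infty)$. Hence
\[
h(t)=\begin{cases} g(a+\epsilon+t)-g(a+\epsilon) & \text{if } -\epsilon<t<0,\\ 0 & \text{if } t=0,\\ g(b-\epsilon+t)-g(b-\epsilon) & \text{if } 0<t<\epsilon \end{cases}
\]
is a definable order-preserving bijection $(-\epsilon,\epsilon)\to R$.

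Now pick $c\in J$ and take $\epsilon$ small enough that also $(c-\epsilon,c+\epsilon)\subseteq J$. Then $t\mapsto h(t-c)$ is a definable order isomorphism from an open subinterval of $J$ onto $R$. That subinterval is definably order-isomorphic to $(J;\oplus,\otimes,<)$ by your field-internal maps. Pulling back the field operations along the composite finishes the proof, with no comparison of the lengths of $I$ and $J$.
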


We say that $\Sa R$ is \textbf{linear} if it is a reduct of an ordered vector space with underlying ordered group $(R;+,<)$.
If $(R;+,<)=\rgoup$ then $\Sa R$ is linear if and only if $\Sa R$ is a reduct of $\rvec$.

\begin{fact}
\label{fact:lovey}
Suppose that $\Sa R$ is an o-minimal expansion of an ordered group $(R;+,<)$.
Then the following are equivalent:
\begin{enumerate}[leftmargin=*]
\item $\Sa R$ is not linear.
\item There is an interval $I \subseteq R$ and definable $\oplus,\otimes \colon I^2 \to I$ such that $(I;\oplus,\otimes,<) \models \rcf$.
\end{enumerate}
\end{fact}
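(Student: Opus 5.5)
The plan is to prove the two implications separately. The easy direction, (2) implies (1), will use the Zarankiewicz machinery of Section~\ref{section:zaran}. The hard direction, (1) implies (2), will reduce to the Peterzil--Starchenko trichotomy together with the Loveys--Peterzil analysis of o-minimal structures with no definable field on any interval.

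For (2) implies (1), suppose $\Sa R$ is linear, so $\Sa R$ is a reduct of an ordered vector space $\Sa V$ over an ordered division ring with underlying group $(R;+,<)$. By Fact~\ref{fact:mct}, $\Sa V$ has near linear Zarankiewicz bounds. Every $\Sa R$-definable bipartite graph is $\Sa V$-definable, so $\Sa R$ has near linear Zarankiewicz bounds as well. Now suppose there were an interval $I$ and definable $\oplus,\otimes$ with $(I;\oplus,\otimes,<)\models\rcf$. Then $\Sa R$ would define a characteristic zero ring, and the point--line incidence graph of that ring is an $\Sa R$-definable bipartite graph. By Proposition~\ref{prop:field-blank} this graph fails near linear Zarankiewicz bounds, which is a contradiction. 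Alternatively one can quote directly that ordered vector spaces are locally modular and so define no field.

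For (1) implies (2), apply the Peterzil--Starchenko trichotomy~\cite{PS-Tri} pointwise. Since $\Sa R$ expands a group, no point of $R$ is trivial, because the graph of $+$ witnesses non-triviality near every point. So each $a\in R$ is either locally modular (the structure is locally linear near $a$) or admits a definable real closed field structure on some open interval around $a$. If some point is of the second kind, we are done. So assume every point is locally modular, i.e.\ near every point every definable function is, piecewise, locally affine with respect to germs of definable partial endomorphisms of $(R;+)$. We then have to show that $\Sa R$ is linear.

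This globalization step is the main obstacle. My first move would be to invoke the Loveys--Peterzil theorem~\cite{Peterzil-Reducts}. Under the hypothesis that no interval supports a definable field, it gives that the germs at $0$ of definable partial endomorphisms form an ordered division ring $\D$. It also gives that every definable set is a boolean combination of sets cut out by $\D$-linear inequalities with bounded-domain partial endomorphisms, and this places $\Sa R$ as a reduct of the ordered $\D$-vector space on $(R;+,<)$ whose scalars extend these germs. The subtle point is that the scalar multiplications need only be defined on bounded intervals in $\Sa R$, as in the semi-bounded case of Fact~\ref{fact:mund}. Linearity, however, only asks that $\Sa R$ be a reduct of the full vector space $\Sa V$ in which they are globally defined, and that is exactly what the argument provides.
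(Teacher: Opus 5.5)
Your proposal is correct and takes the same route as the paper. The paper does not prove this fact; it attributes it to the Peterzil--Starchenko trichotomy together with the Loveys--Peterzil theorem (cited there as \cite{loveys}, not \cite{Peterzil-Reducts}), and these are exactly the two ingredients you use for the hard direction. For the easy direction, your Zarankiewicz argument via Fact~\ref{fact:mct} and the characteristic zero case of Proposition~\ref{prop:field-blank} is valid and non-circular, and it can replace the usual remark that ordered vector spaces are locally modular and so define no field on an interval.
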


Fact~\ref{fact:lovey} is a part of o-minimal  trichotomy~\cite{loveys,PS-Tri}.
Fact~\ref{fact:smib} is due to Edmundo~\cite{ed-str}.

\begin{fact}
\label{fact:smib}
Suppose that $\Sa R$ is a semi-bounded o-minimal expansion of an ordered abelian group $(R;+,<)$.
Let $\Cal B$ be the collection of bounded definable sets.
Then there is a unique ordered division ring $\D$ and ordered $\D$-vector space $\V$ expanding $(R;+,<)$ such that $\Sa R$ is interdefinable with $(\V,\Cal B)$ and $(\V,\Cal B)$ admits quantifier elimination.
\end{fact}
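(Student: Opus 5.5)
This is Edmundo's structure theorem for semi-bounded o-minimal expansions of ordered groups, and the plan reconstructs his argument in four steps: define $\D$, show unbounded behaviour is $\D$-linear, prove a cone decomposition, and read off quantifier elimination and uniqueness.

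\textbf{Step 1: the division ring.} Let $\D$ be the set of $\Sa R$-definable group endomorphisms $\lambda\colon (R;+)\to(R;+)$ defined on all of $R$. Each such $\lambda$ is monotone by o-minimality, and composition and pointwise addition make $\D$ a ring. Invertibility of nonzero elements comes from the fact that a monotone nonzero endomorphism is a bijection; this uses divisibility of $R$, which holds for an o-minimal ordered group, together with o-minimality. Order $\D$ by declaring $\lambda>0$ when $\lambda$ is increasing. Then $\V=(R;+,<,(\lambda)_{\lambda\in\D})$ is an ordered $\D$-vector space, and it is clearly a reduct of $\Sa R$. Adding every bounded definable set, $(\V,\Cal B)$ is also a reduct of $\Sa R$.

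\textbf{Step 2: unbounded unary functions are eventually affine over $\D$.} Let $f\colon (a,\infty)\to R$ be definable. For each $t$ consider $g_t(x)=f(x+t)-f(x)$. I expect to show, by o-minimality and semi-boundedness, that $g_t$ is eventually constant with value $\mu(t)$. The point is that $f$ restricted to intervals of bounded length differs from a translate of itself by a function of bounded variation, and semi-boundedness forbids the unbounded oscillation which would otherwise come from a global field. Then $\mu$ is an additive definable map on a neighbourhood of $0$ and extends to a global endomorphism $\lambda\in\D$. This gives $f(x)=\lambda x+c+b(x)$ on a tail, where $b$ is bounded.

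\textbf{Step 3: cone decomposition.} Next I will show that every definable $X\subseteq R^n$ is a finite union of \emph{cones} $B+\{\sum_i t_iv_i: t_i>0\}$, where $B$ is a bounded definable set and the $v_i$ are $\D$-independent. This is the main obstacle. I would induct on $n$ via o-minimal cell decomposition. Each cell is split by whether its fibers and boundary functions are bounded, and Step 2 is applied uniformly in parameters to replace unbounded boundary functions by $\D$-affine ones modulo bounded errors. The hard part is uniformity: the slopes $\lambda$ must not vary with parameters. I expect this to follow because $\D$ is small, since a definable family of elements of $\D$ is finite by o-minimality, so the slopes are locally constant.

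\textbf{Step 4: quantifier elimination and uniqueness.} Once definable sets are finite unions of cones, they are quantifier-free definable in $(\V,\Cal B)$ using $\D$-linear inequalities and bounded sets. Hence $\Sa R$ and $(\V,\Cal B)$ are interdefinable with quantifier elimination. For uniqueness, suppose $\D'$ and $\V'$ also work. A total definable endomorphism of $(R;+)$ is unbounded, so in $(\V',\Cal B)$, by quantifier elimination, it must coincide with some $\lambda'\in\D'$ on a tail, and hence everywhere by additivity. Conversely every $\lambda'\in\D'$ is a total definable endomorphism, so $\D'=\D$ as rings of operators on $R$.
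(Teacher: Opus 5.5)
The paper does not prove this Fact; it quotes Edmundo's structure theorem. Your overall route — eventual $\D$-linearity of unary functions, then a cone decomposition, then the description of $\Sa R$ as $(\V,\Cal B)$ — is the architecture of that result.

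The genuine gap is Step~3. The cone decomposition is essentially the whole theorem, and your sketch does not prove it. Applying Step~2 uniformly in parameters only gives $f(\bar a,x)=\lambda x+c(\bar a)$ for $x>\theta(\bar a)$, where $c$ and $\theta$ are definable and in general unbounded in $\bar a$. It says nothing about the region where $\bar a$ and $x$ go to infinity together below the threshold. A typical such region is a strip $\{(a,x):|x-\mu a|<r\}$ around a $\D$-line, which is reached neither by ``eventually in $x$ for fixed $a$'' nor by ``eventually in $a$ for fixed $x$''.

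Nor can an unbounded boundary function be replaced on a cell by a single $\D$-affine function up to bounded error. The function $\min(x_1,x_2)$ is continuous on the cell $R^2_{>0}$ and already definable in $(R;+,<)$, yet it is at bounded distance from no affine function. So cells must be cut further along $\D$-linear hyperplanes, which ordinary cell decomposition does not supply. Producing these cuts and showing the pieces are cones is the real content of the theorem, and it is missing here. This involves changing coordinates along $\D$-directions, using that definable functions on bounded sets are bounded to make thresholds uniform over the bounded part, and inducting on the number of unbounded directions.

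The uniformity of slopes, which you single out as the hard part, is the easy part. Also, finiteness of definable families in $\D$ comes from semi-boundedness, not from o-minimality: in an o-minimal field, $(x\mapsto ax)_a$ is an infinite definable family of endomorphisms.

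There are two smaller points.

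First, Step~2 as written is not an argument, but it is easily repaired.
\begin{enumerate}
\item A bounded definable function on an unbounded interval is eventually constant. Otherwise its inverse on a tail is an unbounded definable function on a bounded interval.
\item $g_t$ cannot tend to $\pm\infty$. Otherwise, letting $s_x(c)\in(0,t)$ be the solution of $f(x+s)-f(x)=c$, the map $c\mapsto\lim_{x\to\infty}s_x(c)$ would be a definable injection of $(0,\infty)$ into $(0,t)$.
\end{enumerate}
Hence each $g_t$ is eventually constant, and $\mu$ is already a total additive definable map, so no extension is needed. Then $f-\mu$ is eventually $t$-periodic for every $t$, hence eventually constant, so $f(x)=\mu(x)+c$ exactly on a tail.

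Second, in Step~4 a cone $B+\sum_i R_{>0}v_i$ is given by an existential formula, so its quantifier-free definability in $(\V,\Cal B)$ needs an argument. After a $\D$-linear change of coordinates making $v_i=e_i$, consider the set where every $y_i$ with $i\le k$ exceeds a bound for $B$. There, membership reduces to a bounded-set condition on the last $n-k$ coordinates. The remaining part is a cone with one fewer unbounded direction, so one inducts on $k$.
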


We now prove the real case of Proposition~\ref{prop:o-min}.
We show that an o-minimal expansion of $\rgoup$ is either locally trace equivalent to $\rgoup$ or locally trace equivalent to an o-minimal expansion of $\rfield$.

\begin{proof}
Let $\Sa R$ be an o-minimal expansion of $\rgoup$.
Suppose that $\Sa R$ is not semi-bounded.
Fix $\oplus,\otimes$ as in
Fact~\ref{fact:mund}.
Then $(\R;\oplus,\otimes,<)$ is a connected ordered field and is hence isomorphic to $(\R;+,\times,<)$.
Hence $\Sa R$ is bidefinable with an o-minimal expansion of $\rfield$.
We therefore suppose that $\Sa R$ is semi-bounded.
Proposition~\ref{prop:lvoe} shows that $\Sa R$ is locally trace equivalent to $\rgoup$ when $\Sa R$ is linear.
We therefore suppose that $\Sa R$ is non-linear.
Let $I,\oplus,\otimes$ be as in Fact~\ref{fact:lovey}.
Note that $I$ is bounded as $\Sa R$ is semi-bounded.
Again, $(I;\oplus,\otimes,<)$ is isomorphic to $(\R;+,\times,<)$.
Let $\Sa I$ be the structure induced on $I$ by $\Sa R$.
We show that $\Sa I$ is locally trace equivalent to $\Sa R$.
It is enough to show that $\Th(\Sa I)$ locally trace defines $\Sa R$.
Let $\Cal B$ and $\D$ be as in Fact~\ref{fact:smib}.
Then $(\V,\Cal B)=(\R;+,<,\Cal B,(x\mapsto\lambda x)_{\lambda\in\D})$ is interdefinable with $\Sa R$ and admits quantifier elimination.
Observe that $(x\mapsto\lambda x)_{\lambda\in\D}$ witnesses local trace definability of $(\V,\Cal B)$ in $(\R;+,<,\Cal B)$.
After rescaling and translating we suppose that $I \subseteq (0, 1)$.
Rescaling and translating we see that $(\R;+,<,\Cal B)$ is interdefinable with the expansion of $\rgoup$ by all $\Sa R$-definable subsets of all $[0, 1)^n$.
(This step fails in the non-archimedean setting.)
By Proposition~\ref{prop:nscover} $(\R;+,<,\Cal B)$ is trace equivalent to $\Sa I$.
\end{proof}

The remainder of this section is devoted to the proof that any o-minimal expansion of an ordered group is either locally trace equivalent to $\rgoup$ or locally trace equivalent to an o-minimal expansion of an infinite field.
We let $\Sa R$ be an o-minimal expansion of an ordered abelian group $(R;+,<)$.
Following the real case we   suppose that $\Sa R$ is semi-bounded and non-linear.
An interval $J\subseteq R$ is \textbf{short} if one of the following equivalent conditions holds:
\begin{enumerate}
\item For any positive $\delta\in R$ there is a definable surjection $[0,\delta]\to J$.
\item There are definable $\oplus,\otimes\colon J^2 \to J$ such that $(J;\oplus,\otimes,<)\models\rcf$.
\end{enumerate}
The equivalence follows by \cite[Cor.~3.3]{return}.
By semi-boundedness a short interval is bounded.
We say that $\Sa R$ is \textbf{shortnin'} if every bounded interval is short.

\medskip
Let $\Cal S$ be the collection of all $\Sa R$-definable sets that are contained in cartesian powers of short intervals.
Given a model $\Sa M$ of $\Th(\Sa R)$ we let $\Cal S_{\Sa M}$ be the collection of sets defined by the same formulas as $\Cal S$.
Fact~\ref{fact:short} is due to Peterzil~\cite{return}.

\begin{fact}
\label{fact:short}
Suppose $\Sa R$ is a non-linear semi-bounded o-minimal expansion of an ordered abelian group $(R;+,<)$ and  $\Cal S$ is as above.
Then there is an elementary extension $\Sa R\prec\Sa S$, an o-minimal semi-bounded expansion $\Sa N$ of $\Sa S$, and an elementary submodel $\Sa M$ of $\Sa N$ such that:
\begin{enumerate}
\item $\Sa M$ is shortnin'.
\item There is an ordered division ring $\D$ and an ordered $\D$-vector space $\V$ expanding $(S;+,<)$ such that $\Sa N=(\V,\Cal S_{\Sa S})$ and $\Sa N$ admits quantifier elimination.
\end{enumerate}
\end{fact}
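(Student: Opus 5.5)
Since this is Peterzil's theorem, the plan is to reconstruct his argument from the structure theory already quoted: Facts~\ref{fact:lovey}, \ref{fact:smib} and the definition of short intervals. The construction has three steps: build $\Sa S$, define $\Sa N$ as a reduct of $\Sa S$, and take $\Sa M$ to be a definable closure inside $\Sa N$.

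\textbf{Step 1: the data $\D$, $\V$ and the extension $\Sa S$.} By Fact~\ref{fact:smib} there is an ordered division ring $\D$ and an ordered $\D$-vector space $\V_{\Sa R}$ expanding $(R;+,<)$ such that $\Sa R$ is interdefinable with $(\V_{\Sa R},\Cal B)$. This interdefinability has quantifier elimination, with $\Cal B$ the bounded definable sets. By Fact~\ref{fact:lovey}, non-linearity gives a short interval. After translating, we may take it to be $J_0=[0,e]$ with $e>0$. Let $\Sa R\prec\Sa S$ be $|R|^+$-saturated. The scalar maps $x\mapsto \lambda x$ are $\Sa R$-definable on all of $R$, so $\Sa S$ is again an ordered $\D$-vector space $\V$ expanding $(S;+,<)$. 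We set
\[
\Sa N=(\V,\Cal S_{\Sa S}).
\]
This is a reduct of $\Sa S$ and therefore o-minimal. It is semi-bounded because $\Sa S$ is semi-bounded: semi-boundedness is first order and $\Sa R$ is semi-bounded.

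\textbf{Step 2: quantifier elimination for $\Sa N$.} This is the step I expect to be the main obstacle. I would adapt Edmundo's proof of Fact~\ref{fact:smib}, replacing ``bounded'' by ``contained in a power of a short interval''. The first point to check is that the class of sets in $\Cal S_{\Sa S}$ is closed under the operations that arise when eliminating one existential quantifier over a $\D$-linear cell decomposition. The key inputs are as follows.
\begin{enumerate}[leftmargin=*]
\item Shortness is preserved under $\D$-linear images and under finite unions of translates. Indeed, by condition (1) in the definition of short, a surjection $[0,\delta]\to J$ can be composed with $x\mapsto\lambda x$ and with translations.
\item A definable function on a short interval has short image, again by condition (1).
\item A cell in a cartesian power of short intervals, projected along a $\D$-linear direction, stays inside a power of a short interval.
\end{enumerate}
With these, a formula $\exists y\,\varphi$ splits into a $\D$-linear part, handled by Fourier--Motzkin elimination for ordered vector spaces, and a short part, which lies in $\Cal S_{\Sa S}$. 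Mixed cells are handled via a decomposition of $S^n$ into cells that are products of ``long'' $\D$-linear cells and short cells. This is exactly the point where Peterzil's analysis of short versus long directions is needed, and where I would expect to spend most effort.

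\textbf{Step 3: the shortnin' submodel $\Sa M$.} An o-minimal expansion of an ordered group has definable Skolem functions. Hence for any $A\subseteq S$ the set $M:=\operatorname{dcl}_{\Sa N}(A)$ is the domain of an elementary submodel $\Sa M\prec\Sa N$. Take $A=J_0\cup\{$the countably many parameters needed for $\Cal S_{\Sa S}\}$, chosen inside a short interval. Using the quantifier elimination from Step 2, every $\Sa N$-definable unary function is piecewise either of the form $x\mapsto \lambda x+c$ or has short image. So $\operatorname{dcl}(A)$ is contained in the set of elements bounded by some $\D$-linear combination of elements of $J_0$. Consequently every bounded interval of $M$ lies in finitely many translates of images $\lambda J_0$. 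By input (1) of Step 2, each such translate is short, so every bounded interval of $\Sa M$ is short, i.e.\ $\Sa M$ is shortnin'.

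One point needs checking here: shortness in $\Sa M$ must be witnessed by $\Sa M$-definable maps. Condition (2) in the definition of short is first order with parameters in $A$, so this holds by elementarity.
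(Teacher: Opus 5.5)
The paper does not prove this statement; it quotes it from Peterzil~\cite{return}. Measured against what the statement asserts, your construction has a genuine gap already in Step~1.

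\textbf{Step~1: you build a reduct, but the statement needs an expansion.} The statement requires $\Sa N=(\V,\Cal S_{\Sa S})$ to be an \emph{expansion} of $\Sa S$, and the paper uses exactly this ("$\Sa S$ is a reduct of $\Sa N$"). You take $\D$ to be the ring of globally definable scalars from Fact~\ref{fact:smib} and observe that $(\V,\Cal S_{\Sa S})$ is then a \emph{reduct} of $\Sa S$. You never prove the converse, and with this $\D$ it is false in general. Fact~\ref{fact:smib} keeps all bounded definable sets as primitives. Replacing $\Cal B$ by $\Cal S$ throws away the long bounded sets, and these need not be recoverable from global $\D$-linear maps and short sets.

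For example, let $R$ be a sufficiently saturated real closed field, fix $b\in R$ with $b>\N$, and let $\Sa R=(R;+,<,\cdot|_{[0,1]^2},g)$ with $g(x)=\sqrt{2}\,x$ on $[0,b]$. Then:
\begin{itemize}
\item $\Sa R$ is semi-bounded and non-linear;
\item its only definable endomorphisms of $(R;+)$ are the maps $x\mapsto qx$ with $q\in\Q$, so your $\D$ is $\Q$;
\item $[0,b]$ is long;
\item the graph of $g$ is not definable from $\Q$-linear sets and short sets.
\end{itemize}
The last three points follow from automorphism arguments using automorphisms of $(R;+,<)$ that act as the identity on a suitable convex subgroup.

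So $\D$ must be enlarged by scalars that are \emph{not} $\Sa S$-definable, for instance ones coming from germs of definable partial endomorphisms, realized as global maps on a suitable elementary extension. Then $\Sa N$ is a genuine expansion of $\Sa S$. Its o-minimality and semi-boundedness are no longer inherited, and the facts that it defines every $\Sa S$-definable set and has quantifier elimination are the substance of Peterzil's structure theory for semi-bounded sets. Your Step~2 explicitly defers that analysis, so the core of the proof is missing.

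\textbf{Step~3 fails, and its conclusion is false for your $\Sa N$.} The sets of $\Cal S_{\Sa S}$ are named by predicates in the language of $\Sa N$, so you cannot choose "their parameters" inside a short interval. Suppose $\Sa R$ is not already shortnin', and pick a long interval $[a,c]$ with $a,c\in R$.
\begin{itemize}
\item The singletons $\{a\}$ and $\{c\}$ lie in $\Cal S$, since they sit inside translates of $J_0$.
\item Hence $a$ and $c$ are $\emptyset$-definable in $\Sa N$ and belong to every elementary submodel $\Sa M$.
\item The interval $[a,c]$ is long in $\Sa S$ by elementarity, and your $\Sa N$ is a reduct of $\Sa S$, so $[a,c]$ is long in $\Sa N$.
\item Transferring condition~(2) in the definition of short by elementarity, $[a,c]$ is also long in $\Sa M$.
\end{itemize}
Thus no elementary submodel of your $\Sa N$ is shortnin'. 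For $[a,c]$ to become short in $\Sa N$, the new scalars must stretch short intervals onto such long ones. This is a second reason they cannot be $\Sa S$-definable, and a second reason your choice of $\D$ cannot work.
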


Let $\Sa R, \Sa S, \Sa N, \Sa M$ be as in Fact~\ref{fact:short}.
Let $\Cal E$ be the collection of all bounded $\Sa M$-definable sets.
Note that if $J,J^*\subseteq M$ are bounded intervals and $f\colon J \to J^*$ is $\Sa M$-definable, then $f$ is $(M;+,<,\Cal E)$-definable as the graph of $f$ is in $\Cal E$.
Hence $(M;+,<,\Cal E)$ is shortnin'.
We show that $\Sa R$ is locally trace equivalent to $(M;+,<,\Cal E)$.
Note that $(M;+,<,\Cal S_{\Sa M})$ is reduct of $(M;+,<,\Cal E)$  and $(M;+,<,\Cal E)$ is a reduct of $\Sa M$.
Hence it is enough to show that $\Sa R$ is locally trace equivalent to both $\Sa M$ and $(M;+,<,\Cal S_{\Sa M})$.
We have $\Sa R\equiv\Sa S$, $\Sa N\equiv \Sa M$, and $(S;+,<,\Cal S_{\Sa S})\equiv (M;+,<,\Cal S_{\Sa M})$, so it suffices to show
 that $\Sa S$ is locally trace equivalent to both $\Sa N$ and $(S;+,<,\Cal S_{\Sa S})$.
 As $\Sa S$ is a reduct of $\Sa N$ and $(S;+,<,\Cal S_{\Sa S})$ is a reduct of $\Sa S$ it is enough to show that $(S;+,<,\Cal S_{\Sa S})$ is locally trace equivalent to $\Sa N$.
Now observe that $\Sa N$ is exactly $(S;+,<,\Cal S_{\Sa S},(x\mapsto\lambda x)_{\lambda \in\D})$ so  $(x \mapsto\lambda x)_{\lambda\in\D}$ witnesses local trace definability of $\Sa N$ in $(S;+,<,\Cal S_{\Sa S})$ by quantifier elimination for $\Sa N$.

\medskip
After possibly replacing $\Sa R$ with $(M;+,<,\Cal E)$ we suppose that $\Sa R$ is non-linear, semi-bounded, shortnin', and an expansion of $(R;+,<)$ by a collection $\Cal E$ of bounded sets.
Fix an interval $I\subseteq R$ and $\Sa R$-definable $\oplus,\otimes\colon I^2 \to I$ such that $(I;\oplus,\otimes,<)\models\rcf$.
Let $\Sa I$ be the structure induced on $I$ by $\Sa R$, considered as an expansion of $(I;\oplus,\otimes,<)$.
Then $\Sa I$ is an o-minimal expansion of an ordered field.
We show that $\Sa I$ is locally trace equivalent to $\Sa R$.
It is clear that $\Sa R$ interprets $\Sa I$, so it is enough to show that $\Th(\Sa I)$ locally trace defines $\Sa R$.
It is enough to show that any reduct of $\Sa R$ to a finite sublanguage is trace definable in $\Th(\Sa I)$.
Fix bounded $\Sa R$-definable $X_1, \ldots, X_m \subseteq R^n$.
We show that $(R; +, <, X_1, \ldots, X_n)$ is trace definable in $\Sa I$.
After possibly translating we suppose that each $X_i$ is contained in $[0, \gamma)^n$ for some positive $\gamma \in R$.
Proposition~\ref{prop:nscover} shows that $(R; +, <, X_1, \ldots, X_m)$ is trace definable in the theory of the structure induced on $[0, \gamma)$ by $\Sa R$.
Finally, note that the induced structure is interpretable in $\Sa I$ as $\Sa R$ is shortin'.

\section{Disintegrated o-minimal and weakly minimal structures}\label{section:dis}
We showed that an o-minimal expansion of an ordered abelian group trace defines an infinite field if and only if it interprets an infinite field.
We give an analogous result for groups.

\begin{proposition}\label{prop:dis}
An o-minimal structure trace defines an infinite group if and only if it interprets an infinite group.
\end{proposition}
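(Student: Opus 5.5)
One direction is immediate: an interpretation of an infinite group gives a trace definition (interpretable structures are trace definable, e.g. via the domain of the imaginary sort being coded, or equivalently passing through $\mathrm{T}^{\mathrm{eq}}$), so an o-minimal structure interpreting an infinite group trace defines one. For the converse I would argue contrapositively: suppose the o-minimal structure $\Sa M$ does not interpret an infinite group, and show that $\Th(\Sa M)$ does not even locally trace define an infinite group.

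The plan is to first reduce $\Sa M$ to something trace equivalent to $(\R;<)$ up to local trace equivalence. By the Peterzil--Starchenko trichotomy, an o-minimal structure which does not interpret an infinite group is disintegrated (trivial) in a neighbourhood of every point; more precisely, there is a partition of $M$ into finitely many points and open intervals on each of which the induced structure is trivial, and in a trivial o-minimal structure every definable set is a boolean combination of sets defined by binary relations (a theorem of Mekler--Rubin--Steinhorn type: trivial o-minimal structures are, after naming finitely many constants, binary). So $\Sa M$ is interdefinable with a structure in a binary relational language with quantifier elimination, and it is weakly quasi-o-minimal. Then by Theorem~D(2) (whose proof I take to appear in this section), $\Sa M$ is locally trace equivalent to $(\R;<)$. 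Hence any infinite group locally trace definable in $\Th(\Sa M)$ is locally trace definable in $\Th(\R;<)$.

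It then suffices to show $\Th(\R;<)$ does not locally trace define an infinite group $G$. I would use Proposition~\ref{prop:loc equiv}(5) applied to the ternary graph of multiplication (or better, the binary-to-ternary relation $\{(a,b,c):ab=c\}$): it would embed into a $(\R;<)$-definable ternary relation on some $\R^n$ (modulo finite pieces). The key property to exploit is that $(\R;<)$-definable relations are ``binary'': every definable subset of $(\R^n)^3$ is a boolean combination of sets depending on only two of the three coordinate blocks. Then one shows a ternary relation that is a boolean combination of binary relations (in the blocks) cannot restrict to the graph of a group operation on an infinite set: choose an infinite subset and use Ramsey to get indiscernibility of the binary atomic data, then find $a\ne a'$ with identical binary relations to all of finitely many relevant $b,c$, contradicting that $c=ab$ determines $a$ from $(b,c)$. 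Concretely, with $B$ a finite boolean combination of binary relations $R_i(x,y),S_j(y,z),T_k(x,z)$, for fixed $b$ the set of $a$ with $B(a,b,ab)$ must be $\{a\}$; pigeonhole on the finitely many types gives a counting contradiction (the graph of the operation has $|A|^2$ points but a binary-determined relation with unique-witness property forces at most linearly many, roughly a Zarankiewicz style bound).

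The main obstacle is this last combinatorial step: making rigorous that a boolean combination of block-binary relations cannot encode a Latin square on an infinite set. I would first try the following: restrict to a finite subset $A\subseteq G$ of size $N$ closed enough, note the number of distinct ``binary profiles'' is bounded polynomially while the graph of multiplication on $A\times A$ forces, for fixed $b$, injectivity of $a\mapsto ab$ detected only through the pairs $(a,b),(b,ab),(a,ab)$; then use that in $(\R;<)$ (distal, dp-finite, in fact with binary relations of VC-dimension controlled) the pair relations have Erd\H{o}s--Hajnal homogeneous large subsets, yielding $a\neq a'$, $b$ with the same profile, contradicting the Latin square property.
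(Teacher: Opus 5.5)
Your forward direction is fine: choosing representatives turns an interpretation into a trace definition. The reduction to the disintegrated case via Peterzil--Starchenko and Mekler--Rubin--Steinhorn is also fine. The gap is in what follows.

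Corollary~\ref{cor:2m} only gives \emph{local} trace equivalence with $(\R;<)$. Your route therefore forces you to prove that $\Th(\R;<)$ does not even locally trace define an infinite group. That is, the Latin square $\{(a,b,c) : ab = c\}$ of an arbitrary infinite group must never embed into a boolean combination of $(\R;<)$-definable relations, each depending on only two of the three blocks. The paper never proves this, and your sketch does not either.

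\emph{The Ramsey/pigeonhole step is structurally insufficient.} The relation between the first and third blocks may recover $a$ from the code of $c$, so you cannot force $a \neq a'$ to have the same binary data relative to $b$ and $c = ab$. Concretely, code $a_ib_j$ by the pair consisting of codes of $a_i$ and $b_j$. Then the $(\R;<)$-definable relation $x_1 = z_1 \wedge y_1 = z_2$ on $\R^2\times\R^2\times\R^2$ agrees with $ab=c$ on any sufficiently generic grid $\{a_i\}\times\{b_j\}\times\{a_ib_j\}$. So any argument that only looks at indiscernible sequences or grids of products is refuted. A contradiction has to exploit that each $c$ has many representations $c = ab$.

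\emph{The counting step, which tries to do this, is also flawed.} For arbitrary binary relations the unique-witness property only yields $o(n^2)$ triangles (Ruzsa--Szemer\'edi), and Behrend-type constructions show that no linear bound holds. Any near-linear bound for $(\R;<)$-definable relations would need its own proof. Worse, the count needs finite $A \subseteq G$ with $\Omega(|A|^2)$ solutions of $ab=c$ inside $A$. Such sets force large finite approximate subgroups, which do not exist in every infinite group (e.g.\ Tarski monsters).

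\emph{Distality and strong Erd\H{o}s--Hajnal do not help on their own.} $\rfield$ has both properties and defines groups. Binarity would have to be used in an essential way that your sketch does not supply.

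\emph{The missing ingredient is the one the paper uses, and it makes the detour through $(\R;<)$ unnecessary.} A disintegrated o-minimal structure is monadically $\nip$ (Fact~\ref{fact:monotone0}, due to Braunfeld and Laskowski). By Braunfeld and Laskowski again (Fact~\ref{fact:lb}), a monadically $\nip$ theory cannot trace define an infinite group. Together with your trichotomy reduction, this proves the statement at once. The combinatorics you attempt by hand is exactly what that theorem supplies. Since it is formulated for trace definability, passing to the weaker local notion only makes your target harder.
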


Proposition~\ref{prop:dis} shows in particular that the Shelah completion of an o-minimal structure $\Sa M$ interprets an infinite group if and only if $\Sa M$ defines an infinite group.
This does not extend to weakly o-minimal structures by Proposition~\ref{prop:newdm}.

\medskip
By the Peterzil-Starchenko trichotomy theorem a non-disintegrated o-minimal structure defines an infinite group.
Hence it is enough to show that a disintegrated o-minimal structure cannot trace define an infinite group.
This was originally proven by the author in unpublished work, where it was also conjectured that monadically $\nip$ structures cannot trace define infinite groups.
The conjecture was proven by Laskowski and Braunfeld~\cite[Cor.~4.4]{monadic_nip}.

\begin{fact}\label{fact:lb}
A monadically $\nip$ theory cannot trace define an infinite group.
\end{fact}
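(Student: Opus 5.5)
The plan is to reduce to the Braunfeld--Laskowski characterization of monadic $\nip$ by the absence of \emph{coding configurations}. The point is that an infinite group always carries a definable relation that encodes a pairing function on infinite sets. A trace definition transports such an encoding into a model of $T$ verbatim, because $\uptau^{-1}(Y)$ pulls back the relation exactly. So no genuine preservation theorem for monadic $\nip$ under trace definability is needed, only preservation of a single definable configuration.

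First, I would fix the form of the characterization to be used. $T$ is monadically $\nip$ if and only if in no model $\Sa M \models T$ are there a formula $\varphi(x,y,z)$ (with $x,y,z$ tuples of variables, possibly with parameters) and infinite sequences $(\alpha_i)_{i\in\N}$, $(\beta_j)_{j\in\N}$, $(\gamma_{k,l})_{k,l\in\N}$ with
\[
\Sa M \models \varphi(\alpha_i,\beta_j,\gamma_{k,l}) \quad\Longleftrightarrow\quad (i,j)=(k,l).
\]
One can additionally ask that the sequences be mutually indiscernible, but by Ramsey and compactness this can be arranged from the bare configuration, so I will only produce the bare configuration. The fact that the witnesses are tuples rather than singletons is harmless, since monadic $\nip$ for singletons implies it for tuples; this is part of the Braunfeld--Laskowski theory.

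Second, I would build the configuration in the group. Let $(G;\cdot)$ be an infinite group trace definable in $T$, and by Proposition~\ref{prop:loc trace theories} fix $\Sa M\models T$ and a trace definition $\uptau\colon G\to M^n$. Choose $a_i,b_j\in G$ inductively so that the products $c_{i,j}:=a_ib_j$ are pairwise distinct. At each stage only finitely many values are forbidden: we need $a_i b_j \ne a_{i'}b_{j'}$ for $(i,j)\neq(i',j')$, and each new element is required to avoid finitely many cosets of the form $a_{i'}b_{j'}b_j^{-1}$. Since $G$ is infinite, this is possible. The graph $X=\{(x,y,z): xy=z\}$ is $G$-definable, so $X=\uptau^{-1}(Y)$ for some $\Sa M$-definable $Y\subseteq M^{3n}$. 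Setting $\alpha_i=\uptau(a_i)$, $\beta_j=\uptau(b_j)$, $\gamma_{k,l}=\uptau(c_{k,l})$ and letting $\varphi$ define $Y$, we get
\[
\Sa M\models\varphi(\alpha_i,\beta_j,\gamma_{k,l}) \iff a_ib_j=c_{k,l} \iff (i,j)=(k,l),
\]
by distinctness of the products. This is a coding configuration in $\Sa M$, contradicting monadic $\nip$.

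The main obstacle is the first step: making sure the version of the Braunfeld--Laskowski criterion I invoke really has coding of a pairing function by a formula with tuple variables and arbitrary parameters as an obstruction to monadic $\nip$. If their statement is phrased only for singletons or with indiscernibility hypotheses, I would first pass to an $\aleph_1$-saturated model and extract mutually indiscernible subsequences by Ramsey and compactness. Then, from $\varphi$, I would directly produce unary predicates $P\subseteq M^n$ and an expansion with the independence property: naming $\{\gamma_{k,l}:(k,l)\in S\}$ for arbitrary $S\subseteq\N^2$ makes $\exists z\,(P(z)\wedge\varphi(x,y,z))$ code an arbitrary bipartite graph. The remaining step is to reduce a predicate on $n$-tuples to unary predicates, which is where I expect the real work, and the citation to \cite{monadic_nip}, to lie.
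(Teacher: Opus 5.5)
There is a genuine gap, and it is in your first step: the criterion you invoke is false once the $\gamma_{k,l}$ are allowed to be tuples. Take the pure set $(M;=)$, which is monadically stable. Choose injective sequences $(\alpha_i)$ and $(\beta_j)$ of elements, put $\gamma_{k,l}=(\alpha_k,\beta_l)$, and let $\varphi(x,y,z_1,z_2)$ be $x=z_1\wedge y=z_2$. Then $\varphi(\alpha_i,\beta_j,\gamma_{k,l})$ holds iff $(i,j)=(k,l)$. So the configuration you pull back along $\uptau$ is present in every infinite structure and carries no information.

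Your fallback in the last paragraph fails for the same reason. A predicate $P\subseteq M^n$ with $n\ge 2$ is not a monadic expansion. In $(M;=)$, naming $\{(\alpha_k,\beta_l):(k,l)\in S\}$ names an arbitrary bipartite graph, so there is no general reduction of such predicates to unary ones (your remark that monadic $\nip$ for singletons ``implies it for tuples'' is false in this reading).

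Even a correct criterion on singletons would not rescue the structure of your argument. Your second step uses only the fact that the relation $xy=z$ on $G$ contains a coding configuration. But consider the ternary relation $\{(x,y,(x,y))\}$ on $M\sqcup M\sqcup M^2$, with $x$ from the first copy and $y$ from the second. It contains such a configuration on singletons, and it is interpretable, hence trace definable, in $(M;=)$. So no argument using only that fact can succeed.

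What is missing is a nondegenerate form of the configuration, together with a proof that monadic $\nip$ excludes it. You would have to use more of the group than the existence of pairwise distinct products. For instance, translations are bijections, so $ab$ should look ``fresh'' relative to $a$ and to $b$ separately. The aim is to choose the $a_i,b_j$ so that $\uptau(a_kb_l)$ is not simply assembled from coordinates of $\uptau(a_k)$ and $\uptau(b_l)$, via some disjointness or indiscernibility requirement over each of them separately. You would then need to show that such a configuration really yields an expansion by unary predicates with the independence property.

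That is where the substance of the statement lies. The paper gives no argument of its own here and simply cites Braunfeld and Laskowski \cite[Cor.~4.4]{monadic_nip}. Your proposal, as written, reduces the statement not to that theorem but to a criterion that is false.
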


Recall that a theory is {\bf monadically $\nip$} if any expansion of a model by unary relations is $\nip$ and a structure is monadically $\nip$ if its theory is.
By Fact~\ref{fact:monotone0} below any disintegrated o-minimal structure is monadically $\nip$.

\medskip
We showed above that any o-minimal expansion of an ordered group that does not interpret an infinite field is locally trace equivalent to $\rgoup$.
We now show that any o-minimal structure that does not interpret an infinite group is locally trace equivalent to $(\R;<)$.
We will see that this is a special case of the following more general theorem.

\begin{proposition}
\label{thm:2m}
Any weakly quasi-o-minimal structure in a binary relational language is locally trace equivalent to $(\Q;<)$.
\end{proposition}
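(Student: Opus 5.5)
We need two directions: $\Th(\Sa M)$ locally trace defines $(\Q;<)$, and $\Th(\Q;<)$ locally trace defines $\Sa M$, where $\Sa M$ is weakly quasi-o-minimal in a binary relational language $L$.

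\textbf{Easy direction.} $\Sa M$ expands an infinite linear order $<$. By Ramsey and compactness, some elementary extension contains an infinite $<$-chain of order type $\Q$. Since $(\Q;<)$ has quantifier elimination, Fact~\ref{fact:embed} shows that this embedding is a trace embedding. Hence $\Th(\Sa M)$ trace defines $(\Q;<)$.

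\textbf{Main direction.} The plan is to use Proposition~\ref{prop:loc equiv}(6): it suffices to handle one definable set at a time. After Morleyizing we cannot assume the language stays binary. Instead I will use the structure theory of weakly quasi-o-minimal theories in binary languages, which I take to be known from the literature (the paper does not state it earlier, so this is an imported assumption). The relevant fact is that such theories are binary: every formula is equivalent to a boolean combination of formulas in at most two free variables, together with unary formulas.

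Granting this, I would argue via Proposition~\ref{prop:qe}, or via a variant of Lemma~\ref{lem:tech} for the binary Morleyization. It is then enough to show the following: every definable binary relation $R\subseteq M^2$ embeds, as a structure $(M;R)$, into a structure definable in an elementary extension of $(\Q;<)$ (equivalently $(\R;<)$). Unary relations are handled by Lemma~\ref{lem:nfusion}. Boolean combinations are absorbed by Lemma~\ref{lem:fusion} and the argument in Proposition~\ref{prop:qe}.

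\textbf{Key step: representing one binary relation.} Weak quasi-o-minimality says every definable subset of $M$ is a boolean combination of convex sets and zero-definable unary sets. Apply this to the fibers $R_a$. By compactness there is a uniform $k$ such that each $R_a$ is a boolean combination of at most $k$ convex sets intersected with finitely many fixed zero-definable unary sets $U_1,\dots,U_l$.

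The endpoints of these convex sets are cuts in $M$. I would embed $M$ into the Dedekind completion, or into a suitably saturated dense order, by a map $a\mapsto (a, c_1(a),\dots,c_{2k}(a))$, where the $c_i(a)$ are the endpoint cuts of the convex pieces. The colour of $a$ with respect to $U_1,\dots,U_l$ is recorded in a finite extra coordinate, realised by fixed constants. Then $R(a,b)$ becomes a boolean combination of order comparisons between $b$ and the $c_i(a)$, plus colour data. This is definable in a dense linear order with finitely many coordinates.

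A general linear order may not embed into $\Q$. However, local trace definability only needs embedding into some model of $\Th(\Q;<)$, and any linear order embeds order-preservingly into a sufficiently saturated dense linear order. For cuts we embed the order completion of $M$, which is again a linear order.

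\textbf{Expected obstacle.} The main difficulty is the uniformity and the bookkeeping of the boolean combination. The description of $R_a$ must be made uniform in $a$, with the same boolean shape on a definable partition of $M$. The pieces of that partition are handled by the colour coordinate. One must also justify the binary reduction for weakly quasi-o-minimal theories in a binary language.

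Finally, a colored linear order is weakly quasi-o-minimal in a binary language. A disintegrated o-minimal structure is interdefinable with a binary one by the known description of trivial o-minimal structures. Both are therefore special cases.
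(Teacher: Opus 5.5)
Your argument is correct in outline but takes a different route to the key step than the paper. Both proofs use the binarity half of the Moconja--Tanovi\'c result, Fact~\ref{fact:wqom}(1), which the paper does state just before the proof. Both then use Proposition~\ref{prop:qe} to reduce to a single definable binary relation.

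\emph{The paper's route.} It invokes Fact~\ref{fact:wqom}(2): every binary formula is a boolean combination of unary formulas and formulas $\varphi(x_1,x_2)$ whose fibres are initial segments. Each such relation $R$ is then coded by one cut $\zeta_R(\alpha)=\sup\{\beta : R(\alpha,\beta)\}$ in a complete dense extension of $(M;\triangleleft)$. One extra bit $\chi_U(\alpha)$ records whether that supremum is attained.

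\emph{Your route.} You never use part (2). You apply weak quasi-o-minimality directly to the fibres $R_a$, get a uniform bound by compactness, and code $R_a$ by $2k$ endpoint cuts plus finite colour data. This is more self-contained. It exploits the fact that the coding functions in a local trace definition need not be definable. The cost is more bookkeeping.

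Three points must be made explicit for your argument to go through.
\begin{enumerate}
\item Compactness must be applied to a first-order condition, since the convex sets in the definition of weak quasi-o-minimality need not be definable. A suitable condition is: ``for each atom $A$ of the boolean algebra generated by $U_1,\dots,U_l$, $R_a\cap A$ has at most $k$ $A$-convex components.'' Here $A$-convex components are the classes of the definable relation ``every $z\in A$ between $x$ and $y$ lies in $R_a$.'' This condition is first-order in $a$. It is also monotone under refining the $U_i$ and increasing $k$, so compactness yields uniform $U_1,\dots,U_l$ and $k$.
\item You must record whether each endpoint belongs to its piece. Either put a bit in the finite coordinate, which is exactly the role of the paper's $\chi_U$, or use genuine cuts so that each $b\in M$ sits strictly between $b^-$ and $b^+$.
\item The finite coordinate must carry two things. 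One is the atom containing the element, which is used when it plays the role of $b$. The other is its shape data, which is used when it plays the role of $a$.
\end{enumerate}
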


A theory extending the theory of linear orders is \textbf{weakly quasi-o-minimal} if every definable unary set in every model is a boolean combination of zero-definable sets and convex sets.
We apply the following theorem of Moconja and Tanovi\'c~\cite[Thm.~2, Cor.~2.4]{Moconja}.

\begin{fact}
\label{fact:wqom}
Suppose that $\Sa M$ is a weakly quasi-o-minimal expansion of a linear order.
\begin{enumerate}
[leftmargin=*]
\item The reduct of $\Sa M$ generated by all definable subsets of $M^2$ admits quantifier elimination.
\item Every binary formula $\phi(x_1,x_2)$ is equivalent to a boolean combination of unary formulas $\theta(x_1), \theta(x_2)$ and binary formulas $\varphi(x_1,x_2)$ such that $\{\beta\in M :\Sa M\models \varphi(\alpha,\beta)\}$ is an initial segment of $M$ for all $\alpha \in M$.
\end{enumerate}
\end{fact}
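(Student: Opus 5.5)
The goal is to prove (2) first by a uniform compactness argument, and then deduce (1) from (2) by a Helly-type argument in dimension one.

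\emph{Step 1: proof of (2).} Fix a binary formula $\phi(x_1,x_2)$. By weak quasi-o-minimality, in every model $\Sa M^*\models\Th(\Sa M)$ and for every $\alpha$, the fiber $\phi(\alpha,M^*)$ is a boolean combination of zero-definable unary sets and convex sets. By compactness I would get uniformity. There should be finitely many zero-definable $\theta_1,\ldots,\theta_k$ and a bound $N$ with the following property: for every $\alpha$, and every atom $A$ of the boolean algebra generated by the $\theta_j$, the set $\phi(\alpha,M^*)\cap A$ is the trace on $A$ of a union of at most $N$ convex sets.

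\begin{itemize}
\item The compactness argument runs over the partial type asserting that no such finite description works.
\item Its inconsistency requires passing to a saturated model, which is legitimate since weak quasi-o-minimality is a property of the theory.
\end{itemize}

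Now fix an atom $A$ and take the ``$A$-convex components'' of $\phi(\alpha,M^*)\cap A$ in increasing order: $C_1(\alpha)<\cdots<C_{\le N}(\alpha)$. For each $i$ define
\[
\psi_i(\alpha,y)\;:\Longleftrightarrow\;\exists z\,(z\in C_i(\alpha)\wedge y\le z).
\]
This is definable from $\phi$ and $\theta$ by counting alternations along $A$, and its fibers are initial segments. Let $\psi_i'(\alpha,y)$ be the analogous formula with the existential quantifier replaced by ``for all $z \in C_i(\alpha)$, $y<z$''; its fibers are also initial segments. Then
\[
\phi(x_1,x_2)\;\leftrightarrow\;\bigvee_A\Bigl(A(x_2)\wedge\bigvee_i\bigl(\psi_i(x_1,x_2)\wedge\neg\psi_i'(x_1,x_2)\bigr)\Bigr),
\]
with the disjunction running over atoms $A$. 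This is the required boolean combination.

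\emph{Step 2: proof of (1).} Let $\Sa M_2$ be the binary reduct. By the usual criterion it suffices to eliminate one existential quantifier from a conjunction $\exists y\bigwedge_i\chi_i(a_{i},y)$, where each $\chi_i$ is a binary $\Sa M_2$-atomic or negated atomic formula, $a_i$ ranges over the free variables, and unary conjuncts are absorbed. By (2), after distributing, each conjunct takes the form
\[
y\in A\ \wedge\ y\in D_i(a_i)\setminus D_i'(a_i),
\]
where $A$ is an atom and $D_i,D_i'$ are definable initial segments. The intersection of these conditions is
\[
A\cap\Bigl(\bigcap_i D_i(a_i)\setminus\bigcup_i D_i'(a_i)\Bigr)=A\cap\bigl(D_{j}(a_j)\setminus D_{l}'(a_l)\bigr),
\]
where $j$ indexes the smallest $D_i$ and $l$ indexes the largest $D_i'$. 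Which index is smallest or largest is decided by comparisons such as $D_i(a_i)\subseteq D_{i'}(a_{i'})$, and these are binary $\emptyset$-definable relations in the pair $(a_i,a_{i'})$. Nonemptiness of $A\cap(D_j(a_j)\setminus D'_l(a_l))$ is itself a binary relation in $(a_j,a_l)$. Hence $\exists y\ldots$ is equivalent to a finite disjunction, over choices of $(j,l)$, of conjunctions of binary formulas, all of which lie in the language of $\Sa M_2$. Equality constraints $y=a_i$ would be handled separately by substitution.

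\emph{Main obstacle.} I expect the delicate point to be the compactness step in (1). One must ensure that the number of $A$-convex components is uniformly bounded, and that the finitely many unary sets can be chosen zero-definable and independent of $\alpha$. I would prove this by contradiction: otherwise, in a saturated model, one would obtain a fiber that is not a finite boolean combination of the required kind. The second subtle point is checking that the cut comparisons used in the second step are binary without hidden parameters. This holds because each $D_i$ is $\emptyset$-definable from $a_i$ alone.
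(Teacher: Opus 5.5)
Your proof is correct. It takes a genuinely different route from the paper, which gives no proof of this Fact and only quotes it from Moconja and Tanovi\'c; your argument is self-contained.

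For (2) you use only the definition plus compactness. The condition that there is no alternating chain $y_1<\cdots<y_{2N+1}$ in $A$ (odd terms in $\phi(\alpha,M)$, even terms outside) is first order in $\alpha$. It also survives refining the atoms and enlarging $N$. So the finitely many witnesses that compactness produces merge into a single tuple $\bar\theta$ and a single bound $N$. The $i$-th maximal $A$-convex piece is then uniformly definable. Its downward closure and its set of strict lower bounds are exactly the required initial-segment formulas, and they use the same parameters as $\phi$.

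For (1) your Helly-type observation is right. Finitely many initial segments are linearly ordered by inclusion, so $\exists y$ of a conjunction of such literals reduces to two-variable comparisons $D_i(a_i)\subseteq D_{i'}(a_{i'})$ and a two-variable nonemptiness condition. Because you apply (2) in $\Sa M$ rather than in the reduct, you never need the binary reduct to be weakly quasi-o-minimal. In fact Step~2 shows that any expansion of a linear order satisfying the conclusion of (2) has a binary reduct with quantifier elimination.

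The citation brings the rest of Moconja and Tanovi\'c's structure theory; the same paper is the source of the monotone-structure quantifier elimination the paper uses later. Your argument shows that the Fact, as stated, is essentially a compactness consequence of the definition.

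Small corrections:
\begin{itemize}
\item The compactness step belongs to (2), not to (1) as your final paragraph says.
\item After distributing negated literals, the disjuncts are arbitrary conjunctions of positive and negative initial-segment literals, not literally $y\in A\wedge y\in D_i\setminus D_i'$. Your intersection computation already covers this general case.
\item No saturated model is needed, only that the hypothesis holds in every model of the theory.
\item Equality needs no separate substitution, since $y=a$ is equivalent to $y\le a\wedge\neg(y<a)$, a combination of initial-segment literals.
\end{itemize}
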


We now prove Proposition~\ref{thm:2m}.

\begin{proof}
Let $\Sa M$ be a weakly quasi-o-minimal expansion of  a linear order $(M;\triangleleft)$ in a binary relational language $L$.
It is enough to show that $\Sa M$ is locally trace definable in $\dlo$.
Let  $(N;\triangleleft)$ be a dense complete linear order with a maximum and a minimum that extends $(M;\triangleleft)$.
We show that $(N;\triangleleft)$ locally trace defines $\Sa M$.
Let $L^*$ be the language containing a unary relation for every $\Sa M$-definable unary set and a binary relation $R_\varphi$ for every binary $L(M)$-formula $\varphi(x_1,x_2)$ such that $\{\beta\in M :\Sa M\models \varphi(\alpha,\beta)\}$ is an initial segment of $(M;\triangleleft)$ for all $\alpha \in M$.
Let $\Sa M^*$ be the natural $L^*$-structure on $M$.
By Fact~\ref{fact:wqom} $\Sa M^*$ is interdefinable with $\Sa M$. 
So it is enough to show that $(N;\triangleleft)$ locally trace defines $\Sa M^*$.
By Fact~\ref{fact:wqom} $\Sa M^*$ admits quantifier elimination.
Fix distinct $p,q\in N$ and for every unary $U\in L^*$ let $\chi_U\colon M\to \{p,q\}$ be given by declaring $\chi_U(a)=p$ if and only if $\Sa M^*\models U(a)$.
For every binary $R\in L^*$ let $\zeta_R\colon M\to N$ be given by $\zeta_R(\alpha)=\sup \{\beta\in M : \Sa M^*\models R(\alpha,\beta)\}$.
We show that the  $\chi_U$ and $\zeta_R$ witnesses local trace definability of $\Sa M^*$ in $(N;\triangleleft)$.
By Fact~\ref{fact:wqom}(2) and Proposition~\ref{prop:qe} it is enough to consider a fixed binary $R\in L^*$.
Let $U\in L^*$ be a unary relation defining the set of $\alpha \in M$ such that $\{\beta\in M : \Sa M^*\models R(\alpha,\beta)\}$ has a maximum in $M$.
For any $\alpha,\beta\in M$ we have $\Sa M^*\models R(\alpha,\beta)$ if and only if  either $\beta\triangleleft \zeta_R(\alpha)$ or $\beta=\zeta_R(\alpha)$ and $\chi_U(\alpha)=p$.
\end{proof}

We give some special cases of Proposition~\ref{thm:2m}.
Let $(M;\triangleleft)$ be a linear order. 
A binary relation $R\subseteq M \times J$  is increasing if $R(\alpha,\beta) \land (\alpha^* \triangleleft \alpha) \land (\beta \triangleleft \beta^*)$ implies $R(\alpha^*,\beta^*)$ and decreasing if $R(\alpha,\beta) \land (\alpha^* \triangleleft \alpha) \land (\beta^* \triangleleft \beta)$ implies $R(\alpha^*,\beta^*)$.
A binary relation is \textbf{monotone} if it is  increasing or decreasing\footnote{Previous authors, in particular Simon, use ``monotone" where we use ``increasing".}.
A {\bf monotone structure} is a structure which is interdefinable with a linear order equipped with unary relations and monotone binary relations.

\medskip
Fact~\ref{fact:monotone1} is also due to Moconja and Tanovi\'c~\cite[Cor.~2]{Moconja}.
The case of an expansion by unary relations and increasing relations is due to Simon~\cite[Prop.~4.1]{Simon-dp}.

\begin{fact}
\label{fact:monotone1}
Suppose that $\Sa M$ is a monotone $L$-structure.
Let $L^*$ be the language containing a unary relation for every zero-definable subset of $M$ and a binary relation for every zero-definable monotone binary relation.
Then $\Sa M$ admits quantifier elimination in $L^*$.
\end{fact}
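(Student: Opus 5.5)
Fact~\ref{fact:monotone1} is a quantifier elimination result, attributed to Moconja and Tanovi\'c and in the increasing case to Simon. It is cited rather than proved here, so the following is a sketch of how one could derive it from Fact~\ref{fact:wqom} rather than a reconstruction of their argument.

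\emph{Step 1: $\Sa M$ is weakly quasi-o-minimal.} Let $\Sa M$ be a linear order $(M;\triangleleft)$ with unary relations and monotone binary relations. Fix $\alpha$ in an elementary extension and a monotone relation $R$. The fiber $\{\beta : R(\alpha,\beta)\}$ is convex: if $R$ is increasing it is a final segment, and if $R$ is decreasing it is an initial segment. One then does a back-and-forth argument on types over finite sets. Two elements with the same type over $A$ that lie in the same cut of $\acl(A)$ should have the same type over $A\alpha$, because every binary relation between $\alpha$ and a new element is governed by a monotone threshold. It follows that unary definable sets are boolean combinations of zero-definable sets and convex sets.

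\emph{Step 2: reduction to binary formulas.} The language is binary, so $\Sa M$ is already the reduct generated by its definable subsets of $M^2$. By Fact~\ref{fact:wqom}(1) it therefore admits quantifier elimination in the language with a symbol for every zero-definable subset of $M$ and of $M^2$. So it suffices to show that each zero-definable binary relation is a boolean combination of zero-definable unary relations and zero-definable monotone binary relations.

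\emph{Step 3: expressing binary relations.} By Fact~\ref{fact:wqom}(2), each binary formula is a boolean combination of unary formulas and formulas $\varphi(x_1,x_2)$ whose fibers $\varphi(\alpha,M)$ are initial segments. Such a $\varphi$ need not be monotone, since the cut $\varphi(\alpha,M)$ may move non-monotonically in $\alpha$. This is the main obstacle. My plan is to use the monotone generators themselves to show that $\alpha\mapsto$ (the cut determined by $\varphi(\alpha,M)$) is piecewise monotone, with finitely many zero-definable pieces. The pieces would come from the back-and-forth analysis: the position of such a cut is determined by cuts coming from the monotone generators, composed along finitely many steps, and each such composite is increasing or decreasing.

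On each piece $U$, the relation $\varphi\wedge U(x_1)$ is then increasing or decreasing after adjusting for $\neg U(x_1)$. For example, replacing $\varphi$ by $(\varphi\wedge U(x_1))\vee \neg U(x_1)$ or by $\varphi\wedge U(x_1)$ gives a zero-definable monotone relation. Taking boolean combinations of these recovers $\varphi$, which gives quantifier elimination in $L^*$.
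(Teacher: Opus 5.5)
The paper gives no proof of this fact; it quotes Moconja--Tanovi\'c, and Simon for the increasing case. So your sketch has to stand on its own, and it has a genuine gap. All of the content is deferred to a back-and-forth that you never carry out, and the one concrete claim you make about it is false. Two elements with the same type over $A$ that lie in the same cut of $\acl(A)$, or even of $\acl(A\alpha)$, need not have the same type over $A\alpha$. Take the monotone structure $(\Q;<,R)$ with $R(x,y)\iff y>x+\pi$. Translations are automorphisms, so $3$ and $3.2$ have the same type over $\emptyset$, and both lie above the only element of $\acl(\{0\})=\{0\}$. Yet $R(0,3.2)$ holds and $R(0,3)$ fails. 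The invariant that matters is the position of an element relative to the segments $R(c,M)$ and $R(M,c)$, for parameters $c$ and $\emptyset$-definable monotone $R$; the endpoints of these segments are typically not realized at all.

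Step~3 defers to the same missing analysis. Fact~\ref{fact:wqom}(2) only gives formulas with initial-segment fibers, and nothing in Fact~\ref{fact:wqom} makes $\alpha\mapsto\varphi(\alpha,M)$ piecewise monotone on finitely many $\emptyset$-definable pieces. Your ``composing cuts along finitely many steps'' is an induction on formulas whose quantified subformulas have arity greater than two, i.e.\ it is the quantifier elimination itself. So Steps~2 and~3 do not reduce the problem. Your closing repair is also wrong as stated: if $U$ has points of $M$ on both sides, neither $\varphi\wedge U(x_1)$ nor $(\varphi\wedge U(x_1))\vee\neg U(x_1)$ is monotone. If the cut is increasing on $U$, use $\exists z\,(U(z)\wedge z\trianglelefteq x_1\wedge\varphi(z,x_2))$ instead. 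Its complement is increasing, and it agrees with $\varphi$ on $U\times M$.

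The missing idea is the one that lets you run the back-and-forth directly in $L^*$, without Fact~\ref{fact:wqom} or Step~1. Let $f\colon A\to B$ be a partial $L^*$-isomorphism between finite subsets of $\aleph_0$-saturated models, and let $a\notin A$.

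For $c\in A$ and a monotone atom $R$ (including $\triangleleft$), the sets $R(c,M)$ and $R(M,c)$ are initial or final segments, and they depend monotonically on $c$ with respect to inclusion. So any finite part of the quantifier-free $L^*$-type of $a$ over $A$ is implied by $x\in D\cap I_c\cap F_{c'}$. Here $D\in\mathrm{tp}(a/\emptyset)$, $I_c$ is the smallest initial segment involved, and $F_{c'}$ is the smallest final segment involved.

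Which segment is smallest is decided by relations such as $R(M,c)\subseteq S(M,c')$. Whether $D\cap I_c\cap F_{c'}$ is nonempty is likewise a relation of $(c,c')$. These relations are $\emptyset$-definable. By the monotone dependence on parameters, in each coordinate either all their fibers are initial segments or all are final segments. Hence each of them, or its complement, is monotone, so it is atomic in $L^*$ and preserved by $f$.

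Saturation then gives $b$ with $f\cup\{(a,b)\}$ a partial $L^*$-isomorphism. Weak quasi-o-minimality of monotone structures is then a consequence of the quantifier elimination rather than an input to it.
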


It follows that a monotone structure is weakly quasi-o-minimal and binary.
Note also that any colored linear order is a monotone structure.
(In this case Fact~\ref{fact:monotone1} goes  back to Kamp's work on temporal logic~\cite{Kamp1968-KAMTLA}.)
Disintegrated o-minimal structures are monotone.

\begin{fact}\label{fact:monotone0}
The following are equivalent for any o-minimal structure $\Sa M$.
\begin{enumerate}[leftmargin=*]
\item $\Sa M$ is disintegrated.
\item $\Sa M$ is monadically $\nip$.
\item $\Sa M$ is monotone.
\item $\Sa M$ admits quantifier elimination in a binary relational language.
\item $\Sa M$ is interdefinable with the reduct generated by all definable subsets of $M^2$.
\end{enumerate}
\end{fact}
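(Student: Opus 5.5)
The plan is to prove the equivalences by a cycle, using o-minimal cell decomposition together with the known structure theory of disintegrated o-minimal structures due to Mekler--Rubin--Steinhorn. That theory says that in a disintegrated (trivial) o-minimal structure, every definable function of several variables is locally, on cells, given by a function of one variable, and every definable unary function is piecewise monotone. The chain we aim for is (1)$\Rightarrow$(3)$\Rightarrow$(4)$\Rightarrow$(5)$\Rightarrow$(1), with (2) attached as follows: (3)$\Rightarrow$(2) via Simon and Moconja--Tanovi\'c, and (2)$\Rightarrow$(1) via Fact~\ref{fact:lb} together with Peterzil--Starchenko.

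\textbf{(1)$\Rightarrow$(3).} Assuming $\Sa M$ is disintegrated, we use the Mekler--Rubin--Steinhorn decomposition. Every definable set is a boolean combination of unary sets and of sets of the form $\{x : x_i \triangleleft f(x_j)\}$ or $\{x : x_i = f(x_j)\}$, where $f$ is a definable partial unary function. After splitting the domain of $f$ into finitely many intervals on which $f$ is continuous and strictly monotone or constant, each set $\{(a,b) : b \triangleleft f(a)\}$ becomes a monotone binary relation intersected with unary sets. Hence $\Sa M$ is interdefinable with the order expanded by unary relations and monotone binary relations, i.e.\ it is monotone. This step is the main obstacle: we must cite the multivariable triviality result carefully. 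The route we would try is to show that for a generic point of an open cell $C\subseteq M^n$, each definable function on $C$ depends on only one coordinate; this uses exchange plus disintegration, since $f(a) \in \acl(a)=\bigcup_i \acl(a_i)$. A boolean combination argument over cells then finishes.

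\textbf{(3)$\Rightarrow$(4), (4)$\Rightarrow$(5), (3)$\Rightarrow$(2).} Fact~\ref{fact:monotone1} gives quantifier elimination in a binary language, which is (4), and (4) trivially implies (5). For (3)$\Rightarrow$(2): Simon showed that monotone structures in increasing relations are dp-minimal, and the Moconja--Tanovi\'c quantifier elimination shows that any expansion by unary predicates remains monotone (adding unary predicates preserves monotonicity of the language). Hence every such expansion is $\nip$, which is monadic $\nip$.

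\textbf{(5)$\Rightarrow$(1) and (2)$\Rightarrow$(1).} Suppose $\Sa M$ is not disintegrated. By Peterzil--Starchenko, $\Sa M$ then defines an infinite group on some interval, whose group operation is a ternary relation not definable from binary sets. Indeed, if the structure were binary-generated, then the definable group would have to be trivializable, and this contradicts the fact that a $1$-dimensional definable group locally has a non-trivial definable function of two variables: $x\cdot y$ depends essentially on both coordinates near a generic point, while binary-generated sets give functions depending on one coordinate. This proves (5)$\Rightarrow$(1). For (2)$\Rightarrow$(1), if $\Sa M$ is not disintegrated it defines an infinite group, contradicting Fact~\ref{fact:lb}, since monadically $\nip$ theories cannot trace define infinite groups.
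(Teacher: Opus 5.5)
\textbf{The gap is in (5)$\Rightarrow$(1).} Condition (5) only says that every $\Sa M$-definable set is first-order definable, with quantifiers, from the definable subsets of $M^2$. Your key assertion, that ``binary-generated sets give functions depending on one coordinate'', is fine for sets that are \emph{boolean combinations} of unary and binary sets. Getting from ``definable from binary sets'' to ``boolean combination of binary sets'' is a quantifier elimination theorem, however, and that is exactly the nontrivial content of this implication. In general, quantifying over binary relations can produce genuinely higher-arity relations, and nothing you have written rules out the group operation arising that way. This is the only arrow in your cycle leading back from (4) or (5), so as it stands (4) and (5) are only shown to follow from the other conditions.

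The repair is the paper's route. By cell decomposition in $M^2$ (the same splitting into monotone pieces you already use in (1)$\Rightarrow$(3)), every definable subset of $M^2$ is definable from unary sets and monotone functions on intervals, hence from unary sets and monotone binary relations. This gives (5)$\Rightarrow$(3), and Fact~\ref{fact:monotone1} then gives (4).

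Once you genuinely have quantifier elimination in a binary language, your group argument works. You can also argue directly. Suppose $c \in \acl(a_1,\ldots,a_n)$ is witnessed by a quantifier-free formula. Then $c$ lies in a finite atom of the boolean algebra generated by finitely many subsets of $M$, each definable over a single $a_i$ (or over none). By density, $c$ cannot be interior to every set cutting out that atom. So $c$ is a boundary point of one of them, and hence $c\in\acl(a_i)$. The paper simply cites Mekler--Rubin--Steinhorn for (1)$\Leftrightarrow$(4).

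\textbf{A smaller point concerns (3)$\Rightarrow$(2).} Simon's theorem covers expansions by unary and \emph{increasing} relations only. Disintegrated o-minimal structures may need decreasing ones: for example, $(\Q;<,x\mapsto -x)$ is not interdefinable with an expansion of $(\Q;<)$ by unary and increasing relations. Also, a monotone structure stays monotone after adding unary predicates directly by definition, so that is not what Moconja--Tanovi\'c is needed for. What you need is Fact~\ref{fact:monotone1} itself. It says every formula of a monotone structure is a boolean combination of unary formulas and monotone binary formulas, which are easily seen to be $\nip$. This applies equally to every unary expansion.

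Your (2)$\Rightarrow$(1) is correct. It combines Fact~\ref{fact:lb} with the Peterzil--Starchenko result that a non-disintegrated o-minimal structure defines an infinite group. This is a reasonable alternative to the paper's direct citation of Braunfeld--Laskowski for (1)$\Leftrightarrow$(2).
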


\begin{proof}
Laskowski and Braunfeld showed that (1) and (2) are equivalent \cite[Prop.~4.6]{monadic_nip}.
Mekler, Rubin, and Steinhorn showed that (1) and (4) are equivalent~\cite{mekler-rubin-steinhorn}.
Fact~\ref{fact:monotone1} shows that (3) implies (4) and (4) clearly implies (5).
Monotone structures are $\nip$ and are closed under expansions by unary relations, hence (3) implies (2).
If $\Sa M$ is o-minimal then an application of cell decomposition shows that any $\Sa M$-definable subset of $M^2$ is definable in the reduct of $\Sa M$ by all monotone functions $I \to M$ defined on intervals $I \subseteq M$.
Hence (5) implies (3).
\end{proof}


Corollary~\ref{cor:2m} follows by Proposition~\ref{thm:2m} and the remarks above.

\begin{corollary}
\label{cor:2m}
Any monotone structure is locally trace equivalent to $(\Q;<)$.
Infinite colored linear orders and  disintegrated o-minimal structures are~locally~trace~equivalent~to~$(\Q;<~)$.
\end{corollary}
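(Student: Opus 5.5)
The plan is to reduce everything to Proposition~\ref{thm:2m} together with the preceding facts, and then to handle the passage between $(\Q;<)$ and $(\R;<)$. First I will show that a monotone structure falls under Proposition~\ref{thm:2m}. By definition, a monotone structure $\Sa M$ is interdefinable with a linear order expanded by unary relations and monotone binary relations. By Fact~\ref{fact:monotone1}, $\Sa M$ admits quantifier elimination in the language $L^*$ of zero-definable unary sets and zero-definable monotone binary relations. So up to interdefinability $\Sa M$ is an $L^*$-structure in a binary relational language; since local trace equivalence is invariant under interdefinability, we may replace $\Sa M$ by this structure.

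Next I will check weak quasi-o-minimality, which must hold in every model of the theory. In any model, quantifier elimination in $L^*$ shows that a definable unary set is a boolean combination of zero-definable unary sets and sets of the form $\{\beta : R(\alpha,\beta)\}$ or $\{\beta : R(\beta,\alpha)\}$, with $R$ monotone and $\alpha$ a parameter. For an increasing or decreasing $R$, each such fiber is an initial or final segment, hence convex. So the theory is weakly quasi-o-minimal, and Proposition~\ref{thm:2m} gives local trace equivalence with $(\Q;<)$. This proves the first claim.

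For the second claim, an infinite colored linear order is a linear order with unary relations, hence trivially monotone; this requires only that unary-only expansions count, which is immediate from the definition. A disintegrated o-minimal structure is monotone by Fact~\ref{fact:monotone0}, (1) implies (3). Both cases therefore follow from the first claim. Since $(\Q;<)\equiv(\R;<)$ by completeness of $\dlo$, this also matches the statement of Theorem~D(2) with $(\R;<)$.

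The only step requiring care is the verification that the theory, and not just $\Sa M$, is weakly quasi-o-minimal. This works because Fact~\ref{fact:monotone1} gives quantifier elimination in $L^*$ for the theory, so it transfers to elementary extensions, and monotonicity of zero-definable relations is first-order. I do not expect any further obstacle.
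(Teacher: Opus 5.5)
Your proposal is correct and follows the paper's own route: the paper derives the corollary from Proposition~\ref{thm:2m} using three observations. Monotone structures are weakly quasi-o-minimal and binary by Fact~\ref{fact:monotone1}, colored linear orders are monotone, and disintegrated o-minimal structures are monotone by Fact~\ref{fact:monotone0}. Your explicit check that weak quasi-o-minimality holds in every model, because quantifier elimination and monotonicity of the named relations are properties of the theory, fills in a step the paper leaves implicit; note that convexity of the fibers needs the monotonicity conditions read with non-strict inequalities, which is clearly the intended reading of the definition.
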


\subsection{Disintegrated weakly minimal theories}\label{section:weak min}
We prove a stability-theoretic analogue of Proposition~\ref{thm:2m}.
Recall that $T$ is {\bf weakly minimal} if it is superstable of U-rank one.
Equivalently $T$ is weakly minimal if there is a small set $A$ of parameters from the monster model $\monster$ of $T$ such that every $\monster$-definable subset of $\monsterset$ is an $A$-definable set modulo a finite set~\cite[Thm.~21]{BPW-quasi}.
Let $R$ be a $k$-ary relation on a set $M$.
Then $R$ is \textbf{mutually algebraic} if there is $m$ such that for all $\alpha\in M$ there are at most $m$ elements $(\beta_1,\ldots,\beta_k)$ of $M^k$ such that $R(\beta_1,\ldots,\beta_k)$ and $\alpha\in\{\beta_1,\ldots,\beta_k\}$.
Let $\Sa M$ be an $L$-structure.
A formula $\phi(x_1,\ldots
,x_k)$ is mutually algebraic if it defines a mutually algebraic relation on $M$.
Furthermore $\Sa M$ is mutually algebraic if, up to interdefinability, $L$ is relational and every $R\in L$ is mutually algebraic.
If $R$ is a graph on $M$ then $R$ is mutually algebraic if and only if $R$ has bounded degree, so we view the class of mutually algebraic structures as a natural generalization of the class of bounded degree graphs.
Fact~\ref{fact:laskowski} is due to Laskowski~\cite{laskowski_2013}.

\begin{fact}\label{fact:laskowski}
A structure is mutually algebraic if and only if it is disintegrated and weakly minimal.
Furthermore if $\Sa M$ is mutually algebraic then every formula $\vartheta(x_1,\ldots,x_k)$ in $\Sa M$ is equivalent to a boolean combination of formulas of the form $\varphi(x_{i_1},\ldots,x_{i_n})$ for $n\le k$, $1\le i_1,\ldots,i_n\le k$, and a mutually algebraic formula $\varphi(y_1,\ldots,y_n)$ (possibly with parameters).
\end{fact}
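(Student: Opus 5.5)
The plan is to prove the two directions separately. The decomposition statement comes out of the harder direction (disintegrated and weakly minimal implies mutually algebraic), and I will also use it in the easier direction. I work in a monster model $\monster$ of $T=\Th(\Sa M)$.

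\emph{Mutually algebraic $\Rightarrow$ weakly minimal and disintegrated.} I first establish the decomposition for mutually algebraic $\Sa M$; this is Laskowski's quantifier reduction. The idea is to eliminate one existential quantifier $\exists y\,\psi(x,y)$ at a time, where $\psi$ is a conjunction of mutually algebraic atoms and negated atoms. Atoms that mention $y$ together with some $x_i$ make $y$ algebraic over that single $x_i$, so the quantifier can be replaced by a finite disjunction over these witnesses. Atoms involving only $y$ and parameters give a unary condition that is either finite or cofinite modulo finitely many such constraints.

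Given the decomposition, weak minimality follows. Take a formula $\vartheta(x,b)$ and decompose it. Each mutually algebraic conjunct $\varphi(x,b')$ in which $x$ actually occurs together with a parameter defines a finite set. The remaining conjuncts are formulas in $x$ over the fixed parameter set $A$ of the original language. Hence every definable unary set is $A$-definable modulo a finite set, and \cite[Thm.~21]{BPW-quasi} gives weak minimality.

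Disintegration follows similarly. Suppose $a\in\acl(b_1,\ldots,b_n)$, witnessed by an algebraic formula. The decomposition then yields a mutually algebraic conjunct linking $a$ to some $b_i$. Mutual algebraicity makes every coordinate algebraic over any single other coordinate, so $a\in\acl(b_i)$.

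\emph{Disintegrated and weakly minimal $\Rightarrow$ decomposition.} I fix a small model $M_0$ over which everything is based, as in the BPW characterization. For a tuple $c=(c_1,\ldots,c_k)$, I partition the indices into classes of the equivalence relation ``$c_i\in\acl(M_0c_j)$''; this is an equivalence relation by disintegration together with exchange in U-rank one. The key claim is the following. Distinct classes are independent over $M_0$, and U-rank one plus stationarity of the relevant nonalgebraic types over models imply that $\mathrm{tp}(c/M_0)$ is implied by the union of three pieces of information:
\begin{enumerate}
\item the types of each class;
\item the unary types $\mathrm{tp}(c_i/M_0)$;
\item the formulas $x_i\ne x_j$ and $x_i\notin \acl(M_0x_j)$, for $i,j$ in different classes.
\end{enumerate}
Within a class, every coordinate is algebraic over each other coordinate, so the type of the class is isolated, modulo unary information, by a mutually algebraic formula with parameters. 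Given the claim, a compactness argument over all types in $x_1,\ldots,x_k$ turns each formula into a finite boolean combination of mutually algebraic formulas in subtuples and unary formulas; unary formulas are trivially mutually algebraic. Taking relations for all mutually algebraic formulas then shows $\Sa M$ is mutually algebraic.

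I expect the main obstacle to be this key claim: that types of independent classes amalgamate uniquely over $M_0$. It must also hold uniformly enough for compactness to yield finitely many formulas. My first approach would be to use stationarity over the model $M_0$ together with the fact that, in a weakly minimal theory, the nonforking extension of each class's type is determined by excluding algebraic formulas. Negations of finitely many mutually algebraic formulas should then pin down independence.
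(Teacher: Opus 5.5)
The paper gives no proof of this Fact; it just quotes \cite{laskowski_2013}. So your outline has to stand on its own. Its weak point is not the step you flag as the main obstacle. It is the existential elimination in the direction from mutual algebraicity to the decomposition, which as written does not go through.

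\emph{The elimination step.} The witnesses $y$ are algebraic over $x_i$ but not definable from it. So ``replacing the quantifier by a finite disjunction over these witnesses'' does not produce a formula, and a negated conjunct $\neg\eta(x_K,y)$ with $K\neq\{i\}$ has nothing to attach to. Also, after one elimination the conjuncts are no longer atoms. The induction therefore has to show that boolean combinations of mutually algebraic formulas in subtuples are closed under $\exists y$. Two things are missing.
\begin{enumerate}
\item[(i)] Closure lemmas. A definable subset of a mutually algebraic relation is mutually algebraic. A conjunction of mutually algebraic formulas whose variable sets form a connected hypergraph is mutually algebraic. Projecting a mutually algebraic formula onto a nonempty subset of its variables keeps it mutually algebraic.
\item[(ii)] A counting device.
\end{enumerate}
Write the conjunction as $\Theta(x,y)\wedge\bigwedge_{s\le r}\neg\eta_s(x,y)\wedge\chi(x)$. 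Here $\Theta$ collects the positive conjuncts containing $y$, each $\eta_s$ contains $y$, and $\chi$ collects the conjuncts not containing $y$.

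If $\Theta$ mentions some $x_i$, then $|\Theta(x,M)|\le m$ for some fixed $m$. By inclusion--exclusion the number of witnesses is $\sum_{S\subseteq\{1,\ldots,r\}}(-1)^{|S|}N_S(x)$, where $N_S(x)$ counts the $y$ satisfying $\Theta\wedge\bigwedge_{s\in S}\eta_s$. For $1\le k\le m$ the condition $N_S(x)=k$ is $\exists^{=k}y(\cdots)$, which is mutually algebraic by (i), since all conjuncts share $y$. The condition $N_S(x)=0$ is the negation of $\exists y(\cdots)$.

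If $\Theta$ is unary in $y$, the relevant split is finite versus infinite, not ``finite or cofinite''. If $|\Theta(M)|>\sum_s m_s$, the quantifier is vacuous, because the $\neg\eta_s$ exclude at most $\sum_s m_s$ values of $y$. Otherwise you substitute the finitely many elements of $\Theta(M)$ as parameters.

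\emph{Parameters in the disintegration step.} The decomposition carries parameters $\bar e$, coming from the interdefinability and from the substitutions above. So your disintegration argument only yields $a\in\acl(\bar e)\cup\bigcup_i\acl(\bar e b_i)$. To remove $\bar e$:
\begin{itemize}
\item prove weak minimality first;
\item replace $\bar e$ by a conjugate $\bar e'$ independent from $a\bar b$, which is legitimate because the formula witnessing $a\in\acl(\bar b)$ is over $\emptyset$, so its decomposition moves along the automorphism;
\item use that $a\in\acl(\bar e'b_i)$, together with $a$ independent from $\bar e'$ over $b_i$, forces $a\in\acl(b_i)$.
\end{itemize}

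\emph{The converse direction.} This is the standard argument and is fine. The point you worry about is not an obstacle:
\begin{itemize}
\item Distinct classes are independent over $M_0$ directly by disintegration, since $\acl(M_0)=M_0$.
\item Types over a model are stationary, so the amalgam of the class types is unique.
\item For $x_i,x_j\notin M_0$, exchange shows that the condition $x_i\notin\acl(M_0x_j)$ is exactly the partial type of all $\neg\mu(x_i,x_j)$, with $\mu$ ranging over binary mutually algebraic $L(M_0)$-formulas. To see this, conjoin a formula algebraic in each direction.
\end{itemize}
Hence every type over $M_0$ is implied by its intersection with the boolean algebra generated by mutually algebraic formulas in subtuples. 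The usual compactness lemma then applies, with no uniformity issue. Two adjustments are needed. Put elements of $M_0$ in singleton classes, since symmetry of your relation fails for them. Take $M_0=M$ itself; this is allowed because this direction never uses the property from \cite{BPW-quasi}. Otherwise your mutually algebraic formulas carry parameters outside $M$.
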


With Fact~\ref{fact:laskowski} in place we can easily prove the following.

\begin{proposition}\label{prop:lask}
Any disintegrated weakly minimal theory is locally trace equivalent to the trivial theory of an infinite set with equality.
\end{proposition}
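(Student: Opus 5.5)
One direction is immediate: any infinite structure trace defines (indeed interprets) the trivial structure on an infinite set, since by quantifier elimination for $(M;=)$ the identity map is a trace embedding by Fact~\ref{fact:embed}. So the substance is to show that a mutually algebraic structure $\Sa M$ is locally trace definable in an infinite set with equality. By Fact~\ref{fact:laskowski} we may take $\Sa M$ mutually algebraic. We may also pass to a sufficiently large infinite model $(N;=)$ of the trivial theory, say with $|N|$ much larger than $|M|$.

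We use the criterion of Proposition~\ref{prop:loc equiv}(6)/(7): it suffices to fix a single $\Sa M$-definable $X\subseteq M^k$ and embed $(M;X)$ into an $(N;=)$-definable $k$-ary relation. By the second part of Fact~\ref{fact:laskowski}, $X$ is a boolean combination of sets of the form $\{(a_1,\ldots,a_k):\varphi(a_{i_1},\ldots,a_{i_n})\}$ with $\varphi$ mutually algebraic. By Lemma~\ref{lem:fusion} (fusing finitely many relations) and the fact that a boolean combination of relations on a common embedded domain is handled as in the proof of Proposition~\ref{prop:qe}, it suffices to handle a single mutually algebraic relation $R\subseteq M^n$ with $n\ge 2$; unary pieces are handled by Lemma~\ref{lem:nfusion}.

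For a mutually algebraic $R$ with bound $m$, the key construction: the set of $R$-tuples forms a hypergraph in which each point lies in at most $m$ tuples. Form the graph on the set $\mathcal T$ of tuples in $R$ where two tuples are adjacent if they share a coordinate; it has bounded degree (at most $nm$), so by a greedy coloring argument (extended to infinite graphs by compactness or Zorn) $\mathcal T$ admits a proper coloring $c\colon \mathcal T\to\{1,\ldots,D\}$ with $D$ finite. Fix an injection $\iota\colon\mathcal T\to N$. Now define $\uptau\colon M\to N^{D}\times N$ by letting the $j$th coordinate of $\uptau(a)$ be $\iota(t)$ if $a$ lies in the (unique, by properness) tuple $t$ of color $j$ containing $a$, and otherwise a fresh element $e_{a,j}\in N$ chosen injectively outside $\iota(\mathcal T)$; the last coordinate is an injective label of $a$ itself, recording also in which position of $t$ the element $a$ sits (we can encode the position by using $n$ disjoint copies, i.e. $N^{Dn}$ coordinates, with only the slot for the correct position filled by $\iota(t)$). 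Then $R(a_1,\ldots,a_n)$ holds iff for some color $j$ the $(j,i)$-slot of $\uptau(a_i)$ is the same element for all $i$, and this element is in the relevant ``filled'' pattern; since fresh elements are never shared, this is a finite disjunction of equalities, hence $(N;=)$-definable. Injectivity of $\uptau$ is ensured by the label coordinate.

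The main obstacle is making the equality-based decoding exact: one must ensure that a coincidence of slot values across different $a_i$ happens only when they genuinely belong to a common tuple in the right positions, which is why the fresh-element and position-slot bookkeeping is needed, and one must check the bounded-degree coloring exists for infinite $R$ (a compactness argument on finite subgraphs). Everything else is routine from the cited lemmas.
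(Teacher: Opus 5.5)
Your proof is correct. Your reductions match the paper's: Fact~\ref{fact:laskowski}, then Proposition~\ref{prop:loc equiv}(6) with Lemmas~\ref{lem:fusion}, \ref{lem:nfusion} and the argument of Proposition~\ref{prop:qe}, down to a single mutually algebraic relation of arity $\ge 2$. The key encoding, however, is different.

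The paper uses no coloring. For $k$-ary $R$ it takes maps $\uptau^{i,j}_R\colon M\to M$ listing the at most $m_R$ tuples $\beta$ with $R(\alpha,\beta)$, and decodes $R(\beta_1,\ldots,\beta_k)$ as $\bigvee_i\bigwedge_j \uptau^{i,j}_R(\beta_1)=\beta_{j+1}$. So only the first coordinate is coded, and it is compared with the other coordinates themselves (the identity is tacitly among the witnessing maps). This needs only uniform finiteness of fibres over one coordinate, with no coloring lemma or position bookkeeping.

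Your route uses full mutual algebraicity to get a bounded-degree conflict graph, so it needs de Bruijn--Erd\H{o}s or a transfinite greedy coloring. In exchange it is symmetric in the coordinates. Your fresh elements in a larger $N$ also cover $\alpha$ with $R(\alpha,\cdot)=\emptyset$. The paper's displayed identity passes over that case: its right-hand side is never empty, so a fresh value or a flag is needed there too.

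One repair to your bookkeeping: index the fresh values by $(a,j,i)$, not $(a,j)$, so that no fresh value occupies two slots. Then for $n\ge 2$ the test $\bigvee_{j}\bigwedge_{i=2}^{n}\mathrm{slot}_{j,1}(\uptau(a_1))=\mathrm{slot}_{j,i}(\uptau(a_i))$ is exact. A value shared by the slots of positions $1$ and $2$ cannot be fresh, so it is $\iota(t)$ with $t_i=a_i$ for all $i$.

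With this indexing no ``filled'' test is needed. That matters, because membership in $\iota(\mathcal T)$ is not definable in $(N;=)$ without an extra flag coordinate. If instead $e_{a,j}$ is reused across the $n$ position slots, a diagonal tuple $(b,\ldots,b)$ with $b$ in no tuple of color $j$ gives a false positive.
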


\begin{proof}
Suppose that $\Sa M$ is mutually algebraic.
We show that the trivial structure on $M$ locally trace defines $\Sa M$.
We may suppose that $\Sa M$ admits quantifier elimination in a relational language $L$ such that every $R\in L$ defines a mutually algebraic relation on $M$.
Then for every $k$-ary $R\in L$ there is $m_R$ such that $|\{\beta\in M^{k-1} : \Sa M\models R(\alpha,\beta)\}|\le m_R$ for all $\alpha \in M$.
Hence there is a function $\uptau^{i,j}_R\colon M\to M$ for each $k$-ary $R\in L$ and $i= 1,\ldots,m_R$ and $j = 1,\ldots,k-1$ such that we have the following for all $\alpha\in M$:
\[
\{\beta\in M^{k-1} : \Sa M\models R(\alpha,\beta)\} = \left\{(\uptau^{1,1}_R(\alpha),\ldots,\uptau^{1,k-1}_R(\alpha)),\ldots,(\uptau^{m_R,1}_R(\alpha),\ldots,\uptau^{m_R,k-1}_R(\alpha))\right\}.
\]
Let $\Cal E$ be the collection of all $\uptau^{i,j}_R$.
Then for any $k$-ary $R\in L$ and $\beta_1,\ldots,\beta_k\in M$ we have
\[
\Sa M\models R(\beta_1,\ldots,\beta_k)\quad\Longleftrightarrow\quad \bigvee_{i=1}^{m_R} \bigwedge_{j=1}^{k-1} \uptau^{i,j}_R(\beta_1) = \beta_{j+1}.
\]
Hence $\Cal E$ witnesses local trace definability of $\Sa M$ in the trivial structure on $M$.
\end{proof}

Corollary~\ref{cor:maa} is the ordered version of Proposition~\ref{thm:2m}.

\begin{corollary}
\label{cor:maa}
Any weakly minimal structure in a binary relational language is locally trace equivalent to an infinite set equipped with equality.
\end{corollary}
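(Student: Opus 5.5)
The plan is to reduce to Proposition~\ref{prop:lask}. It suffices to show that a weakly minimal structure $\Sa M$ in a binary relational language $L$ is mutually algebraic, hence disintegrated by Fact~\ref{fact:laskowski}. For this I would use Laskowski's criterion from \cite{laskowski_2013} (the source of Fact~\ref{fact:laskowski}): a structure is mutually algebraic if and only if every atomic formula is equivalent to a boolean combination of mutually algebraic formulas with parameters. Unary atomic formulas are trivially mutually algebraic, so the work concerns a single binary $R \in L$.

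Step one is to decompose $R$. Work in the monster model and fix a small $A$ such that every definable subset of $\monsterset$ is $A$-definable modulo a finite set. For each $a$ the fiber $R(a,\monsterset)$ differs from some $A$-definable set $\theta(\monsterset)$ by finitely many points. By compactness I would find finitely many $A$-formulas $\theta_1,\ldots,\theta_k$, a bound $N$, and a partition of the $x$-sort into definable sets $\psi_1,\ldots,\psi_k$ such that
\[
\psi_i(a)\ \Longrightarrow\ |R(a,\monsterset)\,\triangle\,\theta_i(\monsterset)| \le N .
\]
Then
\[
R(x,y) \leftrightarrow \bigvee_i \bigl[\psi_i(x) \wedge (\theta_i(y) \,\triangle\, E_i(x,y))\bigr],
\]
where $E_i(x,y) := \psi_i(x)\wedge (R(x,y)\,\triangle\,\theta_i(y))$ has fibers of size at most $N$ in the $y$-direction.

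Step two handles the other direction. I would apply the same argument to the transpose of each $E_i$. This writes $E_i$ as a boolean combination of unary formulas in $y$ and a relation $E_i'$ whose fibers are finite in both directions. Here $E_i' \subseteq E_i$ and the $x$-fibers are uniformly bounded. Such an $E_i'$ is mutually algebraic, so $R$ is a boolean combination of unary and mutually algebraic formulas, as required. Proposition~\ref{prop:lask} then gives local trace equivalence with the trivial structure. The converse direction is immediate, since equality is definable in $\Sa M$.

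The main obstacle is the uniformity in step one: obtaining the bound $N$ and finitely many $\theta_i$ needs something like elimination of $\exists^\infty$ for the relevant fibers, which a general weakly minimal theory need not have. If this fails, the fallback is to avoid uniform bounds. In that case I would take $\theta(y)$ ranging over a definable family and use superstability and U-rank one to argue that ``$R(a,y)\triangle\theta(y)$ is algebraic'' is type-definable. I would then extract the finite decomposition by compactness applied to types over $A$ instead of to cardinalities.
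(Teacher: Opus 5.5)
\paragraph{Where the proposal breaks.} Your reduction is the paper's. Corollary~\ref{cor:maa} is deduced from Proposition~\ref{prop:lask} by showing that a weakly minimal binary structure is mutually algebraic. Proposition~\ref{prop:maa} does this by your step one followed by a treatment of the second coordinate.

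The gap is in step two, which you treat as a repetition of step one. Literally repeating step one for the transpose of $E_i$ gives formulas $\theta'_j(x)$, a partition $\psi'_j(y)$, a bound $N'$, and the relation $\psi'_j(y)\wedge(E_i(x,y)\triangle\theta'_j(x))$. This relation is not contained in $E_i$. Moreover, for $a\in\theta'_j(\monsterset)$ its fiber $\psi'_j(\monsterset)\setminus E_i(a,\monsterset)$ is infinite whenever $\psi'_j$ is.

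So your claim ``$E'_i\subseteq E_i$ with fibers finite in both directions'' needs a fact you never state: only finitely many $b$ have $E_i(\monsterset,b)$ infinite. This fact is true and easy to prove.
\begin{itemize}
\item Suppose $\theta'_j$ is infinite and $b_0,\ldots,b_N\in\psi'_j(\monsterset)$ are distinct. Each $E_i(\monsterset,b_l)$ omits at most $N'$ points of $\theta'_j(\monsterset)$, so some $a$ lies in all of them. Then $|E_i(a,\monsterset)|>N$, contradicting step one.
\item Hence the set $B$ of $b$ lying in some $\psi'_j$ with $\theta'_j$ infinite is finite (at most $N$ elements per such $j$).
\item The piece $E_i\cap(\monsterset\times B)=\bigcup_{b\in B}E_i(\monsterset,b)\times\{b\}$ is a boolean combination of unary sets.
\item Off $B$, the $y$-fibers of $E_i$ are bounded by $N'+\max|\theta'_j(\monsterset)|$, the maximum taken over the finite $\theta'_j$. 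So $E_i\setminus(\monsterset\times B)$ is mutually algebraic.
\end{itemize}
This is exactly the paper's step ``$A$ is finite'', followed by splitting off $\bigcup_{\alpha\in A}W^\alpha\times\{\alpha\}$.

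\paragraph{The obstacle you flagged.} The difficulty you raise in step one does not arise, so the fallback is unnecessary. For each $A$-formula $\theta(y)$ and each $n$, the set $\{a : |R(a,\monsterset)\triangle\theta(\monsterset)|\le n\}$ is definable over $A$. By the choice of $A$, these boundedly many definable sets cover $\monsterset$, so by saturation finitely many of them already do. That gives $\theta_1,\ldots,\theta_k$, the bound $N$ and the partition $\psi_i$ without any appeal to elimination of $\exists^\infty$. This is the paper's ``by compactness'' step.

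With the finiteness of $B$ added, your argument is complete. Working in the monster model is harmless, since local trace equivalence depends only on the theory.
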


It is enough to show that any weakly minimal structure in a binary relational language is mutually algebraic.
This follows by the definitions and Proposition~\ref{prop:maa}.

\begin{proposition}
\label{prop:maa}
Suppose that $\Sa M$ is weakly minimal and let $\Sa M_\mathrm{ma}$ be the reduct of $\Sa M$ generated by all definable mutually algebraic relations.
Then every subset of $M^2$ that is definable in $\Sa M$ is already definable in $\Sa M_\mathrm{ma}$.
\end{proposition}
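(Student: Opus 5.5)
The idea is to decompose an arbitrary definable $X \subseteq M^2$ into pieces built from unary definable sets and definable relations with uniformly bounded fibers in both coordinates. Such relations are mutually algebraic. Unary relations are also trivially mutually algebraic: each $\alpha$ lies in at most one tuple. So all pieces are $\Sa M_\mathrm{ma}$-definable. We may work in an $\aleph_1$-saturated (or monster) model, since the conclusion transfers to $\Sa M$ by the usual compactness argument: the defining formulas have parameters and the mutually algebraic relations are defined by the same formulas.

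Fix $X$ defined by $\varphi(x,y,c)$. By weak minimality there is a small set $A$ such that every definable subset of $M$ is an $A$-definable set modulo a finite set, and in particular each fiber $X_\alpha$ has this form. The first key step is to uniformize this by compactness. I would aim to find finitely many $A$-definable sets $D_1,\ldots,D_n \subseteq M$ and a bound $\ell$ such that for every $\alpha$ there is an $i$ with $|X_\alpha \triangle D_i| \le \ell$. I expect this to be the main obstacle. The type-space argument: the partial type in $\alpha$ saying ``$X_\alpha \triangle D$ has more than $\ell$ elements'', for every $A$-definable $D$ and every $\ell$, must be inconsistent. One then needs to extract finitely many $D$ and a single $\ell$. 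Superstability gives elimination of $\exists^\infty$ up to the needed uniformity, and I would use this (or a direct $\aleph_1$-saturation argument) to pass from ``finite'' to ``boundedly finite''. Once this is in hand, the sets $P_i = \{\alpha : |X_\alpha \triangle D_i| \le \ell\}$ are definable, and we may refine them to a definable partition.

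Next write $X = \bigl(\bigcup_i (P_i \times D_i)\bigr) \triangle Z$, where $Z = \{(\alpha,\beta) : \alpha \in P_i,\ \beta \in X_\alpha \triangle D_i\}$. The first part is a boolean combination of products of unary sets, and $Z$ has $x$-fibers of size at most $\ell$. Split $Z$ into $Z_0$, the points whose $y$-fiber $Z^\beta$ has size at most some bound $k$, and $Z_1$, the remainder. Here $k$ is again obtained by the uniform finiteness from the previous step, now applied to the $y$-fibers. Then $Z_0$ is mutually algebraic.

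For $Z_1$, apply the same decomposition in the other coordinate: $Z_1^\beta$ is within a bounded distance of one of finitely many $A$-definable $E_j$ for $\beta \in Q_j$, with each $E_j$ infinite. If some $Q_j$ were infinite, choose infinitely many $\beta \in Q_j$. Each $Z_1^\beta$ contains all but boundedly many points of the infinite set $E_j$, so a counting argument produces some $\alpha$ lying in more than $\ell$ of these fibers. This contradicts the bound on $x$-fibers. Hence each $Q_j$ is finite, and $Z_1$ is a finite union of sets $Z_1^\beta \times \{\beta\}$, each a product of unary definable sets. Thus $X$ is a boolean combination of $\Sa M_\mathrm{ma}$-definable sets.
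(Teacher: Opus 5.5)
Your proof is correct. Its first half matches the paper's: the paper also uses compactness to get finitely many (there zero-definable) sets $Y_i$ such that each fiber $X_\alpha$ is almost equal to some $Y_i$. It then writes $X$ as a boolean combination of products of unary sets and ``error'' sets with finite $x$-fibers, using $\bigcup_i([Y^*_i\times Y_i]\cup W_i)\setminus W'_i$ where you use a symmetric difference.

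The routes diverge at the step showing that only finitely many $\beta$ have infinite $y$-fiber. The paper gets this in one line from the fact that a weakly minimal structure is geometric: $\exists^\infty$ is eliminated and $\acl$ satisfies exchange. You instead apply the uniform almost-$A$-definability a second time, to the $y$-fibers, and use pigeonhole. Any $\ell+1$ fibers that are each within $k'$ of the same infinite $E_j$ share a point $\alpha$, which contradicts $|Z_\alpha|\le\ell$. This gives $|Q_j|\le\ell$, so you only need $\ell+1$ such $\beta$, not infinitely many. Your version uses nothing beyond the single characterization of weak minimality. It also makes explicit the uniform fiber bounds that mutual algebraicity requires, which the paper leaves implicit in ``geometric''. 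The paper's version is shorter given the standard pregeometry facts.

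One correction to your write-up: the step you call the main obstacle is immediate. Inconsistency of your partial type means some finite subset is inconsistent. That yields finitely many pairs $(D_i,\ell_i)$ such that every $\alpha$ satisfies $|X_\alpha\triangle D_i|\le\ell_i$ for some $i$; take $\ell=\max_i\ell_i$.

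Do not appeal to superstability here. Superstable (even $\omega$-stable) theories need not eliminate $\exists^\infty$; an equivalence relation with exactly one class of each finite size is a counterexample. In the weakly minimal case, elimination of $\exists^\infty$ follows from this very compactness argument, and that is exactly how you obtain $k$. Take $k=\max\{|E_j|+k' : E_j \text{ finite}\}$; then every $Z^\beta$ either has size at most $k$ or is infinite.
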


Given a set $X \subseteq A \times B$ we let $X^\beta = \{\alpha \in A : (\alpha, \beta) \in X\}$ for any $\beta \in B$.

\begin{proof}
After possibly passing to an elementary expansion and adding constants to the language we may suppose that every definable subset of $M$ is zero-definable modulo a finite set.
By  compactness there are  zero-definable sets $Y_1,\ldots,Y_m\subseteq M$ such that $X_\alpha$ agrees with some $Y_i$ modulo a finite set for every $\alpha \in M$.
We may suppose that $Y_i \triangle Y_j$ is infinite when $i\ne j$.
For each $i = 1,\ldots,m$ let:
\begin{enumerate}
\item  $Y^*_i$ be the set of $\alpha \in M$ such that $|X_\alpha\triangle Y_i|<\aleph_0$,
\item $W_i$ be the set of $(\alpha,\beta) \in M^2$ such that $\alpha \in Y^*_i$ and $\beta \in X_\alpha\setminus Y_i$, 
\item and $W'_i$ be the set of $(\alpha,\beta) \in M^2$ such that $\alpha \in Y^*_i$ and $\beta \in Y_i\setminus X_\alpha$.
\end{enumerate}
Note that $X = \bigcup_{i=1}^{m} \left([Y^*_i\times Y_i]\cup W_i\right)\setminus W'_i$.
Each $Y_i, Y^*_i$ is unary and hence definable in $\Sa M_\mathrm{ma}$.
Hence it suffices to show that each $W_i,W'_i$ is definable in $\Sa M_\mathrm{ma}$.

\medskip
We suppose that $W$ is a definable subset of $M^2$ such that $W_\alpha$ is finite for every $\alpha \in M$ and show that $W$ is definable in $\Sa M_\mathrm{ma}$.
Let $A$ be the set of $\alpha\in M$ such that $W^\alpha$ is infinite.
Then $A$ is finite as $\Sa M$ is a geometric structure.
Let $Y = \bigcup_{\alpha \in A} [W^\alpha\times\{\alpha\}]$.
Then $Y\subseteq W$ and $W\setminus Y$ is mutually algebraic.
As each $W^\alpha$ is unary $Y$ is definable in $\Sa M_\mathrm{ma}$, hence $W=Y\cup (W\setminus Y)$ is definable in $\Sa M_\mathrm{ma}$.
\end{proof}

\bibliographystyle{abbrv}
\bibliography{NIP}
\end{document}